\documentclass[a4paper, 11pt, UKenglish]{article}

\usepackage{babel}
\usepackage[utf8]{inputenc}
\usepackage[a4paper, margin=2.5cm]{geometry}
\usepackage[T1]{fontenc}

\usepackage{amsmath, amsthm, amssymb, mathrsfs}
\usepackage{enumitem}
\usepackage{authblk}
\usepackage{todonotes}
\usepackage{thmtools}
\usepackage{thm-restate}
\usepackage{mathtools}
\usepackage{hyperref}
\usepackage[capitalise]{cleveref}
\usepackage{xspace}
\usepackage{setspace}
\usepackage{tikz}
\usepackage{multicol}
\usepackage{aliascnt}
\usepackage{comment}

\usepackage{csquotes}
\usepackage[style=alphabetic,maxnames=999]{biblatex}
 \AtEveryBibitem{
 	\clearfield{editor}
 	\clearlist{editor}
 	\clearname{editor}
 	\clearfield{location}
 	\clearlist{location}
 	\clearname{location}
 	\clearfield{isbn}
 	\clearlist{isbn}
 	\clearname{isbn}
 	\clearfield{url}
 	\clearlist{url}
 	\clearname{url}
 	\clearfield{issn}
 	\clearlist{issn}
 	\clearname{issn}
 }

\declaretheorem[name=Theorem, numberwithin=section]{theorem}
\declaretheorem[name=Definition, style=definition, sibling=theorem]{definition}
\declaretheorem[name=Lemma, sibling=theorem]{lemma}

\declaretheorem[name=Corollary, sibling=theorem]{corollary}
\declaretheorem[name=Conjecture, sibling=theorem]{conjecture}
\declaretheorem[name=Claim, sibling=theorem]{claim}

\declaretheorem[name=Example, style=remark, sibling=theorem]{example}

\newenvironment{claimproof}[1][Proof.]{\begin{proof}[#1]}{\renewcommand{\qed}{\hfill$\blacksquare$}\end{proof}}
\newcommand{\claimqedhere}{\renewcommand{\qedsymbol}{\hfill$\blacksquare$}\qedhere}

\newcommand{\mainonly}[1]{}
\newcommand{\deferredsymbol}{$\star$}
\newcommand{\deferred}{\mainonly{\deferredsymbol}}

\crefname{claim}{Claim}{Claims}
\crefname{case}{case}{cases}
\crefname{prop}{Property}{Properties}

\newcommand{\set}[1]{\{#1\}}
\newcommand{\setof}[2]{\{#1\mid #2\}}

\let\origsubset\subset
\renewcommand{\subset}{\subseteq}
\newcommand{\subsett}{\origsubset}

\newcommand{\from}{\colon}
\newcommand{\eqdef}{\coloneqq}

\newcommand{\K}{\mathcal{K}}

\newcommand{\C}{\mathscr{C}}
\newcommand{\D}{\mathscr{D}}
\newcommand{\CC}{\C}
\newcommand{\DD}{\D}
\renewcommand{\P}{\mathcal{P}}
\newcommand{\Q}{\mathcal{Q}}

\newcommand{\Nn}{\mathcal{N}}
\newcommand{\F}{\mathcal{F}}

\renewcommand{\O}{\mathcal{O}}
\newcommand{\Oh}{\O}
\newcommand{\N}{\mathbb{N}}
\DeclareMathOperator{\mw}{mw}
\DeclareMathOperator{\dmw}{definable-mw}
\DeclareMathOperator{\tmw}{transient-mw}
\DeclareMathOperator{\pmw}{partition-mw}

\DeclareMathOperator{\shortmw}{short-mw}
\DeclareMathOperator{\tww}{tww}

\DeclareMathOperator{\scol}{scol}

\newcommand{\dist}{\mathrm{dist}}
\newcommand{\diam}{\mathrm{diam}}
\newcommand{\Ball}{\mathrm{Ball}}
\newcommand{\VC}{\mathrm{VCdim}}

\def\scol{\mathrm{scol}}
\def\wcol{\mathrm{wcol}}
\def\wreach{\mathrm{WReach}}
\def\sreach{\mathrm{SReach}}

\let\le\leqslant
\let\ge\geqslant
\let\leq\leqslant
\let\geq\geqslant

\let\qlt\prec
\let\qgt\succ
\let\qle\preceq

\let\emptyset\varnothing

\renewcommand{\cal}{\mathcal}

\let\preceq\preccurlyeq   % 

\title{Variants of Merge-Width and Applications\footnotemark}
\date{}

\usepackage{authblk}
\author[1]{Karolina Drabik}
\author[1]{Ma\"el Dumas}
\author[2]{Colin Geniet}
\author[1]{\\Jakub Nowakowski}
\author[1]{Micha{\l}~Pilipczuk}
\author[1]{Szymon Toru\'nczyk}

\affil[1]{University of Warsaw, Institute of Informatics, Poland}
\affil[2]{Institute for Basic Science (IBS), Discrete Mathematics Group, Daejeon, South Korea}

\begin{document}
\maketitle

\begin{abstract}
  Merge-width is a recently introduced family of graph parameters that 
  unifies treewidth, clique-width, twin-width, and generalised colouring numbers. 
We prove the equivalence of several alternative definitions of merge-width, thus demonstrating the robustness of the notion. Our characterisation via \emph{definable} merge-width uses vertex orderings inspired by generalised colouring numbers from sparsity theory, and enables us to obtain the first non-trivial approximation algorithm for merge-width, running in time $n^{\O(1)} \cdot 2^n$. We also obtain a new characterisation
of bounded clique-width in terms of vertex orderings, and establish that graphs of bounded merge-width admit sparse quotients with bounded strong colouring numbers, are quasi-isometric to graphs of bounded expansion, and admit neighbourhood covers with constant overlap. We also discuss several other variants of merge-width and  connections to adjacency labelling schemes.
\end{abstract}

\renewcommand{\thefootnote}{\fnsymbol{footnote}}
\footnotetext{\footnotemark
CG was supported by the Institute for Basic Science (IBS-R029-C1).  
KD, MD, JN, ST received funding from ERC grant BUKA (No.~101126229).
MD and MP were supported by the project BOBR that has received funding from ERC
under the European Union's Horizon 2020 research and innovation programme,
grant agreement No.~948057.}
\renewcommand{\thefootnote}{\arabic{footnote}}
\setcounter{footnote}{0}

\section{Introduction}

  Merge-width is a family of graph parameters introduced recently by Dreier and Toruńczyk \cite{merge-width}. The aim of merge-width is to unify and generalise  several well-known structural graph parameters such as tree-width, clique-width, twin-width, and generalised colouring numbers, which have been extensively studied due to their numerous applications in algorithmics, combinatorics, and logic. The model-checking problem for first-order logic is fixed-parameter tractable on graph classes of bounded merge-width \cite{merge-width}, assuming a suitable decomposition witnessing the boundedness is given. Those classes enjoy further good logical and combinatorial properties, such as closure under first-order transductions \cite{merge-width}, the strong Erdős--Hajnal property, and $\chi$-boundedness \cite{bonamy2025chiboundedness}. A central open question is whether merge-width can be efficiently approximated.

  % Roughly, merge-width is defined via a sequence of partitions of the vertex set of a graph, starting from the partition into singletons and ending with the partition with one part. At each step, some pairs of vertices are marked as \emph{resolved}, with the requirement that the unresolved pairs between any two parts of the current partition are either all edges or all non-edges. For $r\in\N$, the graph has \emph{radius-$r$ merge-width} at most $k$ if there exists such a sequence so that at any time, for every vertex $v$, at most $k$ parts can be reached from $v$ by a path length at most $r$ in the graph formed by the resolved pairs.

  In this paper, we explore the properties of merge-width further. We prove the equivalence of several alternative definitions of merge-width, demonstrating the robustness of the notion. 
  Among other findings, one of those characterisations 
   elucidates the connection of merge-width with central notions from sparsity theory \cite{sparsity}, by showing that merge-width can be characterized using vertex orderings similar to those used to define generalised colouring numbers. This potentially opens the avenue for applying methods used in sparsity theory, in the context of dense graphs.
   
   As an indication of the advantage of this characterisation, we use it to obtain a single-exponential time approximation algorithm for merge-width, running in time \mbox{$2^n\cdot n^{\Oh(1)}$} on $n$-vertex graphs. Another of our characterisations yields a certain normal form of merge sequences, which we use to show that graphs of bounded merge-width admit neighbourhood covers with constant overlap, and are quasi-isometric to graphs of bounded expansion. Before explaining our results in more details, we first recall the definition of merge-width.

  % \sz{possibly remove}In addition to these results, we also provide approximation results for classical optimisation problems such as the minimum dominating set and the maximum distance-2 independent set, in graphs of bounded radius-2 merge-width (given a suitable witness). These results further highlight the algorithmic potential of merge-width in solving combinatorial problems on graphs.

\subsection{Merge-width}
We will rely on the definition of merge-width expressed in terms of \emph{merge sequences} \cite[Sec.~3.1]{merge-width}. This variant is more flexible than the alternative definition in terms of \emph{construction sequences}
\cite[Sec.~1]{merge-width}, which in turn might be more intuitive.
See \cref{sec:mw-def} for a formal definition.
We also refer to \cite{merge-width} for illustrative examples.

A \emph{merge sequence} for a graph $G=(V,E)$ is a sequence 
$$(\P_1,R_1),(\P_2,R_2),\dots,(\P_m,R_m),$$
where $\cal P_1\prec\dots\prec\cal P_m$ is a sequence
of ever coarser partitions of the vertex set $V$, with $\P_1$ the partition into singletons and $\P_m$ the partition with one part $V$, and $R_1\subseteq \dots\subseteq R_m\subseteq \binom{V}{2}$ are sets of \emph{resolved} pairs of vertices. The requirement is that for every $t$ and every pair of parts $A,B\in \cal P_t$ (possibly $A=B$), the \emph{unresolved} vertex pairs  between $A$ and $B$ are either all edges or all non-edges in $G$:
$$AB\setminus R_t \subseteq E \qquad\textit{or}\qquad AB\setminus R_t \subseteq  \tbinom{V}{2}\setminus E\qquad\textit{for all $A,B\in \P_t$.}$$
% (Thus, $A$ and $B$ have a \emph{default connection} --- adjacency or non-adjacency --- which applies to all vertex pairs in $AB\in R_t$.)
For $r\in\N$, the \emph{radius-$r$ width} of a merge sequence is the maximum, over all 
$t\in [m-1]$ and $v\in V$, of the number of parts in $\cal P_t$ that are at distance at most $r$ from $v$, where distances are measured in the graph $(V,R_{t+1})$ formed by the resolved pairs in the subsequent\footnote{This offset 
is essential in the case when parts of $\P_{t+1}$ are unions of many parts of~$\P_t$.
However, in the definition of merge-width, one can without loss of generality consider sequences in which $\P_{t+1}$ is obtained from $\P_t$ by merging exactly two parts. In this case, distances can be measured in the graph $(V,R_t)$, resulting in the same merge-width, $\pm 1$.} step~$t+1$.
The \emph{radius-$r$ merge-width} of a graph $G$, denoted by~$\mw_r(G)$, is the least radius-$r$ width of a merge sequence for $G$. 
A graph class~$\C$ has \emph{bounded merge-width} if $\mw_r(\C)<\infty$ holds\footnote{For a graph parameter $f$ and graph class $\C$, we denote $f(\CC)=\sup \{f(G): G\in\C\}$.} for every $r\in\N$. See \cref{ex:run-ex} for a simple example.

Examples of graph classes of bounded merge-width include:
sparse graph classes, such as the class of planar graphs, every class that excludes some graph as a (topological) minor, and more generally, 
every class of bounded expansion (a notion from sparsity, \cite{sparsity});
dense graph classes, such as the class of unit interval graphs, or every proper hereditary class of permutation graphs, classes of bounded clique-width, and more generally, classes of bounded twin-width \cite{tww1}.

%The \emph{radius-$\infty$ merge-width of $G$} is defined 
%as above, but we count the number of parts of $\P_t$ at any finite distance from $v$ in the graph $(V,R_{t+1})$. 
%% Equivalently, 
%% $\mw_\infty(G)=\mw_r(G)$, where $r$ is any number with $r\ge \left|V(G)\right|-1$. 
%It is known \cite{merge-width} that $\mw_\infty(G)$ is functionally equivalent to the clique-width of $G$. In particular, for every class $\C$ of bounded treewidth, or of bounded clique-width, we have $\mw_\infty(\C)<\infty$.
%

\subsection{Main result}
We give several equivalent characterisations of merge-width, using alternative parameters discussed below.
Intuitively, our main result states that the full data of a merge sequence can be approximately recovered from some partial information,
such as a sequence of partitions, or a suitably chosen vertex ordering.
This provides more light-weight variants of merge-width which are sometimes better behaved, leading to new applications of the notion.

\begin{theorem}\label{thm:intro}
  The following are equivalent for a graph class $\C$:
  % \begin{multicols}{2}
    \begin{enumerate}[nosep]
      \item\label{item:merge-width} $\C$ has bounded merge-width,
      \item\label{item:transient-width} $\C$ has bounded transient merge-width,
      \item\label{item:partition-width} $\C$ has bounded partition merge-width,
      \item\label{item:definable-width} $\C$ has bounded definable merge-width.
    \end{enumerate}
  % \end{multicols}
  % Moreover, all translations between 
%  these forms of merge-width 
%  are computable in polynomial time.
\end{theorem}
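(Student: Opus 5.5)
The plan is to prove a cycle of implications
\[
\text{(\ref{item:merge-width})}\Rightarrow\text{(\ref{item:transient-width})}\Rightarrow\text{(\ref{item:partition-width})}\Rightarrow\text{(\ref{item:definable-width})}\Rightarrow\text{(\ref{item:merge-width})},
\]
choosing the order so that each step either forgets information or rebuilds it with a radius loss. In every step we aim for bounds of the shape $\mw_{r}\le f(r,\text{other width at radius } g(r))$, since boundedness of a class means boundedness for all radii, and so linear or polynomial radius blow-ups are harmless.

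The forgetful steps come first. A merge sequence is, by definition, a sequence with monotone resolved sets $R_1\subseteq\dots\subseteq R_m$. Transient merge-width should relax this monotonicity: resolved pairs only need to be present while they are actually needed, and may be dropped afterwards. This relaxation makes (\ref{item:merge-width})$\Rightarrow$(\ref{item:transient-width}) immediate, because a monotone sequence is a special case.

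Passing from a transient witness to a bare partition sequence, i.e. (\ref{item:transient-width})$\Rightarrow$(\ref{item:partition-width}), I would use a canonical choice of resolved set at each time $t$. Take $R_t$ to be the minimal set forced by $\P_t$: a pair $uv$ is resolved iff its homogeneity fails for its two parts $A,B$. Here we orient each part pair's default to the majority of the pairs, or use a fixed canonical rule. Any valid resolved set must contain all pairs of one of the two types between a non-homogeneous pair $A,B$. So the canonical set should be controlled by the given one up to a bounded factor in the radius, and partition merge-width should be defined exactly with these canonical sets.

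For (\ref{item:partition-width})$\Rightarrow$(\ref{item:definable-width}), I would extract a vertex ordering from the partition sequence. Order vertices by the time at which their part is first merged, or take the elimination order of the merges. Then show that the ``weakly reachable''-type sets in the definable formulation are covered by the parts near $v$ in $(V,R_{t+1})$. This mirrors the standard argument that bounded expansion implies bounded weak colouring numbers: each path witnessing weak reachability in the ordering should map to a short resolved path at the time the minimum vertex becomes active.

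The main obstacle is (\ref{item:definable-width})$\Rightarrow$(\ref{item:merge-width}): reconstructing a genuine merge sequence from an ordering plus definable data. Here the plan is to process vertices along the order and, at each step, group vertices by their ``type'' towards the already processed prefix. The type would be their adjacency pattern to the bounded set of reachable smaller vertices, made definable by bounded-size formulas. Parts are then these type classes, and resolved pairs are exactly the pairs between a vertex and its weakly reachable predecessors.

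Two points must then be verified. The homogeneity condition must hold for unresolved pairs, because unresolved pairs lie between vertices agreeing on all relevant adjacencies. The radius-$r$ neighbourhood in the resolved graph must meet only boundedly many classes. For the second point, I expect to need a Ramsey- or VC-type counting argument: the number of types over a bounded parameter set is bounded, and radius-$r$ balls in the resolved graph stay within weak $O(r)$-reachability sets. If that counting fails, I would fall back to bounding it via the inductive type structure, at the cost of a tower-type blow-up in $r$, which is still enough for equivalence of boundedness.
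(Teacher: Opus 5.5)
Only your step (1)$\Rightarrow$(2) is sound as written. For (2)$\Rightarrow$(3) you are using the wrong definition. Partition merge-width measures balls in the flip metric $\dist_{\P_{t+1}}$, which is the maximum distance over \emph{all} $\P_{t+1}$-flips. The step is therefore immediate from $\Ball^r_{\P_{t+1}}(v)\subseteq\Ball^r_{G_{t+1}}(v)$. Your substitute claim, that a majority-rule resolved set is controlled by the given flip up to a radius factor, is false. Take $\P_{t+1}=\{A,B\}$ with $A=A_1\cup\{a_2,\dots,a_N\}$ and $B=B_1\cup\{b_2,\dots,b_N\}$ both independent, and let $\P_t$ consist of $A_1$, $B_1$ and singletons. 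Let $E(G)$ be all pairs between $A_1$ and $B_1$ plus the matching $a_ib_i$, with $|A_1|=|B_1|\gg N$. Then $G$ itself is a flip whose balls meet at most two parts of $\P_t$. The majority rule, however, resolves the non-edges between $A$ and $B$. This puts each vertex of $A_1$ at distance $1$ from $N-1$ singleton parts that are at infinite distance in $G$.

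The genuine gap is in the two non-trivial steps. The mechanisms you propose, bounded weakly reachable sets and resolving pairs to weakly reachable predecessors, are sparse-graph tools and break for dense graphs. In a clique every vertex weakly $1$-reaches all its predecessors, although all four widths equal $1$. In your (4)$\Rightarrow$(1), two type classes avoiding the prefix $S$ can span an arbitrary bipartite graph: agreeing on adjacencies into $S$ says nothing about adjacencies among later vertices. So homogeneity modulo your resolved pairs fails.

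The missing idea is metric conversion. For a transversal $S$ of $\P$, some $(\P\wedge S)$-flip has all its edges at $\dist_\P$-distance at most $6$. Use nested transversals and resolve $R_i=\bigcup_{j\le i}E(G_j)$ for these flips $G_j$. This gives a monotone merge sequence whose resolved pairs stay short, since $\dist_{\P_i}\le\dist_{\P_j}$ for $j\le i$. Neighbourhood complexity then controls the extra refinement, yielding $\mw_r\le k\,\pi_G(k)$ for $k=\pmw_{6r+1}$. That is the paper's (3)$\Rightarrow$(1). From (4) you only need the trivial bound $\pmw_r\le 2\dmw_r+1$.

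Reaching (4) is the genuinely hard direction, and your merge-time ordering does not address it. For a prefix $S$, the partition $\Nn_S$ is in general incomparable with $\P_t$. Unless $S$ is chosen carefully, no $\Nn_S$-flip separates what the resolved pairs separate, so $\Nn_S$-balls can swallow many singletons of $S$. The paper proves (1)$\Rightarrow$(4) by choosing nested $(2,8)$-sample sets. These contain witnesses that each part is small or large (medium parts are split in two), and for every pair of parts a dominating or anti-dominating dual of size $\O(d)$, which exists by bounded VC-dimension. Together they guarantee an $S$-flip $G'$ with $\Ball^r_{G'}(v)\subseteq\Ball^{17r}_{R_{i+1}}(v)$. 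The duals are added inclusion-minimally in reverse along a maximal chain. A counting argument that uses the monotonicity of the $R_i$ then shows each $R$-ball contains only $\O(dk^3)$ sample vertices. That argument needs a genuine merge sequence, not a bare partition sequence, which has no resolved sets at all.
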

Each item in the theorem corresponds to a family of parameters, indexed by radius~$r$, which is defined by modifying the definition of merge-width. The modifications are explained briefly below, and in greater detail in \Cref{sec:variants}.

% A typical implication in the theorem, \eqref{item:merge-width}$\rightarrow$\eqref{item:definable-width} proceeds by showing that the radius-$r$ definable merge-width is bounded in terms of the radius-$(17r+3)$ merge-width, etc.

\begin{description}[leftmargin=0pt,labelindent=0pt]
  \item[Transient merge-width] is defined similarly as merge-width, but the monotonicity condition $R_1\subseteq R_2\subseteq\ldots\subseteq R_m$ is dropped. This is equivalent to specifying a sequence 
  $$(\cal P_1, G_1), (\cal P_2, G_2), \ldots, (\cal P_m,G_m)$$
  with $\cal P_1\prec\dots\prec\cal P_m$ as before,
  where each graph $G_t$ is a \emph{$\cal P_t$-flip} of~$G$ --- obtained from $G$ by complementing the adjacency between some pairs of parts in $\cal P_t$ --- and the radius-$r$ width is measured by considering distances in $G_{t+1}$ rather than in $(V,R_{t+1})$. 
  \item[Partition merge-width] is defined by providing only a sequence of ever coarser partitions $\cal P_1\prec \cdots \prec \cal P_m$. Distances are measured with respect to the \emph{flip-metric} of $\cal P_{t+1}$, defined for each pair of vertices as their maximum distance in any $\cal P_{t+1}$-flip $G'$ of~$G$.
  \item[Definable merge-width] is specified by an enumeration (or total order) of $V(G)$.
  For each $i\in[n]$ consider the partition $\cal P_{i}$ of $V(G)$,
  which partitions the last $i$ vertices
  according to their neighbourhood in the set of the $n-i$ remaining vertices, which in turn is partitioned into singletons. 
  This yields a sequence of partitions $\cal P_{1}\prec\dots\prec \cal P_{n}$;
  distances are measured with respect to the flip-metric of $\cal P_t$.
\end{description}

Transient merge-width is a very natural weakening of merge-width which is still useful combinatorially. For instance, in the results of Bonamy and Geniet \cite{bonamy2025chiboundedness} --- proving the Strong Erdős--Hajnal property and 
linear neighbourhood complexity for classes of bounded merge-width --- it is enough to assume a bound on the transient merge-width of the graphs.
On the other hand, for the  purposes studied in \cite{merge-width} --- first-order model checking, closure under transductions, and the relationship with flip-width --- the original definition of merge-width appears much more useful. Note that the equivalence of bounded merge-width and bounded transient merge-width proved in \Cref{thm:intro} does not respect the radii: while we clearly have $\tmw_1(G)\le \mw_1(G)$, in the other direction our proof only bounds $\mw_1(G)$ in terms of $\tmw_{7}(G)$.

Definable merge-width is inspired by, and generalises 
the \emph{generalised colouring numbers} (\emph{weak} and \emph{strong} colouring numbers) studied in sparsity theory, which are also defined through a vertex ordering.
In particular, the construction in \cite[Lem.~7.5]{merge-width} shows that the definable merge-width with radius~$r$ of a graph $G$ is bounded in terms of the $(r+1)$-weak colouring number of $G$.
Each of the partitions $\P_i$ considered in the definition of definable merge-width is definable (as an equivalence relation) using a first-order formula $\pi_i(x,y)$, which uses a unary predicate marking the last $i$ vertices in the enumeration; hence the term \emph{definable}. In particular, the binary structure $G_i$ consisting of~$G$ and the partition $\P_i$ (again as an equivalence relation)
can be obtained from $G$ using a fixed \emph{first-order transduction} (see e.g.\ \cite{lsd} for a definition). This implies that $G_i$ is ``tame'' if $G$ is, e.g. its clique-width, twin-width or merge-width can be bounded in terms of the corresponding parameters of $G$.
This is in contrast with the other considered variants of merge-width.
\medskip

The proof of \cref{thm:intro} relies on two \emph{metric conversion} lemmas, \cite[Lemma~8]{flip-separability}, and \cite[Theorem~3.5]{local-cliquewidth}.
Informally, each of the previous variants of merge-width uses some notion of distance defined by a vertex partition~$\P$.
These lemmas show how these different distances can be approximated by each other, at the cost of a controlled modification of~$\P$.
To apply these results in the setting of merge-width, there are two difficulties to overcome.
First, these metric conversion lemmas were previously considered for partitions~$\P$ with a bounded number of parts;
for merge-width, we instead assume that the number of parts is bounded only in the neighbourhood of each vertex.
Secondly, when applying them to a partition sequence $\P_1 \qlt \dots \qlt \P_m$,
we need to ensure that as a result, we obtain a sequence of ever coarser partitions. The application of metric conversion lemmas to a sequence of partition is inspired by~\cite{sparseflip}, where a strong colouring order is constructed from a (transient) merge sequence in graphs excluding some biclique $K_{t,t}$.

\medskip
Dreier and Toruńczyk~\cite{merge-width} also define merge-width at radius~$\infty$, considering connectivity rather than distances. It is functionally equivalent to clique-width.
The previous variants of merge-width can also be defined at radius-$\infty$, and the equivalence still holds:
\begin{theorem}\label{thm:cw}
  The following are equivalent for a graph class~$\C$:
  \begin{enumerate}[nosep]
    \item\label{item:cw-cw} $\C$ has bounded clique-width,
      \item\label{item:cw-merge-width} $\mw_\infty(\C)<\infty$,
      \item\label{item:cw-transient-width} $\tmw_\infty(\C)<\infty$,
      \item\label{item:cw-partition-width} $\pmw_\infty(\C)<\infty$,
      \item\label{item:cw-definable-width} $\dmw_\infty(\C)<\infty$.
    \end{enumerate}
\end{theorem}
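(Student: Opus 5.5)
We will prove the cycle of implications
\[
\text{\ref{item:cw-cw}}\Rightarrow\text{\ref{item:cw-definable-width}}\Rightarrow\text{\ref{item:cw-partition-width}}\Rightarrow\text{\ref{item:cw-transient-width}}\Rightarrow\text{\ref{item:cw-merge-width}}\Rightarrow\text{\ref{item:cw-cw}},
\]
where the last implication is the result of Dreier and Toruńczyk that $\mw_\infty$ is functionally equivalent to clique-width. Everything else uses radius-$\infty$ versions of the definitions: we count the parts of $\P_t$ in the same connected component as $v$, in the resolved graph, the flip, or the flip-metric respectively.

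\textbf{Monotone steps.}
\begin{itemize}
\item For $\ref{item:cw-definable-width}\Rightarrow\ref{item:cw-partition-width}$: a definable merge sequence is just a particular partition sequence, so this is immediate, up to re-indexing the offset $t$ versus $t+1$.
\item For $\ref{item:cw-partition-width}\Rightarrow\ref{item:cw-transient-width}$: for each $t$ we pick a $\P_{t+1}$-flip $G_{t+1}$. Since the flip-metric dominates the distance in any particular flip, components of $G_{t+1}$ are contained in flip-metric components. So the transient width is at most the partition width.
\end{itemize}

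\textbf{Key step $\ref{item:cw-transient-width}\Rightarrow\ref{item:cw-merge-width}$.} At radius $\infty$ I would avoid the metric conversion lemmas and use connectivity directly.
\begin{itemize}
\item Given a transient sequence $(\P_t,G_t)$ of width $k$, note that the connected components of a $\P_t$-flip $G_t$ are unions of parts of $\P_t$ only up to splitting. The quantity to control is therefore the number of $\P_t$-parts meeting each component of $G_{t+1}$.
\item Let $\mathcal{C}_t$ be the partition into components of $G_{t+1}$. Then $\P_t\wedge\mathcal C_t$ has at most $k$ parts inside each class of $\mathcal C_t$.
\item Between two different components of $G_{t+1}$ there are no edges of the flip. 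Hence the adjacency in $G$ between them is homogeneous on each pair of $\P_{t+1}$-parts.
\item This gives a clique-width-like decomposition: each component is a $k$-labelled graph glued by homogeneous connections.
\item I would go to clique-width directly (proving $\ref{item:cw-transient-width}\Rightarrow\ref{item:cw-cw}$, which suffices to close the cycle) via rank-width. For every $t$ and every component $C$ of $G_{t+1}$, the cut $(C,V\setminus C)$ has adjacency determined by the $\le k$ parts of $\P_{t+1}$ meeting $C$, up to the flip. So its cut-rank is bounded by $O(k)$, or rather the number of distinct neighbourhood classes across the cut is $2^{O(k)}$.
\item Choosing a laminar family from these components along the coarsening sequence yields a rank-decomposition of bounded width. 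By Oum--Seymour this bounds clique-width.
\end{itemize}

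\textbf{Step $\ref{item:cw-cw}\Rightarrow\ref{item:cw-definable-width}$.}
\begin{itemize}
\item Take a clique-width expression, or a rank decomposition tree of width $k$, rooted. Enumerate the vertices so that the last $i$ vertices form, roughly, a union of subtrees processed bottom-up, i.e.\ a post-order of leaves.
\item Then the set $S_i$ of last $i$ vertices is a union of at most $O(\log n)$-many subtrees, which is bad. Instead I would use the standard balanced argument: order leaves so that $S_i$ splits into subtrees hanging off a single root-path.
\item The classes of $S_i$ by neighbourhood outside $S_i$ are then refined by the $2^k$ classes within each subtree.
\item Components in the flip-metric of $\P_i$ must be analysed. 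Every flip of the singleton part $V\setminus S_i$ is arbitrary, so the flip-metric component of $v$ is potentially large; I expect this to be the main obstacle. I would handle it by showing that flips can separate different subtrees, so that a component contains only $2^{O(k)}$ parts.
\item If this fails, I would fall back on the paper's promised equivalences at finite radius, applied with $r\ge n$.
\end{itemize}
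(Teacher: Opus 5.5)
\textbf{There is a genuine gap.} Your cycle breaks at (4)$\Rightarrow$(3), where the inclusion is reversed. Since $\dist_{\P}(x,y)=\max_{G'}\dist_{G'}(x,y)$ over all $\P$-flips $G'$, you get $\Ball^\infty_{\P_{t+1}}(v)=\bigcap_{G'}\Ball^\infty_{G'}(v)\subseteq\Ball^\infty_{G_{t+1}}(v)$ for \emph{every} $\P_{t+1}$-flip $G_{t+1}$. So flip-metric classes lie inside flip components, not conversely. This is exactly \cref{lem:pmw-tmw}, i.e.\ the easy bound $\pmw_\infty\le\tmw_\infty$, and it gives nothing in your direction.

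For an arbitrary flip your claim is false. Take $G$ edgeless, $\P_1$ the singletons and $\P_2=\{V(G)\}$. Distinct vertices are at flip-distance $\infty$, so the partition sequence has width $1$. But the $\P_2$-flip $\overline{G}=K_n$ is connected and meets all $n$ parts of $\P_1$.

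Going from partition sequences back to merge sequences is the substantive direction. The paper does it in \cref{thm:main1}:
\begin{itemize}
\item replace $\P_i$ by $\P_i\wedge S_i$ for nested transversals $S_1\supseteq\dots\supseteq S_m$;
\item use \cref{lem:informal} to choose a $(\P_i\wedge S_i)$-flip $G_i$ all of whose edges have $\P_i$-flip-distance at most $6$;
\item set $R_i=\bigcup_{j\le i}E(G_j)$, which stays short in the $\P_i$-metric by \cref{lem:flip-metric-refine};
\item pay for the refinement via \cref{cor:mw-transversal-refinement}.
\end{itemize}
At $r=\infty$ this gives $\mw_\infty(G)\le k\,\pi_G(k)\le k2^k$ with $k=\pmw_\infty(G)$.

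Your key step (3)$\Rightarrow$(1) lacks the same ingredient. The local observation is correct: across distinct components of $G_{t+1}$, adjacency depends only on the $\P_{t+1}$-parts, so each component has cut-rank at most $k$. But ``choosing a laminar family from these components'' is the whole difficulty, and you do not address it.
\begin{itemize}
\item A transient sequence imposes no relation between $G_{t+1}$ and $G_{t+2}$, so components at different times can cross each other.
\item Even within one step, a union of many components (e.g.\ of singletons) can have unbounded cut-rank. So a node with many children cannot be binarised for free.
\end{itemize}
Producing a \emph{monotone} family of resolved pairs is precisely what the short-edge choice in \cref{thm:main1} achieves. The repair is (3)$\Rightarrow$(4) by \cref{lem:pmw-tmw}, then (4)$\Rightarrow$(2) by \cref{thm:main1}, then (2)$\Rightarrow$(1) by \cite[Thm.~7.1]{merge-width}.

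Two further points:
\begin{itemize}
\item (1)$\Rightarrow$(5) is not proved in your proposal. Your fallback is in fact the paper's route: (1)$\Rightarrow$(2) by \cite[Thm.~7.1]{merge-width}, then \cref{thm:main2} with $r\ge|V(G)|-1$, using $d\le\O(\mw_1(G))$ from \cref{thm:vc_mw1}.
\item (5)$\Rightarrow$(4) is not mere re-indexing. Definable width counts $\P_i$-parts in $\P_i$-balls, whereas partition width counts $\P_i$-parts in $\P_{i+1}$-balls. You need that $\P_i$ splits each part of $\P_{i+1}$ into at most three pieces, which gives $\pmw_\infty\le2\dmw_\infty+1$ (\cref{lem:dmw-pmw}).
\end{itemize}
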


In a different direction, \cite{merge-width} considers classes~$\C$ with \emph{almost bounded} merge-width,
meaning that for any $r \in \mathbb{N}$ and $\varepsilon > 0$, for sufficiently large~$n$ and for any graph~$G \in \C$ with~$n$ vertices, $\mw_r(G) \le n^\varepsilon$.
Such classes include for instance all \emph{nowhere dense} graph classes considered in sparsity theory.
Every hereditary graph class of 
almost bounded merge-width is \emph{monadically dependent} --- a notion which has gained recent attention in finite model theory and structural graph theory, in connection with the model-checking problem for first-order logic \cite{flip-breakability,demmpt,merge-width}.
Conversely, it is conjectured that all monadically dependent graph classes have almost bounded merge-width \cite[Conjecture~1.18]{merge-width}, and that for hereditary classes, monadic dependence is equivalent to the class having a tractable model checking problem for first-order logic.

Almost bounded transient, partition, and definable merge-width can be defined similarly as almost bounded merge-width.
Our main results in fact show that the different variants of merge-width are bounded by polynomial functions of each other, assuming a fixed bound on the VC-dimension of the considered graphs.
It follows that the `almost bounded' conditions are also equivalent
(as each implies a bound on the VC-dimension, see \Cref{cor:almost-bounded-VC}):
\begin{theorem}\label{thm:almost-bounded}
 The following are equivalent for a hereditary graph class~$\C$:
 \begin{enumerate}[nosep]
     \item\label{item:almost-merge-width} $\C$ has almost bounded merge-width,
     \item\label{item:almost-transient-width} $\C$ has almost bounded transient merge-width,
     \item\label{item:almost-partition-width} $\C$ has almost bounded partition merge-width,
     \item\label{item:almost-definable-width} $\C$ has almost bounded definable merge-width.
   \end{enumerate}
\end{theorem}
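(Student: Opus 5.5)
Theorem~\ref{thm:almost-bounded} will follow by combining two ingredients. The first is the main quantitative comparison behind Theorem~\ref{thm:intro}: for each pair of variants among merge-width, transient, partition and definable merge-width, the radius-$r$ value of one is bounded by a \emph{polynomial} in the radius-$r'$ value of the other, where $r'$ depends only on $r$ (for instance $r'=7r$ or $17r+3$). The polynomial's degree and coefficients may depend on $r$ and on an upper bound $d$ on the VC-dimension of the graph. The second is \Cref{cor:almost-bounded-VC}: a hereditary class with almost bounded merge-width in any of the four senses has bounded VC-dimension.

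The easy directions need no VC-dimension argument. These are \eqref{item:almost-merge-width}$\Rightarrow$\eqref{item:almost-transient-width}, since $\tmw_r\le\mw_r$ at the same radius, and any other monotone comparisons that hold directly from the definitions. For instance, a flip-metric distance is at least the distance in any single flip, so $\tmw_r\le\pmw_r$, and a definable sequence is a particular partition sequence. For each remaining implication, say from variant $X$ to variant $Y$, I will argue as follows.

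Fix a hereditary class $\C$ with almost bounded $X$-width. By \Cref{cor:almost-bounded-VC}, $\C$ has VC-dimension at most some $d$. Now fix $r$ and $\varepsilon>0$. Take the comparison lemma in the form $Y_r(G)\le c_{r,d}\cdot X_{r'}(G)^{k_{r,d}}$, valid for every graph $G$ of VC-dimension at most $d$. Apply almost boundedness of $X$ at radius $r'$ with $\varepsilon'=\varepsilon/(2k_{r,d})$. For all sufficiently large $n$ and every $n$-vertex $G\in\C$, this gives $Y_r(G)\le c_{r,d}\,n^{\varepsilon/2}\le n^{\varepsilon}$, after enlarging the threshold on $n$ to absorb the constant. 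Hence $\C$ has almost bounded $Y$-width.

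The hereditary assumption is used only inside \Cref{cor:almost-bounded-VC}. The idea there is that large VC-dimension lets one find, in some induced subgraph, a shattered structure that forces large width relative to its size at every radius. The main obstacle is making sure the comparisons really are polynomial once the VC-dimension is bounded. The merge-width-style bounds from the metric conversion lemmas (\cite[Lemma~8]{flip-separability}, \cite[Theorem~3.5]{local-cliquewidth}) could a priori be exponential, for instance when counting flips over $k$ parts, or counting neighbourhood classes of $k$ parts into a set. I would handle this with the Sauer--Shelah lemma: a set of size $m$ has only $O(m^d)$ distinct traces of neighbourhoods. Each such exponential count then becomes polynomial of degree depending on $d$, and this must be checked step by step through each conversion.
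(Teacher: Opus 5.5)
Your overall strategy is the paper's. For graphs of VC-dimension at most $d$, the comparisons in \cref{thm:main1}, \cref{thm:main2} and \cref{lem:dmw-pmw,lem:pmw-tmw,lem:tmw-mw} are all polynomial by \cref{sauer_shelah_lemma}, and \Cref{cor:almost-bounded-VC} supplies the VC bound for hereditary classes. These bounds are already stated explicitly in terms of $\pi_G$ and $d$ (as $k\cdot\pi_G(k)$ and $(dk)^{\O(d)}$), so nothing needs re-checking inside the metric conversion lemmas.

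However, one of your ``direct'' comparisons is backwards. Since $\dist_{\P}(x,y)\ge\dist_{G'}(x,y)$ for every $\P$-flip $G'$, flip-metric balls are \emph{smaller}: $\Ball^r_{\P_{t+1}}(v)\subseteq\Ball^r_{G_{t+1}}(v)$. This gives $\pmw_r(G)\le\tmw_r(G)$ (\cref{lem:pmw-tmw}), not $\tmw_r\le\pmw_r$. So \eqref{item:almost-transient-width}$\Rightarrow$\eqref{item:almost-partition-width} is the direct implication. The converse \eqref{item:almost-partition-width}$\Rightarrow$\eqref{item:almost-transient-width} is not available from the definitions; it needs the VC route through \cref{thm:main1} (partition merge-width at radius $6r+1$ to merge-width), followed by \cref{lem:tmw-mw}.

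Similarly, ``a definable sequence is a particular partition sequence'' only gives $\pmw_r\le 2\dmw_r+1$ (\cref{lem:dmw-pmw}), not $\pmw_r\le\dmw_r$. Definable width counts $\Ball^r_{\P_i}(v)/\P_i$ rather than $\Ball^r_{\P_{i+1}}(v)/\P_i$. This is harmless, since the bound is linear.

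The reversed inequality does matter in one step you passed over. \Cref{cor:almost-bounded-VC} is proved only for partition merge-width. To invoke it for classes of almost bounded merge-width, transient merge-width or definable merge-width, you need exactly that almost bounded partition merge-width is the weakest of the four properties, i.e.\ $\pmw_2\le\tmw_2\le\mw_2$ and $\pmw_2\le 2\dmw_2+1$. With the direction corrected, this reduction is immediate and your argument goes through as in the paper.
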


\subsection{Applications}
We now list several applications of \Cref{thm:intro}.

\paragraph{Exponential-time approximation of merge-width}
A total order on the vertex set as in the definition of definable merge-width can be approximated 
as follows. Given an $n$-vertex graph, for each set $S \subset V(G)$ we compute the partition $\cal N_S$
into atomic types over $S$ (that is, where each element of~$S$ forms a singleton part, and the vertices of $V\setminus S$ are partitioned according to their neighbourhood in $S$), and then we approximate the radius-$r$ width
of this partition in polynomial time using a lemma from~\cite{flip-separability}.
As there are $2^n$ many subsets $S\subset V(G)$, this takes time $2^n\cdot n^{\O(1)}$ in total. We can then find a chain of sets $S_0\subsett S_1\subsett \dots\subsett S_n$ of length $n+1$
minimizing the estimated parameter using dynamic programming, thus obtaining a total order $<$ on $V(G)$.
This leads to the following\footnote{We thank Václav Blažej for suggesting this result.}.
\begin{restatable}{theorem}{apxMW}\label{thm:approx-intro}
  There is an algorithm that, given~$G$ an $n$-vertex graph and $r\in \N$,
  computes in time $n^{\O(1)}\cdot 2^n$ a merge sequence for~$G$ with radius-$r$ width at most $f(\mw_{734r}(G))$, where $f(\ell) \le \ell^{\O(\ell)}$.
\end{restatable}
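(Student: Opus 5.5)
The plan is to combine three ingredients: the quantitative direction of \cref{thm:intro} bounding definable merge-width by merge-width, a polynomial-time approximation of the flip-metric width of a single partition, and the conversion from definable merge-width back to merge-width. The chain of radii is roughly radius $r$ on the output, some radius $r'=O(r)$ for definable merge-width, and radius $734r$ for merge-width. First I fix the quantity the algorithm optimises. For $S\subseteq V(G)$, let $\Nn_S$ be the partition into atomic types over $S$. For a radius $s$, let $w_s(S)$ be the maximum over $v\in V$ of the number of parts of $\Nn_S$ within distance $s$ of $v$ in the flip-metric of $\Nn_S$. A total order $<$ on $V$ corresponds to a chain $S_0\subsett\dots\subsett S_n$, where $S_{n-i}$ is the first $n-i$ vertices. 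The radius-$s$ definable width of $<$ is then $\max_i w_s(S_i)$, up to the index offset in the definition.

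The step I would make precise first is the per-subset estimate. Exact flip-metric distances are intractable to compute directly, since they range over exponentially many flips. I would use the lemma from \cite{flip-separability}, possibly adapted to partitions with locally bounded numbers of parts as done in the proof of \cref{thm:intro}. It should give a polynomial-time computable $\tilde w(S)$ with $w_{s}(S)\le \tilde w(S)\le g(w_{cs}(S))$ for a constant $c$ and polynomial or $\ell^{O(\ell)}$-type $g$. One natural candidate is to take $\tilde w(S)$ as the width under a specific computable flip, such as the flip in which each pair of parts is made non-adjacent where possible, together with a ball-counting bound. I compute $\tilde w(S)$ for all $2^n$ sets in time $n^{O(1)}$ each. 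I then run dynamic programming over subsets: let $D(S)=\min_{v\in S}\max(D(S\setminus\{v\}),\tilde w(S))$. This yields in $2^n n^{O(1)}$ time an order minimising $\max_i\tilde w(S_i)$.

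For correctness, let $k=\mw_{734r}(G)$. By the merge-width $\to$ definable merge-width implication of \cref{thm:intro}, there is an order whose definable width at radius $c r'$ is bounded polynomially or by $k^{O(k)}$. The radii are tuned so that their product is at most $734r$. Since the dynamic program minimises $\tilde w$, the order it returns has $\max_i\tilde w(S_i)$ at most that bound. The left inequality $w_{s}\le\tilde w$ then bounds its true radius-$r'$ definable width. Finally, I apply the constructive definable merge-width $\to$ merge-width implication of \cref{thm:intro}, which should run in polynomial time. It turns the order into an explicit merge sequence of radius-$r$ width $f(k)$.

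The main obstacle is making every conversion explicit and polynomial-time, and tracking the constants in the radii so that they compose to exactly $734r$ with $f(\ell)\le\ell^{O(\ell)}$. In particular, the step from definable to actual merge sequences must construct the resolved sets $R_t$ and not merely prove they exist. I expect the hardest part to be the approximation of $w_s(S)$ via \cite{flip-separability} when $\Nn_S$ has unboundedly many parts globally. I would first try to apply the lemma locally, ball by ball, since only the local part count is bounded.
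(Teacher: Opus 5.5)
\textbf{There is a genuine gap: the per-subset estimate is not available.} Your correctness argument needs a polynomial-time $\tilde w(S)$ with $w_s(S)\le \tilde w(S)\le g(w_{cs}(S))$, and neither of your candidates gives this.

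If $\tilde w(S)$ is the width of one fixed $\Nn_S$-flip (for instance the sparsest one), the left inequality holds, but the right one fails in general. Take $x^+$ adjacent to all of $Y_1\cup\dots\cup Y_N$, $x^-$ isolated, $s_i$ complete to $Y_i$, and $S=\{s_1,\dots,s_N\}$. Every flip-metric ball of $\Nn_S$ meets at most $2$ parts. Yet in every $\Nn_S$-flip, one of $x^+,x^-$ is adjacent to at least $N/2$ of the parts $Y_i$. For prefixes of the order in \cref{thm:def-mw} a good single $S$-flip does exist. However, it is defined through the unknown sets $R_{i+1}$, and nothing shows that a canonical choice finds it.

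The lemma of \cite{flip-separability} (\cref{lem:informal}) gives only the opposite side. Its flip $G'$ is a flip of the refinement $\Nn_S\wedge X$, not of $\Nn_S$, and satisfies $\Ball^s_{G'}(v)\subseteq\Ball^{6s}_{\Nn_S}(v)$. So $G'$-balls can be much smaller than flip-metric balls. For example, let parts $X,Y$ satisfy $G[X]\cong G[Y]\cong C_5$, with $\emptyset\ne X^+\ne X$, $X^+$ complete to $Y$ and $X\setminus X^+$ anticomplete to $Y$. A vertex $s\in S$ complete to $X$ makes this an $\Nn_S$. Every flip keeps both $5$-cycles and at least one $X$--$Y$ edge, so flip distances between $X$ and $Y$ are at most $5$. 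But the flip built in the proof of \cref{lem:informal} has no $X$--$Y$ edge at all. Hence the step ``the left inequality bounds the true definable width of the returned order'' is unsupported, and your final definable-to-merge-width conversion rests on it.

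\textbf{The missing idea is that you never need to certify definable width.} Fix an auxiliary vertex order and let $X_S$ consist of the first vertex of each part of $\Nn_S$. Set $\Nn'_S\eqdef\Nn_S\wedge X_S$; then $S\subseteq T$ implies $\Nn'_T\preceq\Nn'_S$. Let $G_S$ be the $\Nn'_S$-flip from \cref{lem:informal}. Weight each edge $(S,T)$ with $T=S\cup\{v\}$ by $\max_u|\Ball^{r'}_{G_S}(u)/\Nn'_T|$, where $r'=6r+1$. A bottleneck path of value $\ell$ is then itself a transient merge sequence $(\Nn'_{S_{n-i}},G_{S_{n-i}})_i$ of radius-$r'$ width $\ell$, so no lower bound on the weights is needed. \cref{lem:pmw-tmw} and \cref{thm:main1} then produce, in polynomial time, a merge sequence of radius-$r$ width $\ell\,\pi_G(\ell)$.

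Only an upper bound on $\ell$ remains, and it comes from a good definable order with $k=\dmw_{36r+7}(G)$:
\begin{itemize}
\item Consecutive prefixes split each part into at most two parts plus a singleton, giving the factor $2k+1$.
\item \cref{cor:mw-transversal-refinement}, together with $\Ball^{r'}_{G_S}(u)\subseteq\Ball^{36r+6}_{\Nn_S}(u)$, gives $\ell\le(2k+1)\pi_G(2k+1)$.
\item \cref{thm:main2} bounds $k$ in terms of $\mw_{17(36r+7)+3}=\mw_{612r+122}\le\mw_{734r}$.
\end{itemize}
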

Note that for the related, but more restrictive parameter twin-width, whose efficient approximation is also a major open problem, it is currently unknown whether there exists any approximation algorithm with running time $2^{\O(n)}$ whose approximation factor would depend only on the value of the optimum. Therefore, \cref{thm:approx-intro} suggests that merge-width might be easier to approximate than twin-width while providing a similar strength with respect to e.g. the model-checking problem for first-order logic.

In \cref{thm:approx-intro}, we did not make an effort to optimise the upper bound on the merge-width of the obtained merge sequence.

\paragraph{Sparse quotients, quasi-isometries, and neighbourhood covers}
As a result of the translation from partition merge-width to merge-width
(implication \eqref{item:partition-width}$\rightarrow$\eqref{item:merge-width} in \Cref{thm:main1}),
we obtain merge sequences with an additional property: for any pair
$uv$ that is resolved at the end of the merge sequence (i.e. $uv\in R_m$), $u$ and $v$ are at distance at most $6$ in $G$.
In \Cref{sec:quasi-iso} we use this property
to construct a partition $\P$ of $V(G)$ such that the quotient 
graph $G/\P$ has bounded generalised colouring numbers, and each part of $\P$ has (weak) diameter at most $12$ in~$G$. 

This has two (related) consequences.
In \cref{thm:quasi-isom} we show that
every graph $G$ of bounded radius-$(18r+1)$ merge-width is \emph{quasi-isometric} to a graph $G'$ whose $r$-strong colouring number is bounded.  Intuitively, this means that 
  graphs of bounded merge-width have the same large-scale geometry as graphs of bounded expansion.
  % Our result also applies to other graph properties which are closed under transductions and imply bounded merge-width (such as bounded twin-width), and their sparse counterparts (such as bounded sparse-twin-width).
  % $G$ and $G'$ have roughly equal distances, specifically,
  % $\dist_G(u,v)=\Theta(\dist_{G'}(u,v))$ for all vertex pairs $u,v$. 
  Results of this form have recently attracted attention
\cite{nguyen2025asymptoticstructureicoarse,hickingbotham2025graphsquasiisometricgraphsbounded,distel2025improvedquasiisometrygraphsbounded}, as part of the recent coarse graph theory programme~\cite{coarse-graphs}.
  
In \cref{thm:nbhdCov} we show that every graph $G$ of bounded radius\nobreakdashes-$37$ merge-width admits 
  a \emph{neighbourhood cover} of bounded radius and bounded overlap.
  Those are families of sets which have similar properties as balls of bounded radius in $G$, but are more manageable algorithmically in some contexts.
Neighbourhood covers originate in the study of distributed network algorithms, and have found a key application in the model-checking results for nowhere dense classes \cite{gks} and monadically stable graph classes \cite{dms,demmpt},
and results allowing to \emph{sparsify} monadically stable graph classes \cite{quasi-bushes,horizons-stable,dreier2026efficientreversaltransductionssparse}, that is, encode graphs from those classes in sparse graphs, in a first-order definable way.

\subsection{Other variants of merge-width}
In addition to the notions of merge-width discussed above, in \Cref{sec:other} we also consider the following variants: 
\begin{description}
\item[short merge-width,] where (i) the merge sequence has length logarithmic in the size of the graph and (ii) every part of every partition in the merge sequence consists of a constant number of parts of the partition from the previous time step;
\item[universal merge-width,] which requires the existence of a single merge sequence whose radius-$r$ width is bounded by a function of $r$, for all $r\in \N$; and 
\item[positive merge-width,] where only edges of $G$ can be resolved, i.e.~$R_m\subseteq E(G)$ (equivalently, $R_t\subset E(G)$ for all $t\in[m]$).
\end{description}

\emph{Short merge-width} is inspired by a similar notion of \emph{short contraction sequences} studied in the context of twin-width \cite{tww2-journal}.
Clearly, classes of bounded short merge-width have bounded merge-width.
We conjecture the converse implication (see \cref{conj:short}),
and observe that classes of bounded twin-width and classes of bounded expansion 
have bounded short merge-width, and that this property is preserved under first-order transductions (for hereditary classes, see \cref{lem:short-transduction}).
Generalising \cite[Thm.~2.8]{tww2-journal}, short merge-width allows to obtain adjacency labelling schemes with short labels: that is, for every $n\in\N$, each $n$-vertex graph $G$ with $\shortmw_1(G)$ bounded by a constant can be vertex-labelled using bit-strings of length $\O(\log n)$, so that the adjacency between two vertices can be determined only by inspecting their labels. 

% \begin{conjecture}
%   Every graph class $\C$ of bounded merge-width has bounded short merge-width: $\shortmw_r(\C)<\infty$  for every $r\in\N$.
% \end{conjecture}
% \sz[inline]{
% This conjecture holds in special cases: 
% for classes of bounded expansion, and for classes of bounded twin-width.
% Furthermore, it follows from the proof in \cite{merge-width} that 
% classes of bounded short merge-width are closed under first-order transductions.
% In particular, this implies that classes of structurally bounded expansion (that is, first-order transductions of classes of bounded expansion) also have bounded short merge-width. 
% }
\begin{restatable}{theorem}{thmAdjLab}\label{thm:adj-labelling}
Every class of graphs of bounded radius-$1$ short merge-width admits an adjacency labelling scheme of length $\Oh(\log n)$.
%Every graph $G$ admits an adjacency labelling scheme with labels of size $\O_k(\log n)$, where $n=|V(G)|$ and $k=\shortmw_1(G)$.
\end{restatable}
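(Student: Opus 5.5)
We plan to mimic the labelling scheme for twin-width from short contraction sequences \cite[Thm.~2.8]{tww2-journal}. Fix a graph $G$ with a short merge sequence $(\P_1,R_1),\dots,(\P_m,R_m)$ of radius-$1$ width at most $k$, where $m=\O(\log n)$ and every part of $\P_{t+1}$ is a union of at most $c$ parts of $\P_t$ (with $c$ a constant depending on $k$). The adjacency of $u,v$ is decided at the \emph{first} time $t$ at which the pair $uv$ stops being resolved, or more conveniently at the \emph{last} time it is still unresolved. We use monotonicity: since $R_1\subseteq\dots\subseteq R_m$ and $R_1$ may be assumed to be $\emptyset$ (singletons need no resolution), each pair $uv$ has a well-defined time $t_{uv}$ at which it becomes resolved; let $t_{uv}=m+1$ if it is never resolved. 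If $t_{uv}\ge 2$ then at time $t=t_{uv}-1$ the pair is unresolved, so $G[uv]$ equals the default connection between the parts $A\ni u$ and $B\ni v$ of $\P_t$. Otherwise $t_{uv}=1$, which is impossible once $R_1=\emptyset$. Hence adjacency is determined by the pair $(A,B)$ and the time $t$.

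The label of $v$ will consist, for every $t\in[m]$, of a constant-size record. First, name parts locally: each part of $\P_{t+1}$ is a union of at most $c$ parts of $\P_t$, so we give each part of $\P_t$ an index in $[c]$ inside its parent. The path of indices from the root part $V\in\P_m$ down to $\{v\}\in\P_1$ identifies $v$'s part at every level using $\O(m)$ bits. Second, at time $t$ we record, for the part $A\ni v$ of $\P_t$, the parts $B$ of $\P_t$ at distance $\le 1$ from $v$ in $(V,R_{t+1})$. There are at most $k$ such parts. For each of them we store the default connection between $A$ and $B$ and a compact reference to $B$.

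The key difficulty is to make these references $O(1)$ bits per level, and also to be able to check $t_{uv}$ from labels. For this we store, for each neighbour $w$ of $v$ in the graph $(V,R_{t+1})$ lying in a part $B$, the identity of $B$ relative to $A$. Since $B$ and $A$ both lie inside level-$(t+1)$ parts that are close in the resolved graph, we instead proceed top-down. Note $R_{t+1}\supseteq R_t$, so the parts near $v$ at time $t$ are children (at most $c$ each) of parts near $v$ at time $t+1$, up to the radius-$1$ bound at time $t+1$. Thus we list the $\le k$ near parts at level $t$ as (index among the $\le k$ near parts at level $t+1$, child index in $[c]$) plus one default-connection bit each, giving $\O(k\log(kc))$ bits per level and $\O(\log n)$ in total.

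To decode adjacency of $u,v$, we walk both labels top-down. At each level we test whether $v$'s part appears in $u$'s near list, matching via the common ancestry encoded in the index paths. We take the largest $t$ at which $uv$ is unresolved, equivalently the least level at which $v$'s part is no longer listed as near $u$ by a resolved edge. We output the stored default bit there. The main obstacle is exactly this consistency: a resolved pair $uv$ must make $v$'s part appear in $u$'s near list at all later levels, which holds by monotonicity of $R$. Conversely, a part being listed must not falsely certify resolution. To handle the latter, we additionally store in the list whether the resolved neighbour relation is with the specific vertex, identified by its level-$1$ descendant index within $B$. If this turns out too costly, we fall back on storing, for unresolved pairs, only the default of $(A,B)$ at the last level where both parts exist, and argue that the resolved pairs of $v$ across levels form a bounded-degree structure chargeable to $O(1)$ bits per level.
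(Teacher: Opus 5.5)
\textbf{The encoding is fine; the decoding has a gap.} Your encoding is sound and is essentially the paper's. At each level $t$ you store the at most $k$ parts of $\P_t$ at $R_{t+1}$-distance at most $1$ from $u$; the paper calls these the parts \emph{touched} by $u$. These form a prefix-closed, $k$-thin subtree of the tree of parts, and your parent-index/child-index encoding is a valid substitute for the paper's layer-by-layer code. You add one default bit per stored part and the root-to-leaf index path of each vertex.

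You propose to read the default bit at $t_{uv}-1$, the last time $uv$ is unresolved, and claim this is ``equivalently'' the level where $v$'s part stops being listed. That equivalence is false. The part $B\in\P_t$ containing $v$ is listed as soon as \emph{some} $w\in B$ has $uw\in R_{t+1}$. So the deepest level $i$ at which $v$'s part is listed can be strictly smaller than $t_{uv}-1$, and $t_{uv}$ cannot be recovered from the labels. Also, at the first level where $v$'s part is \emph{not} listed, no bit for it is stored, so ``output the stored default bit there'' is ill-defined.

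Your patches do not work either:
\begin{itemize}
\item The queried $v$ is unknown at encoding time, and $u$ may have unboundedly many $R$-neighbours inside a listed part. You cannot record which of them are resolved within $\Oh(1)$ bits per level; naming even one costs $\Theta(\log n)$ bits.
\item Resolved degrees are not bounded, so there is no bounded-degree structure to charge to.
\end{itemize}

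\textbf{The missing observation.} You never need the last unresolved time. Any level $t$ at which $v$'s part is listed and $uv\notin R_t$ gives the correct answer, by homogeneity of $\P_t$ modulo $R_t$.

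The deepest listed level $i$ is such a level. Suppose $uv\in R_i$. Then the part of $\P_{i-1}$ containing $v$ is at $R_i$-distance at most $1$ from $u$, so it would be listed at level $i-1$, contradicting the choice of $i$. For $i=1$, use $R_1=\emptyset$ instead. This is just the contrapositive of your own monotonicity remark that a resolved pair makes $v$'s part appear at all later levels. It relies on the index offset: lists at level $t$ use $R_{t+1}$, whereas homogeneity of $\P_t$ is modulo $R_t$.

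So the decoder should find the deepest node of $u$'s stored subtree that is an ancestor of $v$'s leaf and output its bit. This is exactly the paper's decoding claim, and it makes your ``false certification'' worry moot.
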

As mentioned, we conjecture that all classes of bounded merge-width have bounded short merge-width. If true, with \Cref{thm:adj-labelling} this would imply that they also have $\O(\log n)$-adjacency labelling schemes, confirming a conjecture of Dreier and Toruńczyk \cite{merge-width}. As discussed in \cite{merge-width}, this could be a route towards proving the \emph{Small Implicit Graph Conjecture} of \cite{bonnet_et_al:LIPIcs.ITCS.2025.21}.
\medskip

A graph class $\C$ has \emph{universally bounded merge-width} 
if there is a function $f\from\N\to\N$ such that each graph $G\in\C$ has a merge sequence whose radius-$r$ width is at most $f(r)$ for all $r\in\N$. (Thus, a single merge sequence is used as a witness to small merge-width for all~$r$ simultaneously.)
Such classes include classes of bounded twin-width and of bounded expansion, and are closed under first-order transductions.
We conjecture that all classes of bounded {merge-width} have universally bounded merge-width (see \cref{conj:univ}). For classes of universally bounded merge-width, the previously mentioned \cref{thm:quasi-isom} can be strengthened to obtain the following:%
\begin{restatable}{theorem}{quasiIsoBE}\label{thm:quasi-iso-be}
Let $\C$ be a class with universally bounded merge-width. Then there is a class of bounded expansion $\D$ such that every $G\in\C$ is $(13,12)$-quasi-isometric to some $H\in\D$.
\end{restatable}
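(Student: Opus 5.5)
The plan is to run the construction behind \cref{thm:quasi-isom} on a single merge sequence that is good for every radius at once. By assumption every $G\in\C$ has one merge sequence $\mathcal S_G$ whose radius-$r$ width is at most $f(r)$ for all $r$. We do not apply \cref{thm:quasi-isom} to each radius separately, because that would give a different quotient graph for each $r$, and bounded expansion needs a single graph $H$ whose $r$-strong colouring numbers are bounded for every $r$. So the first step is to check that the translation from merge-width to partition merge-width and back (the normal form from implication \eqref{item:partition-width}$\rightarrow$\eqref{item:merge-width} in \Cref{thm:main1}) acts on the sequence itself, not on one radius. Concretely, starting from $\mathcal S_G$, the metric conversion lemmas should produce one normalised merge sequence $\mathcal S'_G$ with two properties: every pair in $R_m$ is at distance at most $6$ in $G$, and for every $r$ its radius-$r$ width is at most $g(r)$, where $g$ depends only on $f$ (e.g.\ via the width of $\mathcal S_G$ at radius $c\cdot r+c'$ for absolute constants $c,c'$). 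The main obstacle is this uniformity. I would go through the conversion steps and confirm that each modification of the partitions (refinements by atomic types over a bounded set, coarsening to keep the sequence nested) is defined independently of $r$, and that only the width analysis depends on $r$. If some step does choose parameters depending on $r$, I would replace the choice by one made with respect to the flip-metric for all radii at once, e.g.\ taking the partition derived from radius-$1$ balls, which determines everything at larger scales up to a constant factor in the radius.

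Given $\mathcal S'_G$, I would repeat the construction of \Cref{sec:quasi-iso}. It yields a partition $\P$ of $V(G)$ into parts of weak diameter at most $12$ in $G$, together with the quotient $H=G/\P$, from the last resolved graph $(V,R_m)$. I would define $H$ on the vertex set $V(G)$ itself rather than on the parts: put an edge $uv$ whenever $u,v$ lie in the same part or in adjacent parts. This is meant to match the constants $(13,12)$. A path of length $\ell$ in $G$ gives a path of length at most $\ell$ in $H$, and an edge of $H$ spans $G$-distance at most $12+1+12$ along a walk through parts; refining the bookkeeping should give $\dist_H\le\dist_G\le 13\dist_H+12$. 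I would take the exact constants from the proof of \cref{thm:quasi-isom} and check that they are independent of $r$, which they are since they only use the radius-$6$ property of $R_m$.

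It remains to show that $\D=\{H_G : G\in\C\}$, closed under subgraphs, has bounded expansion. The analysis in \Cref{sec:quasi-iso} bounds the $r$-strong colouring number of the quotient by a function of the radius-$(18r+1)$ width of the normalised sequence. Since $\mathcal S'_G$ is a single sequence, this gives $\scol_r(H_G)\le h(g(18r+1))$ for all $r$ simultaneously. Blowing each part up into a clique of size at most the part size would break sparsity, so I would instead pick one representative per part and attach the other members as pendant vertices, while keeping the $(13,12)$ quasi-isometry. Adding pendant vertices raises $\scol_r$ by at most $1$. Finally, a class in which $\scol_r$ is bounded for every $r$ has bounded expansion, by the standard sparsity characterisation, which finishes the proof.
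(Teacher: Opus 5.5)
Your proposal is correct and is essentially the paper's argument. The construction in the proof of \cref{thm:main1} (nested transversals $S_i$, refinements $\P_i\wedge S_i$, flips from \cref{lem:informal}, $R_i=\bigcup_{j\le i}E(G_j)$) and the partition into maximally unresolved parts in \cref{lem:quasi-iso} never refer to $r$. So one universal sequence gives a single $6$-firm sequence and a single quotient with $\scol_r$ bounded by a function of $f$ for every $r$; the factor $\pi_G(k)\le 2^k$ is harmless, and your fallback about radius-$1$ balls is unnecessary. Likewise, your pendant-vertex graph works but is a detour: by \eqref{eq:quasi-isom}, the surjective map $x\mapsto[x]$ is already a $(13,12)$-quasi-isometry from $G$ onto $G/\P$, which is what the paper uses.
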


Finally, classes of bounded \emph{positive merge-width} --- where 
only edges can be resolved --- include classes of bounded twin-width or of bounded expansion.
For such classes, the results from \Cref{sec:quasi-iso} (sparse quotients, quasi-isometries, and neighbourhood covers) are much easier to obtain (see \cref{sec:tww-app}).
However, not every class of bounded merge-width has bounded positive merge-width:  we prove in \Cref{lem:co-cubic} that the class of edge-complements of cubic graphs has
unbounded positive merge-width, using the fact that it has unbounded twin-width~\cite{tww2-journal}.

% \subsection{Approximation algorithms for classical problems}
% \sz{write or remove}

\section{Preliminaries}\label{sec:prelims}
%\emph{The proofs of results marked with (\deferredsymbol) can be found in the appendix.}
\renewcommand{\mainonly}[1]{#1}

\subsection{Basic notation and graphs}
For $n\in \N$, we write $[n]\eqdef\set{1,2,\ldots,n}$.
For two sets $A,B$, we write $AB \eqdef\setof{ab}{a\in A,b\in B,a\neq b}$, where $ab$ denotes the unordered pair $\{a,b\}$, and $\binom{A}{2} \eqdef AA$.
Graphs are simple, undirected, and finite, that is, a graph $G$ consists of a finite set $V(G)$ of vertices and a set $E(G)\subseteq \binom{V(G)}{2}$ of edges.
The edge-complement of~$G$ is~$\overline{G}$, with $V(\overline{G})\eqdef V(G)$ and $E(\overline{G}) \eqdef \binom{V(G)}{2} \setminus E(G)$.
By $\dist_G(x,y)$ we denote the length of a shortest path between $x$ and~$y$ in $G$ (or $\infty$ if no path exists), and denote $\Ball^r_G(v) \eqdef\setof{w\in V(G)}{\dist_G(v,w)\le r}$. By $N_G(v)$ we denote the open neighbourhood of~$v$ in $G$, that is, the set of vertices adjacent to $v$; we may omit the subscript if $G$ is clear from the context. We write $\overline{N_G}(v)\eqdef N_{\overline{G}}(v)$.
Two sets $A,B\subseteq V(G)$ are \emph{complete} if $AB\subseteq E(G)$, and \emph{anti-complete} if $AB\cap E(G)=\emptyset$,
and are \emph{homogeneous} if they are either complete or anti-complete.

\subsection{VC-dimension and neighbourhood complexity}
% If~$A$ is a subset of vertices in a graph~$G$, the set of \emph{neighbourhood types over~$A$} is
% \[ N^A \eqdef \{N(x) \cap A : x \not\in A\}. \]
A set~$A$ of vertices of a graph $G$ is \emph{shattered} 
if $\setof{N(v)\cap A}{v\in V(G)}=2^A$.
% that is, $N^A=2^A$.
The \emph{VC-dimension} of~$G$ is the maximum size of a shattered subset of $V(G)$.
The \emph{neighbourhood complexity function} (or \emph{shatter function}) is defined by:
\[ \pi_G(m) \eqdef \max_{\substack{A\subseteq V(G),\\|A|\le m}}\Big|\setof{N(v)\cap A}{v\in V(G)}\Big|.\]
This extends to a class~$\C$ of graphs as $\pi_\C(m) \eqdef \max_{G \in \C} \pi_G(m)$.

Note the trivial bound $\pi_G(m) \le 2^m$. The fundamental Sauer-Shelah-Perles lemma \cite{sauer72,shelah72} states that this bound is polynomial in graphs of bounded VC-dimension.
%The Sauer-Shelah lemma states that when~$G$ has VC-dimension~$d$, this function is a polynomial $\pi_G(k) \le \O(k^d)$, where $\O$ hides a universal constant (independent of $G$ and $d$).

\begin{lemma}[Sauer-Shelah-Perles lemma] \label{sauer_shelah_lemma}
	Let $G$ be a graph of VC-dimension~$d$. Then $$\pi_G(m) \le \O(m^d)\qquad\text{for all $m\in\N$.}$$
\end{lemma}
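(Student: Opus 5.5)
The plan is the standard shifting (compression) argument of Sauer, Shelah and Perles, applied to the set family $\F_A \eqdef \setof{N(v)\cap A}{v\in V(G)}$ on the ground set $A$, for an arbitrary $A\subseteq V(G)$ with $|A|\le m$. Since $\pi_G(m)$ is a maximum over such sets $A$, it suffices to show
\[ |\F_A| \;\le\; \sum_{i=0}^{d}\binom{|A|}{i} \;\le\; \sum_{i=0}^{d}\binom{m}{i}, \]
and then to note that the right-hand side is $\O(m^d)$; for instance it is at most $(em/d)^d$ when $m\ge d\ge 1$, and $\O(1)$ when $m<d$ or $d=0$. The first step is the observation that if a set $B\subseteq A$ is shattered by the trace family $\F_A$, meaning $\setof{F\cap B}{F\in\F_A}=2^B$, then $B$ is shattered in $G$ in the sense of the paper, because $N(v)\cap B=(N(v)\cap A)\cap B$. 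Consequently every subset of $A$ shattered by $\F_A$ has size at most $d$.

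The core of the argument is the combinatorial lemma that every family $\F\subseteq 2^A$ shatters at least $|\F|$ subsets of $A$, which is Pajor's form of the statement. I would prove it by induction on $|A|$. Fix an element $a\in A$ and set
\[ \F_0 \eqdef \setof{F\setminus\{a\}}{F\in\F}, \qquad \F_1 \eqdef \setof{F\subseteq A\setminus\{a\}}{F\in\F \text{ and } F\cup\{a\}\in\F}. \]
These satisfy $|\F|=|\F_0|+|\F_1|$. By induction, $\F_0$ shatters at least $|\F_0|$ subsets of $A\setminus\{a\}$, and each of these is also shattered by $\F$. Likewise $\F_1$ shatters at least $|\F_1|$ subsets $B$ of $A\setminus\{a\}$, and for each such $B$ the set $B\cup\{a\}$ is shattered by $\F$. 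The two collections are disjoint, since one consists of sets avoiding $a$ and the other of sets containing $a$, so $\F$ shatters at least $|\F|$ sets.

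Combining the two steps, the sets shattered by $\F_A$ all have size at most $d$, so their number is at most $\sum_{i\le d}\binom{|A|}{i}$. Hence $|\F_A|$ obeys the same bound, which gives the claim. I expect no real obstacle here. The only point needing care is the bookkeeping in the induction step, namely checking that $|\F|=|\F_0|+|\F_1|$ and that the two shattered collections are disjoint. The final estimate of the binomial sum is routine, and the hidden constant depends on $d$ (at most $1$ for $m\ge d$ via $(e/d)^d$ or the trivial bound $\sum_{i\le d}m^i/i!\le e\,m^d$ for $m\ge1$).
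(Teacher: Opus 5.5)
Your proposal is correct. The paper does not prove this lemma; it only cites Sauer and Shelah, so there is no in-paper argument to compare against. Your argument is a complete, standard proof: pass to the trace family $\mathcal{F}_A=\{N(v)\cap A : v\in V(G)\}$, note that a subset of $A$ shattered by $\mathcal{F}_A$ is shattered in $G$, and apply Pajor's lemma. Your checks of $|\mathcal{F}|=|\mathcal{F}_0|+|\mathcal{F}_1|$ and of the disjointness of the two shattered collections are right.

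The induction you actually run is Pajor's element-deletion induction rather than the shifting/compression argument you announce; that is fine, just mislabelled.

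There are small slips in your closing remarks.
\begin{itemize}
\item $(e/d)^d\le 1$ holds only for $d\ge 3$, so it does not show the constant is at most $1$ in general. For $d=1$ the sum is $1+m$, which exceeds $m$.
\item Your own bound $\sum_{i\le d}\binom{m}{i}\le e\,m^d$ for $m\ge 1$ shows the hidden constant is universal, not dependent on $d$.
\item At $m=0$ the statement is degenerate: $\pi_G(0)=1$ for nonempty $G$, while $0^d=0$ for $d\ge 1$. The bound should therefore be read for $m\ge 1$, or as $\mathcal{O}(1+m^d)$. This is a quirk of the statement, not of your argument.
\end{itemize}
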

For graphs of bounded radius-$2$ merge-width
(see \Cref{sec:mw-def} for a definition), 
Bonamy and Geniet \cite[Thm.~1.5]{bonamy2025chiboundedness}
proved that the neighbourhood complexity function is linear.

\begin{theorem}\label{thm:nbd_complexity}
	For any graph $G$ with $\mw_2(G)= k$ we have:
	$$\pi_G(m)\leq k2^{k+2}\cdot m\qquad\text{for all $m\in\N$.}$$
\end{theorem}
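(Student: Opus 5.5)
The plan is a charging argument over the merge sequence. Fix a merge sequence $(\P_1,R_1),\dots,(\P_m',R_{m'})$ for $G$ of radius-$2$ width $k$; by the footnote on the definition we may assume each step merges exactly two parts. Fix $A\subseteq V(G)$ with $|A|\le m$. We must show that the number of distinct traces $N(v)\cap A$, $v\in V(G)$, is at most $k2^{k+2}m$. Vertices of $A$ contribute at most $m$ traces, so it suffices to handle $v\notin A$.

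\textbf{Choosing a witness time and vertex.} For each $v\notin A$, let $t_v$ be the last time at which the part $P_v\in\P_{t_v}$ containing $v$ is disjoint from $A$. At step $t_v+1$ this part is merged with a part containing some vertex of $A$. Fix such a vertex $a_v\in A$; choosing it canonically, for instance the first vertex of $A$ in a fixed order, leaves at most $m$ choices. Then $v$ and $a_v$ lie in a common part of $\P_{t_v+1}$.

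\textbf{Recording the trace.} At time $t_v$, every part $B\in\P_{t_v}$ with $B\cap A\neq\emptyset$ is either far from $v$ or near it in the graph $(V,R_{t_v+1})$.
\begin{itemize}
\item If $B$ is at distance more than $1$ from $v$, then no pair between $v$ and $B$ is resolved. Hence $v$ is homogeneous to $B$ via the default connection between $P_v$ and $B$.
\item The parts $B$ at distance at most $1$ from $v$ number at most $k$. These are the only places where resolved pairs can make the trace of $v$ deviate from the default pattern.
\end{itemize}
The trace of $v$ is then determined by three pieces of data:
\begin{enumerate}
\item the pair $(t_v,a_v)$, which is $m$-bounded data once we argue that each $a\in A$ can only serve as a witness at boundedly many relevant times;
\item the default connections from $P_v$ to all parts, which are constant on $P_v$;
\item a subset of the at most $k$ nearby parts together with $O(1)$ extra bits per part.
\end{enumerate}
This gives roughly $m\cdot k\cdot 2^{k+2}$ classes. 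The factor $k$ comes from identifying which of the at most $k$ parts near $a_v$ the part $P_v$ is, since $P_v$ lies within distance $2$ of $a_v$ once they merge. This is where radius $2$ is used: radius $2$ around $a_v$ sees both $P_v$ and every part adjacent to $v$ via resolved edges.

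\textbf{Main obstacle.} The hardest step is to show that the data above really determines $N(v)\cap A$ and does not merely determine it up to the resolved exceptions inside a nearby part $B$ containing many vertices of $A$. My first attempt would be to replace $t_v$ by a finer time. I would take the step at which the trace of $v$ becomes "collapsed" into a part containing $A$-vertices, and argue that two vertices $v,v'$ in the same part of $\P_{t}$ with the same $a$ and the same choice of near-part subset must have equal traces.

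If that fails, the fallback is a potential argument modelled on the linear neighbourhood complexity proof for twin-width. Each merge of two parts should increase the number of realised traces on $A$ by at most $k2^{k+2}$, and only merges that touch one of the at most $k$ parts at distance $\le2$ from some $a\in A$ should count. Summing over the $m$ vertices of $A$ then gives the bound.
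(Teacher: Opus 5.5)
\paragraph{Verdict: there is a genuine gap.} You never reach an actual counting argument, and the data listed under \emph{Recording the trace} does not give $O(m)$ classes.

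\emph{The pair $(t_v,a_v)$ is not $m$-bounded.} Your claim that each $a\in A$ serves as a witness at only boundedly many times is false as stated. Take an edgeless graph with $R_t=\emptyset$ throughout, and merge the part of $a$ with one new singleton per step. This is a merge sequence of radius-$2$ width $1$, yet $t_v$ takes $n-1$ distinct values with the same $a_v$.

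\emph{The default connections from $P_v$ are not bounded data either.} They are constant on $P_v$, but they form a pattern over up to $|A|$ parts meeting $A$. So they yield boundedly many classes only if the number of distinct parts $P_v$ is bounded, which is the first problem again.

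\emph{Resolved pairs inside a nearby part break the ``$O(1)$ bits per part'' count.} A nearby part $B$ may contain many vertices of $A$, and nothing prevents $v$ from having resolved pairs to arbitrarily many of them. Then $O(1)$ bits per part do not determine $N(v)\cap A\cap B$. This is exactly the obstacle you defer. Neither the ``finer time'' idea nor the potential argument is made precise; in particular, nothing bounds the number of merges touching parts near $A$ by $O(m)$, since in the edgeless example every merge touches the part of $a$.

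\emph{The justification of the factor $k$ is wrong.} Merging creates no resolved pairs, so $v$ and $a_v$ need not be at finite distance in $(V,R_{t_v+1})$; again see the edgeless example.

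\paragraph{The missing idea.} Fix a single time determined by $A$ rather than one time per vertex, and induct on $|A|$. The paper does not prove this statement itself; it imports it from Bonamy and Geniet. The following argument works.

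Let $t+1$ be the first index at which some part of $\P_{t+1}$ contains two vertices $a\neq a'$ of $A$. Then every part of $\P_t$ contains at most one vertex of $A$.
\begin{enumerate}
\item By homogeneity of $\P_{t+1}$ modulo $R_{t+1}$, any $v\notin\{a,a'\}$ adjacent to exactly one of $a,a'$ satisfies $va\in R_{t+1}$ or $va'\in R_{t+1}$.
\item Members of a class of traces on $A\setminus\{a\}$ agree on $a'$. Hence every such class that $a$ splits meets $\Ball^1_{R_{t+1}}(a)\cup\Ball^1_{R_{t+1}}(a')$.
\item A vertex $v\in\Ball^1_{R_{t+1}}(a)$ has no resolved pair to $A\setminus\Ball^2_{R_{t+1}}(a)$. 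So its trace on $A$ is determined by two things. The first is its part in $\P_t$, with at most $k$ options. The second is its adjacency to $A\cap\Ball^2_{R_{t+1}}(a)$; this set has at most $k$ vertices because they lie in distinct parts of $\P_t$, giving at most $2^k$ options. This is where radius $2$ is used.
\item The same holds for $a'$. Hence $a$ splits at most $k2^{k+1}$ classes.
\end{enumerate}
This gives $\pi_G(m)\le\pi_G(m-1)+k2^{k+1}$. With $\pi_G(1)\le 2$, induction yields $\pi_G(m)\le k2^{k+2}m$.
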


\subsection{Partitions}
\label{sec:partitions}
A \emph{transversal} of a partition $\cal P$ of $V$ is a set $S\subseteq V$ 
such that $|S\cap P|=1$ for all $P\in\cal P$.
Given two partitions~$\P,\Q$ of $V$,
we write 
$\P\preceq\Q$ and say that $\P$ \emph{refines} $\Q$, or $\Q$ is \emph{coarser} than $\P$, if every part of $\P$ is contained in a part of $\Q$. 
We write $\P\prec \Q$ if additionally $\P\neq \Q$. 
A \emph{chain of partitions} of a set $V$ is
a sequence 
$$\cal P_1\prec \cal P_2\prec \dots\prec \cal P_m$$ of distinct partitions of $V$.
A \emph{maximal chain of partitions} is such a sequence of length $n=|V|$, 
equivalently, $\cal P_1$ is the partition into singletons, $\cal P_n$ has one part, and for every $1\le i<n$, $\cal P_{i+1}$ is obtained from $\cal P_i$ by merging exactly two parts. 

The common refinement of $\P$ and $\Q$ is the partition
\[ \P \wedge \Q \eqdef \{P \cap Q : P \in \P, Q \in \Q\}\setminus\set{\emptyset}. \]
For a partition $\P$ of $V$ and set~$S \subset V$, the \emph{quotient set}  is defined as 
\[S/\P \eqdef \{P\in \P\mid P\cap S\neq \emptyset\}.\]
% while by $S/\P$ we denote 
% the quotient set, whose elements are  equivalence classes of $\P|_S$.
% \sz{check if $\P|_{S}$  and $S/\P$ are consistently used. Is $\P|_{S}$ used at all? If not, remove.}

Given a set~$S \subseteq V(G)$ of vertices, we denote by~$\Nn_S$ the partition of $V(G)$ into \emph{atomic types} over $S$, which partitions $S$ into singletons, and two vertices $u,v \notin S$ belong to the same part of~$\Nn_S$
if and only if $N(u) \cap S = N(v) \cap S$.

\subsection{Flips and flip metric}\label{sec:flips}
For a graph $G$ and a partition $\P$ of $V(G)$, a \emph{$\P$-flip} of $G$ is any graph~$G'$ with vertex set $V(G)$ such that for any parts $X,Y \in \P$ (possibly $X=Y$) we have either $XY\cap E(G')=XY\cap E(G)$ or $XY\cap E(G')=XY\setminus E(G)$. In the latter case, we say that the pair $X,Y$ is \emph{flipped} in $G'$ with respect to $G$. For a set $S\subset V(G)$,
an \emph{$S$-flip} of $G$ is an  $\Nn_S$-flip of $G$, where $\Nn_S$ is the partition into atomic types.

In a graph~$G$, the \emph{flip metric} associated with the partition~$\P$ of $V(G)$ is defined, following \cite{flippers}, as 
\[ \dist_{\P}(x,y) = \max_{\text{$G'$ $\P$-flip of $G$}} \dist_{G'}(x,y), \]
where the maximum ranges over all 
$\cal P$-flips $G'$ of $G$.
For $v\in V(G)$ and $r\in\N$, define \[\Ball_{\P}^{r}(v)\eqdef\{x\in V(G)\colon\dist_{\P}(x,v)\le r\}=\, \bigcap_{\text{$G'$ $\P$-flip of $G$}} \Ball_{G'}^{r}(v). \]
The following is immediate from the definition.
\begin{lemma}\label{lem:flip-metric-refine}
  If~$\P$ refines~$\Q$ then for any vertices $x,y$,
  \[ \dist_\P(x,y) \ge \dist_{\Q}(x,y)\quad \text{and}\quad \Ball^r_\P(x)\subseteq \Ball^r_\Q(x).\]
\end{lemma}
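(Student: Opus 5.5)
The plan is to argue pointwise, via the description of $\dist_\P$ as a maximum over $\P$-flips. The key observation is that refining a partition can only enlarge the family of admissible flips.

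First I will show that every $\Q$-flip of $G$ is also a $\P$-flip of $G$. Let $G'$ be a $\Q$-flip, and take parts $X,Y\in\P$. Since $\P$ refines $\Q$, there are parts $X',Y'\in\Q$ with $X\subseteq X'$ and $Y\subseteq Y'$. The pair $X',Y'$ is either flipped in $G'$ or not. If it is not flipped, then $X'Y'\cap E(G')=X'Y'\cap E(G)$, and intersecting both sides with $XY\subseteq X'Y'$ gives $XY\cap E(G')=XY\cap E(G)$. If it is flipped, the same intersection gives $XY\cap E(G')=XY\setminus E(G)$. In either case $X,Y$ satisfies the definition of a $\P$-flip. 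One small point to check is the case $X=Y$, and more generally distinct $X,Y\in\P$ lying in the same part of $\Q$. Here $X'=Y'$, and the argument above goes through unchanged because it only uses $XY\subseteq X'Y'$.

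Next, the set of graphs over which the maximum defining $\dist_\Q(x,y)$ is taken is therefore a subset of the set used for $\dist_\P(x,y)$. Hence
\[ \dist_\P(x,y)=\max_{G'\ \P\text{-flip}}\dist_{G'}(x,y)\ \ge\ \max_{G'\ \Q\text{-flip}}\dist_{G'}(x,y)=\dist_\Q(x,y), \]
and this remains valid when some distances are $\infty$.

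Finally, the inclusion of balls is immediate. If $\dist_\P(x,v)\le r$, then $\dist_\Q(x,v)\le\dist_\P(x,v)\le r$, so $\Ball^r_\P(x)\subseteq\Ball^r_\Q(x)$. The same conclusion also follows from the intersection formula for balls, since that intersection is taken over a larger family of flips for $\P$.

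No step is a genuine obstacle. The only care needed is the bookkeeping in the first step, namely that the flip decision made for the pair $X',Y'$ of $\Q$ restricts consistently to every pair of $\P$-parts inside it.
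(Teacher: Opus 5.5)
Your proof is correct and matches the argument the paper has in mind: the paper gives no proof beyond calling the lemma ``immediate from the definition'', and the intended reason is exactly yours, that every $\Q$-flip is also a $\P$-flip, so the maximum defining $\dist_\P$ (equivalently, the intersection defining $\Ball^r_\P$) ranges over a larger family. Your check that the flip decision for a pair of $\Q$-parts restricts consistently to every pair of $\P$-parts inside it is the only verification needed.
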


\subsection{Generalised colouring numbers}
  Fix a graph $G$, number $r\in\mathbb{N}$, and total order $<$ on $V(G)$. 
  A vertex $u$ is \emph{weakly $r$-reachable} from a vertex $v \geq u$ if  there is a path of length at most $r$ from $v$ 
  to $u$ such that $w> u$ for all internal vertices $w$ of that path.
  Similarly, $u$ is \emph{strongly $r$-reachable} from a vertex $v \geq u$ if there is a path of length at most~$r$ from~$v$ to~$u$ such that $w> v$ for all internal vertices $w$ of that path.
  
  Let $\wreach_r^<(v)$ be the set of vertices that are weakly $r$-reachable from $v$ with respect to the order $<$. Denote
  \[
  \wcol_r^<(G) \coloneqq \max_{v \in V(G)} |\wreach_r^<(v)|
  \]
  and set $\wcol_r(G)$ to be the minimal value of~$\wcol_r^<(G)$ across all the total orders on~$V(G)$. 
  The notions of $\sreach_r^<(G)$, $\scol_r^<(G)$ and $\scol_r(G)$ involving strong $r$-reachability are defined analogously.
  \begin{lemma}[\cite{gen-col-nb}]
      \label{lem:col-nb-equiv}
      Fix a graph~$G$, total order~$<$ on $V(G)$, and $r\in\mathbb{N}$. Then
      \[
      \scol_r^<(G)\ \leq\ \wcol_r^<(G)\ \leq\ (\scol_r^<(G))^r.
      \]
  \end{lemma}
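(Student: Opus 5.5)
The first inequality $\scol_r^<(G)\le\wcol_r^<(G)$ follows from the inclusion $\sreach_r^<(v)\subseteq\wreach_r^<(v)$ for every vertex $v$. Suppose $u\in\sreach_r^<(v)$, witnessed by a path $P$ of length at most $r$ from $v$ to $u$ whose internal vertices are all greater than $v$. Since $u\le v$, every internal vertex $w$ of $P$ satisfies $w>v\ge u$. So the same path $P$ witnesses that $u$ is weakly $r$-reachable from $v$. Taking cardinalities and then the maximum over $v$ gives the claim.

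For the second inequality, set $c\eqdef\scol_r^<(G)$ and fix $v$. I will decompose each weak reachability path into strong reachability steps. Let $u\in\wreach_r^<(v)$, witnessed by a path $P=(v=x_0,x_1,\dots,x_\ell=u)$ with $\ell\le r$ and all internal vertices greater than $u$. Define indices $0=i_0<i_1<\dots<i_s=\ell$ greedily: $i_{j+1}$ is the first index after $i_j$ at which $x_{i_{j+1}}<x_{i_j}$. Such an index exists while $x_{i_j}\neq u$, because $x_\ell=u$ is smaller than every internal vertex.

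With this choice, every vertex of $P$ strictly between positions $i_j$ and $i_{j+1}$ is greater than $x_{i_j}$; equality is excluded because the path is simple. Hence the subpath from $x_{i_j}$ to $x_{i_{j+1}}$, of length at most $r$, witnesses that $x_{i_{j+1}}\in\sreach_r^<(x_{i_j})$. The selected vertices are strictly decreasing, so $s\le\ell\le r$.

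The main obstacle is getting exactly the exponent $r$ rather than $r+1$. The count of reachable sequences must go through the total length, not only the number of hops. If hop $j$ has length $\ell_j\ge 1$, then $x_{i_{j+1}}$ lies in $\sreach_{\ell_j}^<(x_{i_j})\setminus\{x_{i_j}\}$; the vertex itself is excluded since the hop strictly decreases. Each such set has at most $c-1$ elements, because $\sreach_{\ell_j}\subseteq\sreach_r$ and this set contains $x_{i_j}$.

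I will then show by induction on $m$ that the number of vertices reachable from $v$ by chains of hops of total length at most $m$ is at most $c^m$. Such vertices are either $v$ itself (the empty chain), or a first-hop target of length $\ell_1\le m$ followed by a chain of total length at most $m-\ell_1$. This gives the recursion
\[N(m)\le 1+\sum_{\ell_1=1}^{m}(c-1)\,N(m-\ell_1).\]
Here one must be careful about counting, since a single $\sreach_{\ell_1}$ is used for hops of length at most $\ell_1$. The cleaner formulation is to count sequences of hops of lengths $\ell_j$ with $\sum_j\ell_j\le r$, where each hop of length $\ell_j$ has at most $c-1$ choices.

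Bounding this recursion by $c^m$ is routine: induct using the crude bound $N\le 1+(c-1)\sum_{k<m}c^{k}=c^m$. Alternatively, one can simply note that at most $r$ hops each have at most $c$ choices, including a ``stop'' option, which gives $c^r$ directly. This yields $|\wreach_r^<(v)|\le c^r$.
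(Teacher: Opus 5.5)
The paper does not prove this lemma itself but imports it from \cite{gen-col-nb}; your argument (the inclusion $\sreach_r^<(v)\subseteq\wreach_r^<(v)$, then cutting a weak-reachability path at its successive prefix minima into at most $r$ strong-reachability hops and counting the resulting chains) is the standard proof and is correct. Two small points. First, the bookkeeping by total length is unnecessary: counting by number of hops alone gives $\sum_{s=0}^{r}(c-1)^s\le\sum_{s=0}^{r}\binom{r}{s}(c-1)^s=c^r$, which is in effect your closing ``stop option'' remark. Second, the existence of the first hop out of $x_0=v$ uses $u<v$ (from $u\le v$ and $u\neq v$), not the condition on internal vertices.
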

  By a characterisation of \cite{zhu},
   a graph class $\C$ has \emph{bounded expansion} if and only if $\scol_r(\C)<\infty$ for every $r\in\N$
  (equivalently, $\wcol_r(\C)<\infty$ for every $r\in\N$).

\subsection{Twin-width}\label{sec:tww-prelims}
The \emph{twin-width} of a graph $G$, denoted by $\tww(G)$, is the minimum integer $d$ such that there exists a maximal chain 
$$\P_1\prec \P_2\prec \dots\prec \P_n$$
of partitions of $V(G)$ (called a \emph{contraction sequence} in this context) such that 
for every $t\in[n]$, every part $A\in\P_t$ is non-homogeneous to at most $d$ parts $B\in \P_t$ with $B\neq A$.

\subsection{Merge-width}\label{sec:mw-def}
Let~$\P$ be a partition of the vertices $V$ of a graph~$G$, and $R \subseteq \binom{V}{2}$ be a set of pairs of vertices of~$G$.
By $\dist_R(x,y)$ and $\Ball_R^r(x)$ we mean the corresponding notions in the graph $(V,R)$.
The \emph{radius-$r$ width} of $(\P,R)$~is
\[ \max_{v \in V} \left|\Ball^r_R(v)/\P\right|. \]
We say that~$\P$ is \emph{homogeneous modulo $R$} (in $G$) if for any parts $X,Y \in \P$ (possibly $X=Y$)
and pairs $xy,x'y'\in XY\setminus R$, we have that $xy \in E(G)$ if and only if $x'y' \in E(G)$.
Equivalently, there is a $\P$-flip~$G'$ of~$G$ such that $E(G')\subset R$.

\begin{lemma}
  \label{lem:resolved-flip-metric}
  If~$\P$ is homogeneous modulo $R$ in~$G$, then for all~$x,y \in V(G)$ we have
  \[ \dist_R(x,y) \le \dist_\P(x,y). \]
\end{lemma}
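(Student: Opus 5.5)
Since $\P$ is homogeneous modulo $R$, the definition above provides a $\P$-flip $G'$ of $G$ with $E(G')\subseteq R$. The proof is a two-step comparison of distances through this particular flip.

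First, because $E(G')\subseteq R$, the graph $G'=(V,E(G'))$ is a spanning subgraph of $(V,R)$. Every path in $G'$ is therefore also a path in $(V,R)$, and we get
\[ \dist_R(x,y)\le \dist_{G'}(x,y). \]
If $x$ and $y$ are disconnected in $G'$, the right-hand side is $\infty$ and this inequality holds trivially.

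Second, the flip metric is a maximum over all $\P$-flips, and $G'$ is one of them. Hence
\[ \dist_{G'}(x,y)\le \max_{G''\ \P\text{-flip}}\dist_{G''}(x,y)=\dist_\P(x,y). \]
Chaining the two inequalities gives $\dist_R(x,y)\le\dist_\P(x,y)$.

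The only point needing care is why such a $G'$ exists, that is, the claimed equivalence in the definition. For each pair of parts $X,Y\in\P$ (including $X=Y$), the unresolved pairs $XY\setminus R$ all have the same adjacency status in $G$. If they are all edges, we flip $XY$; otherwise we leave it unchanged. After this choice, every pair in $XY\setminus R$ is a non-edge of $G'$, so $E(G')\subseteq R$. This is routine, and I expect no real obstacle.
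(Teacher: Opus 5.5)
Your proof is correct and is essentially the paper's own argument: take a $\P$-flip $G'$ with $E(G')\subseteq R$ and chain $\dist_R(x,y)\le\dist_{G'}(x,y)\le\dist_\P(x,y)$. Your added verification that such a flip exists, by flipping exactly those pairs of parts whose unresolved pairs are all edges, is also correct.
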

\begin{proof}
  Let $G'$ be a $\P$-flip of $G$ such that $R \supseteq E(G')$.
  Then:
  \[ \dist_R(x,y) \le \dist_{G'}(x,y) \le \dist_{\P}(x,y). \qedhere \]
\end{proof}

\begin{definition}[{\cite[Definition 3.1]{merge-width}}]
A \emph{merge sequence} for a graph~$G$ is a sequence $$(\P_1,R_1),\dots,(\P_m,R_m)$$ such that:
\begin{enumerate}
  \item $\P_1\preccurlyeq \P_2\preccurlyeq\ldots\preccurlyeq \P_m$ is a sequence of ever coarser partitions of $V(G)$ with~$\P_1$ the partition into singletons and $\P_m$ the partition with one part, 
  \item $R_1 \subseteq \dots \subseteq R_m \subseteq \binom{V(G)}{2}$ is a monotone sequence of pairs of vertices, and
  \item $\P_t$ is homogeneous modulo $R_t$, for $t=1,\ldots,m$.
\end{enumerate}
The \emph{radius-$r$ width} of this merge sequence is the maximum radius-$r$ width of $(\P_{t},R_{t+1})$,
for  $1 \le t < m$.
Finally, the \emph{radius-$r$ merge-width} of~$G$, denoted by~$\mw_r(G)$, is the minimum radius-$r$ width of a merge sequence for~$G$.
\end{definition}
Note that the mismatched indices in $(\P_t,R_{t+1})$ are intentional, and forbid one from merging many parts and adding many resolved pairs all at once when going from step~$t$ to~$t+1$.

In the definition of merge-width, the sequence of partitions $\P_1\preceq\ldots\preceq \P_m$ may be equivalently required to be a {maximal chain of partitions} of $V(G)$.
Indeed, given any merge sequence $(\P_1,R_1),\dots,(\P_m,R_m)$,
one can transform it into a merge sequence whose sequence of partitions is a maximal chain of partitions as follows: if $\P_i=\P_{i+1}$ for some $1\le i<m$, then we can drop the pair $(\P_{i+1},R_{i+1})$ in the sequence, and if $\P_i\prec \P\prec \P_{i+1}$, then we may insert the pair $(\P,R_{i+1})$ into the sequence. Those operations do not increase the radius-$r$ width of the merge sequence.

Toruńczyk~{\cite[Thm.~5.24]{flip-width}} showed that the VC-dimension of a graph $G$ is linear in its radius-$1$ flip-width. 
The proof can be easily adapted for radius-$1$ merge-width (see also~\cite[Lemma~7.20]{merge-width}).
\begin{theorem}\label{thm:vc_mw1}
	Any graph $G$ satisfies $\VC(G) \le \O(\mw_1(G))$.
\end{theorem}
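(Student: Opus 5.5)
We show that a large shattered set forces large radius-$1$ width at some step of any merge sequence, following the strategy of Toruńczyk's flip-width argument. Fix a merge sequence $(\P_1,R_1),\dots,(\P_m,R_m)$ of radius-$1$ width $k$, which we may assume is a maximal chain. Let $A$ be a shattered set of size $d$ and let $X=\setof{x_B}{B\subseteq A}$ be witnesses with $N(x_B)\cap A=B$. The goal is to show $d\le \O(k)$.

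\emph{Step 1: pick a critical time.} Consider the first time $t$ at which some part of $\P_t$ contains many elements of $A$, say at least $s$ elements, where $s$ is a threshold of order $k$ to be chosen. Because the chain is maximal, each part of $\P_t$ meets $A$ in fewer than $2s$ elements. So $A$ meets at least $d/2s$ parts, and we may fix a part $P$ containing a set $A_0\subseteq A$ with $|A_0|\ge s$. Alternatively, we choose $t$ as the first moment when $A$ is covered by at most about $d/2$ parts.

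\emph{Step 2: use homogeneity to force resolved pairs.} Take two elements $a\ne a'\in A_0$ in the same part $P$. For each part $Q$ of $\P_t$, the unresolved pairs between $P$ and $Q$ have a uniform adjacency. Any witness $x_B\in Q$ with $a\in B$ and $a'\notin B$ therefore has $x_Ba$ or $x_Ba'$ in $R_t\subseteq R_{t+1}$. Now choose pairwise distinct witnesses that separate many such pairs, for instance a family of $x_B$ with $|B\cap A_0|=1$ or with $B$ distinguishing elements of $A_0$. Each such witness is then $R_{t+1}$-adjacent to a vertex of $A_0$.

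\emph{Step 3: count parts in a radius-$1$ ball.} We seek a single vertex whose radius-$1$ ball meets more than $k$ parts. The candidates come from two sources.
\begin{itemize}[nosep]
\item If some vertex $a\in A_0$ is $R_{t+1}$-adjacent to witnesses lying in more than $k$ distinct parts, then $a$ itself violates the width bound.
\item Otherwise, witnesses separating the elements of $A_0$ are concentrated in few parts, and we apply the same argument from the witness side. A witness $x_B$ lying in a part together with other witnesses with different traces on $A$ must be resolved to many elements of $A$. By minimality of $t$, those elements of $A$ lie in many distinct parts, so $x_B$'s ball has more than $k$ parts.
\end{itemize}

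\emph{Main obstacle.} The hard part is making this pigeonhole precise so that both cases give a linear bound. This requires choosing the witnesses $x_B$ for sets $B$ of a convenient shape, such as singletons or complements of singletons within $A_0$, and handling the case where $x_B$ and elements of $A$ share parts. If the minimality choice of $t$ is balanced against $k$ (taking $s\approx k$), we expect $d\le 2(k+1)\cdot\O(1)$.

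I would adapt the proof of~\cite[Thm.~5.24]{flip-width} step by step, replacing flips by resolved pairs via \cref{lem:resolved-flip-metric}. This replacement only makes distances smaller, so the counting argument survives the substitution.
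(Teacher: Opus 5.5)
The paper gives no self-contained proof of this statement; it only says that Toru\'nczyk's radius-$1$ flip-width argument adapts to merge-width. Your explicit sketch of that adaptation has two genuine gaps, and as written it does not give a linear bound.

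\textbf{The threshold $s\approx k$.} This choice in Step~1 is incompatible with linearity. At your time $t$, the parts of $\P_t$ may each contain up to about $s$ elements of $A$. So a witness can be resolved to about $sk\approx k^2$ elements of $A$ while its radius-$1$ ball still meets only $k$ parts. The second bullet of Step~3 therefore cannot produce a violation unless $d\gtrsim k^2$. Similarly, in the first bullet, $|A_0|\approx k$ only confines the witnesses to about $k^2$ parts.

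The right choice is $s=2$. Let $t$ be the step at which two elements $a,a'\in A$ first lie in a common part of $\P_{t+1}$, so every part of $\P_t$ contains at most one element of $A$. Consider any witness $x_B$ with $a\in B$, $a'\notin B$ and $x_B\neq a'$. Homogeneity of $\P_{t+1}$ modulo $R_{t+1}$ puts $x_B$ in $\Ball^1_{R_{t+1}}(a)\cup\Ball^1_{R_{t+1}}(a')$. Hence all such witnesses lie in at most $2k$ parts of $\P_t$, and some $Q\in\P_t$ contains at least $(2^{d-2}-1)/(2k)$ of them.

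\textbf{The claim that a witness ``must be resolved to many elements of $A$''.} This is unjustified. For $a''\in B\mathbin{\triangle}B'$, homogeneity only forces one of $x_Ba''$ and $x_{B'}a''$ into $R_t$, not necessarily the one at $x_B$.

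The missing idea is a \emph{default trace}. For each $a''\in A$, the pairs between $Q$ and the part of $\P_t$ containing $a''$ that are not in $R_t$ are either all edges or all non-edges. Let $D\subseteq A$ be the set of $a''$ for which they are edges (choose arbitrarily if every such pair lies in $R_t$). Then every $x_B\in Q$ is $R_t$-adjacent to all of $(B\mathbin{\triangle}D)\setminus\{x_B\}$, and these vertices lie in distinct parts of $\P_t$.

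Only $\sum_{i\le k+1}\binom{d}{i}$ sets $B$ satisfy $|B\mathbin{\triangle}D|\le k+1$. This is smaller than $(2^{d-2}-1)/(2k)$ once $d\ge Ck$ for an absolute constant $C$. Since the witnesses in $Q$ have distinct traces, some $x_B\in Q$ then satisfies $|\Ball^1_{R_{t+1}}(x_B)/\P_t|>k$, a contradiction.

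This count needs exponentially many witnesses in a single part. Your suggestion to restrict to witnesses of a convenient shape (e.g.\ singletons in $A_0$) would defeat it: if $D=\emptyset$, each singleton witness is resolved to only one vertex. Finally, the closing appeal to \cref{lem:resolved-flip-metric} does not supply either of these missing steps.
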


To illustrate the notion of merge-width and its variants given in the next section, we present a particular class of graphs with small width parameters. 

\begin{example}
	\label{ex:run-ex}
	For a fixed $m \in \N$, consider the (perfect) binary tree~$T_m$ of depth~$m$, that is, the root has depth~$0$ and the~$2^m$ leaves have depth~$m$. 
	Let $C_m$, the $2^m$-vertex \emph{universal cograph}, be the graph whose vertices are the leaves of~$T_m$, and two different leaves are adjacent in $C^m$ if and only if their lowest common ancestor in~$T_m$ has odd depth (see \cref{fig:cograph_partition}).
	
	We construct a merge sequence $(\P_1,R_1), \ldots, (\P_{m+1},R_{m+1})$ for~$C_m$. 
	For any $w\in V(T_m)$, let~$P_w$ be the leaves in~$T_m$ below (or equal to)~$w$. 
	For $i\leq m+1$, define 
	\[\P_i=\{P_w\mid w\in V(T_m)\text{ has depth $m+1-i$}\}.\]
	
Note that whenever two different $v,w\in V(T_m)$ are at the same depth, the sets $P_v,P_w$ are homogeneous. Moreover, at any time $i>1$, each part of~$\P_i$ is a union of exactly two parts of~$\P_{i-1}$. Therefore, taking $R_i= \bigcup_{P\in\P_{i-1}}\binom{P}{2}$, the partition $\P_i$ is homogeneous modulo~$R_i$. Clearly, $R_i\supset R_{i-1}$.
	
	In the construction above, for any~$i\leq m$, no two parts of~$\P_i$ are connected by an edge of~$R_{i+1}$, so that $\mw_r(C_m)=1$ for every $r$. \qed
\end{example}

\begin{figure}
	\centering
	\includegraphics[width=0.5\textwidth]{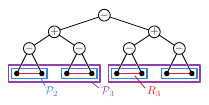}
	\caption{
  % color codes purple : 0.569 0.255 0.675
% blue : 0.208 0.518 0.894
% red : 0.878 0.106 0.141
% make the colors below match the color codes a
  The tree $T_3$ of \cref{ex:run-ex}, with partition \textcolor[rgb]{0.208, 0.518, 0.894}{$\P_2$} in \textcolor[rgb]{0.208, 0.518, 0.894}{blue}, \textcolor[rgb]{0.569, 0.255, 0.675}{$\P_3$} in \textcolor[rgb]{0.569, 0.255, 0.675}{purple} and set of resolved pairs \textcolor[rgb]{0.878, 0.106, 0.141}{$R_3$} in \textcolor[rgb]{0.878, 0.106, 0.141}{red}.}
	\label{fig:cograph_partition}
\end{figure}
\section{Equivalent Variants of Merge-width}\label{sec:variants}
In this section, we define the main variants of merge-width studied
in this paper, and state the obvious relationships between them.
\Cref{thm:intro} states that the variants presented in this section are all equivalent to the usual notion of merge-width.

\subsection{Transient merge-width}
\begin{definition}  Fix $r\in\N$.
  The \emph{radius-$r$ transient merge-width} of a graph $G$, denoted by $\tmw_r(G)$, is the minimum number $k$ such that there is a sequence  $$(\P_1,G_1),\dots,(\P_m,G_m)$$
    with the following properties:
    \begin{itemize}
      \item $\P_1\preccurlyeq\P_2\preccurlyeq\ldots\preccurlyeq \P_m$ is a sequence of ever coarser partitions of $V(G)$ with~$\P_1$ the partition into singletons and $\P_m = \{V(G)\}$ the partition with one part,
      \item $G_i$ is a $\P_i$-flip of $G$ for all $i\in [m]$, and 
      \item for every $i\in [m-1]$ and $v\in V(G)$, we have 
      $$|\Ball^r_{G_{i+1}}(v)/\P_i|\le k.$$
    \end{itemize} 
    A graph class $\C$ has \emph{bounded transient merge-width} if for all $r\in\N$, $\tmw_r(\CC)<\infty$.
\end{definition}
Similarly to the case of merge-width,
equivalently one may require in the definition of transient merge-width that the sequence
$\cal P_1\prec \dots\prec \cal P_n$ is a maximal chain of partitions of $V(G)$.

\begin{example}
To construct a transient merge sequence for the universal cograph $C_m$, take the same sequence of partitions $(\P_i)_{i\in[m+1]}$ as in \cref{ex:run-ex}, and let the $\P_i$-flip $G_i$ be the result of flipping every $P_w\in \P_i$ with itself whenever $w\in V(T_m)$ has odd depth.
	Then, for $i> 1$, the connected components of $G_i$ are precisely the parts of~$\P_{i-1}$, hence $\tmw_r(C_m)=1$ for every $r\in\N$. \qed
\end{example}

Note that every merge sequence gives rise to a transient merge sequence of the same radius-$r$ width.
Indeed if $\P_t$ is homogeneous modulo $R_t$, then there is a $\P_t$-flip $G_t$ of $G$ such that $E(G_t)\subseteq R_t$, and hence
  $\Ball^r_{G_t}(v)\subseteq \Ball^r_{R_t}(v).$  This proves:
\begin{lemma}\label{lem:tmw-mw}
For every graph $G$ and $r\in\N$,
$$\tmw_r(G)\le \mw_r(G).$$
\end{lemma}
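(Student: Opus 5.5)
The plan is to convert an optimal merge sequence for $G$ directly into a transient merge sequence, keeping the same chain of partitions and replacing each resolved set by a suitable flip. Fix $r\in\N$ and a merge sequence $(\P_1,R_1),\dots,(\P_m,R_m)$ for $G$ of radius-$r$ width $k=\mw_r(G)$.

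For each $t\in[m]$, the definition of a merge sequence says that $\P_t$ is homogeneous modulo $R_t$. By the equivalent formulation recalled in \cref{sec:mw-def}, this gives a $\P_t$-flip $G_t$ of $G$ with $E(G_t)\subseteq R_t$. Concretely, for each pair of parts $X,Y\in\P_t$ we flip $XY$ exactly when the unresolved pairs $XY\setminus R_t$ are all edges. After this flip, every edge of $G_t$ between $X$ and $Y$ lies in $R_t$. The endpoint partitions are unchanged: $\P_1$ is the partition into singletons and $\P_m=\{V(G)\}$. So $(\P_1,G_1),\dots,(\P_m,G_m)$ satisfies the first two conditions in the definition of transient merge-width. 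Note that monotonicity of the $R_t$ is not needed here, since transient merge-width imposes no monotonicity on the flips.

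It remains to bound the width. Since $E(G_{t+1})\subseteq R_{t+1}$, every path in $G_{t+1}$ is also a path in $(V,R_{t+1})$, so $\dist_{R_{t+1}}(v,w)\le \dist_{G_{t+1}}(v,w)$ for all $v,w$, and hence $\Ball^r_{G_{t+1}}(v)\subseteq \Ball^r_{R_{t+1}}(v)$. (This is the same reasoning as in \cref{lem:resolved-flip-metric}.) Taking quotients by $\P_t$ preserves inclusion, so
\[|\Ball^r_{G_{t+1}}(v)/\P_t|\le |\Ball^r_{R_{t+1}}(v)/\P_t|\le k\]
for all $t\in[m-1]$ and $v\in V(G)$. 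This gives $\tmw_r(G)\le k$.

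There is no real obstacle. The only point needing care is that the index offset matches in both definitions: transient merge-width measures balls in $G_{t+1}$ against the partition $\P_t$, and merge-width measures balls in $R_{t+1}$ against the same $\P_t$. Because $G_{t+1}$ is built from $R_{t+1}$, the two quantities are compared at the same step and the bound transfers directly.
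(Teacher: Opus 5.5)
Your proposal is correct and follows the paper's argument exactly: for each $t$, take the $\P_t$-flip $G_t$ with $E(G_t)\subseteq R_t$ guaranteed by homogeneity modulo $R_t$, and observe that $\Ball^r_{G_{t+1}}(v)\subseteq \Ball^r_{R_{t+1}}(v)$. Your remark that monotonicity of the $R_t$ is not needed is also accurate.
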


Comparing to the definition of merge-width, in transient merge-width there is no consistency required between the graphs considered in consecutive step. Therefore,
pairs of vertices which are close at step $t$ might be far at step $t+1$;
whereas in merge-width, the monotonicity condition $R_1\subseteq\dots\subseteq R_m$
implies that vertices that are close at step $t$ remain so in the subsequent steps.

\subsection{Partition merge-width}
Next, we define the weakest notion, where only  a sequence of partitions is provided.

\begin{definition}
  Let $G$ be a graph and 
    let $\P_1 \preceq\dots \preceq\P_m$ be a sequence of partitions of~$V(G)$, where
  $\P_1$ is the partition into singletons and $\P_m$ has one part.
  The \emph{radius-$r$ width} of the sequence $\cal P_1,\ldots,\cal P_m$
  is defined using flip metrics as:
  $$\max_{1\le i<m}\max_{v\in V(G)} |\Ball_{\P_{i+1}}^{r}(v)/\P_{i}|.$$
    The \emph{radius-$r$ partition merge-width} of $G$, denoted $\pmw_r(G)$, is the minimum radius-$r$ width of a sequence 
    $\cal P_1 \preceq\dots \preceq\P_m$ of partitions of~$V(G)$ as above.
    Finally, a graph class $\C$ has \emph{bounded partition merge-width} if $\pmw_r(\CC)<\infty$ for every $r\in\N$.
\end{definition}
Clearly, in the definition of radius-$r$ partition merge-width one may equivalently consider only sequences $\cal P_1\prec \dots\prec \cal P_m$ where $\P_1$ is the partition into singletons and $\P_m$ has only one part.
\medskip

\begin{example}
	For the universal cograph $C_m$, the analysis for transient merge-width results in a sequence of partitions $(\P_i)_{i\in[m+1]}$ for $C_m$ and $\P_i$-flips $G_i$ which witness that for every $i\leq m$, any two different parts of $\P_i$ are at distance $\infty$ in the flip metric associated with $\P_{i+1}$. Therefore, $\pmw_r(C_m)=1$ for every $r$. \qed
\end{example}

%In fact, for any graph $G$ and its merge sequence $(\cal P_t,R_t)_{t\in [m]}$
%of radius-$r$ width $k$,
%the underlying partition sequence 
%$\cal P_1\preceq\ldots\preceq\cal P_m$ 
%has radius-$r$ width  at most $k$, by \Cref{lem:resolved-flip-metric}. In particular,
%\begin{equation}\label{eq:pmw-mw}
%\pmw_r(G)\le \mw_r(G).  
%\end{equation}
%However, compared to a merge sequence,
%a sequence $\cal P_1\preceq\ldots\preceq\cal P_m$ carries less information, as it  lacks  
%the sets $R_1,\ldots,R_m$ of resolved pairs.

\begin{lemma}\label{lem:pmw-tmw}
For every graph $G$ and $r\in\N$,
$$\pmw_r(G)\le \tmw_r(G).$$
\end{lemma}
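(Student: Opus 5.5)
We take an optimal transient merge sequence and keep only its partitions. Concretely, let $(\P_1,G_1),\dots,(\P_m,G_m)$ be a transient merge sequence for $G$ witnessing $\tmw_r(G)=k$. We claim that the underlying partition sequence $\P_1\preceq\dots\preceq\P_m$ has radius-$r$ width at most $k$ in the sense of partition merge-width. This sequence is a valid candidate for $\pmw_r(G)$: $\P_1$ is the partition into singletons and $\P_m$ has one part.

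Fix $i\in[m-1]$ and $v\in V(G)$. By definition of the flip metric, $\Ball^r_{\P_{i+1}}(v)$ is the intersection of $\Ball^r_{G'}(v)$ over all $\P_{i+1}$-flips $G'$ of $G$. The graph $G_{i+1}$ is one such flip, so
\[ \Ball^r_{\P_{i+1}}(v)\subseteq \Ball^r_{G_{i+1}}(v). \]
Passing to quotient sets preserves inclusion: if $S\subseteq T$ then $S/\P_i\subseteq T/\P_i$. Therefore
\[ |\Ball^r_{\P_{i+1}}(v)/\P_i|\le |\Ball^r_{G_{i+1}}(v)/\P_i|\le k. \]
Taking the maximum over $i$ and $v$ gives radius-$r$ width at most $k$, and hence $\pmw_r(G)\le\tmw_r(G)$.

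The argument is direct, and no step is a real obstacle. The only point to check is the index alignment: the flip $G_{i+1}$ used in the transient definition is a $\P_{i+1}$-flip, which matches the partition $\P_{i+1}$ defining the flip metric in the partition merge-width definition. Both definitions count parts of $\P_i$, so the comparison goes through with the indices exactly as stated.
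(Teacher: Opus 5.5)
Your proposal is correct and is essentially the paper's own proof: keep the partitions of the transient merge sequence and use that $G_{i+1}$ is one of the $\P_{i+1}$-flips, so $\Ball^r_{\P_{i+1}}(v)\subseteq \Ball^r_{G_{i+1}}(v)$. Your remark on the index alignment is accurate, since both definitions measure the ball at step $i+1$ and count parts of $\P_i$.
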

\begin{proof}
Assume that 
$(\P_1,G_1),\ldots,(\P_m,G_m)$ is a transient merge sequence of radius-$r$ width $k$, with $\P_1\preccurlyeq \dots\preccurlyeq \P_m$.
Then the sequence $\cal P_1\preccurlyeq \dots\preccurlyeq \cal P_m$ has radius-$r$ width at most $k$,
as for every $v\in V(G)$ and $t\in [m-1]$ we have
\[\Ball^r_{\cal P_{t+1}}(v)\subseteq \Ball^r_{G_{t+1}}(v).\qedhere\]
\end{proof}

 In \Cref{thm:main1} we show that  $\mw_r(G)$ is bounded in terms of $\pmw_{6r+1}(G)$. Hence, bounded merge-width, bounded transient merge-width and bounded partition merge-width are all equivalent properties of graph classes.

\subsection{Definable merge-width}
Finally, we consider partition sequences 
$\P_1\prec \dots \prec \P_n$
in which the partitions~$\P_i$ are \emph{definable} in the following sense: $\P_i$ is the partition into atomic types over some set $S_i$ which forms a prefix of a total order on $V(G)$. This is made precise below.

\begin{definition}
  Let $G$ be a graph and $<$ a total order on $V(G)$, and enumerate~$V(G)$ as $v_1<\dots < v_n$.
  Consider the sequence of partitions $\P_1 \preceq \dots \preceq \P_n$ of $V(G)$, where $\P_i=\Nn_{S_i}$ is the partition into atomic types over the set $S_i\eqdef\{v_1,\dots,v_{n-i}\}$.
  The \emph{radius-$r$ definable merge-width} of $(G,<)$, denoted $\dmw_r(G,<)$, is
  defined as:
  $$\max_{1\le i \le n}\max_{v\in V(G)} |\Ball_{\P_{i}}^{r}(v)/\P_{i}|.$$
  Finally, the \emph{radius-$r$ definable merge-width} of $G$ is minimum of ${\dmw_r(G,<)}$ over all total orders $<$ on $V(G)$.
\end{definition}

\begin{example} \label{ex:defmw}
	For the universal cograph $C_m$, consider an order~$<$ on~$V(C_m)$ so that for each~$i\in [0,m]$, the first~$2^i$ vertices in the order form a transversal of the partition~$\P_{m+1-i}$ (defined in \cref{ex:run-ex}). 
	One can verify that this order witnesses that $\dmw_r(C_m)$ is at most 2 for every $r$.
	 \qed
%	
%	Let $S$ be a non-empty prefix in~$<$ and let~$i$ be the smallest integer such that $|S|< 2^i$. 
%	If $i\leq2$, that is $|S|\leq 3$, then $\Nn_S$ partitions $V(C_m)$ into at most 8 non-singleton parts. 
%	In the $\Nn_S$ metric the singleton classes are at distance~$\infty$ from any other part of~$\Nn_S$, therefore $\max_{v\in V(G)} |\Ball_{\Nn_S}^{r}(v)/\Nn_S|\leq 8$.
%	
%	When $i>2$, we will argue that $\Nn_S$ is a refinement of~$\P_{m+2-i}$.
%	By choice of the order, each part of $\P_{m+1-i}$ contains at most one element of~$S$. 
%	Because $|S|\geq 2^{i-1}$ and as each part~$P\in \P_{m+2-i}$ contains exactly two elements of $\P_{m+1-i}$, $P$ contains one or two elements of~$S$. 
%	As~$i>2$, there are parts $A,B \in \P_{m+2-i}$, different from $P$, such that $AP\subseteq E(C_m)$ and $BP\subseteq E(\overline{C_m})$. 
%	Thus the elements of $A\cap S$, $B\cap S$ show that $P$ must be a union of classes of~$\Nn_S$.
%	
%	For $P\in \P_{m+1-i}$, note that for any $v\in S\setminus P$, $\{v\}$ and $P$ are homogeneous, hence $\Nn_S$ splits $P$ according to the atomic types over $P\cap S$, so into at most 4 non-singleton parts as $|P\cap S|\leq 2$.
%	In the $\Nn_S$ metric, these (at most) 4 parts can be of finite distance only to themselves, because any two different parts of $\P_{m+1-i}$ are homogeneous. This proves that $\max_{v\in V(G)} |\Ball_{\Nn_S}^{r}(v)/\Nn_S|\leq 4$ for $|S|\geq 4$.

\end{example}

%\begin{figure}\label{fig:definable_cograph}
%	\includegraphics[width=0.45\textwidth]{figure/definable_cograph.pdf}
%	\caption{Illustration of the partition $\Nn_S$ of $C_m$ where $S$ is a transversal of $\P_{m-2}$ defined in \cref{ex:defmw}.}
%\end{figure}

Observe that in the above definition, $\P_i$ is obtained from $\P_{i + 1}$ by splitting each part $X \in \P_{i + 1}$ into at most $3$ parts: $X \cap N(v_{n-i})$, $X \cap \overline{N}(v_{n-i})$ and $X \cap \{v_{n - i}\}$.
This implies the following.

\begin{lemma}
  \label{lem:dmw-pmw}
  For every graph $G$ and $r\in\N$,
  $$\pmw_r(G)\le 2 \cdot \dmw_r(G) + 1.$$
\end{lemma}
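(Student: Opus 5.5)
Let $<$ be a total order witnessing $\dmw_r(G,<)=k$, and let $\P_1\preceq\dots\preceq\P_n$ be the associated sequence of partitions, $\P_i=\Nn_{S_i}$ with $S_i=\{v_1,\dots,v_{n-i}\}$. I will use this same sequence as the witness for partition merge-width. The endpoints are correct. $\P_1=\Nn_{S_1}$ with $S_1=V\setminus\{v_n\}$ consists of singletons. $\P_n=\Nn_\emptyset$ has one part, since all vertices have the same (empty) atomic type over $\emptyset$. So it suffices to show that for every $1\le i<n$ and $v\in V(G)$,
\[ |\Ball^r_{\P_{i+1}}(v)/\P_i|\le 2k+1. \]

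Two observations give this bound. First, by \cref{lem:flip-metric-refine} applied to $\P_i\preceq\P_{i+1}$, we do not need to enlarge the ball. Instead we count parts of $\P_i$ meeting $B\eqdef\Ball^r_{\P_{i+1}}(v)$ via the coarser partition $\P_{i+1}$. Every part of $\P_i$ meeting $B$ lies inside a part of $\P_{i+1}$ meeting $B$. By the definition of $\dmw_r(G,<)$ (with index $i+1\le n$), there are at most $k$ parts $X\in\P_{i+1}$ meeting $B$.

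Second, each such $X$ splits into at most $3$ parts of $\P_i$. These are $X\cap N(u)$, $X\cap\overline{N}(u)$ and $X\cap\{u\}$, where $u=v_{n-i}$ is the single vertex of $S_i\setminus S_{i+1}$. This holds because atomic types over $S_i$ refine those over $S_{i+1}$ only by the adjacency to $u$, and $u$ becomes a singleton. Moreover, the piece $\{u\}$ occurs in at most one part $X$, namely the part containing $u$. So the total number of parts of $\P_i$ meeting $B$ is at most $2k+1$: each of the at most $k$ parts contributes at most two neighbourhood pieces, plus at most one extra singleton $\{u\}$. This is exactly the claimed bound.

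The only point needing a little care is this counting of the singleton $\{u\}$ once rather than once per part. That is why the bound is $2k+1$ rather than $3k$. Beyond that the argument is routine.
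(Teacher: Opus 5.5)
Your proof is correct and follows the paper's argument: you use the same sequence $\Nn_{S_i}$, bound the number of parts of $\P_{i+1}$ meeting $\Ball^r_{\P_{i+1}}(v)$ by $k$, split each such part by adjacency to $v_{n-i}$, and count the singleton $\{v_{n-i}\}$ only once. The appeal to \cref{lem:flip-metric-refine} is unnecessary, since $\Ball^r_{\P_{i+1}}(v)$ is exactly the ball that the definition of $\dmw_r(G,<)$ controls at index $i+1$; your explicit check of the endpoint partitions is a detail the paper leaves implicit.
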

\begin{proof}
  Let $k = \dmw_r(G)$ and $v_1<\dots < v_n$ be the order on $V(G)$ such that $$\dmw_r(G,<) = k.$$
  Consider the corresponding sequence of partitions $\P_1 \preceq \dots \preceq \P_n$ of $V(G)$, where $\P_i=\Nn_{S_i}$ is the partition into atomic types over the set $S_i=\{v_1,\dots,v_{n-i}\}$.
  % We show that the width of this sequence is at most $2k+1$.
  % First, observe that $\P_i$ is obtained from $\P_{i + 1}$ by splitting each $X \in \P_{i + 1}$ into at most $3$ nonempty parts: $X \cap N(v_{n-i})$, $X \cap N(v_{n-i})$ and $X \cap \{v_{n - i}\}$.
  Let $i < n$ and $v \in V(G)$.
  % For each $X \in \P_{i + 1}$ intersected by $\Ball_{\P_{i + 1}}^r(v)$ there are at most $2$ parts $Y \in \P_i$ other than $\{v_{n-i}\}$ such that $Y \subseteq X$ and $Y \cap \Ball_{\P_{i + 1}}^r(v) \neq \emptyset$.
  For each part $Y \in \P_i$, other than $\{v_{n-i}\}$ and which intersects $\Ball_{\P_{i + 1}}^r(v)$,
  there exists $X \in \P_{i + 1}$ such that $Y = X \cap N(v_{n-i})$ or $Y = X \cap \overline{N}(v_{n-i})$, and $X \cap \Ball_{\P_{i + 1}}^r(v) \neq \emptyset$.
  So the number of such parts $Y$ is bounded by $2k$.
  Additionally, the part $\{v_{n-i}\}$ itself may intersect $\Ball_{\P_{i + 1}}^r(v)$.
  Therefore, $|\Ball_{\P_{i + 1}}^{r}(v)/\P_{i}| \leq 2k + 1$.
\end{proof}
In \Cref{thm:main2} we give an upper bound on $\dmw_r(G)$ expressed as a function of $\mw_{17r+3}(G)$. Together with \Cref{thm:main1}, this will prove \Cref{thm:intro},
that bounded merge-width, definable merge-width, partition merge-width, and transient merge-width are all equivalent properties of graph classes.
\Cref{thm:cw} and \Cref{thm:almost-bounded} will follow similarly, see \Cref{sec:proofs}.

\section{Other Variants}\label{sec:other}
This section discusses three additional variants of merge-width.
The first two --- universal and short merge-width --- are conjectured to be equivalent to the usual notion of merge-width.
This is known in the special case of bounded expansion and bounded twin-width classes.
The third one --- positive merge-width --- is a natural variant, which already captures bounded expansion and bounded twin-width classes.
However, we show that it is not stable under complementation.

\subsection{Universal merge-width}
We start with the concept of universal merge-width, where we require the existence of a single merge sequence witnessing the boundedness of the width for all the radii simultaneously.

\begin{definition}
  A graph class $\CC$ has \emph{universally bounded merge-width}
  if there is a function $f\from\N\to\N$ such that any $G\in\C$ has a merge sequence with radius-$r$ width at most $f(r)$, for every $r\in\N$.
\end{definition}
Clearly, every class of universally bounded merge-width has bounded merge-width. We conjecture the following.

\begin{conjecture}\label{conj:univ}
Every class of bounded merge-width has bounded universal merge-width.
\end{conjecture}
\Cref{conj:univ} is corroborated by the following facts.
First, it follows from the result of van den Heuvel and Kierstead \cite{universal-wcol} (see also \cite[Thm.~3.23]{SIEBERTZ2026100855})
and the construction in \cite[Lem.~7.5]{merge-width} that every class of bounded expansion has universally bounded merge-width. Second, it follows from the construction in \cite[Lem.~7.2]{merge-width} that every class of bounded twin-width has universally bounded merge-width. Finally, it follows from the proof of \cite[Thm.~1.12]{merge-width}
that if $\CC$ has universally bounded merge-width and $\D$ is a first-order transduction of $\CC$, then $\D$ has universally bounded merge-width. In particular, classes of \emph{structurally bounded expansion} \cite{lsd} (first-order transductions of classes of bounded expansion) have universally bounded merge-width. 

In \Cref{thm:quasi-iso-be} we show that every class of universally bounded merge-width 
is quasi-isometric to a class of bounded expansion.

\subsection{Short merge-width}
We now turn our attention to the concept of short merge-width, where we require the merge sequence to be of logarithmic length. However, for technical reasons we also assume a bound on the ``branching'' of a merge sequence, encapsulated in the following parameter: the {\em{valency}} of a merge sequence $(\P_1,R_1),\ldots,(\P_m,R_m)$ is the least $\Delta\in \N$ such that for every $i\in [m-1]$ and every part $P\in \P_{i+1}$, $P$ is the union of at most $\Delta$ parts of $\P_i$.

\begin{definition}Fix $r\in\N$.
  The \emph{radius-$r$ short merge-width} of a graph $G$, denoted $\shortmw_r(G)$, is the least $k\in\N$ such that $G$ has a merge sequence $(\P_1,R_1),\ldots,(\P_m,R_m)$ of radius-$r$ width at most $k$, valency at most $k$, and length \mbox{$m\le k\cdot \log_2(|V(G)|)+k$}.
  We say that a graph class $\CC$ has \emph{bounded short merge-width} if  $\shortmw_r(\CC)<\infty$ for every $r\in\N$ 
\end{definition}

This notion is inspired by \emph{short contraction sequence} considered in the context of twin-width \cite{tww2-journal}. It was shown that classes of bounded twin-width admit such short contraction sequences of bounded twin-width,
which in particular implies that classes of bounded twin-width have bounded short merge-width. 
Also, the construction of logarithmic-length ``block'' orderings for generalised colouring numbers of \cite[Lem.~13]{paraFO} can be combined with the construction of merge sequences from orderings with bounded weak colouring numbers of~\cite[Lemma 7.5]{merge-width} to prove that every class of bounded expansion has bounded short merge-width. %\michal{This statement is a very long shot. Could we make it more detailed?}
Finally, from the results present in the literature it follows that bounded short merge-width is preserved under first-order transductions:
%and from the proof of \cite[Thm.~1.12]{merge-width} that bounded short merge-width is 
%preserved under first-order transductions
%which decrease the size of the graph by at most a polynomial factor.\michal{Isn't this something that we express in a different flavor just below in Lem 3.10?}
\begin{lemma}\label{lem:short-transduction}
Let $\CC$ be a hereditary class of graphs of bounded short merge-width and $\D$ 
be a first-order transduction of $\CC$. Then $\D$ has bounded short merge-width.
\end{lemma}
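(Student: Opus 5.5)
The plan is to follow the proof of \cite[Thm.~1.12]{merge-width}, which shows that merge-width is preserved under first-order transductions, and to check that it also preserves logarithmic length and bounded valency. A first-order transduction splits into elementary steps: copying vertices a bounded number of times, colouring by unary predicates, interpreting the edge relation by a fixed formula $\varphi(x,y)$, and taking an induced subgraph. Since $\C$ is hereditary, passing to induced subgraphs is harmless. We must only make sure the length bound $k\log_2|V(H)|+k$ is measured in the size of the output graph $H$, not of the input $G$. I will handle this by replacing $G$ with its induced subgraph on the at most $c\cdot|V(H)|$ vertices that are actually used; here $c$ is the number of copies. Then $\log|V(G)|\le \log|V(H)|+O(1)$, and the length bound transfers up to constants.

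For copying and colouring, take a merge sequence $(\P_t,R_t)_t$ of $G$ with radius-$r'$ width, valency and length bounded. Lift it to the copies: parts become unions of copies, and resolved pairs between copies of $u$ and $v$ are added whenever $uv$ is resolved, together with all pairs among copies of the same vertex. This multiplies width and valency by constants and keeps the length. Unary colours are handled similarly, by splitting parts according to colour; this multiplies valency and width by the number of colours and does not change the length.

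The core step is interpreting $\varphi(x,y)$. The proof of \cite{merge-width} takes a merge sequence of $G$ of large radius $r'$, depending on $r$ and $\varphi$. It then produces a merge sequence for $H=\varphi(G)$, whose partitions refine those of $G$ by local $\varphi$-types: Gaifman locality is applied to the resolved graph, with types over the bounded number of nearby parts. Its resolved pairs are the pairs at bounded distance in $R_t$. The key point I need to verify is that the new sequence is obtained \emph{step-by-step}: each step $t$ of the original gives rise to a bounded number of steps of the new sequence, bounded in terms of $k$, $r$ and $\varphi$. Granting this, the length grows by only a constant factor. For valency, a part of the new $\P'_{t+1}$ is a union of parts of $\P'_t$. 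Their number is at most the valency of the original step times the number of local types, which is bounded because the width at radius $r'$ is bounded.

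I expect the main obstacle to be this last verification. The construction in \cite{merge-width} may insert intermediate partitions to keep the offset condition $(\P_t,R_{t+1})$ valid. One must check that the number of intermediate steps per original step is bounded, and that the refinement by types does not blow up branching when many parts merge at once. If the construction instead interleaves refinements globally, I would first normalise the original sequence so that each step merges a bounded number of parts, which is possible since the valency is bounded. I would then apply the construction one step at a time and bound the type changes locally.
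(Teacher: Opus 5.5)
Your plan for transferring merge sequences along the transduction matches the paper's. The paper also just invokes the proof of \cite[Thm.~1.12]{merge-width}, which turns a sequence of length $m$ and valency $\Delta$ for $G$ into one of the same length $m$ and valency at most $\Delta\cdot d$ for $H$. So your worry about inserted intermediate steps is unnecessary.

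The genuine gap is in how you control $|V(G)|$ in terms of $|V(H)|$. You propose to replace $G$ by its induced subgraph $G[U]$ on the vertices whose copies survive in $H$. This is not valid. The interpreting formula $\varphi(x,y)$ is evaluated in the whole coloured, copied graph, and its quantifiers range over vertices that are discarded only afterwards. For example, take $\varphi(x,y)=\exists z\,(E(x,z)\wedge E(z,y))$ applied to a star whose centre is then deleted. Then $H$ is a clique on the leaves, whereas $G[U]$ is edgeless and so is its image. Heredity of $\C$ gives $G[U]\in\C$, but not that $G[U]$ still transduces to $H$.

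Nor can this be patched by a generic witness-adding argument, because for general hereditary classes (e.g.\ all graphs) no polynomial bound holds. A fixed sentence can say that some vertex outside a colour class $L$ has no neighbour in $L$, and that the traces $N(z)\cap L$ are closed under toggling single elements of $L$. Every subset of $L$ is then realised as a trace, forcing $|V(G)|\ge 2^{|L|}$. Without a size reduction, $m\le k\log|V(G)|+k$ yields no bound of the form $\O(\log|V(H)|)$.

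The missing ingredient, which is how the paper argues, is structural. Bounded short merge-width implies bounded merge-width, hence $\C$ is monadically dependent \cite{merge-width}. For a transduction $\D$ of a monadically dependent hereditary class, \cite[Thm.~40]{quasi-bushes} gives a constant $c$ such that every $H\in\D$ is obtained from some $G\in\C$ with $|V(G)|\le |V(H)|^c$. This polynomial bound suffices (you do not need the linear one you aimed for), since then $m\le k\log|V(G)|+k\le kc\log|V(H)|+kc$. With this theorem in place of your restriction step, the rest of your plan goes through.
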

\begin{proof}[Proof sketch.]
  As $\CC$ has bounded short merge-width, it has bounded merge-width, and thus is monadically dependent by \cite{merge-width}.  
  By \cite[Thm.~40]{quasi-bushes},  if $\D$ is a transduction of a monadically dependent, hereditary class $\CC$, then there is a constant $c$  such that every $H\in \D$ is obtained from some $G\in \CC$ with $|V(G)|\le |V(H)|^c$.
  By the proof of \cite[Thm.~1.12]{merge-width}, it follows that a merge sequence for $G$ of radius-$(r\cdot d)$ width $k$ and length $m$ can be transformed into a merge sequence of $H$ of radius-$r$ width $f(k)$ and length $m$, for some function $f$ and constant $d$ depending on the transduction. Moreover, if the merge sequence for $G$ has valency $\Delta$, then the 
  resulting sequence for $H$ has valency at most $\Delta\cdot d$.
  Then $$m\le k\cdot \log(|V(G)|)+k\le k\cdot \log(|V(H)|^c)+k\le kc\cdot \log(|V(H)|)+kc.$$
  This proves that $\D$ has bounded short merge-width.
\end{proof}
In particular, from all the facts stated above it follows that classes of structurally bounded expansion (that is, first-order transductions of bounded expansion classes) have bounded short merge-width.

All of this motivates us to pose the following.
\begin{conjecture}\label{conj:short}
Every class of bounded merge-width has bounded short merge-width.
\end{conjecture}

In \Cref{thm:adj-labelling} we show that every class of bounded radius-$1$ short merge-width 
admits adjacency labelling schemes with labels of length $\O(\log n)$.

\subsection{Positive merge-width}
Finally, we discuss the notion of positive merge-width, where only edges can be resolved.

\begin{definition}Fix $r\in\N$.
  The \emph{radius-$r$ positive merge-width} of a graph $G$, denoted by $\mw_r^+(G)$,
  is the least number $k\in \N$ such that $G$ has a merge sequence $(\P_1,R_1),\ldots,(\P_m,R_m)$ of radius-$r$ width $k$ such that $R_i\subseteq E(G)$ for all $1 \le i \le n$.
  A graph class $\CC$ has \emph{bounded positive merge-width} if $\mw_r^+(\CC)<\infty$ for every $r\in \N$.
\end{definition}
This notion can be defined equivalently in terms of \emph{construction sequences} as defined in \cite{merge-width}, by considering construction sequences which 
only allow \emph{positive resolves}, such that all edges (rather than all vertex pairs) are resolved in the end.

Classes of bounded positive merge-width are quite general, by the following observations.
It follows from the construction in \cite[Lem.~7.5]{merge-width} that every class of bounded expansion has bounded positive merge-width. 
Furthermore, it is straightforward to modify the construction in \cite[Lem.~7.2]{merge-width} to show that every class of bounded twin-width has bounded positive merge-width.
However, this property is not closed under first-order transductions --- not even under edge complementation --- and not every class of bounded merge-width has bounded positive merge-width. 
 Namely, using the fact that the class of cubic graphs has unbounded twin-width \cite{tww2-journal}, we prove the following. Recall that a {\em{cubic graph}} is one where every vertex has degree~$3$.

\begin{restatable}[]{lemma}{cocubic}\label{lem:co-cubic}
  Let $\CC$ be the class of edge-complements of cubic graphs. Then ${\mw_1^+(\CC)=\infty}$.
\end{restatable}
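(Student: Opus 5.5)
The plan is to turn a positive merge sequence for $G=\overline{H}$, with $H$ cubic, into a contraction sequence for $H$ of bounded twin-width. This contradicts the fact that cubic graphs have unbounded twin-width \cite{tww2-journal}. So suppose $\mw_1^+(\overline H)\le k$ for every cubic $H$. We will show $\tww(H)\le k+3$ for every cubic $H$, which is false for large $k$-independent families of cubic graphs.

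Fix a cubic graph $H$ and set $G=\overline H$. Take a merge sequence $(\P_1,R_1),\dots,(\P_m,R_m)$ for $G$ of radius-$1$ width at most $k$ with all $R_t\subseteq E(G)$. As remarked after the definition of merge-width, we may assume the partitions form a maximal chain. Inserting intermediate partitions reuses the later sets $R_{i+1}$, which are still contained in $E(G)$, so positivity is preserved. We use $\P_1\prec\dots\prec\P_n$ as the contraction sequence for $H$. Note that two sets are homogeneous in $H$ if and only if they are homogeneous in $G$, so we may check non-homogeneity in $G$.

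The key step is a local counting argument. Fix $t<n$ and a part $A\in\P_t$, and let $B\in\P_t$, $B\neq A$, be non-homogeneous to $A$. Then $AB$ contains a non-edge of $G$. Such a pair cannot be resolved, since $R_t\subseteq E(G)$. Homogeneity of $\P_t$ modulo $R_t$ then forces all unresolved pairs of $AB$ to be non-edges of $G$, so every $G$-edge between $A$ and $B$ lies in $R_t\subseteq R_{t+1}$.

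Now fix one vertex $a\in A$. If some $b\in B$ is $G$-adjacent to $a$, then $ab\in R_{t+1}$, so $B$ meets $\Ball^1_{R_{t+1}}(a)$. The width bound on $(\P_t,R_{t+1})$ allows at most $k$ such parts. Otherwise $B\subseteq N_H(a)$, which has size $3$, so at most $3$ parts $B$ arise this way. Hence $A$ is non-homogeneous to at most $k+3$ other parts. The final step $t=n$ is trivial, since $\P_n$ has a single part. This gives $\tww(H)\le k+3$.

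I expect the only delicate point to be the index bookkeeping. Homogeneity is used at time $t$ with $R_t$, while the width bound concerns $(\P_t,R_{t+1})$; the monotonicity $R_t\subseteq R_{t+1}$ reconciles the two. One also has to check that the reduction to maximal chains preserves positivity, which holds as noted above. The conclusion then follows because the class of cubic graphs has unbounded twin-width, so no single $k$ can work for all of $\CC$, i.e.\ $\mw_1^+(\CC)=\infty$.
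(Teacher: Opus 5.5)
Your proof is correct and is essentially the paper's argument. You use the same key observation: a non-homogeneous pair $A,B$ must have all its $G$-edges resolved. You then make the same split into at most $k$ parts meeting $\Ball^1_{R}(a)$ and at most $3$ parts contained in $N_H(a)$. The differences are cosmetic. You apply the width bound to $(\P_t,R_{t+1})$ directly, whereas the paper derives $|\Ball^1_{R_t}(v)/\P_t|\le k$ by coarsening. You also read the contraction sequence off for $H$, instead of bounding $\tww(\overline{H})$ and invoking complement-invariance of twin-width.
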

	\begin{proof}
	We bound the twin-width of each $G\in\CC$ as follows:
	\begin{align}\label{eq:mwpos}
		\tww(G)\le \mw_1^+(G)+3.
	\end{align}
	It is known that the class of cubic graphs has unbounded twin-width \cite{tww2-journal}.
	Since twin-width is preserved under edge complements,
	also $\C$ has unbounded twin-width.
	Together with~\eqref{eq:mwpos}, this proves that $\mw_1^+(\C)=\infty$.
	
	To prove \eqref{eq:mwpos},
	suppose that $G\in\CC$ and $(\P_1,R_1),\ldots,(\P_m,R_m)$ is a merge sequence for $G$ with $R_m\subseteq E(G)$
	and of radius-$1$ width $k$.  
	We may assume that $\P_1\prec \dots \prec \P_m$ is a maximal chain of partitions.
	Since for each $t\in \set{2,\ldots,m}$ we have that $(\P_{t-1},R_{t})$ has radius-$1$ width $k$ and 
	$\P_{t}$ is coarser than $\P_{t-1}$, it follows that 
	$$|\Ball^1_{R_t}(v)/\P_{t}|\le k\quad \text{for all $v\in V(G)$ and $t\in [m]$}.$$

	We argue that $\P_1,\P_2,\ldots,\P_m$ is a contraction sequence 
	witnessing that $G$ has twin-width at most $k+3$.
	
	The key observation is that for every $t\in [m]$ and parts $A,B\in \P_t$,
	if there is some non-edge between $A$ and $B$ in $G$,
	then all edges between $A$ and $B$ must be contained in $R_t$.
	Indeed, as $\P_t$ is homogenous modulo $R_t$, 
	either all edges between $A$ and $B$ are contained in~$R_t$, or all non-edges between $A$ and $B$ are contained in $R_t$. However, as there is a non-edge between $A$ and $B$, the latter possibility is excluded in positive merge sequences by the requirement $R_t \subseteq E(G)$.
	
	Consider $t\in[m]$ and a part $A$ of $\P_t$.
	We argue that $A$ is non-homogeneous towards at most $k+3$ parts $B\in\P_t$ with $A\neq B$.
	Fix $a\in A$.
	If $A$ is non-homogenous towards a part $B\in\P_t$ then in particular there is a non-edge between $A$ and $B$, and by the above, all edges between $A$ and $B$ in $G$ are contained in $R_t$.
	If $a$ has a neighbour $b$ in $B$ then $ab\in R_t$,
	and therefore, the part $B$ accounts to the value $|\Ball^1_{R_t}(a)/\P_{t}|$.
	Hence, there are at most $k$ parts $B$ such that 
	$A$ and $B$ are non-homogeneous and $a$ has a neighbour in~$B$.
	Additionally, there are at most three parts $B\in\P_t$ with $A\neq B$ such that $a$ does not have a neighbour in $B$, as $a$ has at most three non-neighbours altogether (other than $a\in A$).
	In total, there are at most $k+3$ parts $B\in \P_t$ with $A\neq B$ that are non-homogeneous towards $A$.
	Thus, $\P_1,\ldots,\P_m$ is a contraction sequence witnessing that $G$ has twin-width at most $k+3$.
\end{proof}

\section{From Partition Sequences to Merge Sequences}\label{sec:local-metric-conversion}
  This section proves the equivalence of merge-width and partition merge-width.
  The merge sequences constructed in the proof have the following additional useful property.
  Recall that~$\overline{G}$ is the edge-complement of~$G$.
  A merge sequence $(\P_1,R_1),\dots,(\P_m,R_m)$ is called \emph{$d$-firm} if
  for any resolved pair $uv \in R_m$,
  \[ \dist_G(u,v) \le d \quad \text{and} \quad \dist_{\overline{G}}(u,v) \le d. \]
  Note that the same holds for $uv \in R_i$, $i\in[m]$, since $R_i \subseteq R_m$.

\begin{restatable}{theorem}{thmone}\label{thm:main1}
  Let~$G$ be a graph with $\pmw_{6r+1}(G) = k$ and neighbourhood complexity~$\pi_G$. Then
  \[ \mw_r(G)\le k\cdot \pi_G(k). \]
  Moreover, $G$ has a 6-firm merge sequence of radius-$r$ width $k \cdot \pi_G(k)$,
  which can be computed in polynomial time given~$G$ and a sequence witnessing $\pmw_{6r+1}(G)\le k$.
\end{restatable}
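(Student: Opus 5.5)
Starting from a partition sequence $\P_1\prec\dots\prec\P_m$ (a maximal chain, after dropping repeats) of radius-$(6r+1)$ width $k$, I will keep the partitions essentially as they are and define the resolved sets $R_t$ explicitly from local data. At each time $t$, every vertex $v$ sees at most $k$ parts of $\P_t$ inside $\Ball^{6r+1}_{\P_{t+1}}(v)$, and hence (by \cref{lem:flip-metric-refine}) at most $k$ parts inside $\Ball^{6r+1}_{\P_t}(v)$. The key local tool is the metric-conversion idea of \cite[Lemma~8]{flip-separability}. Two vertices at flip-distance greater than some constant $c$ with respect to a partition $\P$ are separated in a specific flip. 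Within a flip-ball, one can refine $\P$ by atomic types over a transversal $S$ of the (at most $k$) nearby parts. This yields at most $k\cdot\pi_G(k)$ classes, and between any two such classes every pair is either homogeneous or the relevant pairs are at $G$- and $\overline G$-distance at most $6$. The factor $\pi_G(k)$ in the bound comes exactly from this refinement by neighbourhoods in a $k$-element set.

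Concretely, I would set $R_t$ to be the set of pairs $uv$ with $\dist_{\P_t}(u,v)\le 2$ and $\dist_G(u,v),\dist_{\overline G}(u,v)\le 6$. Monotonicity $R_t\subseteq R_{t+1}$ follows from \cref{lem:flip-metric-refine}, since flip-distances only decrease as partitions coarsen, and $6$-firmness holds by construction. Two things then need checking. First, homogeneity of $\P_t$ modulo $R_t$: for parts $A,B\in\P_t$, I must show that the unresolved pairs in $AB$ all have the same adjacency. If $A$ and $B$ are far in the flip metric, the pair is fixed in the flips realising the distance, so it is homogeneous. If they are close, a pair $ab$ of the wrong adjacency type together with one of the other type should yield short $G$- and $\overline G$-paths through a transversal. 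This is the standard argument that any two vertices in a non-homogeneous pair of nearby parts are at distance $\le 3$ in both $G$ and $\overline G$, giving distance $6$ in total. Second, the width: for a vertex $v$, every $u\in\Ball^r_{R_{t+1}}(v)$ is joined to $v$ by $r$ steps, each of $\P_{t+1}$-flip-length at most $2$. I would bound $\dist_{\P_{t+1}}(u,v)\le 6r+1$ using the triangle inequality in the flip metric, which holds because the flip metric is a maximum of metrics. Then $u$ lies in one of at most $k$ parts of $\P_t$.

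This already gives width $k$ rather than $k\cdot\pi_G(k)$, which suggests my naive definition fails the homogeneity check. That check is the main obstacle. If it fails, I would refine: replace $\P_t$ by $\P_t\wedge\Nn_{S}$-type local refinements with $S$ a transversal of nearby parts, making the partition sequence still a chain by choosing transversals consistently (a nested choice of representatives as parts merge). This is where the factor $\pi_G(k)$ enters, each old part splitting into at most $\pi_G(k)$ classes. I would also use the extra slack $6r+1$ versus $2r$ to absorb the distance needed to certify homogeneity (the constant $6$). The polynomial-time claim follows since every step — flip-distances via \cite{flip-separability}-style computation, atomic types, and distances in $G$ and $\overline G$ — is polynomial.
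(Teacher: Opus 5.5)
\textbf{There is a genuine gap: your concrete construction fails, and the two arguments you give for homogeneity are false.} Take $G=nK_2$ with edges $a_ib_i$ ($n\ge 3$), at a step where $\P_t=\{A,B\}$ with $A=\{a_1,\dots,a_n\}$ and $B=\{b_1,\dots,b_n\}$. This is compatible with any width bound: $G$ is itself a $\P$-flip, so every flip-ball has size at most $2$.

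In the flip that complements only $AB$, $a_i$ and $b_i$ are at distance $3$, so $\dist_{\P_t}(a_i,b_i)=3$. Also $\dist_G(a_i,b_j)=\infty$ for $i\ne j$. Hence no pair of $AB$ lies in your $R_t$, although $AB$ contains both edges and non-edges. This refutes both justifications:
\begin{itemize}
\item parts that are far in the flip metric need not be homogeneous;
\item vertices witnessing non-homogeneity need not be at distance $\le 3$ in $G$.
\end{itemize}
Refining by the transversal $\{a_1,b_1\}$ does not rescue threshold $2$ either. The parts $A\setminus\{a_1\}$ and $B\setminus\{b_1\}$ are still joined by a matching whose edges have $\P_t$-flip-distance $3$. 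So refinement is not an optional fallback, and the threshold defining resolved pairs must be raised.

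Your fallback (nested transversals $S_1\supseteq\dots\supseteq S_m$ and $\P'_t=\P_t\wedge S_t$) is exactly the paper's route. However, both steps that carry the proof are missing.

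\emph{Homogeneity.} The paper uses \cref{lem:informal}: for a transversal $S$ of $\P$, there is a $(\P\wedge S)$-flip all of whose edges $uv$ satisfy $\dist_{\P}(u,v)\le 6$. Its proof is a case analysis on the diameters of $G[X,Y]$ and of its bipartite complement. The representatives $x\in X\cap S$ and $y\in Y\cap S$ are what allow the two-biclique case and the full/isolated-vertex case to be flipped away. Taking these flips $G_t$ and setting $R_t=\bigcup_{j\le t}E(G_j)$ gives:
\begin{itemize}
\item monotonicity of the $R_t$;
\item homogeneity of $\P'_t$ modulo $R_t$;
\item $6$-firmness;
\item the inclusion $\Ball^r_{R_{t+1}}(v)\subseteq\Ball^{6r}_{\P_{t+1}}(v)$.
\end{itemize}
It also avoids computing flip-distances at all, which is what makes the construction polynomial.

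\emph{Width.} It is not true that each part of $\P_t$ splits into at most $\pi_G(k)$ classes of $\P'_t$; globally it can split into many. What holds is \cref{lem:mw-neigh-refinement}. Vertices of $\Ball^{6r}_{\P_{t+1}}(v)$ lying in one part $P\in\P_t$ can only be separated by vertices of $S_t\setminus P$ that have both a neighbour and a non-neighbour among them. Such a vertex keeps a neighbour among them in every $\P_{t+1}$-flip, so it lies in $\Ball^{6r+1}_{\P_{t+1}}(v)$. That ball meets at most $k$ parts of $\P_t$, hence at most $k$ vertices of $S_t$, which gives the factor $\pi_G(k)$.

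So the $6r$ is forced by the constant $6$ in \cref{lem:informal}. The $+1$ is needed for this counting, not as slack for certifying homogeneity.
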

Together with \cref{lem:pmw-tmw,lem:tmw-mw}, this proves the equivalence of
bounded merge-width, partition merge-width, and transient merge-width, i.e.\ the first three conditions of \cref{thm:intro}.
\Cref{sec:quasi-iso} presents an application of \Cref{thm:main1} to quasi-isometries and neighbourhood covers, using 6-firm merge sequences.

To prove \cref{thm:main1}, we will use the lemma below, which follows from the proof of \cite[Lem.~8]{flip-separability}.
It uses the following variant of a partition refinement:

Let $G$ be a graph.
Fix a partition~$\P$ of $V(G)$ and a set $S\subseteq V(G)$. 
By the \emph{$S$-refinement} of~$\P$, denoted by ${\P\wedge S}$, we mean the partition
obtained by splitting each $P\in \P$ according to the $S\setminus P$-neighbourhoods. Formally, $u,v\in V(G)$ belong to the same part of~$\P\wedge S$ if and only if they belong to the same part $P\in \P$ and $N(u)\cap (S\setminus P)=N(v)\cap (S\setminus P)$.

\begin{restatable}[{Variant of \cite[Lemma~8]{flip-separability}}]{lemma}{informallemma}
  \label{lem:informal}
  Let~$G$ be a graph, $\P$ a partition of~$V(G)$, and~$S$ a transversal of~$\P$.
  Consider the $S$-refinement $\P' \eqdef \P \wedge S$.
  Then there exists a $\P'$-flip~$G'$ of $G$ such that
\begin{equation*}
  \dist_{\cal P}(u,v)\le 6\quad 
  \text{for all~$uv \in E(G')$}.
\end{equation*}
  Moreover, $G'$ can be computed in polynomial time, given $\cal P$ and $S$.
\end{restatable}
In~\cite{flip-separability}, this lemma was applied to partitions with a bounded number of parts.
When $|\P| = k$, we obtain $|\P'| \le k \cdot 2^k$, and $|\P'| = \O(k^{d+1})$ if~$G$ has VC-dimension~$d$, which yields the version of the statement given in \cite{flip-separability}.
The fact that one can choose $\P'$ to be of the form $\P\wedge S$ for any transversal $S$ of $\P$
is not stated explicitly in \cite{flip-separability}, but follows from a close inspection of the proof,
which we detail in \cref{apx:informal-proof}.

We will apply \cref{lem:informal} to partitions of unbounded size.
The next lemma shows that refining a partition $\P$ by $S$ cannot increase too much 
the width of $\Q$-balls in the refined $\P \wedge S$ metric, where~$\Q$ is any partition refined by~$\P$.
\begin{restatable}{lemma}{mwrefinement}
  \label{lem:mw-neigh-refinement}
  Let~$G $ be a graph, $\P\preceq\Q$ partitions of~$V(G)$, and~$S$ a subset of vertices of~$G$. Let $v\in V(G)$ be such that:
  \begin{enumerate}
    \item $|\Ball_\Q^r(v)/\P| \le k$, and
    \item $|S \cap \Ball^{r+1}_\Q(v)| \le \ell$.
  \end{enumerate}
  Then, for  $\P' \eqdef \P \wedge S$, we have:
  $$|\Ball_{\Q}^r(v)/\P'|\le k \cdot \pi_G(\ell).$$
\end{restatable}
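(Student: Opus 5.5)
The plan is to bound, for each of the at most $k$ parts $P\in\Ball_\Q^r(v)/\P$, the number of parts of $\P'$ into which $P\cap\Ball_\Q^r(v)$ splits. Summing these bounds over the $k$ parts then gives the estimate. By definition of $\P'=\P\wedge S$, two vertices $u,w\in P$ lie in the same part of $\P'$ exactly when $N(u)\cap(S\setminus P)=N(w)\cap(S\setminus P)$. So it suffices to show that, as $u$ ranges over $P\cap\Ball^r_\Q(v)$, the traces $N(u)\cap(S\setminus P)$ take at most $\pi_G(\ell)$ distinct values.

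The key step is to show that the only elements of $S\setminus P$ that can distinguish vertices of $P\cap\Ball^r_\Q(v)$ lie in $\Ball^{r+1}_\Q(v)$. Take $s\in S\setminus P$ with $s\notin\Ball^{r+1}_\Q(v)$, and let $Q_s\in\Q$ be its part. Pick $u\in P\cap\Ball^r_\Q(v)$, and let $Q_u\in\Q$ be the part containing $P$ (recall $\P\preceq\Q$).

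Suppose $s$ had both a neighbour and a non-neighbour in $P\cap\Ball^r_\Q(v)$. I would argue that this is impossible, as follows. Suppose first that $Q_s$ and $Q_u$ are not homogeneous in $G$. Then in every $\Q$-flip $G'$ of $G$, the set $\{s\}$ is non-homogeneous to $Q_u\cap\Ball^r_\Q(v)$. This is because flipping the pair $(Q_s,Q_u)$ swaps edges and non-edges between $s$ and these vertices but keeps both present, since the flip acts uniformly on the pair of parts. Hence in each $\Q$-flip $G'$, $s$ is adjacent to some vertex $x\in\Ball^r_\Q(v)\subseteq\Ball^r_{G'}(v)$. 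That gives $\dist_{G'}(v,s)\le r+1$ for every $\Q$-flip $G'$, i.e. $s\in\Ball^{r+1}_\Q(v)$, a contradiction.

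The vertex $x$ may depend on $G'$, and this is fine, since $\Ball^r_\Q(v)\subseteq\Ball^r_{G'}(v)$ holds for every flip. The case $Q_s=Q_u$ is covered by the same argument, since the same-part pair is also flipped uniformly. Consequently every $s\in S\setminus\Ball^{r+1}_\Q(v)$ is homogeneous to $P\cap\Ball^r_\Q(v)$, and so it does not separate its vertices.

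It follows that the trace $N(u)\cap(S\setminus P)$ on $P\cap\Ball^r_\Q(v)$ is determined by $N(u)\cap A$, together with constant information from the other vertices of $S\setminus P$. Here $A\eqdef (S\setminus P)\cap\Ball^{r+1}_\Q(v)$ is a set of size at most $\ell$ by assumption~2. By the definition of $\pi_G$, the number of distinct sets $N(u)\cap A$ is at most $\pi_G(|A|)\le\pi_G(\ell)$, using monotonicity of $\pi_G$. So each $P$ contributes at most $\pi_G(\ell)$ parts of $\P'$ meeting $\Ball^r_\Q(v)$, and summing over the at most $k$ parts $P$ gives $|\Ball^r_\Q(v)/\P'|\le k\cdot\pi_G(\ell)$.

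The main obstacle is the homogeneity step. One must check carefully that a vertex $s$ which is non-homogeneous to a subset of the ball stays adjacent to that subset in every $\Q$-flip. This holds because a flip acts on whole pairs of $\Q$-parts and $P\subseteq Q_u$, so every vertex of $P$ lies in the same $\Q$-part. That is exactly where the hypothesis $\P\preceq\Q$ is used.
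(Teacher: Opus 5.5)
Your proof is correct and is essentially the paper's argument: any $s\in S\setminus P$ with both a neighbour and a non-neighbour in $P\cap\Ball^r_\Q(v)$ keeps a neighbour there in every $\Q$-flip, because $P$ lies in a single $\Q$-part. Hence such $s$ lies in $\Ball^{r+1}_\Q(v)$, so at most $\ell$ vertices of $S$ can separate vertices of $P\cap\Ball^r_\Q(v)$, and each of the at most $k$ parts of $\P$ splits into at most $\pi_G(\ell)$ parts of $\P'$. The differences are cosmetic: the paper argues by contradiction via representatives $v_1,\dots,v_t$ of distinct parts of $\P'$, and your case distinction on whether $Q_s$ and $Q_u$ are homogeneous is unnecessary, since the assumption on $s$ already forces non-homogeneity.
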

\begin{proof}
	Assume for a contradiction that the ball $B \eqdef \Ball^r_\Q(v)$ intersects more than $k \cdot \pi_G(\ell)$ parts of~$\P'$.
	Since~$B$ intersects only~$k$ parts of~$\P$, there are distinct $Q_1,\dots,Q_t \in \P'$ all contained in the same $P \in \P$, all intersecting~$B$, and with $t > \pi_G(\ell)$. Pick $v_i \in Q_i \cap B$ for $i=1,\ldots,t$.
	By construction of~$\P'$, the vertices $v_1,\dots,v_t$ have pairwise different neighbourhoods in~$S\setminus P$.
	Let $S' \subseteq S\setminus P$ be the set of those vertices $w\in S\setminus P$ such that $w$ is non-homogeneous to $\{v_1,\dots,v_t\}$.
	It follows that the vertices $v_1,\dots,v_t$ have pairwise different neighbourhoods in~$S'$.
	
	As each $s \in S'$ has both a neighbour and a non-neighbour within $\{v_1,\dots,v_t\}$ in $G$, and $v_1,\dots,v_t$ all belong to the same part of $\P \preceq \Q$,
	the same holds for any $\Q$-flip of $G$. Therefore, $S' \subseteq \Ball^{r+1}_\Q(v)$ so $|S'| \leq \ell$.
	Because vertices $v_1,\dots,v_t$ have pairwise distinct neighbourhoods in~$S'$, it follows that $t \leq \pi_G(|S'|) \leq \pi_G(\ell)$.
	This contradicts the assumption.
\end{proof}

As an immediate consequence, we obtain:
\begin{corollary}\label{cor:mw-transversal-refinement}
	Let~$G $ be a graph, $\P\preceq\Q$ partitions of~$V(G)$, and~$S$ a transversal of $\P$.
  Suppose~$|\Ball_\Q^{r+1}(v)/\P| \le k$ for every~$v\in V(G)$.
  Then for every $v\in V(G)$, we have 
  $$|\Ball_{\Q}^r(v)/(\P \wedge S)|\le k \cdot \pi_G(k).$$
\end{corollary}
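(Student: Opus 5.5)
We apply \cref{lem:mw-neigh-refinement} to each vertex $v\in V(G)$ with the same partitions $\P\preceq\Q$ and with $S$ the given transversal of $\P$, taking $\ell\eqdef k$. The conclusion of that lemma is exactly the desired bound $|\Ball_\Q^r(v)/(\P\wedge S)|\le k\cdot\pi_G(k)$. So it suffices to verify the two hypotheses of \cref{lem:mw-neigh-refinement} for every $v$.

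For the first hypothesis, note that $\Ball^r_\Q(v)\subseteq\Ball^{r+1}_\Q(v)$, since distances at most $r$ are in particular at most $r+1$. Every part of $\P$ meeting the smaller ball therefore also meets the larger one. Hence
\[|\Ball^r_\Q(v)/\P|\le|\Ball^{r+1}_\Q(v)/\P|\le k\]
by the assumption of the corollary.

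For the second hypothesis, we use that $S$ is a transversal of $\P$, so each part of $\P$ contains exactly one element of $S$. Each element of $S\cap\Ball^{r+1}_\Q(v)$ lies in a distinct part of $\P$, and that part intersects $\Ball^{r+1}_\Q(v)$. The map sending $s$ to its part in $\P$ is therefore an injection from $S\cap\Ball^{r+1}_\Q(v)$ into $\Ball^{r+1}_\Q(v)/\P$. This gives
\[|S\cap\Ball^{r+1}_\Q(v)|\le|\Ball^{r+1}_\Q(v)/\P|\le k=\ell.\]

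With both hypotheses checked, \cref{lem:mw-neigh-refinement} yields $|\Ball^r_\Q(v)/(\P\wedge S)|\le k\cdot\pi_G(k)$ for every $v\in V(G)$, which is the statement.
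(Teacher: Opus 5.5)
Your proof is correct and is exactly the argument the paper has in mind: the paper states this corollary as an immediate consequence of \cref{lem:mw-neigh-refinement} with $\ell = k$. Your two checks supply the details it leaves implicit, namely $\Ball^r_\Q(v)\subseteq\Ball^{r+1}_\Q(v)$ for the first hypothesis, and the transversal property giving $|S\cap\Ball^{r+1}_\Q(v)|\le|\Ball^{r+1}_\Q(v)/\P|\le k$ for the second.
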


\begin{proof}[Proof of \Cref{thm:main1}]
Fix a sequence  $\P_1 \preceq\dots \preceq\P_m$ of partitions of $V(G)$ such that $\cal P_1$ is the partition into singletons and $\cal P_m$ has one part.
We may pick 
  transversals $S_1,\ldots, S_m$ of $\P_1,\dots,\P_m$ respectively,
  so that ${S_1\supseteq \ldots \supseteq S_m}$.
  Let $\P'_i \eqdef \P_i \wedge S_i$.
  Note that $S_i \supseteq S_{i+1}$ implies that~$\P'_i$ refines~$\P'_{i+1}$.
  Moreover,~$\P'_1\preceq\P_1$ is the partition into singletons and $\P'_m=\{V(G)\}$.
  By \cref{lem:informal}, 
  for each $i\in [m]$
  there is a $\P'_i$-flip~$G_i$ of~$G$ such that
  \begin{equation}
    \label{eq:informal-lemma}
    \dist_{\cal P_i}(u,v)\le 6\qquad\text{for all  $uv\in E(G_i)$.}
  \end{equation}
  Define, for each $i\in [m]$, $$R_i \eqdef \bigcup_{j \le i} E(G_j).$$
  Since~$R_i$ is a superset of the edge set of a $\P'_i$-flip, $\P'_i$ is homogenous modulo $R_i$.
  Also, we have $R_i \subseteq R_{i+1}$ by construction. Furthermore, the merge sequence $(\cal P_1',R_1),\ldots,(\cal P_m',R_m)$ can be computed in polynomial time, given $\cal P_1,\ldots,\cal P_m$.

  Let us first check that this new merge sequence is 6-firm.
  We in fact obtain a stronger property:
  \begin{equation}\label{eq:extra-firm}
    \text{for any $i \in [m]$ and $uv \in R_i$,} \quad \dist_{\P_i}(u,v) \le~6.
  \end{equation}
  Indeed, if $uv \in R_i$, then there is some $j \le i$ such that $uv \in E(G_j)$.
  Then we have $\dist_{\P_i}(u,v) \le \dist_{\P_j}(u,v)\le 6$,
  where the first inequality comes from \cref{lem:flip-metric-refine} since $\P_j$ refines~$\P_i$,
  and the second is given by \cref{eq:informal-lemma}.
  When $i=m$, this means that for any $uv \in R_m$, $u$ and~$v$ are at distance at most~6 in any $\P_m$-flip, i.e.\ in~$G$ and~$\overline{G}$, proving 6-firmness.

  Furthermore, \cref{eq:extra-firm} can be restated as: for every $r\in \N$, $i\in [m]$, and $v\in V(G)$, we have
  \begin{equation}
    \Ball_{R_i}^r(v) \subseteq \Ball_{\P_i}^{6r}(v).
    \label{eq:balls-apx}
  \end{equation}
  % \karolina[inline]{I would write $\Ball_{R_i}^r(v) \subseteq \Ball_{G_i}^r(v)$}

  Now let 
   $r,k\in\N$ and suppose that $\cal P_1,\ldots,\cal P_m$ has radius-$(6r+1)$ width $k$, i.e. $$|\Ball_{\P_{i+1}}^{6r + 1}(v)/\P_{i}|\le k$$ for every $v\in V(G)$ and $i\in [m-1]$. 
By \cref{cor:mw-transversal-refinement}, this implies $$|\Ball_{\P_{i+1}}^{6r}(v)/\P_{i}'|\leq k \cdot \pi_G(k).$$
  Then \cref{eq:balls-apx} gives $$|\Ball_{R_{i+1}}^r(v)/\P'_i| \le k \cdot \pi_G(k),$$
  so $(\P'_i,R_{i+1})$ has radius-$r$ width at most $k \cdot \pi_G(k)$. This concludes the proof of \Cref{thm:main1}.
\end{proof}

\section{Constructing Definable Merge Sequences}\label{sec:defmw}

The main result of this section is the following.
\begin{theorem}\label{thm:def-mw}
	Let $G$ be a graph with $\mw_{17r+3}(G) \leq k$, of VC-dimension~$d$, and neighbourhood complexity $\pi_G$. Then there is some
    $M\le \O(dk^3)$ %\karolina{$m$ also denotes the length of merge sequence}
    and a total order on $V(G)$ such that for any prefix $S$ of this order and any $v\in V(G)$, there exists an $S$-flip $G'$ of $G$ such that
	\[|\Ball_{G'}^r(v)/ \Nn_S| \le  M + k\cdot \pi_G(M)\le  (dk)^{\O(d)}.\]
\end{theorem}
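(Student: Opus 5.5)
We start from a merge sequence $(\P_1,R_1),\dots,(\P_m,R_m)$ of radius-$(17r+3)$ width at most $k$, taken to be a maximal chain. The order is built \emph{backwards} from the sequence, in the spirit of the weak-colouring-number construction of \cite[Lem.~7.5]{merge-width} run in reverse. Going from $\P_{t+1}$ down to $\P_t$, one part splits into two. For every part of $\P_t$ we maintain a small set of representatives, and the order lists representatives of coarse partitions before those of finer ones. Concretely, we choose nested transversals $T_m\subseteq T_{m-1}\subseteq\dots\subseteq T_1=V(G)$ with $T_t$ a transversal of $\P_t$; these exist because each refinement step splits one part into two. We enumerate $V(G)$ so that every $T_t$ is a prefix. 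This alone is not enough: a prefix $S$ with $T_{t+1}\subseteq S\subseteq T_t$ need not refine $\P_t$ usefully, since $\Nn_S$ only sees neighbourhoods. We therefore add, for each newly created part, a bounded set of extra vertices: witnesses for the non-homogeneity of the parts that are close in $R_{t}$. By VC-dimension $d$ and width $k$, about $\O(dk^3)$ such witnesses per step suffice, which is where $M$ comes from. The goal is that for every prefix $S$ the partition $\Nn_S$ is refined by a partition $\P_t\wedge S$ on the ball around $v$, up to at most $M$ singleton parts.

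Next, fix a prefix $S$ and $v\in V(G)$, and let $t$ be the stage whose transversal is sandwiched by $S$. We build the $S$-flip $G'$. The key point is that $\Nn_S$ is coarse outside $S$, so we cannot just use $E$ of a $\P_t$-flip with edges in $R_t$. Instead, as in \cref{lem:informal}, we flip according to the types over $S$ so that every edge $uw\in E(G')$ has $\dist_{R_{t'}}(u,w)$ bounded by a constant, about $17$, at an appropriate nearby stage $t'$. The witnesses guarantee that pairs of $\Nn_S$-classes that are not homogeneous are ``explained'' by a resolved connection. Then $\Ball^r_{G'}(v)\subseteq \Ball^{17r+2}_{R_{t'}}(v)$, and this ball meets at most $k$ parts of $\P_{t'}$ by the width bound; the $+3$ absorbs the index shift and the extra step used in \cref{lem:mw-neigh-refinement}.

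Finally, we count the parts of $\Nn_S$ meeting the ball. The vertices of $S$ in the ball contribute singletons. The representatives and witnesses attached to the at most $k$ relevant parts give at most $M$ such singletons. The non-$S$ vertices in the ball are split inside each of the $k$ parts of $\P_{t'}$ according to their neighbourhoods in at most $M$ vertices of $S$ that can distinguish them. By the same argument as \cref{lem:mw-neigh-refinement}, a distinguishing vertex is adjacent in every flip to some vertex of the part, so it lies in the $(r+1)$-ball. This gives at most $k\cdot\pi_G(M)$ classes. Sauer--Shelah--Perles (\cref{sauer_shelah_lemma}) then turns $M+k\pi_G(M)$ into $(dk)^{\O(d)}$.

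The main obstacle is the second step. We must design the order, and the witness sets, so that for \emph{every} prefix, not only prefixes equal to some $T_t$, there is a single $S$-flip whose edges are short in the resolved graph. We must also ensure that the number of $S$-vertices in a ball stays $\O(dk^3)$ despite the monotone growth of $R_t$. I would first try to handle an intermediate prefix by comparing it with the two stages $t$ and $t+1$ it lies between, using \cref{lem:informal} for $\P_{t+1}$ and $S\cap T_{t+1}$.
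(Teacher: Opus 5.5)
Your proposal has a genuine gap, and it is exactly the step you flag as the ``main obstacle'': you never construct the $S$-flip, and the tool you suggest cannot produce it. \cref{lem:informal} gives a $(\P\wedge S)$-flip whose edges are short in the $\P$-flip metric, and hence in $R$. The theorem, however, needs an $\Nn_S$-flip, and $\Nn_S$ does not refine $\P_t\wedge S$: vertices in different parts of $\P_t$ can have the same atomic type over $S$. So no choice of transversal makes that lemma applicable, whether you use $\P_t$, or $\P_{t+1}$ together with $S\cap T_{t+1}$.

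The missing idea is that $S$ must be a \emph{sample set} for $(\P'_i,R_{i+1})$. This has two ingredients.
\begin{itemize}
\item For every part, $S$ contains witnesses that it is $2$-large or $8$-small in $(V,R_{i+1})$. Medium parts are first split into two halves, which gives $\P'_i$.
\item For every pair of parts $(X,Y)$, $S$ contains a \emph{dual} set: a subset of $X$ dominating $Y$, or a subset of $Y$ anti-dominating $X$. Such a set of size $\O(d)$ exists by \cref{thm:dual_vc}.
\end{itemize}
For such $S$, \cref{lem:incremental} shows that atomic types over $S$ determine adjacency between vertices at $R_{i+1}$-distance more than $17$. This yields an $S$-flip with $\Ball^r_{G'}(v)\subseteq\Ball^{17r}_{R_{i+1}}(v)$, and it is where the constant $17$ comes from. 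Any superset of a sample set is again a sample set, so prefixes lying strictly between two stages come for free. No comparison of consecutive stages is needed.

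The second gap is the sparsity bound. It is neither true nor needed that $\O(dk^3)$ vertices are added per step: one step may require fresh duals for many pairs of parts, hence unboundedly many new vertices. What must be shown is that every ball $\Ball^{17r+1}_{R_{i+1}}(v)$ contains only $\O(dk^3)$ vertices of $S_{i-1}$, even though the sets accumulate over all steps.

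The paper obtains this in \cref{lem:orderS} as follows.
\begin{itemize}
\item It builds $S_{m-1}\subseteq\dots\subseteq S_0$ backwards.
\item It adds duals by inclusion-minimal augmentation, so each added vertex $x$ has a ``reason'' $\rho(x)$ separating it from earlier vertices of its part.
\item It shows that too many sample vertices in one ball would force $k+2$ vertices in distinct parts of $\P_{i-1}$ inside the radius-$(r+2)$ ball, contradicting the width. This is why the hypothesis is at radius $17r+3$.
\item Keeping the halves of medium parts consistent over time requires an additional pigeonhole property.
\end{itemize}
Your final counting step matches the paper's: singletons from $S$, plus at most $k\cdot\pi_G(M)$ types via the argument of \cref{lem:mw-neigh-refinement}. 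But it presupposes both ingredients above.
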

This immediately yields the following:
\begin{corollary}\label{thm:main2}  For every $r\in\N$ and graph $G$ of VC-dimension $d$,
   $$\dmw_r(G)\le d^{\Oh(d)}\cdot \mw_{17r+3}(G)^{\O(d)}.$$
\end{corollary}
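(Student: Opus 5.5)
The corollary follows directly from \cref{thm:def-mw}. The only point needing care is the comparison between the quantity in that theorem, which is measured in one particular $S$-flip $G'$, and the flip metric of $\Nn_S$. Let $k\eqdef \mw_{17r+3}(G)$ and let $d$ be the VC-dimension of $G$. Fix the total order $v_1<\dots<v_n$ given by \cref{thm:def-mw}. For $i\in[n]$ let $S_i=\{v_1,\dots,v_{n-i}\}$ and $\P_i=\Nn_{S_i}$. Each $S_i$, including $S_n=\emptyset$, is a prefix of the order.

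\textbf{Step 1: pass from one flip to the flip metric.} Fix $i$ and $v$, and let $G'$ be the $S_i$-flip provided by \cref{thm:def-mw}. An $S_i$-flip is by definition a $\Nn_{S_i}$-flip, that is, a $\P_i$-flip. The flip metric is a maximum over all $\P_i$-flips, so
\[\Ball^r_{\P_i}(v)=\bigcap_{G''\ \P_i\text{-flip}}\Ball^r_{G''}(v)\subseteq \Ball^r_{G'}(v).\]
Hence $|\Ball^r_{\P_i}(v)/\P_i|\le|\Ball^r_{G'}(v)/\Nn_{S_i}|\le M+k\cdot\pi_G(M)$. Maximising over $i$ and $v$ gives $\dmw_r(G,<)\le M+k\cdot\pi_G(M)$, and therefore $\dmw_r(G)$ obeys the same bound.

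\textbf{Step 2: make the numerical bound explicit.} By \cref{sauer_shelah_lemma}, $\pi_G(M)\le \sum_{j\le d}\binom{M}{j}\le (M+1)^d$. Substituting $M\le C\,dk^3$ for an absolute constant $C$ gives
\[M+k\,\pi_G(M)\le C dk^3+k\,(C dk^3+1)^d\le d^{\O(d)}k^{\O(d)},\]
since $k\ge 1$ for nonempty $G$. (The case $d=0$ is covered by the same bound, because then $\pi_G(M)\le 1$.) This is exactly the claimed bound $d^{\O(d)}\cdot\mw_{17r+3}(G)^{\O(d)}$.

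There is no real obstacle here; all the work lies in \cref{thm:def-mw}. The one point to check is the direction of the ball inclusion in Step 1. Step 1 uses that a larger flip distance means a smaller ball, so a single flip already upper-bounds the number of parts met by the flip-metric ball.
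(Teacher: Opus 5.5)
Your proof is correct and follows the paper, which derives this corollary directly from \cref{thm:def-mw}. The flip-metric ball $\Ball^r_{\Nn_S}(v)$ is contained in the ball of the particular $S$-flip supplied by that theorem, and the final numerical bound comes from Sauer--Shelah--Perles; your Step~1 spells out this inclusion, which the paper leaves implicit.
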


Note that together with \cref{thm:main1} and \cref{lem:dmw-pmw,lem:pmw-tmw,lem:tmw-mw}, \cref{thm:main2} completes the proof of \cref{thm:intro}.

The construction in our proof of \Cref{thm:def-mw} is based on \cite[Thm.~3.3]{local-cliquewidth},
which considered a single partition $\P$, and showed how to choose a ``sample set'' $S$ so that the distances in the $\Nn_S$-metric are not much smaller than in the $\P$-metric. 
We lift this approach to a sequence of partitions obtained from a merge sequence. We start with introducing the necessary definitions and results based on  \cite{local-cliquewidth}.

\subsection{Sample sets and definable flips}
Sample sets consist of two parts: \emph{witnesses} and \emph{duals}.

\paragraph{Small and large parts, witnesses}
% By a \emph{metric} on a set $V$ we mean a symmetric function $\dist\colon V^2\to \mathbb R_{{\ge}0}\cup\set{\infty}$ satisfying the triangle inequality, and such that $\dist(v,v) = 0$ for all $v \in X$. 
% By $\Ball^r(v)$ we denote 
% $\setof{w\in V}{\dist(v,w)\le r}$.
% We will only consider metrics of the form $\dist_R$, where $(V,R)$ is a graph and $R\subset \binom{V}{2}$ and for $u,v\in V$, and $\dist_R(u,v)$ is the length of a shortest path from $u$ to $v$ in the graph $(V,R)$, or $\infty$ if no such path exists.

Let~$V$ be a vertex set and $R\subset \binom{V}{2}$ be a set of vertex pairs (which will be a set of resolved pairs in our setting).
Let $s,t \in \N$ with~$s \le t$. We say that a set $X\subset V$ is:
\begin{enumerate}
  \item \label[case]{it:setS_i} \emph{$t$-small} if there exists $x_1 \in X$ such that $X \subseteq \Ball^t_R(x_1)$.
  \item \label[case]{it:setS_ii} \emph{$(s,t)$-medium} if there are $x_1,x_2 \in X$ with $\dist_R(x_1,x_2) > t$, and $X \subseteq \Ball^s_R(x_1) \cup \Ball^s_R(x_2)$.
  \item \label[case]{it:setS_iii} \emph{$s$-large} if there are $x_1,x_2,x_3 \in X$ that are pairwise at distance more than~$s$ in $(V,R)$.
\end{enumerate}
In all three cases, vertices~$x_i$ are called \emph{witnesses} that~$X$ is small, medium, or large.
These three properties are not mutually exclusive, but one of the three must hold.
Indeed, witnesses that~$X$ is small, medium, or large can be found greedily:
pick any~$x_1$, then any~$x_2$ at distance more than~$t$ from $x_1$, and any~$x_3$ at distance more than~$s$ from both $x_1$ and $x_2$, stopping if the step is impossible.

\paragraph{Duals}
For a graph $G$ and subsets of vertices $X, Y \subseteq V(G)$, a set $S_{XY}$ is \emph{dual} for $(X,Y)$ if:
\begin{itemize}
	\item $S_{XY} \subseteq X$ and $S_{XY}$ dominates $Y$, i.e. $Y\subseteq N(S_{XY})$, or
	\item $S_{XY} \subseteq Y$ and $S_{XY}$ anti-dominates $X$, i.e. $X\subseteq \overline{N}(S_{XY}).$
\end{itemize}
A graph $G$ is said to have \emph{a duality of order} $d\in \mathbb{N}$ if every pair of subsets of $V(G)$ has a dual of size at most $d$.
The following result is an improvement of \cite[Thm.~3.3]{local-cliquewidth}, which itself follows from the $(p,q)$-theorem \cite[Thm.~4]{matousek2004VC}.
\begin{theorem}[{\cite[Thm.~E.2]{flip-width}}]\label{thm:dual_vc}
	Any graph $G$ has a duality of order $\O(\VC(G))$.
\end{theorem}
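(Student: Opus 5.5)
The plan is to prove \cref{thm:dual_vc} via LP duality (the minimax theorem) combined with the $\varepsilon$-net theorem of Haussler and Welzl. Let $d=\VC(G)$ and fix $X,Y\subseteq V(G)$. I would consider the zero-sum game whose payoff matrix $M$ is indexed by $X\times Y$, with $M(x,y)=1$ if $xy\in E(G)$ and $0$ otherwise. By von Neumann's minimax theorem, the quantities
\[ \max_{\mu}\min_{y\in Y}\mu\bigl(N(y)\cap X\bigr)\quad\text{and}\quad \min_{\nu}\max_{x\in X}\nu\bigl(N(x)\cap Y\bigr) \]
are equal. Here $\mu$ ranges over probability distributions on $X$ and $\nu$ over probability distributions on $Y$. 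Call the common value $v$.

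If $v\ge 1/2$, fix an optimal $\mu$. Then every set of the family $\F_Y\eqdef\setof{N(y)\cap X}{y\in Y}$ has $\mu$-measure at least $1/2$. This family has VC-dimension at most $d$, because a subset of $X$ shattered by it is shattered in $G$. The $\varepsilon$-net theorem with $\varepsilon=1/2$ then gives $S\subseteq X$ with $|S|=\O(d\cdot\frac1\varepsilon\log\frac1\varepsilon)=\O(d)$ that meets every $N(y)\cap X$. So $S\subseteq X$ dominates $Y$.

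If $v<1/2$, fix an optimal $\nu$. Then for every $x\in X$ the set $Y\setminus N(x)$ has $\nu$-measure greater than $1/2$. The family $\setof{Y\setminus N(x)}{x\in X}$ consists of complements of traces of neighbourhoods, and complementation does not change VC-dimension, so it again has VC-dimension at most $d$. An $\varepsilon$-net with $\varepsilon=1/2$ gives $S\subseteq Y$ with $|S|=\O(d)$ such that every $x\in X$ has a non-neighbour in $S$. Thus $S$ anti-dominates $X$.

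The step I expect to need the most care is not the main argument but the bookkeeping when $X$ and $Y$ overlap. A vertex is not its own neighbour, so if $x\in X\cap Y$ is a net element, "anti-dominates" must be read with the convention $x\in\overline{N}(x)$, or else the diagonal must be handled separately. I would fix the convention consistent with $\overline{N_G}(v)=N_{\overline{G}}(v)$. If needed, I would add $x$ itself as an extra witness, or perturb the matrix on the diagonal, which changes the VC-dimension by at most one. Beyond this, the only other point to verify is the dual-versus-primal VC-dimension issue: one must check that the set systems used are traces of neighbourhoods on a ground set, so that their VC-dimension is bounded by $d$ directly and not via the exponential dual bound. Both families above are of this form, so the final bound is $\O(d)$.
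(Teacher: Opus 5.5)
Your main argument is correct and is the standard route to a linear bound: minimax on the $X\times Y$ adjacency matrix, then a $\frac12$-net for $\setof{N(y)\cap X}{y\in Y}$ or for $\setof{Y\setminus N(x)}{x\in X}$. Both families are traces of neighbourhoods, so both have VC-dimension at most $d$. The paper gives no proof of its own. It cites \cite[Thm.~E.2]{flip-width} and only remarks that the earlier, weaker version came from the $(p,q)$-theorem. Two points in your write-up need correcting.

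\emph{The diagonal.} Your proposed resolution does not work. You say you would adopt the strictly open reading $\overline{N}(v)=N_{\overline G}(v)$. Under that reading the statement is false. For $X=Y=\{v\}$, no $S\subseteq\{v\}$ satisfies $\{v\}\subseteq N(S)$ or $\{v\}\subseteq\overline N(S)$. More generally, it fails whenever some $x\in X\cap Y$ has no neighbour in $X$ and no non-neighbour in $Y\setminus\{x\}$. So no bookkeeping can rescue that reading.

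Perturbing the diagonal does not help either. Setting $M(x,x)=1$ just moves the self-witness problem from the anti-domination case to the domination case, since the net may then hit $N(y)\cup\{y\}$ only at $y$.

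Here is what your argument proves verbatim with $M(x,x)=0$. Note that $Y\setminus N(x)$ contains $x$ whenever $x\in Y$, so the net may hit it only there. The conclusion is: either some $S\subseteq X$ of size $\O(d)$ has $Y\subseteq N(S)$, or some $S\subseteq Y$ of size $\O(d)$ has $X\subseteq\bigcup_{s\in S}\bigl(V(G)\setminus N(s)\bigr)$. In the second case a vertex of $X\cap S$ counts as anti-dominated by itself. That is a reading under which the theorem is true, and it is exactly the one your proof supports. State this convention explicitly instead of the one you proposed.

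\emph{The $\varepsilon$-net bound.} The original Haussler--Welzl theorem gives nets of size $\O(\frac{d}{\varepsilon}\log\frac{d}{\varepsilon})$. For $\varepsilon=\frac12$ that is only $\O(d\log d)$. The $\O(\frac{d}{\varepsilon}\log\frac{1}{\varepsilon})$ form you actually use is due to Blumer, Ehrenfeucht, Haussler and Warmuth, and to Koml\'os, Pach and Woeginger. Cite that version, since it is what gives $\O(d)$.
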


\paragraph{Sample sets}
Consider a graph $G$, a partition $\P$ of $V(G)$ which is homogeneous modulo a set $R\subset \binom{V}{2}$,
and let $s \le t$.
A set $S\subseteq V(G)$ is an \emph{$(s,t)$-sample set} of $(\P,R)$ if
\begin{enumerate}
  \item for every $X \in \P$, $S$ contains witnesses that~$X$ is either $s$-large or $t$-small with respect to $R$; and
  \item $S$ contains a dual $S_{XY}$ for every $(X,Y)\in \P^2$.
\end{enumerate}
Note that this definition forbids medium parts in~$\P$.
To find a sample set for a given partition~$\P$, one should thus first split each medium part into two small parts, before looking for witnesses and duals (see \Cref{lem:partitions}).

The next result follows from the proof of~\cite[Thm.~3.5]{local-cliquewidth}, where the result was shown for a $(2,2)$-sample set; the lift to $(2,t)$-sample sets follows from a straightforward inspection of the reasoning.
Recall from \cref{sec:partitions,sec:flips} that for~$S\subseteq V(G)$,~$\Nn_S$ is the partition of~$V(G)$ according to the atomic types over~$S$, and that an \emph{$S$-flip} is short for an $\Nn_S$-flip.

\begin{lemma}[\cite{local-cliquewidth}]\label{lem:incremental}
  Consider a graph $G$ and a partition~$\P$ of $V(G)$ which is homogeneous modulo a set $R\subset \binom{V}{2}$. For every $(2,t)$-sample set $S\subseteq V(G)$ for $(\P,R)$ there is an $S$-flip $G'$ of $G$ such that for all $u,v \in V(G)$,
  \[ \dist_{G'}(u,v)  \geq \dist_{R}(u,v)/(2t+1); \] or equivalently, for every $r\in \N$ and $v\in V(G)$,
  \[\Ball_{G'}^r(v) \subseteq \Ball_R^{(2t+1)r}(v).\]
\end{lemma}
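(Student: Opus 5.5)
We plan to build a single $\Nn_S$-flip $G'$ in which every edge crosses a short $R$-distance, and then count.

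\textbf{Choosing the flip.} For parts $X,Y\in\P$ (possibly $X=Y$), homogeneity modulo $R$ means that the pairs in $XY\setminus R$ are either all edges or all non-edges. Let $G'$ be the graph that flips exactly those pairs $X,Y$ whose unresolved pairs are edges. Then $E(G')\subseteq R$. However, $G'$ need not be an $\Nn_S$-flip, because $\Nn_S$ does not refine $\P$. So we will define the flip through the duals. Given $u,v$ with $u\in X$, $v\in Y$, the dual $S_{XY}\subseteq S$ decides the flip between the $\Nn_S$-class of $u$ and that of $v$, as follows. Suppose $S_{XY}\subseteq X$ dominates $Y$. If $u$ is adjacent to some $s\in S_{XY}$, the rule is: flip the class pair iff $u$ and $s$ "look like" members of the same part with the default connection being an edge. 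Concretely, let $G'$ flip a pair of $\Nn_S$-classes $(A,B)$ according to the adjacency pattern over the witness/dual vertices of $S$. Since $\Nn_S$ records the adjacency to all of $S$, this is a legitimate $S$-flip. The remaining task is to bound distances.

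\textbf{Distance claim.} We will show that every edge $uv$ of $G'$ satisfies $\dist_R(u,v)\le 2t+1$. The inequality $\dist_{G'}(u,v)\ge\dist_R(u,v)/(2t+1)$ then follows by concatenating along a shortest $G'$-path, and the ball inclusion is equivalent. Take $uv\in E(G')$ with $u\in X$, $v\in Y$. There are two cases.
\begin{enumerate}
  \item If $uv\in R$, we are done.
  \item Otherwise $uv$ is an unresolved pair, so it carries the default connection of $(X,Y)$. The flip rule, read off from the types over the dual and witnesses, must then have misidentified the default. We show this forces $u$ or $v$ to be $R$-close to a witness in its own part, and that the part is small. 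Within a $t$-small part, all vertices lie within distance $t$ of $x_1$, so any two of them are at $R$-distance at most $2t$. An extra resolved edge to the other side then gives $2t+1$.
\end{enumerate}
For $2$-large parts, we use the three pairwise far witnesses $x_1,x_2,x_3$. A vertex $u\in X$ is within $R$-distance $1$ of at most one of them, so at least two witnesses $x_i$ have $ux_i\notin R$. Their adjacency to $u$ therefore reveals the default connection of $(X,X)$, and similarly for $(X,Y)$. The same argument uses witness pairs in $Y$ and the dual to reveal the default of $(X,Y)$ from the $\Nn_S$-types of $u$ and $v$. This is where the radius $2$ in $(2,t)$ is used.

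\textbf{Main obstacle.} The hard part is making the flip decision depend only on the pair of $\Nn_S$-classes, not on $\P$, which the flip cannot see. It must also be correct whenever neither endpoint has a resolved pair to the relevant witness or dual vertex. The approach is to choose the rule by majority over the dual and witnesses. A wrong decision for an edge $uv\notin R$ would then require many resolved pairs from $u$ or $v$ to the far-apart witnesses, which contradicts their pairwise distance greater than $2$, unless the part is $t$-small. This yields the bound $2t+1$. Checking this case analysis is where the detailed inspection of \cite[Thm.~3.5]{local-cliquewidth} is needed.
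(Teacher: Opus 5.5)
\textbf{Verdict: genuine gap.} Your outer reduction is the right one and matches the paper: produce an $\Nn_S$-flip $G'$ in which every edge $uv$ has $\dist_R(u,v)\le 2t+1$, then sum along a shortest path in $G'$. But the existence of such a flip is the entire content of the lemma, and your proposal does not establish it.

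The flip rule is never actually defined, and it cannot work the way you use it. ``Majority over the dual and witnesses'' assumes you know the parts $X\ni u$ and $Y\ni v$ of $\P$ whose witnesses and dual $S_{XY}$ are to be polled. An $S$-flip, however, may only depend on the pair of classes $(A,B)\in\Nn_S^2$, and one class of $\Nn_S$ typically meets many parts of $\P$ with different defaults towards a given part. So analysing a single unresolved edge $uv$ that the rule ``misidentified'' misses the real obstruction. (Minor slip: when $S_{XY}\subseteq X$ dominates $Y$, it is $v\in Y$, not $u$, that is guaranteed a neighbour in $S_{XY}$.)

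\textbf{What is missing.} The paper imports from \cite[Thm.~3.5]{local-cliquewidth} a homogeneity statement: for all $A,B\in\Nn_S$, the pairs $uv$ with $u\in A$, $v\in B$ and $\dist_R(u,v)>2t+1$ are either all edges or all non-edges. One then flips $(A,B)$ exactly when these far pairs are edges, so no explicit rule is needed. Write $c(X,Y)$ for the common adjacency of unresolved pairs between parts $X,Y$. Proving homogeneity means refuting two far pairs $u\in A\cap X$, $v\in B\cap Y$ and $u'\in A\cap X'$, $v'\in B\cap Y'$ with $c(X,Y)=1$ and $c(X',Y')=0$. Two ingredients do this.
\begin{enumerate}
\item[(i)] If $Y$ is $2$-large, then $c(X',Y)=c(X,Y)$. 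At most one witness of $Y$ lies within $R$-distance $1$ of $u$ (resp.\ $u'$), and $u,u'$ have the same adjacency to these witnesses. Hence $Y,X'$ are not both large, and neither are $X,Y'$.
\item[(ii)] Cross duals. Suppose $Y$ is $t$-small. Then $S_{X'Y}$ cannot be a subset of $Y$ anti-dominating $X'$: otherwise $u$ inherits from $u'$ a non-neighbour $s\in Y$, forcing $us\in R$ and $\dist_R(u,v)\le 2t+1$. So $S_{X'Y}$ is a subset of $X'$ dominating $Y$, and $v,v'$ have a common neighbour $s\in X'$ with $v's\in R$. This forbids $X'$ from also being $t$-small.
\end{enumerate}
Up to symmetry, two configurations remain: $X,Y$ small with $X',Y'$ large, or $Y,Y'$ small with $X,X'$ large. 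Each is closed by one more such argument giving $\dist_R(u,v)\le t+3\le 2t+1$, which uses $t\ge 2$.

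Your plan contains neither the two-pair formulation nor the cross duals. As a result, your deferral to the citation covers the whole lemma rather than a routine check.
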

Note that \cite[Thm.~3.5]{local-cliquewidth} actually states that for any $P,Q \in \Nn_S$, pairs of vertices $u\in P, v \in Q$ with $\dist_R(u,v) > 2t+1$ are either all edges or all non-edges in $G$. To obtain the desired $S$-flip $G'$, flip each pair $(P,Q)\in \Nn_S^2$ such that all pairs $uv\in PQ$ with $\dist_R(u,v)>2t+1$ are edges in $G$; then the conclusion of \Cref{lem:incremental}~follows.

\subsection{Finding sample sets in merge sequences}
We would like to apply \cref{lem:incremental} to each step of a merge sequence.
This requires constructing a monotone sequence of sample sets for its partitions, while ensuring that balls of fixed radius do not contain too many points from the sample set.

\begin{restatable}{lemma}{samplesets} \label{lem:orderS}
  Let $r\in \N$ and $G$ be a graph with duality of order $d$.
  Let $$(\P_1, R_1), \dots , (\P_m , R_m)$$ be a merge sequence for $G$ of radius-$(r+2)$ width at most $k$
  such that
  $\P_1\prec \dots\prec \P_m$ is a maximal chain of partitions.
  Then there exist sets $V(G) = S_0 \supseteq S_1 \supseteq \dots \supseteq S_{m-1}$ such that for every $i\in [m-1]$,
  \begin{enumerate}
    \item  $|\Ball_{R_{i+1}}^{r}(v) \cap S_{i-1}| \le  \O(dk^3),$ for all $v \in V(G)$, and
    \item there is a partition~$\P'_i$ obtained by possibly splitting each part of~$\P_i$ in two, such that $S_i$ is a $(2,8)$-sample set for $(\P'_i, R_{i+1})$.
  \end{enumerate}
\end{restatable}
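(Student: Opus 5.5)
The sets will be built backwards in time, from $S_{m-1}$ down to $S_1$, by adding at each step only the vertices needed for the new partition. So we define $S_i$ for decreasing $i$. Since $\P_1\prec\dots\prec\P_m$ is a maximal chain, $\P_i$ is obtained from $\P_{i+1}$ by splitting one part $Z\in\P_{i+1}$ into two parts $X,Y\in\P_i$. For each $i$ we fix $\P'_i$ by splitting every medium part of $\P_i$, with respect to $R_{i+1}$, into two $8$-small parts. A $(2,8)$-medium part $X$ with witnesses $x_1,x_2$ splits into $X\cap\Ball^2_{R_{i+1}}(x_1)$ and the rest; both pieces are $8$-small, and indeed even $2$-small. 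Every remaining part is either $8$-small or $2$-large. The set $S_i$ is then $S_{i+1}$ together with:
\begin{itemize}
\item witnesses (at most $3$ vertices) for each part of $\P'_i$ that is new or whose status changed;
\item a dual of size at most $d$ for each pair of parts of $\P'_i$ that is not already served by a dual in $S_{i+1}$.
\end{itemize}
A simpler choice is $S_i := \bigcup_{j\ge i} T_j$, where $T_j$ consists of the witnesses of all parts of $\P'_j$ and duals for all pairs of parts of $\P'_j$. With this choice property~2 is immediate, since a superset of a sample set is still a sample set. We also take $S_0=V(G)$. The entire difficulty is property~1: bounding $|\Ball^r_{R_{i+1}}(v)\cap S_{i-1}|$.

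To get property~1, I would charge each vertex of $S_{i-1}\cap\Ball^r_{R_{i+1}}(v)$ to the time $j\ge i-1$ at which it was added. Monotonicity $R_{i+1}\subseteq R_{j+1}$ gives $\Ball^r_{R_{i+1}}(v)\subseteq\Ball^r_{R_{j+1}}(v)$. A vertex added at time $j$ belongs to a part of $\P'_j$, or to a pair of parts of $\P'_j$, involved in the change at step $j$. The naive union over all $j$ is too large, so we must exploit that a part or pair only needs new witnesses or duals when it changes. I would therefore reuse witnesses and duals from later times whenever they remain valid, and add new ones only for parts containing the newly split pieces $X,Y$ and for the pairs involving them.

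The key counting step is as follows. Fix $v$ and $i$. Every vertex $w\in\Ball^r_{R_{i+1}}(v)\cap S_{i-1}$ lies in some part $P\in\P_{i-1}$ that meets the ball. By the width bound there are at most $k$ such parts, and this is where we use radius $r+2$ width together with the step offset. Each such part is a subpart of a chain of ancestor parts $P\subseteq P^{(i)}\subseteq\dots$. We need to show that only $O(1)$ witnesses and $O(dk^2)$ dual vertices inside $P$ can lie in the ball. For witnesses, a $2$-large part keeps its three witnesses, which are pairwise far apart, so at most one of them lies near $v$; we would use the radius $r+2$ to compare with the balls. For small parts, witnesses lie within distance $8$ of each other. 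For duals, the relevant pairs are those $(P,Q)$ where $P$ meets the ball and $Q$ is reachable in a controlled way; $Q$ is forced to be among the $k$ parts within radius $r+2$, giving $k\cdot k$ pairs, each contributing $d$ vertices, per part $P$. This yields $O(dk^3)$ overall.

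The main obstacle is making duals stable over time. A dual $S_{XY}$ chosen for parts at time $j$ lies inside $X$ or $Y$, and as parts merge it remains valid only if the same side still dominates or anti-dominates. I would first try to guarantee that only $O(k)$ genuinely new dual sets near $v$ are needed per part: a pair needs a fresh dual only when one side was just merged. Otherwise I would try to argue that a dual placed inside a part $Q$ far from $v$ (distance $>r$) cannot lie in the ball, so only pairs with both endpoints near $v$ contribute, and these number at most $k^2$ at the relevant time, with the ancestor structure charging each pair only a bounded number of times.
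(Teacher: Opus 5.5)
There is a genuine gap. Property~1 is never actually established, and the counting step you sketch for the duals has no justification.

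Your assertion that the partner part $Q$ of a relevant pair is ``forced to be among the $k$ parts within radius $r+2$'' is the problem. A dual for $(P,Q)$ may lie inside $P$, as a subset of $P$ dominating $Q$, even when $Q$ is arbitrarily far from $v$. A part $P$ meeting $\Ball^r_{R_{i+1}}(v)$ can have unboundedly many such partners. When a part is split, every pair involving one of the new pieces may need a fresh dual, so the number of new pairs per step is not $\O(k)$.

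The claim that ``a pair needs a fresh dual only when one side was just merged'' is also false. Whether a part is medium, and hence $\P'_i$ itself, depends on $R_{i+1}$. This can change for many parts in one step; in fact $\P'_i$ need not refine $\P'_{i+1}$.

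If you simply add some dual of size at most $d$ for every unserved pair, nothing stops each far partner from placing new vertices in $P\cap\Ball^r_{R_{i+1}}(v)$. Your remark that duals placed inside far parts avoid the ball only handles the harmless half of the problem.

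Witnesses have the same issue. Re-choosing them whenever a status changes lets witnesses from many later steps pile up inside one part of $\P_{i-1}$. Also, your claim that at most one of three pairwise-far witnesses lies near $v$ is false: pairwise $R$-distance greater than $2$ does not stop all three from being within distance $r$ of $v$.

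The missing ideas are as follows.
\begin{itemize}
\item \emph{Witnesses.} Choose them consistently: obtain $X\cap W_i$ by greedily extending $X\cap W_{i+1}$ with vertices pairwise at $R_{i+1}$-distance greater than $2$, stopping at three. Then each part of $\P_{i-1}$ contains at most three witnesses in total.
\item \emph{Minimal duals.} For each pair, add an inclusion-wise minimal set which, together with the current set, contains a dual. Minimality gives every inserted vertex $x\in X'$ a reason $\rho(x)\in Y'$: a vertex adjacent (or non-adjacent) to $x$ but not to any earlier-inserted vertex of $X'$.
\item \emph{Reasons are close.} Suppose $x$ and such an earlier vertex $y$ both lie in the ball and in one part of $\P_{i-1}$. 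Homogeneity modulo $R_{i-1}$ then forces $x\rho(x)$ or $y\rho(x)$ into $R_{i-1}\subseteq R_{i+1}$. So reasons lie in $\Ball^{r+1}_{R_{i+1}}(v)$ no matter how far $Y'$ is. This is what controls far partners.
\item \emph{Pigeonholing.} Restrict to a set of such vertices that stays inside a single part of $\P'_j$ for all $j\ge i-1$; this needs the halves of medium parts to be chosen consistently over time. Further restrict so that the reasons lie in a single part and all have the same sign.
\item \emph{Spreading over generations.} Each pair is processed once per generation with a batch of at most $d$ vertices, so at most $d$ of these vertices come from any one generation. Hence they span more than $k+3$ generations.
\item \emph{Contradiction.} The dominating subsets at consecutive generations produce vertices $z_\ell\in\Ball^{r+2}_{R_{i+1}}(v)$ lying in $k+2$ distinct parts of $\P_{i-1}$. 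This contradicts the radius-$(r+2)$ width.
\end{itemize}
This last step is where radius $r+2$ is genuinely used, and none of this mechanism appears in your proposal.
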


Before giving the proof, let us briefly elaborate on the approach and the difficulties. The construction of the sample sets $S_0 \supseteq S_1 \supseteq \dots \supseteq S_{m-1}$ is quite natural: each sample set $S_i$ consists of a set $W_i$, containing a witness for every part of $\P_i$, and a set $D_i$, containing a dual set for every pair of parts of $\P_i$. The sets $W_i$ and $D_i$ are constructed by scanning the merge sequence in the reverse order: we start with $W_m=D_m=\emptyset$ that work for $\P_m=\{V(G)\}$, and iteratively revert the merges along the maximal chain of partitions $\P_m\succ \ldots \succ \P_1$. When proceeding from $\P_i$ to $\P_{i-1}$, the partition gets finer, which means that we might need to add some new witnesses to $W_i$ in order to get a suitable~$W_{i-1}$, and augment some dual sets in $D_i$ in order to get a suitable~$D_{i-1}$. We do this in any inclusion-wise minimal way. Note that this may result in adding an unbounded number of new vertices to the constructed sample set $S_{i-1}$. However, here is the crucial point: the inclusion-wise minimality of the augmentation can be used to argue that $S_{i-1}$ is still ``sparse'', in the sense that every distance $r$-ball in $R_{i+1}$ contains only a bounded number of elements of $S_{i-1}$; this establishes the first assertion from the lemma statement. For the second assertion, in order to get rid of the medium parts we artificially split them into two small parts along the way, thus obtaining partitions $\P'_i$ from $\P_i$; and we work with partitions $\P'_i$ instead. This adds an extra layer of technicalities to the argument.

\begin{proof}
	We first give an overview of the proof.
	We will start with constructing sets $W_1 \supseteq \dots \supseteq W_{m-1}$,
	where $W_i$ contains witnesses that each part of $\P_i$ is small, medium, or large.
	This will give us a sequence of partitions $\P_1', \dots ,\P_m'$, where $\P_i'$ is obtained from $\P_i$ by splitting each medium part into two.
	Then, we will construct sets $D_1 \supseteq \dots \supseteq D_{m-1}$ such that $D_i$ contains dual sets for each pair $X', Y' \in \P_i'$.
	Having defined $W_i$ and $D_i$, we define: 
	\begin{equation}\label{def:S_i}
		S_i \eqdef W_i \cup D_i\qquad\text{for $i\in [m-1]$.}
	\end{equation}
	By construction, $S_1\supseteq\dots\supseteq S_{m-1}$ will be a monotone sequence of sample sets for $(\P'_i, R_{i+1})$. Our goal will be to prove that $\Ball^r_{R_{i+1}}(v)$ contains few vertices of~$S_{i-1}$, for all $v\in V(G)$ and $i\in[m-1]$; this will conclude the lemma.
	
	\medskip
	{\itshape
		Throughout the remainder of the proof of \Cref{lem:orderS}, we fix 
		a merge sequence $$(\P_1, R_1), \dots, (\P_m, R_m)$$ of radius $(r+2)$-width at most $k$, such that $\P_1\prec\dots\prec \P_m$ is a maximal chain of partitions.}
	
	\medskip
	
	The sets $W_1,\ldots,W_{m-1}$ are constructed in the following lemma.
	\begin{lemma}\label{lem:witnesses}
		% Let $(\P_1, R_1), \dots, (\P_m, R_m)$ be a merge sequence for $G$
		% of $r$-width at most $k$ such that $\P_1 \prec \dots \prec \P_m$ is a maximal chain of partitions.
		There are subsets $W_1 \supseteq \dots \supseteq W_{m-1}$ of $V(G)$ such that for every $i \in [m-1]$, for every part $X \in \P_i$, the set $X \cap W_i$ contains witnesses that~$X$ is $8$-small, $(2,6)$-medium, or $2$-large with respect to~$R_{i+1}$,
		and furthermore, for $i > 1$ and every $v\in V(G)$, \[|\Ball_{R_{i+1}}^r(v) \cap W_{i - 1}| \le 3(k + 1).\] %$|X \cap W_i| \le 3$.    
	\end{lemma}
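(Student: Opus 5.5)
We construct the sets backwards, from $W_{m-1}$ down to $W_1$, keeping each step inclusion-wise minimal. For $i=m-1$, $\P_{m-1}$ has two parts. For each part we greedily pick witnesses inside it with respect to $R_m$, using the procedure described after the definition of small/medium/large parts with $(s,t)=(2,6)$ and small threshold $8$. Here we pick $x_1$; then $x_2$ at $R$-distance more than $6$ from $x_1$, if one exists; then $x_3$ at distance more than $2$ from both. We need the outcomes to be meaningful for the stated thresholds, namely $8$-small, $(2,6)$-medium, or $2$-large. These are adjusted so that a part keeps its witnesses consistent as $R$ shrinks going backwards, which is why the small threshold $8$ is larger than the medium threshold $6$.

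For the inductive step, suppose $W_i$ has been built. The partition $\P_{i-1}$ arises from $\P_i$ by splitting a single part $Z$ into $Z_1,Z_2$, and $R_i\subseteq R_{i+1}$. Every part $X\neq Z$ of $\P_{i-1}$ is also a part of $\P_i$. We set $W_{i-1}\supseteq W_i$ and try to reuse the witnesses of $X$ from $W_i$; these were certified with respect to $R_{i+1}$, but now must be certified with respect to $R_i$. Since distances only grow when passing from $R_{i+1}$ to $R_i$, large witnesses stay large. Small or medium witnesses may fail, because a ball of radius $8$ or $2$ in $R_i$ may no longer cover $X$. In that case we rerun the greedy procedure in $X$, starting from the existing witnesses in $X\cap W_i$, and add at most three new vertices. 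For $Z_1,Z_2$ we likewise add at most three witnesses each. The bookkeeping shows that $X\cap W_{i-1}$ contains valid witnesses with respect to $R_i$; the indices here must be matched to the statement, which certifies $\P_i$ against $R_{i+1}$.

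The main obstacle is the sparsity bound $|\Ball^r_{R_{i+1}}(v)\cap W_{i-1}|\le 3(k+1)$. The natural plan is to charge each point of $W_{i-1}$ lying in the ball to its part in $\P_{i-1}$, or $\P_i$. Radius-$(r+2)$ width at most $k$ bounds the number of parts of $\P_i$ (hence of $\P_{i-1}$, up to one extra part from the split) within $R_{i+1}$-distance $r+2$ of $v$. This would give at most $3(k+1)$ points, provided each part keeps at most three witnesses at any time. That claim fails for the monotone construction, since old witnesses accumulate: a part may contain witnesses inherited from many coarser stages.

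To handle this, we use minimality and the $+2$ slack in the radius. Suppose an old witness $w\in X$ was added at a later stage $j>i$, when $X\subseteq Y\in\P_j$, and $w$ lies in $\Ball^r_{R_{i+1}}(v)$. We want to argue that $w$ is still one of at most three "active" witnesses of its current part. Alternatively, we bound per part of $\P_{i-1}$ the witnesses lying within distance $r$ of $v$. Two witnesses chosen as far-apart points, at distance more than $2$, are both within distance $r$ of $v$ only if they belong to parts at distance at most $r$. Choosing new witnesses preferentially among existing ones, plus the monotonicity $R_{i}\subseteq R_{j}$, should show that each part intersecting $\Ball^{r+2}_{R_{i+1}}(v)$ contributes at most three witnesses to the ball. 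I expect most of the work to go into making this charging precise.
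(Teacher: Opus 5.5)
\textbf{Gap: the sparsity bound is not proved.} Your last paragraph only sketches a charging argument you hope will work. The obstacle you identify there is created by your own construction, not by monotonicity.

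You run the greedy with non-uniform thresholds: $x_2$ at distance $>6$ from $x_1$, then $x_3$ at distance $>2$ from both. You also allow up to three fresh witnesses per part per stage. Suppose a part with witnesses $x_1,x_2,x_3$ is later split so that $x_2,x_3$ land in the same child at $R$-distance in $(2,6]$. This pair is not a valid partial run of your greedy. The child may need a new point far from $x_2$ and then a fourth point, while the old ones stay in $W$, and nothing stops this from repeating. So your procedure gives no bound on $|X\cap W_{i-1}|$, and it is not clear that ``minimality'' or the $+2$ radius slack repairs this. Contrary to what you say, a monotone construction can keep at most three witnesses per part.

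\textbf{Missing idea: use a single separation threshold.} Maintain the invariant that for every $X\in\P_i$, the set $X\cap W_i$ consists of at most $3$ vertices pairwise at $R_{i+1}$-distance $>2$. This invariant is inherited for free. If $X\in\P_i$ is contained in $Y\in\P_{i+1}$, then $X\cap W_{i+1}\subseteq Y\cap W_{i+1}$ has at most $3$ points pairwise at $R_{i+2}$-distance $>2$. Since $R_{i+1}\subseteq R_{i+2}$, they are also pairwise at $R_{i+1}$-distance $>2$.

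Starting from this set, greedily add vertices of $X$ at $R_{i+1}$-distance $>2$ from all chosen ones, until there are $3$ or none can be added. Only afterwards classify the part:
\begin{itemize}
\item three points witness that $X$ is $2$-large;
\item one point witnesses that $X$ is $2$-small;
\item two points $x,y$ give $X\subseteq\Ball^2_{R_{i+1}}(x)\cup\Ball^2_{R_{i+1}}(y)$, which is $(2,6)$-medium if $\dist_{R_{i+1}}(x,y)>6$ and $8$-small otherwise.
\end{itemize}
The last case is why the small threshold is $8=2+6$; it has nothing to do with robustness as $R$ shrinks.

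The bound then follows at once. $\Ball^r_{R_{i+1}}(v)$ meets at most $k$ parts of $\P_i$, hence at most $k+1$ parts of $\P_{i-1}$, since exactly one part is split. Each of these contains at most $3$ points of $W_{i-1}$, giving $3(k+1)$.
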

	
	\begin{proof}
		We construct the sets~$W_i$ for $i\in [m]$ in reverse order, i.e.\ starting with~$W_{m}$, so that the following hold for all $i\in [m-1]$ and $X\in\P_i$:
		\begin{align} 
			&\text{ $\dist_{R_{i+1}}(x,y)>2$ for all $x,y\in X \cap W_i$, and } \label{prop:Mi}\\
			&\text{$|X \cap W_i|\le 3$.} \label{prop:Mi2}
		\end{align}
		Set $W_m=\emptyset$.
		Let $i\in[m-1]$, and 
		assume~$W_{i+1}$ with the required properties is already constructed.
		Since~$\P_i$ refines~$\P_{i+1}$, and distances in~$R_{i+1}$ are at least as large as in~$R_{i+2}$,
		% \jakub[]{changed from "distances in~$R_{i+1}$ are at least as large as in~$R_{i+2}$"}
		this implies that for every part $X\in \P_i$, the set $X \cap W_{i+1}$ consists of at most~3 vertices pairwise at distance more than~2 in~$R_{i+1}$.
		Then $X \cap W_i$ is defined by greedily picking vertices from~$X$ pairwise at distance more than~$2$ in~$R_{i+1}$, starting with $X \cap W_{i+1}$, and ending when either 3 such vertices have been found, or no further vertex can be added.
		Thus, conditions \eqref{prop:Mi}, \eqref{prop:Mi2}, and $W_i \supseteq W_{i+1}$ are immediately satisfied.
		
		Now consider any part $X \in \P_i$.
		If $X \cap W_i$ has three vertices, then they are witnesses that~$X$ is 2-large.
		If $X \cap W_i = \{x\}$ is a singleton instead, then $X \subseteq \Ball^2_{R_{i+1}}(x)$ (or another vertex would have been added), so~$x$ witnesses that~$X$ is 2-small.
		Finally, if $X \cap W_i = \{x,y\}$, then we again have that~$X$ is contained in~$\Ball^2_{R_{i+1}}(x) \cup \Ball^2_{R_{i+1}}(y)$.
		If $\dist(x,y) > 6$, then~$x,y$ witness that~$X$ is $(2,6)$-medium,
		while if~$\dist(x,y) \le 6$, then just~$x$ witnesses that~$X$ is 8-small.
		
		Note that, for $i > 1$, each ball $\Ball_{R_{i+1}}^r(v)$ intersects at most~$k + 1$ parts of~$\P_{i - 1}$ (as it intersects at most $k$ parts of $\P_i$ and $\P_{i - 1}$ is obtained from $\P_i$ by splitting some part into two).
		Each of these parts contains at most~3 vertices of~$W_{i-1}$,
		hence 
		\begin{equation*}\label{eq:witnesscount}
			|\Ball_{R_{i+1}}^r(v) \cap W_{i - 1}| \le 3(k + 1).\claimqedhere
		\end{equation*}
	\end{proof}

  \begin{figure}[h]
    \centering
    \includegraphics[scale =2.1]{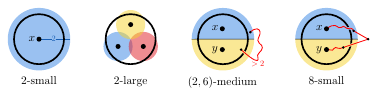}
    \caption{Illustration of $4$ differents cases for witnesses of a part $X\in \P_i$. Colored blobs correspond to radius-$2$ balls of the witnesses in $R_{i+1}$, \textcolor{red}{red} edges and paths are in $R_{i+1}$. %From left to right the part are respectively, $2$-small, $2$-large, $(2,6)$-medium, $8$-small. 
    }
    \label{fig:witnesses}
  \end{figure}

	{\itshape In the remainder of the proof of \Cref{lem:orderS}, we fix a sequence $W_1\subseteq \dots\allowbreak \subseteq W_{m-1}$ as obtained in \Cref{lem:witnesses}.}
	\medskip
	
	We now construct a sequence of partitions $\P_1', \dots, \P_m'$.
	
	Let $i \in [m - 1]$. For $X \in \P_i$
	choose $W_i^X \subseteq W_i \cap X$ to be witnesses for~$X$ being $8$-small,
	$(2,6)$-medium, or $2$-large in with respect to~$R_{i+1}$.
	% (Note that possibly $W_i^X \subsett W_i \cap X$.)
	From now on, we call a part~$X$  \emph{small}, \emph{medium} or \emph{large} (omitting the radius parameters) according to the choice made for~$W_i^X$.
	Recall that the definition of sample sets forbids medium parts, and we thus want to split them into small parts.

	For $i \in [m-1]$, let $\P_i'$ be obtained from $\P_i$ by splitting each medium part $X \in \P_i$ with witnesses $W_i^X = \{x_1,x_2\}$
	into two parts $X^1= \Ball^2_{R_{i+1}}(x_1) \cap X$ and $X^2= \Ball^2_{R_{i+1}}(x_2) \cap X$, called the \emph{halves} of $X$.
	Note that these halves are disjoint since we require $\dist_{R_{i+1}}(x_1,x_2) > 6$.
	\begin{lemma}\label{lem:partitions}
		\crefalias{enumi}{lemma}
		For each $i\in[m-1]$, the following properties hold:
		\begin{enumerate}[label=(\arabic*), ref=(\arabic*)]
			\item\label{prop:sample}  $W_i$ contains witnesses for each part of $\P_i'$ being either $8$-small or $2$-large with respect to $R_{i+1}$.
			\item\label{prop:medium-halves} For every medium part $X \in \P_i$, if two vertices are in different halves of $X$, then they are at distance more than $2$ in~$R_{i+1}$.
			\item\label{prop:medium-halves-future} For every $i \leq j < m$, if $X \in \P_i$ and $Y \in \P_j$ are medium parts such that $X \subseteq Y$, if two vertices are in the same half of $X$, then they are also in the same half of $Y$.
			\item\label{prop:halves-pigeonhole} For every part $X \in \P_i$ and subset $A \subset X$, for every weight function $f \colon A \to \N$ with $\sum_{x \in A} f(x)>2\ell$, there is a subset $B \subseteq A$ such that $\sum_{x \in B} f(x)>\ell$ and $B$ is contained in a single part of~$\P'_j$, for all $j \ge i$.
			% \item For every part $X \in \P_i$ and subset $A \subset X$ with $|A|>2\ell$, there is a subset $B \subseteq A$ with $|B|>\ell$ such that $B$ is contained in a single part of~$\P'_j$, for all $j \ge i$.\label{prop:halves-pigeonhole}
		\end{enumerate}
	\end{lemma}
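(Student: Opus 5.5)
Properties \ref{prop:sample} and \ref{prop:medium-halves} are local to step $i$, and I will obtain them directly from the construction in \Cref{lem:witnesses}; properties \ref{prop:medium-halves-future} and \ref{prop:halves-pigeonhole} compare different steps and use monotonicity of $R_i$ and of the $W_i$.

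For \ref{prop:sample}, take a part of $\P'_i$. If it is a non-medium part $X\in\P_i$, the witnesses $W_i^X\subseteq W_i$ already show it is $8$-small or $2$-large. If it is a half $X^1=\Ball^2_{R_{i+1}}(x_1)\cap X$ of a medium part, then $x_1\in W_i$ witnesses that $X^1$ is $2$-small, hence $8$-small. For \ref{prop:medium-halves}, let $u\in X^1$ and $v\in X^2$ and suppose $\dist_{R_{i+1}}(u,v)\le 2$. The triangle inequality then gives $\dist_{R_{i+1}}(x_1,x_2)\le 2+2+2=6$, contradicting the choice of medium witnesses at distance more than $6$.

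For \ref{prop:medium-halves-future}, let $X\subseteq Y$ with $X\in\P_i$ and $Y\in\P_j$ both medium, and let $Y$ have witnesses $y_1,y_2$. Two vertices $u,v$ in the same half of $X$, say the half of $x_1$, satisfy $\dist_{R_{i+1}}(u,v)\le 4$, so $\dist_{R_{j+1}}(u,v)\le 4$ because $R_{i+1}\subseteq R_{j+1}$. If $u$ and $v$ lay in different halves of $Y$, then by the argument of \ref{prop:medium-halves} their distance in $R_{j+1}$ would be more than $2$. That alone does not contradict a bound of $4$, so I would use the finer bound: $u\in\Ball^2(y_1)$ and $v\in\Ball^2(y_2)$ give $\dist(y_1,y_2)\le 2+4+2=8$, which still does not contradict a bound of $6$. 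I therefore exploit monotonicity of witnesses instead. Since $W_j\subseteq W_i$, the greedy construction of \Cref{lem:witnesses} extends $X\cap W_j$ when it builds $X\cap W_i$. I would first show that $X\cap W_j$ is nonempty, then route through $x_1$: we have $\dist_{R_{j+1}}(x_1,u)\le 2$ and $\dist_{R_{j+1}}(x_1,v)\le 2$, and $x_1$ lies within distance $2$ of some $y_\ell$. Closing this argument requires showing that $x_1$ and the vertices of its half all land in the half of the same $y_\ell$, possibly by picking the witnesses $W_i^X$ consistently with the witnesses of the parent. I expect this step to be the main obstacle, and if needed I would adjust the choice of $W_i^X$ (for instance, preferring witnesses inherited from $W_{i+1}$) so that $x_1=y_\ell$ whenever possible.

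Given \ref{prop:medium-halves-future}, property \ref{prop:halves-pigeonhole} is pigeonhole. Define $B$ by tracking the chain of parts $X=X_i\subseteq X_{i+1}\subseteq\cdots$ containing $A$. Let $j_0\ge i$ be the first index at which $X_{j_0}$ is medium. If there is none, $B=A$ works. Otherwise $A$ splits into the two halves of $X_{j_0}$, and we choose the half $B$ carrying weight more than $\ell$, which exists because the total weight exceeds $2\ell$. At every later medium step $B$ stays inside a single half by \ref{prop:medium-halves-future}, applied with $X_{j_0}$ and the later medium part (if an intermediate part is non-medium, apply the property between consecutive medium parts along the chain). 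For $i\le j<j_0$, $B\subseteq A$ lies in one part of $\P'_j$ because $X_j$ is unsplit.
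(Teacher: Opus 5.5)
Your treatment of (1), (2) and (4) matches the paper's. However, (3) is left unproved: you stop at what you call the main obstacle, and (4) depends on (3). The missing step is short and uses only (2). Apply (2) at step $j$ to the part $Y$, but to the pairs $(u,x_1)$ and $(v,x_1)$ rather than to $(u,v)$. You already observed that $\dist_{R_{j+1}}(x_1,u)\le 2$ and $\dist_{R_{j+1}}(x_1,v)\le 2$. Since $Y$ is medium, $Y\subseteq \Ball^2_{R_{j+1}}(y_1)\cup\Ball^2_{R_{j+1}}(y_2)$ and the two halves are disjoint. So each of $u,v,x_1\in X\subseteq Y$ lies in exactly one half of $Y$. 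By (2) for $Y$, two vertices of $Y$ at $R_{j+1}$-distance at most $2$ cannot lie in different halves. Hence $u$ and $v$ both lie in the half of $Y$ containing $x_1$, which is precisely the paper's argument.

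The repairs you propose instead cannot be used. The lemma is about the already fixed sets $W_i$ and witnesses $W_i^X$, so you cannot re-choose them. Moreover, your intermediate claim that $X\cap W_j\neq\emptyset$ need not hold: the witnesses of $Y$ may all lie in $Y\setminus X$, and then $x_1=y_\ell$ is impossible. Once (3) is closed as above, your pigeonhole argument for (4) goes through as written.
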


	\begin{proof} Let us prove the four conditions.
		\begin{enumerate}
			\item For small and large parts of~$\P_i$, this condition is immediate by the choice of~$W_i$.
			Consider a medium part $X \in \P_i$ and $W_i^X = \{x_1,x_2\}$ its witnesses.
			The corresponding parts in~$\P'_i$ are the halves $X^s = \Ball^2_{R_{i+1}}(x^s)$ for $s=1,2$, which are 2-small, as witnessed by~$x_s \in W_i \cap X^s$.
			Therefore, for each $X' \in \P'_i$, the set $W_i \cap X'$ contains witnesses that~$X'$ is either 8-small or 2-large.
			
			\item For a medium part $X \in \P_i$ with witnesses $W_i^X = \{x_1,x_2\}$, recall that being $(6,2)$-medium requires $\dist_{R_{i+1}}(x_1,x_2) > 6$.
			On the other hand, the two halves~$X^1,X^2$ are contained in the 2-balls in~$R_{i+1}$ around~$x_1,x_2$.
			Then triangle inequality gives that $\dist_{R_{i+1}}(u,v) > 2$ for any $u \in X^1$, $v \in X^2$.
			
			\item For $i \le j < m$, consider medium parts $X \in \P_i$ and $Y \in \P_j$ with $X \subseteq Y$.
			Take~$u,v$ in the same half of~$X$, say $u,v \in \Ball^2_{R_{i+1}}(x)$ where $x \in W_i^X$ is one of the witnesses of~$X$ being medium.
			As $R_{i+1} \subseteq R_{j+1}$, we also have that~$u,x$, respectively~$v,x$ are at distance at most~2 in~$R_{j+1}$.
			By condition~\ref{prop:medium-halves}, this implies that~$u,x$, respectively~$v,x$ are in the same half of~$Y$.
			So~$u,v$ are also in the same half of~$Y$, as~desired.
			
			\item Let~$X \in \P_i$ be a part, and for any~$j \ge i$, call~$X_j$ the part of~$\P_j$ containing~$X$.
			Consider the minimal~$j$ such that~$X_j$ is medium. (If there is no such $j$, then the claim is trivial with $B=A$.)
			For the given weight function~$f$, one of the two halves of~$X_j$ has weight more than~$\ell$, say $\sum_{x \in A \cap X_j^1} f(x) > \ell$.
			Then choose~$B$ to be $A \cap X_j^1$.
			For any subsequent $j' \ge j$ where~$X_{j'}$ is medium, condition~\ref{prop:medium-halves-future} implies that~$B$ is contained in a single half of~$X_{j'}$,
			and when~$X_{j'}$ is not medium, we trivially have $B \subset X_{j'} \in \P_{j'}$.
		\end{enumerate}
		This concludes the proof of \cref{lem:partitions}.
	\end{proof}
	Let us point out that while condition~\ref{prop:medium-halves-future} above proves that
	medium parts $X \in \P_i$, $Y \in \P_j$ are split in a consistent way,
	there might still be at some intermediate step $i < \ell < j$ a part $Z \in \P_\ell$ with $X \subseteq Z \subseteq Y$ that is not medium, and thus is not split in~$\P'_\ell$.
	Thus, it is not the case that $\P'_i$ refines $\P'_{i+1}$.
	Condition~\ref{prop:halves-pigeonhole} is a form of the pigeonhole principle that helps us to work around this difficulty.
	
	We now fix the sequence $\P_1',\dots,\P_m'$ as described above,
	and are ready to construct the sets $D_1 \supseteq \dots \supseteq D_{m-1}$.
	\begin{lemma}\label{lem:duals}
		There exist sets $D_1 \supseteq \dots \supseteq D_{m-1}$ such that for every $i \in [m-1]$, $D_i$ contains a dual set for each pair of parts $X', Y' \in \P_i'$,
		and moreover, for $i > 1$ and every $v\in V(G)$, 
		\begin{equation*}\label{eq:boundD}
			|\Ball_{R_{i+1}}^r(v)\cap D_{i - 1}|\le \O(dk^3).  
		\end{equation*}    
	\end{lemma}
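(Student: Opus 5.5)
We build the sets $D_i$ in reverse order, as for $W_i$: start with $D_{m-1}$, and for each $i$ obtain $D_{i-1}$ from $D_i$ by adding, in an inclusion-wise minimal way, vertices so that $D_{i-1}$ contains a dual for every pair in $\P'_{i-1}$. Concretely, for each pair $(X',Y')\in(\P'_{i-1})^2$ we first check whether $D_i$ already contains a dual for $(X',Y')$, i.e.\ a subset of $X'\cap D_i$ dominating $Y'$ or a subset of $Y'\cap D_i$ anti-dominating $X'$. If not, we add a dual $S_{X'Y'}$ of size at most $d$, given by the duality assumption. Monotonicity $D_{i-1}\supseteq D_i$ holds by construction.

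For the counting, fix $v$ and $i>1$, and let $B=\Ball^r_{R_{i+1}}(v)$. A vertex $u\in B\cap D_{i-1}$ was added at some stage $j\le i$, as part of a dual $S_{X'Y'}$ for a pair of parts of $\P'_{j-1}$. Since $R_{j}\subseteq R_{i+1}$, the part containing $u$ meets $B$, and $B$ meets at most $k+1$ parts of $\P_{i-1}$. The main obstacle is that a part of $\P'_{i-1}$ meeting $B$ may have received duals for pairs $(X',Y')$ with $Y'$ far away, and there may be unboundedly many such $Y'$. The key claim to prove is that minimality forces $Y'$ to be close to $B$.

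The argument for this claim uses homogeneity of $\P_{j}$ modulo $R_{j}$. Suppose $S_{X'Y'}\subseteq X'$ dominates $Y'$ and was needed, so $X'\cap D_j$ did not already dominate $Y'$. The parts $X'$ and $Y'$ of $\P'_{j-1}$ lie inside parts of $\P_j$, and these were already handled at stage $j$. So there is a witness of domination, or of non-domination, by the previous dual. Hence some vertex of $Y'$ is adjacent to some vertex of $X'$ and non-adjacent to another vertex of the same part of $\P_j$. By homogeneity modulo $R_j$, one of these pairs is resolved. This places $Y'$ within distance $1$ in $R_j$ of the part of $X'$, and thus within distance about $r+2$ (using small parts of diameter $\le 16$ only via witnesses, or large parts via pigeonhole) of $v$ in $R_{i+1}$. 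Here we invoke radius-$(r+2)$ width: such $Y'$ lie in at most $k$ parts of $\P_i$, hence at most $2(k+1)$ parts of $\P'_{i-1}$.

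Summing over the at most $O(k)$ parts $X'$ near $v$, the at most $O(k)$ relevant $Y'$, and the $d$ vertices per dual gives $O(dk^2)$ per stage. A further factor of $k$ is needed because duals added at earlier stages $j<i$ must be charged. For this we use \cref{lem:partitions}\ref{prop:halves-pigeonhole}: among the vertices of $B\cap D_{i-1}$ inside one part of $\P_{i-1}$, a weighted half stays in a single part of all later $\P'_j$. Such a half can only collect duals for pairs where the minimality argument applies at the current step. This bounds the total by $O(dk^3)$. I expect the delicate step to be making the charging across stages rigorous given that $\P'_j$ is not a chain.
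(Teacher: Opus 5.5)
Your proposal has a genuine gap at the step you call delicate, and the closeness step before it fails as written. The direction of time is reversed. A vertex of $D_{i-1}$ was inserted for a pair of parts of $\P'_j$ with $j\ge i-1$, i.e.\ at the same or a coarser time, and for $j\ge i$ the corresponding resolved set contains $R_{i+1}$. So your inclusion $R_j\subseteq R_{i+1}$ goes the wrong way, and a resolved pair found via homogeneity of $\P_j$ modulo $R_j$ at the generation of the dual says nothing about distances in $R_{i+1}$. Moreover, it attaches $Y'$ only to some vertex of a possibly huge part of $\P_j$, not to a vertex of $B$.

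The paper applies homogeneity at the finest time instead. By \cref{lem:partitions}, condition~\ref{prop:halves-pigeonhole}, it extracts a large $\Gamma_0\subseteq D_{i-1}\cap B\cap X$ with $X\in\P_{i-1}$, which stays inside a single part of $\P'_j$ for all $j\ge i-1$. Every $x\in\Gamma_0$, except possibly the first inserted one, has a reason $\rho(x)$ separating it from some earlier-inserted $y\in\Gamma_0$ (\cref{clm:reason-minimal}). Since $x,y\in X\in\P_{i-1}$, one of $x\rho(x)$, $y\rho(x)$ lies in $R_{i-1}\subseteq R_{i+1}$, so $\rho(x)\in\Ball^{r+1}_{R_{i+1}}(v)$. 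This requires each batch to be inclusion-wise minimal \emph{relative to the vertices already inserted}. Your concrete rule (add a fresh dual of size at most $d$ whenever none is present) does not guarantee that a new vertex is separated from the earlier vertices of its part by any vertex of $Y'$.

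More seriously, nothing in your sketch bounds the number of generations. A per-generation bound of $O(dk^2)$ is useless, because the generation ranges over up to $m$ values. Your assertion that a half ``can only collect duals where the minimality argument applies at the current step'' is unjustified: one half can contain dual vertices inserted in many different generations. The missing idea is \cref{cl:spread} together with \cref{cl:zell}:
\begin{itemize}
\item Restrict to $\Gamma\subseteq\Gamma_0$ whose reasons lie in one part of every $\P'_j$ and are all adjacent to their vertices. Then each generation contributes at most $d$ vertices of $\Gamma$, since they all belong to one dominating set for one pair.
\item Suppose $\Gamma$ meets generations $j_1<\dots<j_{k+4}$, with vertices $x_\ell$, reasons $r_\ell$, and parts $X_\ell\in\P_{j_\ell}$, $X'_\ell\in\P'_{j_\ell}$ containing $\Gamma$. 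The dominating set containing $x_\ell$ has a vertex $z_\ell$ adjacent to $r_{\ell-1}$.
\item Minimality gives $z_\ell r_\ell\notin E(G)$ and $z_\ell\notin X'_{\ell-1}$. Homogeneity puts $z_\ell$ in $\Ball^{r+2}_{R_{i+1}}(v)$. \cref{lem:partitions}, condition~\ref{prop:medium-halves}, excludes $z_\ell\in X_{\ell-1}$.
\item Hence $z_3,\dots,z_{k+4}$ lie in distinct layers $X_\ell\setminus X_{\ell-1}$, i.e.\ in $k+2$ distinct parts of $\P_{i-1}$. This contradicts radius-$(r+2)$ width at most $k$.
\end{itemize}
This is where the third factor of $k$ and the radius $r+2$ come from. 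Without an argument of this kind, your proposal yields no bound.
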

	\newcommand{\ins}{\mathsf{ins}}
	\begin{proof}%\sz{this proof still needs polishing}\michal{Went through it. It was a hard read, but I think it is ok.}
		We construct dual sets~$D_i$ for parts in~$\P'_i$, again in reverse order, starting with $D_m=\emptyset$.
		Let $i\in[m-1]$ and assume that~$D_{i+1}$ is already defined.
		We define~$D_i$ by the following process. Start with $D_i\coloneqq D_{i+1}$.
		Next, we consider all pairs $(X',Y')$ of parts of~$\P'_i$ in any order, and for each such pair, we add to $D_i$  any inclusion-wise minimal set~$\Delta$ which together with the current~$D_i$ contains a dual set for~$(X',Y')$.
		Then $|\Delta|\le d$ by the assumption that $G$ has a duality of order~$d$.
		We call~$\Delta$ a \emph{batch} from \emph{generation~$i$}, and all the elements of $\Delta$ are \emph{generated by the pair $(X',Y')$}.
		
		This process constructs~$D_1,\dots,D_{m-1}$ in reverse order,
		by sequentially adding vertices in batches (the~sets $\Delta$ as above).
		This thus defines an \emph{insertion quasi-ordering} $\qlt_\ins$ on $D_{1}$,
		where $x \qlt_\ins y$ means that~$x$ was inserted in some batch strictly before the batch containing~$y$,
		while $x \qle_\ins y$ means that~$x$ was inserted before or in the same batch as~$y$.
		Note that insertion times are reversed compared to time in the merge sequence,
		i.e.\ if~$x,y$ are from generations~$i,j$ respectively with $i < j$, then $x \qgt_\ins y$.

		Let $i \in [m-1]$.
		Consider some vertex~$x$ from generation~$i$, contained in the part $X' \in \P'_i$. This means that $x$ was generated by the pair $(X',Y')$ or  the pair $(Y',X')$, for some $Y' \in \P'_i$.
		This means~$x$ belongs to an inclusion-wise minimal set~$\Delta$
		such that~$\Delta$, together with vertices of~$D_i$ inserted before~$x$,
		contains a subset of~$X'$ that dominates~$Y'$ (if $x$ is generated by $(X',Y')$) or anti-dominates $Y'$ (if~$x$ is generated by $(Y',X')$).
		Thus, the minimality of~$\Delta$ implies that there is some vertex $w\in Y'$ that is adjacent (resp. non-adjacent) to~$x$,
		and not to any other vertex  $y\in D_i\cap X'$ with $y\qle_\ins x$. We call $w$ the \emph{reason} for~$x$, and denote it by $\rho(x)$ (see \cref{fig:duals}).
		Precisely, we have the following:

		\begin{claim}\label{clm:reason-minimal}
			Let $i \in [m-1]$, $x \in D_i$ be a vertex from generation~$i$, and  $X' \in \P'_i$ be the part containing~$x$.
			Then for any $y \qle_\ins x$ with $y \in X'\setminus \set x$, $$x\rho(x)\in E(G)\iff y\rho(x)\notin E(G).$$
		\end{claim}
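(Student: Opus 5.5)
The claim follows directly from how the batch $\Delta$ containing $x$ was chosen, using its inclusion-wise minimality. Suppose first that $x$ is generated by the pair $(X',Y')$, so that $\Delta$ was added in order to obtain a subset of $X'$ dominating $Y'$. Let $D^{\mathrm{prev}}$ be the set of vertices of $D_i$ inserted strictly before the batch $\Delta$. By the choice of $\Delta$, the set $(D^{\mathrm{prev}}\cup\Delta)\cap X'$ dominates $Y'$. By minimality, $(D^{\mathrm{prev}}\cup\Delta\setminus\{x\})\cap X'$ does not dominate $Y'$. Here we use that any dual set which is a subset of $X'$ dominating $Y'$ may be taken as $(D^{\mathrm{prev}}\cup \Delta)\cap X'$, since supersets within $X'$ still dominate.

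Hence there is a vertex $w\in Y'$ that is adjacent to $x$ but to no vertex of $(D^{\mathrm{prev}}\cup\Delta\setminus\{x\})\cap X'$. We set $\rho(x)\eqdef w$. The vertices $y\qle_\ins x$ with $y\in X'\setminus\{x\}$ are exactly the vertices of $(D^{\mathrm{prev}}\cup\Delta\setminus\{x\})\cap X'$. So $x\rho(x)\in E(G)$ and $y\rho(x)\notin E(G)$ for all such $y$, which gives the equivalence in the claim.

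The case where $x$ is generated by $(Y',X')$ is symmetric, with non-adjacency in place of adjacency. There $\Delta$ completes a subset of $X'$ anti-dominating $Y'$, meaning every vertex of $Y'$ has a non-neighbour in the set. Minimality yields $w\in Y'$ non-adjacent to $x$ but adjacent to every other earlier-or-same-batch vertex of $D_i\cap X'$. Then $x\rho(x)\notin E(G)$ and $y\rho(x)\in E(G)$, and again the equivalence holds.

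The only delicate point is bookkeeping: $\rho(x)$ must be defined with respect to the batch in which $x$ was actually inserted, and the set $\{y: y\qle_\ins x\}$ must coincide with the previously inserted vertices together with $\Delta$. Vertices from generations $j>i$ were inserted earlier, so they belong to $D^{\mathrm{prev}}$ and are covered. If $x$ happens to lie in several candidate batches, we fix its first insertion.
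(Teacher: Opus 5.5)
Your proposal is correct and follows the paper's reasoning. The paper defines $\rho(x)$ as the vertex of $Y'$ whose existence is forced by the inclusion-wise minimality of the batch $\Delta$ containing $x$, so the claim is a restatement of that choice, exactly as in your argument. Your caveat about $x$ lying in several batches is unnecessary: a minimal batch is automatically disjoint from the vertices already in $D_i$, so every vertex is inserted exactly once.
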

		
    \begin{figure}[h]
      \centering
      \includegraphics[scale=2.2]{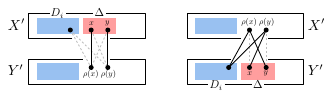}
      \caption{Illustration of the construction of the dual set for $(X',Y')$ with $X',Y'\in \P_i$.}\label{fig:duals}
    \end{figure}

		Our next goal is to prove that for $i > 1$ and $v\in V(G)$,
		\begin{equation}\label{eq:mainbound}
			\left|\Ball_{R_{i+1}}^r(v)\cap D_{i - 1}\right|\le 8d(k+1)^2(k+3) + 2(k+1).  
		\end{equation}
		Fix $i>1$ and a vertex $v \in V(G)$, and denote $B^r \eqdef \Ball_{R_{i+1}}^r(v)$.
		Assume towards a contradiction that
		$$\left|D_{i - 1} \cap B^r\right| > 8d(k+1)^2(k+3) + 2(k+1).$$
		
		By the assumption on the width of the merge sequence, $B^r$ intersects at most~$k$ parts of~$\P_i$, so at most $k+1$ parts of $\P_{i-1}$.
		Thus, there is some part $X \in \P_{i - 1}$ such that $$\left|D_{i - 1} \cap B^r \cap X\right| > 8d(k+1)(k+3)+2.$$
		Now, by \cref{lem:partitions}, condition \ref{prop:halves-pigeonhole} (with $A=D_{i - 1} \cap B^r \cap X$ and $f(v) \eqdef 1$ for each $v\in A$), there is a subset
		\begin{equation}
			\Gamma_0 \subseteq D_{i-1} \cap B^r \cap X,
		\end{equation}
		with 
		\begin{equation}\label{eq:sizeofgamma}
			\left|\Gamma_0\right|>4d(k+1)(k+3)+1,  
		\end{equation}
		such that the following holds:
		\begin{equation}\label{eq:Gamma-same-part}
			\textit{$\Gamma_0$ is contained in a single part of~$\P'_j$, for all~$j \ge i - 1$.}
		\end{equation}
		
		Now consider any pair of distinct vertices $x, y \in \Gamma_0$ such that $y \qle_\ins x$.
		Let~$j$ be the generation of $x$, which is at least~$i-1$ since $x \in D_{i-1}$.
		By \cref{eq:Gamma-same-part}, $x,y$ are in the same part of~$\P'_j$.
		% For any $y \qle_\ins x$ in~$\Gamma_0$, if~$x$ is from generation~$j$, then $j \ge i - 1$, and we have by \eqref{eq:Gamma-same-part} that~$x,y$ are in the same part of~$\P'_j$.
		Thus, \cref{clm:reason-minimal} implies that $\rho(x)$ has both an edge and a non-edge towards~$\{x,y\}$ in~$G$.
		Since~$x,y$ are both in~$X \in \P_{i - 1}$, and~$\P_{i-1}$ is homogeneous modulo~$R_{i-1}$,
		this implies that either~$x\rho(x)$ or~$y\rho(x)$ belongs to~$R_{i-1}\subset R_{i+1}$.
		Thus,~$\rho(x)$ is a neighbour of~$\Gamma_0$ in~$R_{i+1}$, and this holds for all~$x\in \Gamma_0$, except possibly the minimum of~$\Gamma_0$ if it is unique
		(i.e.\ the~unique vertex $x \in \Gamma_0$ such that $x \qlt_\ins y$ for every $y \in \Gamma_0$, if it exists).
		
		Recall from \cref{eq:sizeofgamma} that~$\left|\Gamma_0\right|>4d(k+1)(k+3)+1$.
		Thus, if we remove from~$\Gamma_0$ the smallest element,
		we are left with a subset $\Gamma_1 \subseteq \Gamma_0$ with $|\Gamma_1|>4d(k+1)(k+3)$ and $\rho(\Gamma_1)\subset N_{R_{i+1}}(\Gamma_0)$.
		
		Combined with \cref{eq:Gamma-same-part}, we also obtain that for any $j \ge i - 1$,
		if~$X'_j$ denotes the part of~$\P'_j$ containing~$\Gamma_0$, then:
		\begin{equation}\label{eq:Gamma-reason-close}
			\rho(\Gamma_1)\subseteq N_{R_{i+1}}(X'_j).
		\end{equation}
		
		Since $\Gamma_0 \subseteq B^r$ and $\rho(\Gamma_1)\subset N_{R_{i+1}}(\Gamma_0)$, it follows that~$\rho(\Gamma_1) \subseteq B^{r+1} \eqdef \Ball_{R_{i+1}}^{r+1}(v)$.
		As $|B^{r+1}/\P_{i-1}|\le k + 1$, there is a part $Y \in \P_{i - 1}$ and a subset $\Gamma_2\subset \Gamma_1$
		such that the following holds:
		\begin{equation}\label{eq:Gamma-reason-dist}
			\rho(\Gamma_2)\subset B^{r+1} \cap Y\quad\text{and}\quad{|\Gamma_2|>4d(k+3)}.
		\end{equation}
		
		Next, by applying \cref{lem:partitions}, condition \ref{prop:halves-pigeonhole},
		to $A = \rho(\Gamma_2)$ with weight function $f(v) \eqdef |\{x \in \Gamma_2 \mid \rho(x) = v\}|$,
		we obtain a subset of $\rho(\Gamma_2)$ that contains reasons for more than $2d(k+3)$ vertices of $\Gamma_2$ and is contained in one part of~$\P'_j$, for all~$j \ge i - 1$.
		Let $\Gamma_3$ be those vertices of~$\Gamma_2$. Therefore, $|\Gamma_3| > 2d(k+3)$ and
		\begin{equation}\label{eq:Gamma-reason-same-part}
			\text{$\rho(\Gamma_3)$ is contained in one part of~$\P'_j$, for 
				all~$j \ge i - 1$.}
		\end{equation}
		A final application of the pigeonhole principle allows us to
		restrict to a set $\Gamma \subseteq \Gamma_3$ with $|\Gamma|>d(k+3)$  such that
		\begin{equation*}
			\set{x\rho(x)\colon x\in \Gamma}\subseteq E(G)\quad\textrm{or}\quad \set{x\rho(x)\colon x\in \Gamma}\subseteq  E(\overline{G}).
		\end{equation*}
		By symmetry, for the remainder of the proof we assume that
		\begin{equation}
			\label{eq:Gamma-reason-positive}
			\set{x\rho(x)\colon x\in \Gamma}\subseteq E(G).
		\end{equation}
		
		\begin{claim}\label{cl:spread}
			For each~$j \ge i - 1$, there are at most~$d$ vertices from generation~$j$ in~$\Gamma$.
		\end{claim}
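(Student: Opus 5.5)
The plan is to fix $j \ge i-1$ and show that all vertices of $\Gamma$ from generation~$j$ lie in one batch. Every batch has size at most~$d$, since $G$ has a duality of order~$d$, so the claim follows.

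First, I would place these vertices and their reasons in $\P'_j$. By \eqref{eq:Gamma-same-part}, $\Gamma\subseteq \Gamma_0$ is contained in a single part $X'_j\in\P'_j$. By \eqref{eq:Gamma-reason-same-part}, since $\Gamma\subseteq\Gamma_3$, the set $\rho(\Gamma)$ is contained in a single part $Y'_j\in\P'_j$. Both facts hold for every $j\ge i-1$, and in particular for the generation under consideration.

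Next, I would identify the pair of parts that generated each such vertex. Let $x\in\Gamma$ be from generation~$j$. By construction, $x$ was generated by an ordered pair of parts of $\P'_j$, one of which contains $x$; that part is $X'_j$. The reason $\rho(x)$ was chosen in the other part, so that part is $Y'_j$. Hence $x$ was generated either by $(X'_j,Y'_j)$, acting as a subset of $X'_j$ that dominates $Y'_j$, or by $(Y'_j,X'_j)$, acting as a subset of $X'_j$ that anti-dominates $Y'_j$.

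In the anti-domination case, the reason is by definition a vertex of $Y'_j$ non-adjacent to~$x$. This contradicts \eqref{eq:Gamma-reason-positive}, which says $x\rho(x)\in E(G)$. So every generation-$j$ vertex of $\Gamma$ was generated by the single ordered pair $(X'_j,Y'_j)$, in its dominating role.

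Finally, I would count. In generation~$j$, each ordered pair of parts of $\P'_j$ is processed exactly once and contributes exactly one batch~$\Delta$. Therefore all generation-$j$ vertices of $\Gamma$ lie in this one batch, which has size at most~$d$. When $X'_j=Y'_j$, the two ordered pairs coincide, so there is still only one batch and the same bound holds.

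I expect the main obstacle to be bookkeeping rather than a real difficulty. One has to check carefully that the domination/anti-domination role, and hence the adjacency between $x$ and $\rho(x)$, is determined by which of the two parts contains~$x$. That is exactly what the symmetry reduction to \eqref{eq:Gamma-reason-positive} exploits.
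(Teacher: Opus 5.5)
Your proof is correct and follows essentially the same argument as the paper. Like the paper, you place $\Gamma$ and $\rho(\Gamma)$ in parts $X'_j,Y'_j$ of $\P'_j$ via \eqref{eq:Gamma-same-part} and \eqref{eq:Gamma-reason-same-part}, use \eqref{eq:Gamma-reason-positive} to rule out the anti-dominating role, and conclude that all generation-$j$ vertices of $\Gamma$ lie in the single batch for $(X'_j,Y'_j)$, which has size at most $d$; your remarks on one batch per ordered pair and on the case $X'_j=Y'_j$ only spell out what the paper leaves implicit.
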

		\begin{claimproof}
			By \cref{eq:Gamma-same-part,eq:Gamma-reason-same-part}, there are parts $X',Y' \in \P'_j$ such that $\Gamma\subseteq X'$ and $\rho(\Gamma)\subseteq Y'$.
			Thus, all the vertices from generation~$j$ in~$\Gamma$ were added to create a dual set for $(X',Y')$ or for $(Y',X')$.
			Since~\cref{eq:Gamma-reason-positive} also implies that each $x \in \Gamma$ is adjacent to $\rho(x)$,
			they must in fact have been added to create a dominating set of~$Y'$ contained in~$X'$, as part of an $(X',Y')$-dual set.
			Since~$G$ has a duality of order~$d$, we added at most~$d$ vertices to make this dual set.
		\end{claimproof}
		By \cref{cl:spread}, the vertices of~$\Gamma$ have to be spread over more than~$k+3$ generations.
		Thus, there are steps $i - 1 \le j_1 < \dots < j_{k+4}$ such that for each~$\ell$, $\Gamma$ contains a vertex~$x_\ell$ of generation~$j_\ell$.
		Note that the insertion order $x_1 \qgt_\ins \dots \qgt_\ins x_{k+4}$ is reversed.
		Let~$r_\ell\eqdef\rho(x_\ell)$,
		and $X'_\ell,Y'_\ell$ be the parts of $\P'_{j_\ell}$ containing respectively~$\Gamma$ and~$\rho(\Gamma)$ (which exist due to~\cref{eq:Gamma-same-part,eq:Gamma-reason-same-part}).
		Let also~$X_\ell$ be the part of~$\P_{j_\ell}$ containing~$\Gamma$,
		so that~$X'_\ell$ is either equal to~$X_\ell$, or constitutes one half of~$X_\ell$ if the latter is medium.
		Finally, define $B^{r+2} \eqdef \Ball_{R_{i+1}}^{r+2}(v)$. 
		
		\begin{claim}\label{cl:zell}
			For each~$2<\ell \le k+4$, there exists some vertex $z_\ell \in (X_\ell \setminus X_{\ell-1}) \cap B^{r+2}$.
		\end{claim}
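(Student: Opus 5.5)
The plan is to extract $z_\ell$ from the dual set that already existed at generation $j_\ell$ for the pair $(X'_\ell,Y'_\ell)$, and then explain why generation $j_{\ell-1}$ still had to insert $x_{\ell-1}$. I have not settled how to certify $z_\ell\in B^{r+2}$; this is the main obstacle, discussed in the third paragraph.

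\emph{Setting up.} Fix $2<\ell\le k+4$. Since $j_{\ell-1}<j_\ell$, the vertex $x_\ell$, and everything of generation $\ge j_\ell$, was inserted before $x_{\ell-1}$. By \cref{eq:Gamma-same-part,eq:Gamma-reason-same-part} the vertices $x_\ell,x_{\ell-1}$ lie in a common part of $\P'_{j_{\ell-1}}$. So \cref{clm:reason-minimal} applied to $x_{\ell-1}$ gives $x_\ell r_{\ell-1}\notin E(G)$, while $x_{\ell-1}r_{\ell-1}\in E(G)$ by \cref{eq:Gamma-reason-positive}. Likewise $\rho(\Gamma)$ lies in a single part of $\P'_{j_\ell}$, so $r_{\ell-1}\in Y'_\ell$.

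\emph{Locating the candidate $z_\ell$.} When $D_{j_\ell}$ was finished, it contained a dual set $S$ for $(X'_\ell,Y'_\ell)$, all of whose elements are $\qle_\ins$ every vertex of generation $j_{\ell-1}$, in particular $\qle_\ins x_{\ell-1}$.
\begin{itemize}
\item If $S\subseteq Y'_\ell$ anti-dominates $X'_\ell$, some $s\in S$ is non-adjacent to $x_\ell$. I expect to rule this case out, or reduce it to the other, by arguing in the same way with $x_\ell$ and its reason $r_\ell$, using \cref{eq:Gamma-reason-positive}.
\item So the main case is that $S\subseteq X'_\ell$ dominates $Y'_\ell$. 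Then some $z\in S$ is adjacent to $r_{\ell-1}$.
\end{itemize}
I claim $z\notin X'_{\ell-1}$. Otherwise $z$ is an earlier-inserted vertex of $X'_{\ell-1}$ adjacent to $r_{\ell-1}$, contradicting \cref{clm:reason-minimal} for $x_{\ell-1}$. Because $X'_{\ell-1}$ is $X_{\ell-1}$ or one of its halves, and by \cref{prop:medium-halves} and \cref{prop:medium-halves-future} of \cref{lem:partitions}, I expect either $z\in X_\ell\setminus X_{\ell-1}$, or $z$ lies in the other half of $X_{\ell-1}$. The second option should be handled the same way, possibly by passing to step $j_{\ell-2}$, which is why $\ell>2$ is needed.

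\emph{Main obstacle: $z\in B^{r+2}$.} We know $r_{\ell-1}\in\rho(\Gamma_2)\subseteq B^{r+1}$ by \cref{eq:Gamma-reason-dist}. So it suffices that the pair $zr_{\ell-1}$, or some replacement edge from $r_{\ell-1}$ into $X_\ell\setminus X_{\ell-1}$, lies in $R_{i+1}$.
\begin{itemize}
\item First attempt: use homogeneity. $x_\ell$ and $z$ lie in a common part, $r_{\ell-1}$ lies in one part, and $zr_{\ell-1}$ is an edge while $x_\ell r_{\ell-1}$ is not. Hence one of these pairs is resolved.
\item The difficulty is that this homogeneity is only modulo $R_{j_\ell}$, not $R_{i+1}$.
\item To fix this, I would re-choose $z$ more carefully: among the vertices of $X_\ell\setminus X_{\ell-1}$ adjacent to $r_{\ell-1}$, take one reachable from $r_{\ell-1}$ by an $R_{i+1}$-edge. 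Its existence should follow from homogeneity of $\P_{i-1}$ modulo $R_{i-1}\subseteq R_{i+1}$, applied to the part of $\P_{i-1}$ containing $r_{\ell-1}$ and the parts meeting $X_\ell$.
\item If that fails, I would instead run the domination argument inside $B^{r+2}$ itself: assume $X_\ell\cap B^{r+2}=X_{\ell-1}\cap B^{r+2}$, and derive that $D_{j_\ell}$ already contained a dominator of $r_{\ell-1}$ inside $X'_{\ell-1}$. That contradicts $x_{\ell-1}$ being generated.
\end{itemize}
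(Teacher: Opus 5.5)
Your proposal has a genuine gap, and it is the step you flag yourself: you never prove $z_\ell\in B^{r+2}$. Your treatment of the case where $z_\ell$ falls into the other half of a medium $X_{\ell-1}$ also depends on that step.

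The missing idea is to apply \cref{clm:reason-minimal} to $x_\ell$ itself, not only to $x_{\ell-1}$. This requires choosing the candidate more carefully. Do not take it from an arbitrary dual set present at the end of generation $j_\ell$, since that set may contain vertices inserted after $x_\ell$. Instead, take $z_\ell$ to be a neighbour of $r_{\ell-1}\in Y'_\ell$ inside the subset of $X'_\ell$ dominating $Y'_\ell$ into which $x_\ell$ was inserted. That subset is dominating, not anti-dominating, because $x_\ell r_\ell\in E(G)$, exactly as in \cref{cl:spread}; so your anti-dominating case never arises.

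With this choice, $z_\ell\in X'_\ell$ and $z_\ell\qle_{\mathsf{ins}}x_\ell$, so \cref{clm:reason-minimal} gives $z_\ell r_\ell\notin E(G)$, while $z_\ell r_{\ell-1}\in E(G)$. Now use homogeneity with the roles swapped relative to your first attempt. Comparing two vertices $z,x_\ell$ against $r_{\ell-1}$ only yields a pair resolved in $R_{j_\ell}$, as you noticed. Instead compare the single vertex $z_\ell$ against the two reasons $r_\ell,r_{\ell-1}$, which lie in a common part of $\P_{i-1}$ by \cref{eq:Gamma-reason-same-part}. Since $\P_{i-1}$ is homogeneous modulo $R_{i-1}\subseteq R_{i+1}$, one of $z_\ell r_\ell$, $z_\ell r_{\ell-1}$ lies in $R_{i+1}$. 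As $r_\ell,r_{\ell-1}\in B^{r+1}$ by \cref{eq:Gamma-reason-dist}, this gives $z_\ell\in B^{r+2}$.

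The same $R_{i+1}$-adjacency is what excludes the other half. This is also where $\ell>2$ is used, not by passing to $j_{\ell-2}$. By \cref{eq:Gamma-reason-close}, both $r_\ell$ and $r_{\ell-1}$ are $R_{i+1}$-neighbours of $X'_{\ell-1}$, so $z_\ell$ is within $R_{i+1}$-distance $2$ of $X'_{\ell-1}$. If $z_\ell$ lay in the other half of $X_{\ell-1}$, condition~\ref{prop:medium-halves} of \cref{lem:partitions} would put it at distance more than $2$ in $R_{j_{\ell-1}+1}$. Since $j_{\ell-1}\ge j_2\ge i$, we have $R_{j_{\ell-1}+1}\supseteq R_{i+1}$, so that distance also exceeds $2$ in $R_{i+1}$, a contradiction.

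Your fallback suggestions are not carried out, and as stated they do not produce an edge and a non-edge within a single pair of parts of $\P_{i-1}$ involving the candidate. In particular, $x_\ell$ lies in the part of $\P_{i-1}$ containing $\Gamma$, which is inside $X_{\ell-1}$, while the desired $z$ lies outside $X_{\ell-1}$. So the pair $x_\ell r_{\ell-1}$, $z r_{\ell-1}$ cannot be exploited at time $i-1$.
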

		\begin{claimproof}
			Recall that~$x_\ell r_\ell\in E(G)$ by~\cref{eq:Gamma-reason-positive}.
			This means that~$x_\ell$ was picked in generation~$j_\ell$ as part of a subset of $X'_\ell$ that  dominates~$Y'_\ell$.
			As~$r_{\ell-1}$ (and indeed all $r_1,\dots,r_{k+4}$) also belongs to~$Y'_\ell$,
			this dominating subset must also contain some vertex $z_\ell$ adjacent to~$r_{\ell-1}$ (see \cref{fig:zell}).
			Then:
			\begin{enumerate}
				\item As~$x_\ell$ was introduced to form a subset of $X'_\ell$ dominating~$Y'_\ell$,
				and~$z_\ell$ was used in the same dominating subset,
				$z_\ell$ was inserted earlier or in the same batch as~$x_\ell$.
				Therefore, we have $z_\ell \qle_\ins x_\ell \qlt_\ins x_{\ell-1}$.
				\item $z_\ell r_\ell\notin E(G)$.
				Indeed, since~$z_\ell,x_\ell\in X'_\ell \in \P'_{j_\ell}$ and $z_\ell \qle_\ins x_\ell$,
				this follows from \cref{clm:reason-minimal}.
				\item $z_\ell\in B^{r+2}$.
				Indeed, we have $z_\ell r_{\ell-1}\in E(G)$ and $z_\ell r_\ell\notin E(G)$.
				Since $r_\ell, r_{\ell-1}$ are both in the same part of~$\P_{i+1}$ by \cref{eq:Gamma-reason-same-part}, $z_\ell$ must be adjacent to one of them in~$R_{i+1}$.
				Furthermore, $r_\ell,r_{\ell-1}$ are both in~$B^{r+1}$ by \eqref{eq:Gamma-reason-dist}, thus $z_\ell$ is in~$B^{r+2}$.
				\item $z_\ell\notin X'_{\ell-1}$.
				Indeed, $z_\ell$ and~$x_{\ell-1}$ are both adjacent to~$r_{\ell-1}$ in~$G$,
				and $z_\ell \qlt_\ins x_{\ell-1}$, hence by \cref{clm:reason-minimal}, it cannot be that $z_\ell$ is in the same part $X'_{\ell-1} \in \P'_{j_{\ell-1}}$ as~$x_{\ell-1}$.
				\item Finally, $z_\ell\notin X_{\ell-1}$.
				Indeed, otherwise~$X_{\ell-1}$ has to be medium, with $X'_{\ell-1}$ being one half and~$z_\ell$ contained in the other half.
				In that case, \cref{lem:partitions}, condition \ref{prop:medium-halves}, implies that~$z_\ell$ is at distance more than~$2$ from~$X'_{\ell-1}$ in~$R_{j_{\ell - 1}+1}$.
				As $\ell > 2$, we have that
				% $i - 1 \leq j_1  < j_{\ell - 1} < j_{\ell - 1} + 1$, so
				$j_{\ell - 1} + 1 \geq i + 1$,
				so $z_\ell$ is at distance more than~$2$ from~$X'_{\ell-1}$ also in $R_{i + 1}$.
				% and a fortiori in~$R_{j_{\ell - 1}+1}$.
				But we have established that in~$R_{i + 1}$, $z_\ell$ is at distance~1 from either $r_\ell$ or $r_{\ell-1}$,
				and each of $r_\ell,r_{\ell-1}$ is at distance at most~1 from~$X'_{\ell-1}$ by \cref{eq:Gamma-reason-close}, a contradiction. \claimqedhere
			\end{enumerate}
		\end{claimproof}
		
    \begin{figure}[h]
      \centering
      \includegraphics[scale=1.6]{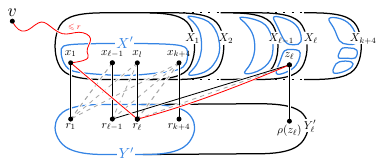}
      \caption{Illustration of \cref{cl:zell}. The partition $\P'_i$ is in \textcolor[rgb]{0.208, 0.518, 0.894}{blue}. Black edges are from $E(G)$, dashed \textcolor{gray}{gray} edges are from $E(\overline{G})$, \textcolor{red}{red} edges are from $R_{i+1}$. } \label{fig:zell}
    \end{figure}

		% Thus, for each~$\ell > 1$, we have $z_\ell \in (X_\ell \setminus X_{\ell-1}) \cap B^{r+2}$.
		\Cref{cl:zell} implies that $z_3,\dots,z_{k+4}\in B^{r+2}$ are~$k+2$ vertices in distinct parts of~$\P_{i-1}$, 
		which implies 
		$|B^{r+2}/\P_{i-1}|> k + 1$.
		This contradicts the assumption that the radius-$(r+2)$ width of the merge sequence is~$k$.
		This contradiction proves \cref{eq:mainbound}, and concludes the proof of \Cref{lem:duals}.
	\end{proof}
	We now wrap up the proof of \cref{lem:orderS}.
	Set $S_0\coloneqq V(G)$, and 
	for $i\in [m-1]$, set:
	$$S_{i}\coloneqq W_{i}\cup D_{i},$$
	where $W_i$ and $D_i$ are obtained from \Cref{lem:witnesses,lem:duals}, respectively.
	Then $D_i$ is a $(2,8)$-sample set for $(\P_i',R_{i+1})$, by \cref{lem:partitions}, condition~\ref{prop:sample}, and \cref{lem:duals}.
	By \cref{lem:witnesses,lem:duals} we obtain, for each $1<i<m$:
	$$\left|\Ball_{R_{i+1}}^{r}(v) \cap S_{i-1}\right|\le \O(dk^3)\quad\text{for $v\in V(G)$.}$$
	
	For $i = 1$ we know that $\Ball_{R_2}^{r}(v)$ intersects at most $k$ parts of~$\P_1$, which are singletons.
	Therefore, $|\Ball_{R_2}^{r}(v) \cap S_0| \leq k$, so the bound also holds. 
	This concludes the proof of \cref{lem:orderS}.
\end{proof}

We now complete the proof of \cref{thm:def-mw} using \cref{lem:orderS}. 

\begin{proof}[Proof of \cref{thm:def-mw}]
	Let $(\P_1, R_1), \dots , (\P_m , R_m)$ be a merge sequence for $G$ of radius-$(17r+3)$ width at most $k$.
  Without loss of generality we can assume that
  % $|\P_i| = |\P_{i + 1}| + 1$, i.e.
  $\P_1\prec \dots \prec \P_m$ is a maximal chain of partitions.
  Let $S_0 \supseteq S_1\supseteq \dots \supseteq S_{m-1}$ be subsets obtained from \cref{lem:orderS}. Consider a total order on $V(G)$ which first places $S_{m-1}$, followed by  $S_{m-2} \setminus S_{m-1}$, etc., up to $S_0\setminus S_1$, where each of those sets is ordered arbitrarily. %\sz{we still need to order $V(G)\setminus S_1$}
  Fix a set $S$ which forms a prefix of this order.
  Then there is some $i$
  with $S_i \subseteq S \subseteq S_{i-1}$. For  $v \in V(G)$, denote 
  \begin{equation*}
  S_v \eqdef \Ball_{R_{i+1}}^{17r+1}(v) \cap S.      
  \end{equation*}
  By \cref{lem:orderS} we have 
  \begin{align}\label{eq:bound_Sv}
  % |S_v| \le |\Ball_{R_{i+1}}^{17r+1}(v)\cap S_i|+|S_{i-1}\setminus S_i|\le  \O(dk^3).
  |S_v| \le |\Ball_{R_{i+1}}^{17r+1}(v)\cap S_{i-1}| \le  \O(dk^3).
  \end{align}
	Using similar arguments as in \cref{lem:mw-neigh-refinement}, we now show that \begin{equation} \label{eq:bound_Spart_Ri}
		|\Ball_{R_{i+1}}^{17r}(v) / \Nn_S| \le |S_v| + k\cdot \pi_G(|S_v|).
	\end{equation}
  % \karolina[inline]{
  First, observe that $\Ball_{R_{i+1}}^{17r}(v)$ contains at most $|S_v|$ elements of $S$, which form singleton parts of $\Nn_S$.
  It remains to bound the number of intersected parts of $\Nn_S$ within $V \setminus S$.
  
  Let $P \in \P_i$ be such that $\Ball_{R_{i+1}}^{17r}(v) \cap P \neq \emptyset$.
  Note that the number of such parts is at most~$k$. Let $A \eqdef (\Ball_{R_{i+1}}^{17r}(v) \cap P)\setminus S$.
  Observe that for each $u \in S \setminus S_v$ there is no resolved pair between $u$ and $A$ in $R_{i + 1}$ (otherwise $u$ would be in $S_v$).
  Since $\P_i$ is homogeneous modulo~$R_{i + 1}$, $A$ is homogeneous to $u$.
  Therefore, |$A/ \Nn_S| = |A/ \Nn_{S_v}| \leq \pi_G(|S_v|)$.
  This proves \cref{eq:bound_Spart_Ri}.
  % Therefore, the number of parts of $\Nn_S$ intersected by $A$ is bounded by the number of different neighbourhood types on $S_v$.
  % So we get $|\Ball_{R_{i+1}}^{17r}(v) / \Nn_S| \le |S_v| + k \cdot \pi_G(|S_v|)$.

	Finally, since $S$ is a $(2,8)$-sample set of $(\P_i', R_{i+1})$, by \cref{lem:incremental}, there exists an $S$-flip $G'$ of $G$ such that $$\Ball_{G'}^r(v) \subseteq \Ball_{R_{i+1}}^{17r}(v).$$ Now, the first inequality in the conclusion of the theorem follows by combining this with \cref{eq:bound_Spart_Ri,eq:bound_Sv}, while the second inequality follows from the Sauer-Shelah-Perles lemma, \Cref{sauer_shelah_lemma}.
\end{proof}

\section{Proofs of the main results}
\label{sec:proofs}
In this section, we wrap up the proofs of the main results stated in the introduction, namely \Cref{thm:intro,thm:cw,thm:almost-bounded}.

\paragraph{Bounded merge-width}
\Cref{thm:intro} states that bounded merge-wid  th, bounded partition merge-width, bounded transient merge-width, and bounded definable merge-width are all equivalent properties of graph classes.
As noted earlier, this follows from 
 \cref{thm:main1}, \cref{lem:dmw-pmw,lem:pmw-tmw,lem:tmw-mw}, and \cref{thm:main2}.

\paragraph{Infinite radius}
Merge-width, and its transient, partition, and definable variants each have an associated limit parameter,
denoted $\mw_\infty, \tmw_\infty,\pmw_\infty$, and $\dmw_\infty$  respectively, and defined by the same definitions as the finite-radius variants, but
interpreting $\Ball^\infty(v)$ as $\bigcup_{r\ge 1} \Ball^r(v)$ in each of the definitions. Equivalently, $\mw_\infty(G)=\mw_r(G)$ where $r\ge |V(G)|-1$, and similarly for the remaining variants.
Therefore, by \Cref{lem:tmw-mw} and \Cref{lem:pmw-tmw}  we have:
$$\pmw_\infty(G)\le \tmw_\infty(G)\le \mw_\infty(G).$$
\Cref{lem:dmw-pmw} implies
$$\pmw_\infty(G)\le 2\cdot \dmw_\infty(G)+1.$$
Furthermore, by \cref{thm:main1},
 $\mw_\infty(G)$ is bounded in terms of $\pmw_\infty(G)$, while
by \Cref{thm:main2},
$\dmw_\infty(G)$ is bounded in terms of $\mw_\infty(G)$.
 Thus, all those parameters are functionally equivalent to each other.
 Since $\mw_\infty(G)$ is functionally equivalent to clique-width \cite[Thm.~7.1]{merge-width}, altogether this proves \Cref{thm:cw}.

 \paragraph{Almost bounded merge-width}
%  We turn to the proof of \Cref{thm:almost-bounded}.
 Recall that a graph class $\CC$ has \emph{almost bounded merge-width}
 \cite{merge-width}
 if for every fixed $r\in\N$ and $\varepsilon>0$,
 for sufficiently large $n$ we have that $\mw_r(G)\le n^\varepsilon$ for every $n$-vertex graph $G\in \CC$.
More concisely, for fixed $r$ we have that  $\mw_r(G)\le |V(G)|^{o(1)}$ holds for all $G\in\CC$.
Analogously, we define \emph{almost bounded partition merge-width}, 
\emph{almost bounded definable merge-width}, and 
\emph{almost bounded transient merge-width}.
\Cref{thm:almost-bounded} states that for a hereditary graph class $\CC$, those four notions are equivalent.

Towards a proof of \Cref{thm:almost-bounded},
 first observe that for classes $\CC$ of VC-dimension bounded by a fixed constant $d$, the four conditions are all equivalent.
 This again follows by 
 \cref{thm:main1}, \cref{lem:dmw-pmw,lem:pmw-tmw,lem:tmw-mw}, and \cref{thm:main2},
 because each of the bounds relating one notion of merge-width with another in those statements is polynomial, for graphs $G$ of VC-dimension bounded by a fixed constant $d$ 
 (recall that $\pi_G(n)\le \O(n^d)$ by \Cref{sauer_shelah_lemma}),
 and we have that $\textit{poly}(n^{o(1)})=n^{o(1)}$ for any fixed polynomial $\textit{poly}$.

 To conclude the proof of \Cref{thm:almost-bounded},
it is therefore enough to prove that if $\CC$ is a  \emph{hereditary}
graph class $\CC$ satisfying either of the four conditions,
then $\CC$ has VC-dimension bounded by a constant. 
Since almost bounded partition merge-width is the weakest of the four properties (it is implied by each other property), it is enough to prove this for hereditary classes of almost bounded partition-merge width.
To this end, we prove the following lemma, which is an analogue of \cite[Cor.~5.26]{flip-width}.

\begin{restatable}{lemma}{VCdim}\label{lem:VCdim}
  Let $G$ be a graph with $\VC(G)\ge d$.
  Then $G$ contains an induced subgraph $H$ 
  with at most $2d$ vertices and
  $$\pmw_2(H)\ge \Omega(d).$$
\end{restatable}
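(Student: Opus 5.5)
The plan is to turn a large shattered set into an explicit bipartite pattern of size $2d$ whose rows and columns are pairwise far apart in Hamming distance, and then look at the first moment the partition sequence merges two of the chosen vertices.

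\emph{Choosing $H$.} Let $A=\{a_1,\dots,a_d\}$ be a set shattered in $G$. Fix a $d\times d$ $0/1$ matrix $M$ such that every two rows and every two columns differ in at least $cd$ positions, for an absolute constant $c>0$. A random matrix or a Hadamard-type matrix has this property. By shattering, for each $j$ pick a vertex $b_j$ with $N(b_j)\cap A=\{a_i : M_{ij}=1\}$. If some $b_j$ lies in $A$, or two chosen $b_j$'s coincide, we drop a few indices, which only changes constants. Set $H\eqdef G[A\cup B]$ with $B=\{b_1,\dots,b_d\}$, so $H$ has at most $2d$ vertices. Edges inside $A$ and inside $B$ are not controlled, but the $A$--$B$ pattern is exactly $M$.

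\emph{Key observation.} Let $\Q$ be a partition of $V(H)$, and let $u,v$ lie in a common part of $\Q$. Any vertex $w\notin\{u,v\}$ adjacent in $H$ to exactly one of $u,v$ remains adjacent to exactly one of them in every $\Q$-flip, because $w$'s part is flipped uniformly against the part containing $u,v$. Hence $w\in\Ball^1_{\Q}(u)\cup\Ball^1_{\Q}(v)$.

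\emph{Main argument.} Take any partition sequence $\P_1\preceq\dots\preceq\P_m$ for $H$. Let $t$ be the least index such that some part $X\in\P_{t+1}$ contains two distinct vertices $u,v$ of $A\cup B$. By minimality, all vertices of $A\cup B$ lie in distinct parts of $\P_t$. Suppose $u,v$ have at least $cd$ distinguishers $w$. By the observation, at least $cd/2$ of them lie in, say, $\Ball^1_{\P_{t+1}}(u)$, and these lie in pairwise distinct parts of $\P_t$. This gives $|\Ball^2_{\P_{t+1}}(u)/\P_t|\ge cd/2$, hence $\pmw_2(H)\ge\Omega(d)$. If $u,v\in B$, the distinguishers inside $A$ are exactly the positions where the corresponding columns of $M$ differ, so there are at least $cd$ of them. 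The case $u,v\in A$ is symmetric, using the rows.

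\emph{Main obstacle: mixed pairs $u=a_i$, $v=b_j$.} Here the distinguishers in $B$ are the $b'$ whose (controlled) adjacency to $a_i$ differs from their (uncontrolled) adjacency to $b_j$. Let $\beta_j\in\{0,1\}^B$ be the adjacency vector of $b_j$ towards $B$. The pair $(a_i,b_j)$ has few distinguishers only if row $i$ of $M$ is within Hamming distance $cd/3$ of $\beta_j$. By the row-distance property of $M$, for each $j$ at most one $i$ can be such a ``bad'' partner.

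So the bad pairs form a graph in which every $b_j$ has degree at most $1$. I would first try to pass to subsets $A'\subseteq A$ and $B'\subseteq B$ of linear size containing no bad pair, and rerun the argument inside $H'=G[A'\cup B']$. To make this work:
\begin{itemize}
\item The sub-matrix on $A'\times B'$ must keep linear row and column distances. This holds for a random $M$ with high probability, simultaneously for all linear-size sub-matrices.
\item $H'$ is an induced subgraph of $G$ with at most $2d$ vertices, as required.
\end{itemize}
Alternatively, the bad partner $a_i$ of $b_j$ could be used as an intermediate vertex, exploiting the fact that the statement allows radius $2$ rather than $1$. I expect this mixed-pair bookkeeping, together with ensuring that the distance properties of $M$ survive the restriction, to be the technical heart of the proof.
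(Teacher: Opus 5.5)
\textbf{There is a genuine gap, in your ``key observation''.} The flip metric is a maximum over all $\Q$-flips, so $\Ball^1_\Q(u)=\bigcap_{G'}\Ball^1_{G'}(u)$. What your argument actually gives is that a distinguisher $w$ of $u,v$ lies in $\bigcap_{G'}\bigl(\Ball^1_{G'}(u)\cup\Ball^1_{G'}(v)\bigr)$, and the intersection does not distribute over the union. Flipping the pair of parts containing $w$ and $\{u,v\}$ (possibly a part with itself) swaps which of $u,v$ is adjacent to $w$. Hence $\dist_\Q(u,w)\ge 2$ and $\dist_\Q(v,w)\ge 2$, so $w\notin\Ball^1_\Q(u)\cup\Ball^1_\Q(v)$, and the pigeonhole step collapses.

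Nor can this be rescued at radius $2$ at your moment $t$. Since $V(H)=A\cup B$, minimality of $t$ forces $\P_t$ to be the partition into singletons. For a maximal chain, which you must handle, $\P_{t+1}$ differs from $\P_t$ only by the doubleton $\{u,v\}$. Then one flip isolates $u$: make $uv$ a non-edge, make every common neighbour or common non-neighbour of $u,v$ non-adjacent to both, and attach every distinguisher to $v$ only. Analogous flips show $\Ball^r_{\P_{t+1}}(x)=\{x\}$ for every vertex $x$ and every $r$, so the width at step $t$ is $1$. The first merge of two vertices therefore yields no lower bound at all, however the mixed pairs $(a_i,b_j)$ are treated; and that part of your plan is left open anyway.

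\textbf{The missing idea} is to wait until some part of $\P_{t+1}$ reaches constant size. You then use several vertices inside that part whose adjacency patterns towards the other side are independent, so that all four combinations occur. The paper does this as follows.
\begin{itemize}
\item From the shattered set it extracts the inner-product graph on two copies $A,A^*$ of $\mathbb{Z}_2^m$, with $2^m\le d$.
\item It takes $t$ minimal such that $\P_{t+1}$ has a part $P$ with at least $18$ vertices, so all parts of $\P_t$ have at most $17$ vertices.
\item Without loss of generality $P$ contains $9$ vertices of $A^*$, hence $4$ linearly independent vectors $w_1,\dots,w_4$.
\item Fix $v_1\in A$ with inner products $(1,1,0,0)$ against $w_1,\dots,w_4$. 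In every $\P_{t+1}$-flip it is adjacent to exactly $\{w_1,w_2\}$ or exactly $\{w_3,w_4\}$ among the $w_i$.
\item Each $v_2$ with inner products $(1,0,1,0)$ is adjacent to $\{w_1,w_3\}$ or $\{w_2,w_4\}$. These pairs always share some $w_i$.
\item So all $2^{m-4}$ such $v_2$ lie in $\Ball^2_{\P_{t+1}}(v_1)$, and they meet at least $2^{m-4}/17$ parts of $\P_t$.
\end{itemize}
The property your matrix needs is this four-combination condition on constantly many vertices of one part, not pairwise Hamming distance. A random $M$ might be made to work along these lines, but that would be a different argument from the one you propose.
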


\begin{proof}
For  $m\in\N$, let $\mathbb{Z}_2^m$ be the $m$-dimensional vector space over the two-element field $\mathbb{Z}_2$.
We define $H_m$ to be the bipartite graph with domain consisting of two copies of $\mathbb{Z}_2^m$, denoted $V$ and~$V^*$, such that $u\in V, v\in V^*$ are adjacent if and only if $0 = u\cdot v \eqdef \sum_{i=1}^n u_i\cdot v_i$ (with addition considered in $\mathbb Z_2$, that is, modulo 2).

Fix $d\in\N$. 
Let $G$ be a graph with $\VC(G) \ge d$,
and $m$ be largest so that $2^m\leq d$. 
It is easy to see (see proof of \cite[Lem.~5.25]{flip-width}) that $V(G)$ contains two (possibly overlapping) subsets $A,A^*$ of size $2^m$ each, such that 
the bipartite graph $G[A,A^*]$ 
with vertex set $A\uplus A^*$ and edge set $\set{uv\in E(G)\colon u\in A, v\in A^*}$
is isomorphic to $H_m$.
Via this isomorphism, we identify $A$ with the two copies of $\mathbb{Z}_2^m$, and treat the elements of $A,A^*$ as vectors in $\mathbb{Z}_2^m$.

We argue that $H\coloneqq G[A\cup A^*]$ has $\pmw_2(H)\geq \frac{d}{2\cdot 16\cdot 17}$.
If $d< 32$ this is trivial, so we assume that $d\ge 32$ and we have that $|A|=|A^*|=2^m\ge 32$.

Let $\P_1 \prec P_2\prec \dots, \prec \P_n$ be any sequence of partitions of $V(H)$ such that $\P_1$ is the partition into singletons and $\P_n$ contains one part.
Let $t<n$ be smallest such that $\P_{t+1}$ contains a part~$P$ of size at least $18$. Without loss of generality, this part contains at least $9$ elements from $A^*$. 
Among them are four linearly independent vectors $w_1,w_2,w_3,w_4$, as a three-dimensional space over $\mathbb Z_2$ has size at most $2^3=8$.

For $\textbf{b} \in \{0,1\}^4$, 
  denote $$W_{\textbf{b}}  \eqdef \{x\in A\mid x\cdot w_i = b_i\quad\text{for } i\in [4]\}.$$
\begin{claim}For all $\textbf{b}\in \{0,1\}^4$, we have $|W_{\textbf{b}}|=2^{m-4}.$

\end{claim}
\begin{claimproof}This is elementary linear algebra, as we have four linearly independent equations in $m$ variables. For the sake of completeness, we provide a proof.

   Let $M$ be the $4\times m$ matrix whose rows are vectors $w_1,\dots.w_4$.
  As $w_i$'s are linearly independent, $M$ has rank $4$ and the linear map $f: \mathbb{Z}_2^n \rightarrow \mathbb{Z}_2^4$, given by $x\mapsto Mx$, is surjective.
  By Rank-Nullity, the kernel of $f$ has dimension $m-4$, so $|f^{-1}(\mathbf 0)|=2^{m-4}$, where $\mathbf 0\in \mathbb{Z}_2^4$ is the zero vector.
  The set $W_{\mathbf{b}}=f^{-1}(\mathbf{b})$ is a translate of $f^{-1}(\mathbf 0)$, as 
   $f^{-1}(\mathbf{b})=f^{-1}(\mathbf 0)+v$ for any $v\in f^{-1}(\mathbf{b})$. Thus  $|W_{\mathbf{b}}|=2^{m-4}$.
  % For $\mathbf{b}\in \{0,1\}^4$, take any $v\in f^{-1}(\mathbf{b})$. 
  % Consider the bijective map $g_\mathbf{b}:\mathbb{Z}_2^m\rightarrow \mathbb{Z}_2^m$ given by $x\mapsto x+ v$. 
  % Then $f^{-1}(\mathbf{b})= g_{\mathbf{b}}(f^{-1}(0))$ is of the same size as $f^{-1}(0)$, hence $|W_{\mathbf{b}}|=2^{m-4}$.
\end{claimproof}
Let $\textbf{b}_1=(1,1,0,0), \textbf{b}_2=(1,0,1,0)$.
Fix $v_1 \in W_{\textbf{b}_1}$ and consider any $v_2\in W_{\textbf{b}_2}$.
Let $S_1, S_2$ be the parts of $\P_{t+1}$ containing $v_1,v_2$ respectively. 
Recall that $w_1,\ldots,w_4$ are all in the same part $P$ of $\P_{t+1}$.
In every $\P_{t+1}$-flip $H'$ of $H$,
$v_1$ is adjacent to either $\set{w_1,w_2}$ or to $\set{w_3,w_4}$, while $v_2$ is adjacent to either $\set{w_1,w_3}$ or to $\set{w_2,w_4}$. It follows that 
 there is some $w_i$ that is adjacent to both $v_1$ and $v_2$ in $H'$.
Therefore, $v_2 \in \Ball_{P_{t+1}}^2(v_1)$, and so $W_{\textbf{b}_2}\subset \Ball_{P_{t+1}}^2(v_1)$.

Finally, since each part of $\P_t$ has at most 17 elements, $W_{\textbf{b}_2}$ intersects at least $2^{m-4}/17$ parts,
and hence $$|\Ball_{P_{t+1}}^2(v_1)/\P_t|\ge \frac{2^{m-4}}{17}.$$
As $2^m\geq d/2$, this proves that $\pmw_2(H)\geq \frac{d}{2\cdot 16\cdot 17}$.
\end{proof}

\begin{corollary}\label{cor:almost-bounded-VC}
  Let $\CC$ be a hereditary graph class with $$\pmw_2(G)\le o(|V(G)|)\qquad\text{for all $G\in\CC$}.$$ Then $\VC(\CC)<\infty$. In particular, $\VC(\CC)<\infty$ for every hereditary graph class $\CC$ of almost bounded partition merge-width.
\end{corollary}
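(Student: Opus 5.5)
We argue by contraposition, using \cref{lem:VCdim} together with heredity of $\CC$. Suppose $\VC(\CC)=\infty$. Then for every $d\in\N$ there is a graph $G_d\in\CC$ with $\VC(G_d)\ge d$. By \cref{lem:VCdim}, $G_d$ contains an induced subgraph $H_d$ with at most $2d$ vertices and $\pmw_2(H_d)\ge c\cdot d$ for some absolute constant $c>0$ (from the proof, $c=\frac{1}{2\cdot 16\cdot 17}$ suffices). Since $\CC$ is hereditary, $H_d\in\CC$.

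Writing $n_d\eqdef|V(H_d)|\le 2d$, we obtain $\pmw_2(H_d)\ge c\,d\ge \tfrac{c}{2}\, n_d$. The step needing care is that $n_d\to\infty$, so that this family genuinely violates $\pmw_2(G)\le o(|V(G)|)$ rather than being a finite set of small graphs. This holds because $\pmw_2(H)\le |V(H)|$ trivially (a ball can meet at most $|V(H)|$ parts), so $n_d\ge \pmw_2(H_d)\ge c\,d\to\infty$. Hence the family $(H_d)_d$ consists of graphs in $\CC$ with arbitrarily many vertices satisfying $\pmw_2(H_d)\ge \tfrac c2|V(H_d)|$, contradicting the hypothesis. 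Therefore $\VC(\CC)<\infty$.

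For the second sentence, it suffices to check that almost bounded partition merge-width implies the hypothesis. With $r=2$ and $\varepsilon=\tfrac12$, for all sufficiently large $n$ every $n$-vertex $G\in\CC$ has $\pmw_2(G)\le n^{1/2}=o(n)$, which is exactly the assumption. So the first part applies.

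There is no real obstacle: the only subtle points are the use of heredity to place $H_d$ in $\CC$, and the observation $|V(H_d)|\to\infty$, which ensures that the asymptotic condition is actually contradicted.
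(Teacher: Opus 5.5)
Your proof is correct and is essentially the paper's argument: the corollary follows directly from \cref{lem:VCdim} together with heredity, since the small induced subgraphs $H_d\in\CC$ satisfy $\pmw_2(H_d)\ge\Omega(|V(H_d)|)$. You also check that $|V(H_d)|\to\infty$, via the trivial bound $\pmw_2(H)\le|V(H)|$ (or alternatively $|V(H_d)|\ge 2^m\ge d/2$); the paper leaves this step implicit, and you handle it correctly.
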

With the earlier remarks, this implies \Cref{thm:almost-bounded}.

\section{Applications}\label{sec:apps}
\subsection{Sparse quotients, quasi-isometry, and neighbourhood covers}\label{sec:quasi-iso}
As applications of \cref{thm:main1}, 
we show that graphs of bounded merge-width admit partitions such that the quotient graph has bounded strong colouring numbers, and each part has small weak diameter. This will yield two applications: quasi-isometries with graphs of bounded strong colouring numbers, and sparse neighbourhood covers. We remark that the arguments presented in this section have much simpler counterparts in the more restricted setting of graphs of bounded twin-width; they moreover yield better bounds. Hence, for the convenience of the readers interested in twin-width, we provide an exposition in \cref{sec:tww-app}.

\medskip
For a graph~$G$ and a partition~$\P$ of~$V(G)$, we define the \emph{quotient graph} $G/\P$ to be the graph obtained from~$G$ by contracting each part of~$\P$ into a single vertex.
That is, $V(G/\P) = \P$, and for any distinct $P,Q \in \P$, $PQ \in E(G/\P)$ if and only if there exist $x \in P$, $y \in Q$ with $xy \in E(G)$.
A set $X\subset V(G)$ has \emph{weak diameter} $r$ in $G$ if $\dist_G(x,y) \le r$ for all $x,y \in X$.
% The following lemma shows that graphs of bounded merge-width admit partitions into parts of bounded diameter, yielding a quotient graph with bounded strong colouring numbers.
\begin{lemma}
  \label{lem:quasi-iso}Fix $r\in\N$.
  Let~$G$ be a graph and $k=\mw_{18r+1}(G)$.
  % Fix $\ell = k \cdot \pi_G(k)$.
  Then 
  there is a partition~$\P$ of~$V(G)$ satisfying the following:
  \begin{enumerate}
    \item \label{item:scol} $\scol_r(G/\P) \le k \cdot \pi_G(k)$, and
    \item \label{item:quasi-isometry} 
    each part of $\P$ has weak diameter at most $12$ in $G$.        
  \end{enumerate}  
\end{lemma}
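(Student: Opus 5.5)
Since $\pmw_{18r+1}(G)\le\mw_{18r+1}(G)=k$ (by \cref{lem:tmw-mw,lem:pmw-tmw}) and $18r+1=6(3r)+1$, \cref{thm:main1} applied with radius $3r$ yields a merge sequence $(\P'_1,R_1),\dots,(\P'_m,R_m)$ of radius-$3r$ width at most $\ell\eqdef k\cdot\pi_G(k)$. Its construction also gives property \eqref{eq:extra-firm}: every $uv\in R_i$ satisfies $\dist_{\P_i}(u,v)\le 6$, so in particular $\dist_G(u,v)\le 6$. We may assume the partitions form a maximal chain. The idea is to stop the sequence at a suitable time $t$ and take $\P$ to be the partition of $V(G)$ into the connected components of $(V,R_m)$, or a variant of it. We want each part to be connected through resolved pairs in a controlled way, and we want the quotient to be sparse.

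\textbf{Choice of $\P$.} Consider the graph $(V,R_m)$ and define $\P$ from the merge sequence by the following rule. When two parts $A,B$ of $\P'_t$ are merged into a part of $\P'_{t+1}$, we use the ordering of merge times to orient the quotient. The natural ordering on the parts is by when they ``die'', which mimics the proof from \cite[Lem.~7.5]{merge-width} and, in reverse, the approach of \cite{sparseflip}. Concretely, I would take $\P$ to group vertices $u,v$ together whenever they lie in a common part of $\P'_i$ at a time $i$ at which they are also at $R_{i+1}$-distance at most $2$, suitably closed up. The closure has to be arranged so that weak diameter stays at most $12=2\cdot 6$, which uses $\dist_G\le 6$ for resolved pairs. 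The order on $G/\P$ is by the time at which each part is formed.

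\textbf{Bounding $\scol_r$.} Take a quotient vertex $P$ and a strongly $r$-reachable quotient vertex $Q$, reached by a path $P=Q_0,Q_1,\dots,Q_s=Q$ with $s\le r$ whose internal vertices are later than $P$. Consider the time $i$ associated with $P$. The edges of $G$ between consecutive $Q_j$ lie between parts of $\P'_i$ that are still distinct. Homogeneity modulo $R_{i+1}$ together with the grouping rule should force each such edge to either be resolved, or be witnessed via a short detour of length $\le 3$ in $R_{i+1}$. The detour goes through a vertex of a part with a non-edge, using homogeneity as in the proof of \cref{lem:co-cubic}. This gives $Q\cap\Ball^{3r}_{R_{i+1}}(v)\neq\emptyset$ for a fixed $v\in P$, and distinct such $Q$ hit distinct parts of $\P'_i$. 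Hence the count is at most the radius-$3r$ width, namely $\ell$.

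\textbf{Main obstacle.} The main obstacle is converting a $G$-edge between two quotient parts into a short $R_{i+1}$-path, because unresolved pairs between distinct parts may be edges homogeneously. I would handle this by choosing $\P$ so that any two parts of $\P'_i$ joined homogeneously by edges are already merged. If that fails, I would flip to the $\P'_i$-flip whose edges lie in $R_i$ and argue with the flip metric and \cref{lem:resolved-flip-metric}. The constant $18=3\cdot 6$ suggests a detour of length $3$ per quotient edge, measured in $\P_i$-flip distance $\le 6$ per resolved pair.
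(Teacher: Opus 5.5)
Your setup is right: apply \cref{thm:main1} at radius $3r$ (since $18r+1=6\cdot 3r+1$) to get a $6$-firm merge sequence $(\P'_i,R_i)$ of radius-$3r$ width at most $\ell=k\cdot\pi_G(k)$, and your count of $3=2+1$ steps per quotient edge matches the paper. But the proof itself is missing. You never fix the partition $\P$ or its order, and both required properties are left as hopes.

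Your candidates do not work as stated. The components of $(V,R_m)$ can have unbounded diameter. For a long path, $6$-firmness prevents far non-edges from being resolved, so homogeneity of $\{V\}$ forces $E(G)\subseteq R_m$, and $(V,R_m)$ is connected. Likewise, the transitive closure of ``same part of $\P'_i$ and $R_{i+1}$-distance at most $2$'' can chain arbitrarily many steps, and nothing in your description bounds the weak diameter by $12$. In the $\scol_r$ step you say homogeneity and the grouping rule ``should force'' each quotient edge to be resolved or bypassed by a short detour. You then leave this as the main obstacle with a fallback plan, but that obstacle is precisely the content of the lemma.

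The missing idea is to take as $\P$ the \emph{maximally unresolved} parts. Call $P\in\P'_p$ unresolved if some edge $xy\in E(G)\setminus R_p$ has $x\in P$. Call it maximally unresolved if moreover, for every $j>p$, the part of $\P'_j$ containing $P$ has all its incident $G$-edges in $R_j$. Assume $R_1=\emptyset$, set isolated vertices aside, and treat separately the case $E(G)\not\subseteq R_m$: there every non-edge is resolved, so $\diam(G)\le 6$ by firmness and $\P=\{V(G)\}$ works. Otherwise these parts partition $V(G)$ and all have index $p<m$.

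Homogeneity is then used for the \emph{diameter}, not for reachability. Suppose $xy\in E(G)\setminus R_p$ with $x\in P$ and $y\in Q\in\P'_p$. Every non-edge between $P$ and $Q$ lies in $R_p$, since otherwise $P,Q$ would carry both an unresolved edge and an unresolved non-edge. Every edge between them lies in $R_{p+1}$ by maximality. Hence a fixed $y'\in Q$ is $R_{p+1}$-adjacent to all of $P$, giving weak diameter $2$ in $(V,R_{p+1})$ and $12$ in $G$.

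For $\scol_r$, order $\P$ by decreasing index. On a path witnessing strong $r$-reachability from $P$ (index $p$), every internal part has index at most $p$. By maximality, all its incident $G$-edges already lie in $R_{p+1}$, and it has $R_{p+1}$-diameter $2$. So the quotient path lifts to an $R_{p+1}$-path of length at most $3r$ from a fixed $v_0\in P$. Distinct targets have index at least $p$, so they contain distinct parts of $\P'_p$ meeting $\Ball^{3r}_{R_{p+1}}(v_0)$, and there are at most $\ell$ of them. No detours through non-edges or flip-metric arguments are needed in this step.
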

\begin{proof}
  By \cref{thm:main1}, there is a merge sequence $$(\P_1,R_1), \dots, (\P_m,R_m)$$ with radius-$3r$ width at most~$\ell\coloneqq k \cdot \pi_G(k)$ which is 6-firm, i.e.\ $\dist_G(x,y) \le 6$ for all $xy \in R_m$.
  Since $\P_1$ is the partition into singletons, we may assume that $R_1=\emptyset$.

  Say that a part~$P \in \P_i$ is \emph{unresolved} if there exists some unresolved edge~$xy \in E(G) \setminus R_i$ with $x \in P$.

  Suppose first that the unique part $V(G)$ of $\P_m$ is unresolved (meaning that $E(G) \not\subseteq R_m$).
  Since there is an unresolved edge, every non-edge must be resolved (in~$R_m$).
  Then for every $x,y \in V(G)$,
  if~$xy\notin E(G)$ then $\dist_G(x,y) \le 6$ by assumption on the merge sequence,
  and if $x,y \in E(G)$ then $\dist_G(x,y) \le 1$. Hence, $G$ has diameter at most~6, and we may take $\cal P=\{V(G)\}$, which satisfies the conclusion of the lemma.
  Henceforth we assume this case does not occur.

  Say furthermore that an unresolved part $P$ is \emph{maximally unresolved} with \emph{index}~$i$ if~$P \in \P_i$ is unresolved,
  but for every $j>i$, the part of~$\P_{j}$ containing~$P$ is resolved.
  In particular, all edges of~$G$ incident to $P_j$ are contained in $R_{j}$.
By the previous assumption, any maximally unresolved part has index strictly less than~$m$.

  We assume that $G$ has no isolated vertex, as these can be treated in a trivial manner, by placing each isolated vertex in its own part of the partition.
  Let~$\P$ be the set of all maximally unresolved parts across $\P_1,\dots,\P_m$.
  Note that since~$G$ has no isolated vertex and $R_1=\emptyset$, each $v\in V(G)$ belongs to some unresolved part.
  It follows that~$\P$ is a partition of~$V(G)$.

  \begin{claim}\label{clm:max-unres-diam}
    Let~$P$ be a maximally unresolved part with index~$p<m$.
    Then~$P$ has weak diameter~$2$ in~$(V(G),R_{p+1})$, and weak diameter at most $12$ in $G$.
  \end{claim}
  \begin{claimproof}
    Since~$P$ is unresolved, there is some $xy \in E(G) \setminus R_p$ with~$x \in P$.
    Let~$Q \in \P_p$ be the part containing~$y$ (possibly $P=Q$).
    Fix an arbitrary $y'\in Q$.
    We will argue that  $x'y' \in R_{p+1}$ for any $x'\in P$.
    This implies that $\dist_G(x',y')\le 6$ by firmness, and proves the claim.

    There are two cases. If~$x'y'$ is an edge, then it must be in~$R_{p+1}$, as otherwise~$P$ would not be maximally unresolved.
    If on the other hand~$x'y'$ is a non-edge, then it must be in~$R_p$,
    as otherwise there would be both the unresolved edge~$xy$ and the unresolved non-edge~$x'y'$ between~$P$ and $Q$, contradicting that~$\P_p$ is homogeneous modulo~$R_p$. As $R_p\subseteq R_{p+1}$, this implies $x'y' \in R_{p+1}$.
  \end{claimproof}

  Order~$\P$ by reversed indices, breaking ties arbitrarily.
  That is, pick a total ordering~$<$ of~$\P$ such that if~$P,Q$ have indices~$i,j$ respectively, then $P<Q$ implies $i \ge j$.
  This yields a total order~$<$ on the vertices of the quotient graph~$G/\P$.
  \begin{claim}\label{clm:wcol-MaxUnrPart}
    For every $P\in \P$, in $G/\P$ we have $|\sreach_{r}^<(P)|\le \ell$.
  \end{claim}
  \begin{claimproof}
        Let~$p$ be the index of~$P$, and recall that we may assume $p<m$. Let $v_0\in P$ be any vertex. We argue that each part  $S\in\P$ which is strongly~$r$-reachable from~$P$ in~$G/\P$ is also reachable from $v_0$ in~$R_{p+1}$ via a path of length at most~$3r$. This will imply that there are at most~$\ell$ such parts~$S$.

        Since $S$ is strongly $r$-reachable from $P$, there is a path $P=P_0,P_1,\dots,P_t=S$ in~$G/\P$ with $t \le r$.
        Let $p=p_0,p_1,\dots,p_{t-1}$ be the indices of $P_0,P_1,\dots,P_{t-1}$.
        We have $S < P <  P_{i}$ for all~$i$, and thus $s \ge p \ge p_i$.
        Using \cref{clm:max-unres-diam} and the monotonicity of~$(R_j)_{j \in [m]}$, all parts $P,P_i$ have weak diameter at most~$2$ in~$(V,R_{p+1})$.
        Moreover, as these parts are maximally unresolved, all the edges of~$G$ that have an endpoint in one of $P,P_i$ are already present in $R_{p+1}$.
        Therefore, we can lift the path $P_0,\ldots,P_t$ to a path in $(V,R_{p+1})$ of length at most $3r$ in $G$, which starts in $v_0$ and ends in some $w\in P_t$
        (this is easily proved by induction on $t$).
        Hence, there is $u \in S$ such that $\dist_{R_{p+1}}(v_0,u) \le 3r$.
        Note that any two different $S,S'<P$ are supersets of different parts of $\P_p$ and so there are at most $\ell$ parts within~$\sreach_{r}^<(P)$ in~$G/\P$.
  \end{claimproof}
  \Cref{clm:max-unres-diam} and \Cref{clm:wcol-MaxUnrPart} together yield the lemma.
\end{proof}

Let $\cal P$ be the partition as in \Cref{lem:quasi-iso}. For $x\in V(G)$, we denote by $[x]$ the part of $\cal P$ containing $x$.
Then for any~$x,y \in G$,
if $[x]$ and $[y]$ are at distance at most $r$ in $G/\P$, then $\dist_G(x,y) \le r+12(r+1)=13r+12$.
This is proved by induction on $r$, using the fact that any two vertices in the same part of $\P$ are at distance at most $12$ in $G$.
Hence, we get that for all $x,y\in V(G)$,
\begin{equation}\label{eq:quasi-isom}
  \dist_{G/\P}([x],[y]) \le \dist_G(x,y) \le 13 \cdot \dist_{G/\P}([x],[y]) + 12.
 \end{equation}

\paragraph{Quasi-isometry} 
In the language of coarse geometry (see e.g. \cite[Sec.~2.1]{coarse-graphs}), \cref{eq:quasi-isom} implies that $G$ and $G/\P$ are \emph{$(13,12)$-quasi-isometric}. Formally, quasi-isometries are defined as follows:
\begin{definition}
	Let $(X,\delta_X)$ and $(Y,\delta_Y)$ be metric spaces. For constants $a\geq 1$ and $b\geq 0$, an {\em{$(a,b)$-quasi-isometry}} from $(X,d_X)$ to $(Y,d_Y)$ is mapping $\phi\colon X\to Y$ satisfying the following:
	\begin{itemize}
		\item $\frac{1}{a}\cdot d_X(x,x')-b\leq d_Y(\phi(x),\phi(x'))\leq a\cdot d_X(x,x')+b$, for all $x,x'\in X$; and
		\item for each $y\in Y$ there exists $x\in X$ such that $d_Y(y,\phi(x))\leq b$.
	\end{itemize}
\end{definition}
So intuitively, a quasi-isometry is a mapping that preserves distances up to fixed multiplicative and additive factors ($a$ and $b$, respectively) and whose image is roughly dense in the co-domain. The definition can be applied to graphs by taking the distance metric on the vertex set.

%Intuitively, the distances in $G$ and $G/\P$ are similar up to fixed multiplicative and additive constants (here, $3$ and $2$, respectively). 
% In the corner case when $G$ has diameter at most~$6$, $G$ is trivially $(1,6)$-quasi-isometric to a single vertex. 
We thus obtain the following statement.
% In any case, we get the following statement, which roughly says that graphs of bounded merge-width are quasi-isometric to graphs with bounded strong colouring numbers.

\begin{theorem}\label{thm:quasi-isom}
  Fix $r\in\N$.
  Let $G$ be a graph and $k=\mw_{18r+1}(G)$.
  Then $G$
   is $(13,12)$-quasi-isometric to some $H$ with \mbox{$\scol_r(H)\le k \cdot \pi_G(k)$}.
\end{theorem}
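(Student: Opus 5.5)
The plan is to take $H \eqdef G/\P$, where $\P$ is the partition given by \cref{lem:quasi-iso} applied with the same $r$ and $k=\mw_{18r+1}(G)$. Item~\ref{item:scol} of that lemma gives $\scol_r(H)\le k\cdot\pi_G(k)$ directly. What remains is to exhibit a $(13,12)$-quasi-isometry between $G$ and $H$. The natural map is $\phi\colon V(G)\to V(H)$ with $x\mapsto [x]$.

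First, I would check the two-sided distance estimate \eqref{eq:quasi-isom}. For the lower bound $\dist_{G/\P}([x],[y])\le \dist_G(x,y)$, project a shortest $x$--$y$ path in $G$ to $G/\P$. Each edge either stays inside a part or becomes an edge of the quotient, so the projection is a walk of no greater length.

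For the upper bound I would argue by induction on $t=\dist_{G/\P}([x],[y])$. Take a path $[x]=P_0,P_1,\dots,P_t=[y]$ in $G/\P$. Each quotient edge $P_{j}P_{j+1}$ is realised by some edge $a_jb_{j+1}$ of $G$ with $a_j\in P_j$ and $b_{j+1}\in P_{j+1}$. Consecutive chosen vertices in the same part, namely $x$ and $a_0$, $b_j$ and $a_j$, and $b_t$ and $y$, are at $G$-distance at most $12$ by item~\ref{item:quasi-isometry}. This gives $\dist_G(x,y)\le t+12(t+1)=13t+12$. The case $t=0$, i.e.\ $x,y$ in the same part, is covered by the weak diameter bound.

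Next, I would match the estimate against the definition of an $(a,b)$-quasi-isometry with $a=13$ and $b=12$. The inequality $\dist_H(\phi x,\phi y)\le \dist_G(x,y)\le 13\,\dist_G(x,y)+12$ gives the upper condition. From $\dist_G\le 13\dist_H+12$ we obtain $\dist_H\ge \frac1{13}\dist_G-\frac{12}{13}\ge\frac1{13}\dist_G-12$, which gives the lower condition. Finally, $\phi$ is surjective, since every part of $\P$ is nonempty, so the density condition holds with distance $0$.

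There is no real obstacle here; all the substance lies in \cref{lem:quasi-iso}. The only delicate point is handling parts that are isolated vertices, and the degenerate case $\P=\{V(G)\}$ in which $G$ has diameter at most $6$. Both are already covered by that lemma's conclusions, since the weak-diameter bound is all the argument uses.
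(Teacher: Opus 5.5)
Your proposal is correct and matches the paper's proof. Like the paper, you take $H=G/\P$ for the partition from \cref{lem:quasi-iso}, use the weak-diameter bound $12$ to get $\dist_{G/\P}([x],[y])\le\dist_G(x,y)\le 13\,\dist_{G/\P}([x],[y])+12$, and conclude that $x\mapsto[x]$ is a $(13,12)$-quasi-isometry; your explicit check of the two inequalities in the definition and of surjectivity only fills in what the paper leaves implicit.
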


Furthermore, if $\CC$ is a class with universally bounded merge-width,
then the quotients form a class with bounded expansion. 

\quasiIsoBE*

\begin{proof}
	Observe that in the proof of \cref{thm:main1}, if we start with a merge sequence with radius-$r$ width at most $f(r)$ for every~$r$,
	then we obtain a merge sequence with radius-$r$ width bounded by $f'(r)$ for every~$r$, for some function $f'$ depending only on~$f$. Moreover, the sequence has the property that $\dist_G(x,y) \le 6$ whenever \mbox{$xy \in R_m$}.
	Then, the  proof of \cref{lem:quasi-iso} yields a partition~$\P_G$ such that $\scol_r(G/\P_G)$ is bounded by $f''(r)$ for every~$r$, for some function $f''$ depending only on $f$,
	so that $G$ is $(13,12)$-quasi-isometric to $G/\P_G$. The class $\D = \{G/\P_G : G \in \CC\}$ satisfies $\scol_r(\D) \le f''(r)$ for all $r$, and is thus a class of bounded expansion.
\end{proof}

% The following corollary is immediate.
% \begin{corollary}\label{cor:quasi-isom}
%   Let $\cal P$ be the partition as in \Cref{lem:quasi-iso}. Then
%   for any~$x,y \in G$,
%       \[ \dist_{G/\P}([x],[y]) \le \dist_G(x,y) \le 3 \cdot \dist_{G/\P}([x],[y]) + 2, \]
%       where~$[x]$ denotes the part of $\P$ containing~$x$.
% \end{corollary}

\paragraph{Neighbourhood covers} As a further application of \cref{lem:quasi-iso}, we show that graphs of bounded merge-width admit sparse neighbourhood covers of small radius.  
\begin{definition}\label{def:nbhdCov}
    A \emph{neighbourhood cover} of a graph~$G$ of~\emph{overlap}~$k$ and \emph{radius}~$t$ 
    is a family $\K$ of non-empty subsets of~$V(G)$ such~that
    \begin{enumerate}
        \item \label{it:nbhdCov_i} for every~$v\in V(G)$ there is~$C\in \K$ with~$N(v) \subseteq C$;
        \item \label{it:nbhdCov_ii} for every~$C\in\K$ there is~$v \in V(G)$ with~$C\subseteq\Ball_G^t(v)$; and
        \item \label{it:nbhdCov_iii} for every~$v\in V(G)$, the set~$\{C \in \K \mid v\in C\}$ has size at most $k$.
    \end{enumerate}
\end{definition}
All graphs with linear neighbourhood complexity (in particular, graphs of bounded radius-$2$ merge-width, by \cref{thm:nbd_complexity}) admit neighbourhood covers with bounded radius and overlap $\O(\log n)$, by the result of Dreier et al.~\cite{demmpt}. \Cref{thm:nbhdCov} below decreases the overlap to a constant, in classes of bounded merge-width. We note that we are not aware of any previous construction of such neighbourhood covers even for the more restrictive case of classes of bounded twin-width.

We use the following lemma, which is a special case of \cite[Lem.~6.10]{gks} (note that \cite{gks} use a stronger notion of neighbourhood cover, requiring that $G[C]$ has radius ${\le}t$ for each $C\in\K$).
\begin{lemma}[\cite{gks}]\label{lem:NbdCovWcol}
  Let $G$ be a graph and $p=\wcol_2(G)$. Then $G$ admits a neighbourhood cover of overlap~$p$ and radius~$2$.
\end{lemma}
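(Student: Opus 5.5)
The plan is the standard construction of clusters from weak reachability sets. Fix a total order $<$ on $V(G)$ with $\wcol_2^<(G)=p$. Recall that $\wreach_2^<(w)$ contains $w$ itself, via the path of length $0$. For each vertex $u\in V(G)$ define the cluster
\[ C_u \eqdef \setof{w\in V(G)}{u\in \wreach_2^<(w)}, \]
and let $\K\eqdef\setof{C_u}{u\in V(G)}$. Each $C_u$ is non-empty because $u\in C_u$.

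I will check the three conditions of \cref{def:nbhdCov} in turn, starting with the easy ones.

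\emph{Radius.} If $w\in C_u$, then $u$ is reachable from $w$ by a path of length at most $2$. Hence $C_u\subseteq \Ball^2_G(u)$, which gives radius $2$.

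\emph{Overlap.} A vertex $w$ lies in $C_u$ exactly when $u\in\wreach_2^<(w)$. So $w$ lies in at most $|\wreach_2^<(w)|\le p$ clusters.

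\emph{Covering.} This is the only step that needs an idea. For $v\in V(G)$, let $u$ be the $<$-minimum of the closed neighbourhood $N(v)\cup\set v$. I claim $N(v)\subseteq C_u$, i.e.\ $u\in\wreach_2^<(w)$ for every $w\in N(v)$. There are three cases.
\begin{itemize}
\item If $w=u$, this holds via the trivial path.
\item If $u=v$, then the edge $wv$ is a path of length $1$ with no internal vertices, and $w\ge u$ by minimality of $u$.
\item Otherwise $u\ne v$ and $u\in N(v)$. Then $w,v,u$ is a path of length $2$ whose only internal vertex $v$ satisfies $v>u$, and $w>u$, both by the choice of $u$.
\end{itemize}
In all cases $u$ is weakly $2$-reachable from $w$.

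I expect no real obstacle here. The only point requiring care is choosing $u$ as the minimum of the \emph{closed} neighbourhood, so that the middle vertex $v$ is guaranteed to be larger than $u$ in the order. Note also that the definition only asks for $N(v)\subseteq C$ (open neighbourhood), so the case distinction above suffices. The resulting cover even has the stronger property used in \cite{gks}, since every $w\in C_u$ is joined to $u$ by a path of length at most $2$ through vertices of $C_u$.
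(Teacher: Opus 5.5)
Your proposal is correct and takes essentially the same approach as the paper, which cites \cite{gks} for this lemma; the underlying construction is exactly the family of clusters $C_u=\{w : u\in\wreach_2^<(w)\}$. Choosing $u$ as the $<$-minimum of the closed neighbourhood of $v$ correctly handles the covering condition, and the radius and overlap checks go through as you state.
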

% \begin{proof}[Proof sketch]
% For $v\in V(G)$, let $C_v = \{u \in V(G) : v \in \wreach_2(u)\}$ and define $\F = \{C_v : v \in V(G)\}$. It is easily checked that $\F$ satisfies the required conditions.
% \end{proof}

Combining \cref{lem:quasi-iso,lem:NbdCovWcol}, we obtain the following.
\begin{theorem}\label{thm:nbhdCov}
    Let $G$ be a graph and  $k=\mw_{37}(G)$. Further, let $\ell = k\cdot\pi_G(k)$.
    Then $G$ admits a neighbourhood cover of overlap~$\ell^2$ and radius~$38$.
\end{theorem}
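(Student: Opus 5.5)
We apply \cref{lem:quasi-iso} with $r=2$. Then $18r+1=37$, so from $k=\mw_{37}(G)$ we get a partition $\P$ of $V(G)$ with $\scol_2(G/\P)\le \ell$, where each part of $\P$ has weak diameter at most $12$ in $G$. Isolated vertices form singleton parts and cause no trouble. By \cref{lem:col-nb-equiv}, the same order $<$ on $\P$ gives $\wcol_2(G/\P)\le \scol_2(G/\P)^2\le \ell^2$. \Cref{lem:NbdCovWcol} then gives a neighbourhood cover $\K'$ of the quotient $G/\P$ with overlap $\ell^2$ and radius $2$.

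Next we lift $\K'$ to $G$. For each $C'\in\K'$ let $C\eqdef\bigcup_{P\in C'}P\subseteq V(G)$, and set $\K\eqdef\setof{C}{C'\in\K'}$. We check the three conditions of \cref{def:nbhdCov} in turn.

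\emph{Overlap.} Each vertex $v$ lies in exactly one part $[v]$. So $v\in C$ if and only if $[v]\in C'$, and $v$ lies in at most $\ell^2$ sets of $\K$.

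\emph{Covering neighbourhoods.} Take $v\in V(G)$. Every neighbour $u$ of $v$ satisfies $[u]=[v]$ or $[u]\in N_{G/\P}([v])$. The cover of $G/\P$ only guarantees a set containing $N_{G/\P}([v])$, so we need $[v]$ in it as well. The cleanest fix is to apply \cref{lem:NbdCovWcol} in a form that covers closed neighbourhoods. The sets in the \cite{gks} construction, $C_P=\setof{Q}{P\in\wreach_2(Q)}$, already contain $Q$ together with $N(Q)$. Indeed, for $Q$ with maximal neighbour-or-self $P$ in the order, every element of $N[Q]$ weakly $2$-reaches $P$ via $Q$ (or directly). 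So we use $C_P$ with this choice and obtain $N_G(v)\subseteq C$ for the set $C$ lifted from the $C'$ that contains $N_{G/\P}[[v]]$. If instead we must use the lemma only as stated, we can add a universal-free fix: a loop-free variant where we cover $N[P]$ by taking $C'\ni N(P)$ for some neighbour-based argument. I expect this closed-versus-open neighbourhood detail to be the one real subtlety, and I would resolve it via the explicit $\wreach$ construction.

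\emph{Radius.} Let $C'\subseteq\Ball^2_{G/\P}(P_0)$ and fix $v_0\in P_0$. By \eqref{eq:quasi-isom}, every $x\in C$ satisfies $\dist_G(v_0,x)\le 13\cdot 2+12=38$. Hence $C\subseteq\Ball^{38}_G(v_0)$, which gives radius $38$, as required.
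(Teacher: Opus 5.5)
Your argument is the paper's proof: \cref{lem:quasi-iso} with $r=2$, then $\wcol_2\le\scol_2^2$ for the same order, then \cref{lem:NbdCovWcol} on $G/\P$, lifting each cluster to the union of its parts, and radius $13\cdot 2+12=38$ via \eqref{eq:quasi-isom}. Your concern about closed neighbourhoods is justified: a neighbour of $v$ may lie in $[v]$ itself, so the cluster must contain $[v]$, which the paper's one-line lift tacitly assumes. The \cite{gks} sets $C_P=\setof{Q}{P\in\wreach_2(Q)}$ do resolve this, but only after two corrections. With this paper's convention that weakly reachable vertices are smaller, take $P$ to be the $<$-\emph{minimum} of the closed neighbourhood $N_{G/\P}[Q]$, not the maximum; every $x\in N_{G/\P}[Q]$ then reaches $P$ directly or via $Q>P$. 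Also delete the sentence about a ``universal-free fix'', which has no content.
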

\begin{proof}
  By \cref{lem:quasi-iso} there exists partition~$\P$ of~$V(G)$ with the following properties: each $P\in\P$ has weak diameter at most~$12$ in~$G$ and there is an order $<$ on $\P$ witnessing $\scol_2(G/\P) \le \ell$. By \cref{lem:col-nb-equiv}, such an order also witnesses $\wcol_2(G/\P)\le \ell^2$.

By \Cref{lem:NbdCovWcol}, graph $G/\P$ admits a neighbourhood cover $\K$ of overlap at most~$\wcol_2(G/\P) \le \ell^2$ and radius~$2$.
Then $\{\bigcup C \colon C \in \K\}$ is a neighbourhood cover of $G$ of overlap at most $\ell^2$ and radius at most $13\cdot 2 + 12=38$,
by~\cref{eq:quasi-isom}.
\end{proof} 

    % We now define the neighbourhood cover for $G$. For each $P\in\P$, let
    % \[
    % A_P=\bigcup\{Q\in \P : P \in \wreach_2^<(Q)\}
    % \]
    % and define
    % \[
    % \A = \left\{A_P: P \in \P\right\}
    % .\]
    
    % Fix $v\in V$ and let $P\in \P$ be the part containing $v$. Let~$Q\in \P$ be the smallest, wrt.~$<$, part containing a neighbour of~$v$. 
    % Then any part containing a neighbour of~$v$ weakly~$2$-reaches~$Q$ via a path through~$P$, hence~$N(v)\subseteq A_Q$. 
    % This shows \cref{it:nbhdCov_i} of \cref{def:nbhdCov}.
    % Furthermore, if~$v\in A_Q$ for some $Q\in \P$, then by construction~$Q\in \text{WReach}_2^<(P)$, so \cref{it:nbhdCov_iii} holds with overlap~$1+ 2(l-1)^2$.
    % Finally, let $u\in A_P$, and $Q\in \P$ the part containing $u$. By definition, $P$ is $2$-reachable from $Q$ in $G/\P$. Since every part of $\P$ has diameter at most~$12$ in~$G$, it follows that $\dist_G(u,v) \le 3\cdot 12+2$. Therefore $A_P \subseteq \Ball^{38}_G(v)$ and \cref{it:nbhdCov_ii} holds.

\subsection{Approximating merge-width}\label{sec:apxmw}
As an application of \cref{thm:main1,thm:main2}, we obtain a single-exponential time approximation algorithm for merge-width.%
\apxMW*
\cref{thm:approx-intro} follows from the next lemma.
\begin{restatable}{lemma}{approxmw}\label{lem:approx-mw}
  There is an algorithm that, given an $n$-vertex graph~$G$ and $r\in\N$, computes in time $n^{\O(1)}\cdot 2^n$
  a transient merge sequence with radius-$r$ width at most
  \[ (2k+1) \cdot \pi_G(2k+1) \qquad \text{where } k = \dmw_{6r+1}(G). \]
\end{restatable}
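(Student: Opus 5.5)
The plan has three stages. First, use dynamic programming over subsets to find a vertex order whose definable partitions have approximately small flip-metric balls. Second, turn that order into a partition sequence via the refinement step of \cref{lem:dmw-pmw}. Third, feed that sequence to the transversal-refinement construction from the proof of \cref{thm:main1}, keeping only the flips $G_i$ and not accumulating them into $R_i$. The bound $(2k+1)\cdot\pi_G(2k+1)$ then has the shape of \cref{cor:mw-transversal-refinement} applied to a partition sequence of width $2k+1$.

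The algorithmic core is an exact, or slightly approximate, evaluation of the cost of one partition. For a set $S\subseteq V(G)$ and $u\notin S$, define
\[ c(S) \eqdef \max_{v}|\Ball^{6r+1}_{\Nn_S}(v)/\Nn_S|, \qquad c'(S,u) \eqdef \max_v |\Ball^{6r+1}_{\Nn_{S\cup\{u\}}}(v)/\Nn_S|. \]
The second quantity is the one actually needed for a partition-sequence step. Exact computation of $\dist_\P$ is problematic, so I would use a computable surrogate $\widetilde{\dist}_\P$ with $\dist_\P\le\widetilde{\dist}_\P$. The natural candidate comes from \cref{lem:informal}: take $\P\wedge T$ for a transversal $T$ and the polytime flip $G'_\P$, for which $\dist_{G'_\P}\le\dist_{\P\wedge T}$ and edges are $\dist_\P$-short.

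I would then organise the search as a dynamic program over the Boolean lattice. Let $F(S)$ be the minimum over chains from $\emptyset$ up to $S$ of the maximal step cost, so $F(S\cup\{u\})=\min_{u}\max(F(S),\text{cost}(S,u))$. This is $2^n\cdot n^{\O(1)}$ time in total, since each cost evaluation is polynomial. The optimal order witnessing $\dmw_{6r+1}(G)=k$ yields a chain of value at most $2k+1$ by the argument of \cref{lem:dmw-pmw}, provided the surrogate cost is at most the true flip-metric cost. So the returned chain defines $\P_i=\Nn_{S_i}$ with $|\Ball^{6r+1}_{\P_{i+1}}(v)/\P_i|\le 2k+1$ in the surrogate sense.

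Finally, I apply the construction of \cref{thm:main1}: choose nested transversals, set $\P'_i=\P_i\wedge S_i$, and take the flips $G_i$ from \cref{lem:informal}. Then $(\P'_i,G_i)$ is a transient sequence, and \cref{cor:mw-transversal-refinement} together with $\Ball^r_{G_{i+1}}\subseteq\Ball^{6r}_{\P_{i+1}}$ gives width $(2k+1)\pi_G(2k+1)$.

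\textbf{Main obstacle.} The crux is that $\dist_\P$ cannot be computed exactly, since it is a maximum over exponentially many flips. The DP therefore needs a polytime quantity that is simultaneously (a) a lower bound on the true cost, so the optimum stays within $2k+1$, and (b) strong enough that \cref{cor:mw-transversal-refinement} can be rerun with the surrogate balls. I would try defining the step cost directly as the width of balls in the graph $G_{i+1}$ output by \cref{lem:informal}, measured at radius $r$ against $\P'_i$. This directly certifies the final transient width, and I would bound it by the true quantity using $\Ball^r_{G_{i+1}}\subseteq \Ball^{6r}_{\P_{i+1}}$ plus the refinement lemma. The delicate point is making the transversal choice local to each DP step while keeping the transversals nested along the chain; I would fix canonical transversals, e.g.\ least vertex of each part under a fixed global order, and check that nestedness holds.
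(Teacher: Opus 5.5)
Your final plan is exactly the paper's proof. It takes as DP edge weight the radius-$r$ ball width of the \cref{lem:informal} flip of $\Nn_S$, measured against $\Nn_{S\cup\{u\}}$ refined by its least-element transversal; these transversals are nested, so the refined partitions form a chain. It then bounds the weights along the optimal order via the argument of \cref{lem:dmw-pmw}, \cref{cor:mw-transversal-refinement}, and $\Ball^r_{G_\P}\subseteq\Ball^{6r}_\P$. Drop the earlier surrogate-metric detour: a bound holding only ``in the surrogate sense'' would not give the genuine flip-metric hypothesis that \cref{cor:mw-transversal-refinement} requires.
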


\begin{proof}
	Let $G$ be an $n$-vertex graph and $r\in\N$, and fix an arbitrary ordering of~$V(G)$.
	
	For any partition~$\P$ of~$V(G)$, let~$X_\P$ be the transversal of~$\P$ obtained by selecting the first element of each part according to the chosen ordering of~$V(G)$,
	and call $\P' \eqdef \P \wedge X_\P$ the corresponding refinement.
	Note that when~$\P$ refines~$\Q$, we have $X_\P \supseteq X_\Q$, and thus~$\P'$ refines~$\Q'$.
	Finally, let~$G_\P$ be the $\P'$-flip of~$G$ given by \cref{lem:informal} such that
	\begin{equation}\label{eq:balls-flip-metric}
		\Ball^r_{G_\P}(v) \subseteq \Ball^{6r}_\P(v) \quad \text{for any $v \in V(G)$.}
	\end{equation}
	Given~$\P$, it is trivial to compute~$X_\P$ and~$\P'$ in polynomial time, and recall that~$G_\P$ can also be found in polynomial time.
	We shall assume that some tie-breaking rule is used in the construction of~$G_\P$ so that it is uniquely defined.
	
	Next, for any subset $S \in V(G)$, call~$\Nn_S$ the partition into atomic types over~$S$.
	Given~$S$, one can again compute~$\Nn_S$, $\Nn'_S$, and~$G_{\Nn_S}$ in polynomial time.
	
	Consider now the directed graph $\cal K$ with vertex set $2^{V(G)}$ and edges  $(S,S\cup \{v\})$ for each $S\subsett V(G)$ and $v\in V(G)\setminus S$.
	This graph has $2^n$ vertices and less than $n2^{n}$ edges.
	We assign to each edge $(S,T)$ a weight $f(S,T)$ defined by
	\[ f(S,T) \eqdef \max_{v\in V(G)} |\Ball_{G_{\Nn_s}}^{r}(v)/\Nn'_{T}|. \]
	This can be computed in polynomial time by considering each $v\in V(G)$ in turn.
	In total, the weight function $f$ can be computed in time $n^{\O(1)}\cdot 2^n$.
	
	Let $\ell$ be the least integer such that there is a directed path in~$\cal K$ from $\emptyset$ to $V(G)$ with maximum edge weight at most $\ell$. We can determine $\ell$ in time $n^{\O(1)} \cdot 2^n$ using e.g.\ breadth-first search.
	
	\medskip
	We first verify
	\[ \ell \le (2k+1) \cdot \pi_G(2k+1) \quad \text{where } k \eqdef \dmw_{6r+1}(G). \]
	By definition, there is a total order on $V(G)$ such that for each prefix $S$ of this order and each $v\in V(G)$,
	\[ \left|\Ball_{\Nn_S}^{6r+1}(v)/ \Nn_{S}\right| \le  k. \]
	For two consecutive prefixes $S\subsett T$ we have that $\Nn_{T}$ is a refinement of 
	$\Nn_S$ such that each part of $\Nn_S$ is the union of at most two parts of $\Nn_{T}$, plus possibly the singleton $T \setminus S$.
	It follows that
	\[ \left|\Ball_{\Nn_S}^{6r+1}(v)/ \Nn_{T}\right| \le  2k+1. \]
	Next, applying \cref{cor:mw-transversal-refinement} to~$\Nn_T$, $\Nn_S$, and the transversal~$X_{\Nn_T}$, we obtain the following bound for the refined partition~$\Nn'_T$:
	\[ \left|\Ball_{\Nn_S}^{6r}(v)/ \Nn'_{T}\right| \le  (2k+1) \cdot \pi_G(2k+1). \]
	Finally, by \eqref{eq:balls-flip-metric}, we find in the graph~$G_{\Nn_S}$:
	\[ \left|\Ball_{G_{\Nn_S}}^{r}(v)/ \Nn'_{T}\right| \le  (2k+1) \cdot \pi_G(2k+1). \]
	This precisely proves $f(S,T) \le (2k+1) \cdot \pi_G(2k+1)$.
	Following the ordering of~$V(G)$ given by definable merge-width,
	we thus obtain a directed path in $\cal K$ from $\emptyset$ to $V(G)$ such that each edge on this path has weight at most $(2k+1) \cdot \pi_G(2k+1)$.
	
	\medskip
	Consider now a path in~$\cal K$ with maximum edge weight at most~$\ell$, i.e.\ a sequence
	$$\emptyset=S_0\subsett S_1\subsett \dots \subsett S_n=V(G)$$
	such that for each $i\in [n]$, we have $f(S_{i-1},S_{i})\le \ell$.
	Let us show that from this sequence, we can in polynomial time construct a transient merge sequence with radius-$r$ width at most $\ell \cdot \pi_G(\ell)$, proving the result.
	
	Let $\P_i \eqdef \Nn_{S_{n-i}}$ for each $0 \le i \le n$, and $\P_i' \eqdef \Nn'_{S_{n-i}}$ the corresponding refined partition.
	Clearly~$\P_i$ refines~$\P_{i+1}$, and recall that this implies that~$\P'_i$ refines~$\P'_{i+1}$.
	Also, it is simple to check that $\P_0 = \P'_0$ is the partition into singletons,
	and $\P_n = \P'_n$ has a single part~$V(G)$.
	Finally, let $G_i \eqdef G_{\P_i} = G_{\Nn_{S_{n-i}}}$ be the chosen $\P'_i$-flip of~$G$.
	
	Now by assumption we have $f(S_{n-i},S_{n-i+1}) \le \ell$, meaning that for any $v \in V(G)$,
	\begin{equation*}%\label{eq:approx-mw-2}
		|\Ball_{G_{i}}^{r}(v)/ \P'_{i-1}| \le  \ell.
	\end{equation*}
	Thus $(\P'_0,G_0),\dots,(\P'_n,G_n)$ is a transient merge sequence with radius-$r$ width at most~$\ell$.
\end{proof}

\begin{proof}[Proof of \cref{thm:approx-intro}]
  Let~$d$ denote the VC-dimension of~$G$, and $\ell = \mw_{612r+122}(G)$.
  By \cref{thm:main2}, we have
  \[ k_1 \eqdef \dmw_{36r+7}(G) \le (\ell d)^{\O(d)}. \]
  Now \cref{lem:approx-mw}, applied to~$G$ with radius~$6r+1$, gives in time~$n^{\O(1)} \cdot 2^n$ a transient merge sequence with radius at most
  \[ k_2 \eqdef (2k_1+1) \cdot \pi_G(2k_1+1). \]
  Finally, by \cref{thm:main1}, this transient merge sequence can be turned into a (standard) merge sequence of radius-$r$ width at most
  \[ k_3 \eqdef k_2 \cdot \pi_G(k_2). \]
  This can be done in polynomial time.

  Combining the inequalities on~$\ell,k_1,k_2,k_3$, and using the bound $\pi_G(t) \le t^{\O(d)}$ (\cref{sauer_shelah_lemma}),
  we find that
  \[ k_3 \le (d \ell)^{\O(d)} \le \ell^{\O(\ell)}, \]
  where the last inequality uses $d \le \O(\ell)$ (\cref{thm:vc_mw1}).
  Thus we have constructed a merge sequence with radius-$r$ width at most $\ell^{\O(\ell)}$ for $\ell =  \mw_{612r+122}(G) \le \mw_{734r}(G)$.
\end{proof}

\subsection{Adjacency labelling schemes}\label{sec:labscheme}

In this section we give an adjacency labelling scheme for graphs of bounded radius-$1$ short merge-width. We start by recalling the formal definition of an adjacency labelling scheme.

%\begin{definition}
%	The {\em{valency}} of a merge sequence $(\P_1,\ldots,\P_m)$ of a graph $G$ is the smallest integer $\Delta$ such that for every $1<i\leq m$ and part $P\in \P_i$, $P$ comprises of at most $\Delta$ parts of $\P_{i-1}$.
%\end{definition}

%So informally speaking, in a merge sequence of valency $\Delta$ we can merge at most $\Delta$ parts together at a time.
 
\newcommand{\enc}{{\cal E}}
\newcommand{\dec}{{\cal D}}

\begin{definition}
	Let $\CC$ be a graph class. A {\em{labelling scheme}} for~$\CC$ is a pair of functions $(\enc,\dec)$, called the {\em{Encoder}} and the {\em{Decoder}}. The Encoder gets on input a graph $G\in \CC$ and a vertex $u$ of~$G$, and outputs a binary word $\enc_G(u)\in \{0,1\}^*$, called the {\em{label}} of $u$. The Decoder gets on input a pair of words $x,y\in \{0,1\}^*$ and outputs a single bit: the pair is either accepted or rejected. We require that the Decoder faithfully decodes the adjacency relation of graphs in~$\CC$ from the labels produced by the Encoder in the following sense: for every graph $G\in \CC$ and a pair of distinct vertices $u,v$ of~$G$, 
	\[uv\in E(G)\quad\textrm{if and only if}\quad \dec(\enc_G(u),\enc_G(v))=\textrm{accept}. \]
	The {\em{length}} of the labelling scheme $(\enc,\dec)$ is the function $\ell\colon \N\to \N$ mapping every $n\in \N$ to the maximum length of a label produced by the Encoder in a graph $G\in \CC$ with $n$ vertices:
	\[\ell(n)\coloneqq \max_{\substack{G\in \CC,\\|V(G)|=n}}\max_{u\in V(G)} |\enc_G(u)|.\]
	We may also say that a graph class $\CC$ admits a labelling scheme of length $\mathscr F$, where $\mathscr F$ is a class of functions (e.g. $\Oh(\log n)$), if there is a labelling scheme for $\CC$ of length $f\in \mathscr F$.
\end{definition}

Note that the definition of an adjacency labelling scheme does not stipulate any bounds on the time complexity of computing the Encoder and the Decoder, or even assert the computability of those functions. While these aspects can be of course discussed, we do not consider them~here.

With the definition in place, let us recall the result: the goal of this section is to prove the following.

\thmAdjLab*

Towards \cref{thm:adj-labelling}, we prove the following technical claim. 

\begin{theorem}\label{thm:adj-labelling-technical}
	Suppose that $\CC$ is a class of graphs such that every $n$-vertex graph $G\in \CC$ admits a merge sequence of length at most $m(n)$, valency at most $\Delta(n)$, and radius-$1$ width at most $k(n)$, for some functions $m,\Delta,k\colon \N\to \N$. Then $\CC$ admits a labelling scheme of length $\Oh(m(n)\cdot k(n)\cdot \log \Delta(n))$.
\end{theorem}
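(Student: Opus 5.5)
The plan is to read off adjacency of a pair $u,v$ from the last moment at which the pair $uv$ is still unresolved. At that moment the adjacency equals the default connection between the two parts involved, and the locality condition (radius-$1$ width at most $k$) lets each vertex afford to store these defaults. We fix $G\in\CC$ on $n$ vertices and a merge sequence $(\P_1,R_1),\dots,(\P_m,R_m)$ with $m=m(n)$, valency $\Delta=\Delta(n)\ge 2$ and radius-$1$ width $k=k(n)$.

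\emph{Naming parts.} The parts form a rooted \emph{merge tree}: the root is $V\in\P_m$, and the children of $P\in\P_{t+1}$ are the at most $\Delta$ parts of $\P_t$ contained in it. We fix an arbitrary numbering of the children of each node by $[\Delta]$. For every vertex $u$ and $t\in[m-1]$, define the \emph{list} $L_u(t)\eqdef \Ball^1_{R_{t+1}}(u)/\P_t$, and set $L_u(m)\eqdef\{V\}$. By the width assumption, $|L_u(t)|\le k$.

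The key structural fact is that lists are closed under taking parents. If $Q\in L_u(t)$ and $Q'\in\P_{t+1}$ is its parent, then $Q'\in L_u(t+1)$, since $R_{t+1}\subseteq R_{t+2}$ implies $\Ball^1_{R_{t+1}}(u)\subseteq\Ball^1_{R_{t+2}}(u)$. Hence $L_u(t)$ can be encoded relative to $L_u(t+1)$. Each element is given as a pair (position of its parent in $L_u(t+1)$, child index in $[\Delta]$). Sorting $L_u(t)$ by parent position, the parent positions form a nondecreasing sequence in $[k]$ of length at most $k$, encodable in $\Oh(k)$ bits by gap/unary coding. The child indices cost $\Oh(k\log\Delta)$ bits, so each level costs $\Oh(k\log\Delta)$ bits.

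\emph{The label of $u$.} The label consists of four pieces:
\begin{enumerate}
\item the path of $u$ in the merge tree, i.e.\ the child indices of $P_t(u)$ for $t<m$, which costs $m\log\Delta$ bits;
\item the encoded lists $L_u(t)$ for all $t$;
\item for each $t\ge 2$ and each $Q\in L_u(t)$, one bit giving the default connection between $P_t(u)$ and $Q$, that is, the common adjacency of all pairs in $P_t(u)Q\setminus R_t$ (and the single default bit for $t=m$);
\item for $t=1$, where $L_u(1)$ consists of at most $k$ singletons $\{w\}$, the actual adjacency bit of $uw$.
\end{enumerate}
The total length is $\Oh(m k\log\Delta)$, as required.

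\emph{Decoding.} Given the labels of $u$ and $v$, the decoder walks down the tree from level $m$. Using $v$'s path and $u$'s encoded lists, it determines for each $t$ whether $P_t(v)\in L_u(t)$, and if so at which position. This works because of parent-closure: if $P_{t+1}(v)\notin L_u(t+1)$ then $P_t(v)\notin L_u(t)$; otherwise one looks for the entry whose parent is $P_{t+1}(v)$ and whose child index matches. The set of $t$ with $P_t(v)\in L_u(t)$ is therefore an up-closed interval $[t_0,m]$. There are two cases.
\begin{itemize}
\item If $t_0\ge 2$, then $P_{t_0-1}(v)\notin L_u(t_0-1)$. Hence $uv\notin R_{t_0}$, so $uv$ is unresolved at time $t_0$ and its adjacency equals the default bit stored for $(P_{t_0}(u),P_{t_0}(v))$ at level $t_0$. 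This includes the case $t_0=m$.
\item If $t_0=1$, then $\{v\}\in L_u(1)$ and the adjacency bit was stored explicitly.
\end{itemize}

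The main obstacle is the length bound, and it is handled by the parent-closure of the lists. Naming parts absolutely would cost $\Oh(m\log\Delta)$ bits per list entry, giving $m^2k\log\Delta$ in total. Naming them relatively with $\log k$-bit pointers would give an extra $\log k$ factor. The sorted gap encoding of parent pointers is what brings this down to $\Oh(k\log\Delta)$ per level. I would check carefully that this encoding, together with the top-down search, identifies $P_t(v)$ unambiguously at each level.
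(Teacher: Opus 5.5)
Your proposal is correct and follows the paper's proof: your lists $L_u(t)$ are the levels of the paper's $k$-thin tree $T^u$ of parts touched by $u$, your parent-closure is its prefix-closure, your relative encoding of lists is the paper's thin-tree encoding, and decoding via the least $t_0$ with $P_{t_0}(v)\in L_u(t_0)$ is the paper's ``deepest node of $T^u$ that is an ancestor of $w(v)$''. The differences are only cosmetic: you store explicit adjacency bits at level $1$ instead of assuming $R_1=\emptyset$, you use part-level default bits rather than the paper's per-vertex bits $b(u,w)$, and you encode parent positions in unary rather than child counts in binary.
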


Note that \cref{thm:adj-labelling-technical} implies \cref{thm:adj-labelling}, for assuming $\CC$ has bounded short merge-width, we may take $m(n)\in \Oh(\log n)$, $\Delta(n)\in \Oh(1)$, and $k(n)\in \Oh(1)$. The remainder of this section is devoted to the proof of \cref{thm:adj-labelling-technical}.

\paragraph{Encoding thin trees.}
As the first technical step, we prove an auxiliary lemma about encoding thin trees. Throughout this section, for a parameter $\Delta\in \N$, by a {\em{$\Delta$-branching tree}} we mean a non-empty, finite set  of $T\subseteq [\Delta]^*$ (that is, consisting of finite words over the alphabet $[\Delta]=\{1,\ldots,\Delta\}$) that is closed under prefixes. We call the elements of $T$ {\em{nodes}} and interpret the prefix relation in $T$ as the ancestor relation: a node $u$ is an ancestor of a node $v$ if $u$ is a prefix of $v$. The usual notions of descendant, parent, and child then follow naturally. For $u\in T$ and $i\in [\Delta]$, the {\em{$i$-child}} of $u$ is the node~$ui$. Note that the set of values $i$ such that $u$ has an $i$-child in $T$ can be an arbitrary subset of $[\Delta]$. 

The {\em{depth}} of a node $u\in T$ is the length of $u$, and the {\em{height}} of~$T$ is the largest depth among the nodes of $T$. For $j\in \N$, by $T_j$ we denote the set of nodes of $T$ of depth exactly $j$.
A $\Delta$-branching tree $T$ is called {\em{$q$-thin}} if $|T_j|\leq q$ for every $j\in \N$.

\newcommand{\cd}{\mathsf{code}}

\begin{restatable}[name=\deferred]{lemma}{treeencoding}\label{cl:tree-encoding}
	With every $q$-thin $\Delta$-branching tree $T$ of height at most $h$ one can associate a binary word $\cd(T)\in \{0,1\}^*$ of length $\Oh(hq\log \Delta)$ so that $T$ can be uniquely computed from $\cd(T)$.
\end{restatable}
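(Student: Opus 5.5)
We encode $T$ level by level. The idea is to record, for every depth $j$, how the nodes of $T_{j+1}$ arise as children of the nodes of $T_j$. To make this unambiguous we fix a canonical order on each level: order $T_j$ lexicographically as words over $[\Delta]$, so each node of $T_j$ gets an index in $\{1,\dots,|T_j|\}\subseteq[q]$. Every node of $T_{j+1}$ has the form $ui$ with $u\in T_j$ and $i\in[\Delta]$, so it is described by the pair (index of $u$ in $T_j$, letter $i$).

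The encoding has three parts.
\begin{enumerate}
\item We write the height $h'\le h$ of $T$ in binary. This costs $\Oh(\log h)$ bits, which is within the budget since $h\ge 1$ may be assumed; the tree consisting only of the root is trivial and gets a fixed code.
\item For each $j<h'$ we write $|T_{j+1}|\le q$ using $\lceil\log_2(q+1)\rceil$ bits.
\item For each $j<h'$ we list the nodes of $T_{j+1}$ in lexicographic order. Each node is given as a parent index of $\lceil\log_2 q\rceil$ bits followed by a letter of $\lceil\log_2\Delta\rceil$ bits.
\end{enumerate}
All fields have fixed width once $q$ and $\Delta$ are fixed. Here $q$ and $\Delta$ are treated as parameters known to the decoder; if they are not, we prepend them in a self-delimiting form, such as a unary-length prefix. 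This costs only $\Oh(\log q+\log\Delta)$ extra bits.

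For decoding, we start from $T_0=\{\varepsilon\}$ and reconstruct the levels inductively. Given the lexicographically ordered $T_j$, the block for level $j+1$ lists parent indices and letters. From these we recover each word $ui$ explicitly, and we recover the next level's ordering by re-sorting lexicographically. Uniqueness follows because the encoder's listing determines the set $T_{j+1}$ exactly.

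The length of the code is at most
\[
\Oh(\log h)+h\cdot\Oh(\log q)+h\cdot q\cdot\Oh(\log q+\log\Delta).
\]
This is $\Oh(hq\log q+hq\log\Delta)$, which matches the claimed bound $\Oh(hq\log\Delta)$ only if the $\log q$ term is absorbed.

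The main obstacle is exactly this parent-index term $q\log q$ per level, which is not obviously bounded by $\Oh(q\log\Delta)$ when $q\gg\Delta$. To eliminate it, I would not store parent indices at all. Instead, for each $u\in T_j$ in lexicographic order, we store the number of its children in unary, followed by the letters of those children in $\lceil\log_2\Delta\rceil$ bits each. The unary counts over one level sum to $|T_j|+|T_{j+1}|\le 2q$ bits, so a level costs $\Oh(q+q\log\Delta)$. The whole code then has length $\Oh(hq\log\Delta)$, assuming $\Delta\ge2$; for $\Delta=1$ the tree is a path and is determined by its height. Decoding is unchanged in spirit: we process the parents of $T_j$ in order, read each unary count, and then read that many letters.
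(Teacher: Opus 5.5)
Your final encoding is correct and is essentially the paper's construction. The paper also lists, level by level in lexicographic order, each node's number of children $c^t_j$ and the child indices $i^t_j$, avoids parent indices, and rebuilds $T$ layer by layer; writing the counts in unary instead of as numbers in $\{0,\dots,\Delta\}$ does not change the $\Oh(hq\log\Delta)$ bound.
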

\begin{proof}
	For every $0\leq j\leq h$, let $u^1_j,\ldots,u^{q_j}_j$ be the elements of $T_j$ ordered lexicographically. Note that $q_j\leq q$ by $q$-thinness of $T$. For every $0\leq j\leq h$ and $t\in [q_j]$, consider the following two numbers:
	\begin{itemize}[nosep]
		\item $c^t_j\in \{0,1,\ldots,\Delta\}$ is the number of children that $u^t_j$ has in $T_{j+1}$, and
		\item $i^t_j\in [\Delta]$ is such that $u^t_j$ is the $i^t_j$-child of its parent. (This value is not recorded for $j=0$.)
	\end{itemize}
	Observe that given the sequences of numbers $(c^1_j,\ldots,c^{q_j}_j)$ and $(i^1_j,\ldots,i^{q_j}_j)$ for all $0\leq j\leq h$, we can uniquely reconstruct $T$ layer by layer. First, we have $T_0=\{\varepsilon\}$. Then, to construct $T_{j+1}$ from $T_j$, for each $t\in [q_j]$ we insert into $T_{j+1}$ the following $c^t_j$ children of $u^t_j$: the $i^{s+1}_{j+1}$-child, the $i^{s+2}_{j+1}$-child, and so on up to the $i^{s+c^t_j}_{j+1}$-child, where $s=\sum_{t'<t} c^{t'}_j$.
	
	It remains to note that since the sequences $(c^1_j,\ldots,c^{q_j}_j)$ and $(i^1_j,\ldots,i^{q_j}_j)$ consist of at most $q$ numbers in $\{0,1,\ldots,\Delta\}$, we can encode them in a binary word $\cd(T)$ of length $\Oh(hq\log \Delta)$.
\end{proof}

 We now proceed with the labelling scheme. 
\begin{proof}[Proof of \cref{thm:adj-labelling-technical}]
Let $(\P_1,R_1),\ldots,(\P_m,R_m)$ be a merge sequence of the given graph $G$ with length $m\leq m(n)$, valency $\Delta\leq \Delta(n)$, and radius-$1$ merge-width $k\leq k(n)$. Recall that $\P_1\preceq \P_2\preceq\ldots \preceq \P_m$, with $\P_1$ being the partition into singletons, and $\P_m$ the partition into one part:~$V(G)$. We may assume without loss of generality that $R_1=\emptyset$.

We construct a $\Delta$-branching tree $T$ of height $h\coloneqq m-1$ that encodes the sequence of partitions $\P_1\preceq \P_2\preceq\ldots \preceq \P_m$. Each level $T_j$, $0\leq j\leq h$, will correspond to the partition $\P_{m-j}$, and this correspondence is encoded by a bijection $\eta_j\colon T_j\to \P_{m-j}$. The construction proceeds as follows:
\begin{itemize}[nosep]
	\item For the unique part $V(G)\in \P_m$, we insert the empty word $\varepsilon$ to $T$ and set $\eta_0(\varepsilon)\coloneqq V(G)$.
	\item Next, for each consecutive $j\in [h]$, we construct $T_j$ and $\eta_j$ as follows. Consider every part $P\in \P_{m-j+1}$ and let $Q^1,\ldots,Q^d$ be the parts of $\P_{m-j}$ contained in $P$, enumerated arbitrarily. Note that $d\leq \Delta$. We insert into $T$ nodes $w1,w2,\ldots,wd$, where $w=\eta^{-1}_{j-1}(P)$, and set $\eta_j(wt)\coloneqq Q_t$ for each $t\in [d]$. 
\end{itemize}
Note that thus, the descendant relation in $T$ corresponds to the inclusion relation between the parts of the partitions $\P_1,\ldots,\P_m$: for $A\in \P_i$ and $A'\in \P_{i'}$ with $i\leq i'$, we have $A\subseteq A'$ if and only if $\eta^{-1}_{m-i}(A)$ is descendant of $\eta^{-1}_{m-i'}(A')$. We define $\eta\colon T\to \bigcup_{i\in [m]} \P_i$ as $\eta\coloneqq \bigcup_{0\leq j\leq h} \eta_j$.

Consider any vertex $u$ of $G$. We define the {\em{identifier}} of $u$ to be the unique node $w(u)\in T_h$ such that $\eta_h(w(u))=\{u\}$. Next, for a part $P\in \P_i$, we say that $P$ is {\em{touched}} by $u$ if $u\in P$ or there is $v\in P$ such that $uv\in R_{i+1}$. Then we define
\[T^u\coloneqq \{w\in T\mid \eta(w)\textrm{ is touched by }u\}\subseteq T.\]
We note the following.

\begin{claim}
	For each vertex $u$ of $G$, $T^u$ is a $k$-thin $\Delta$-branching tree.
\end{claim}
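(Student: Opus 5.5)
We need to show two things about $T^u$: it is closed under prefixes (a $\Delta$-branching tree), and each level contains at most $k$ nodes.

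\emph{Prefix-closure.} Take $w \in T^u$ at depth $j$, so $\eta(w)=P\in\P_i$ with $i=m-j$ and $P$ touched by $u$. Its parent $w'$ corresponds to the part $P'\in\P_{i+1}$ containing $P$. Two cases:
\begin{itemize}
\item If $u\in P$, then $u\in P'$, so $P'$ is touched.
\item Otherwise some $v\in P$ has $uv\in R_{i+1}$. By monotonicity $R_{i+1}\subseteq R_{i+2}$, and $v\in P'$, so $P'$ is touched by $u$.
\end{itemize}
For the root, $\eta(\varepsilon)=V(G)\in\P_m$ always contains $u$, so $\varepsilon\in T^u$ and $T^u$ is nonempty. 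Iterating the parent step shows every prefix of $w$ lies in $T^u$. Here the definition of touching for $i=m$ must be read with only the clause $u\in P$, since $R_{m+1}$ is undefined. This is harmless because $\P_m$ has a single part.

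\emph{Thinness.} Since $\eta_j$ is a bijection onto $\P_{m-j}$, we have $|T^u_j|$ equal to the number of parts of $\P_i$ ($i=m-j$) touched by $u$. For $i<m$, every touched part meets $\Ball^1_{R_{i+1}}(u)$: either it contains $u$ itself (distance $0$) or it contains an $R_{i+1}$-neighbour of $u$. The radius-$1$ width of $(\P_i,R_{i+1})$ is at most $k$, so $|\Ball^1_{R_{i+1}}(u)/\P_i|\le k$, and hence at most $k$ parts are touched. For $i=m$ there is exactly one part, and $1\le k$ provided $k\ge 1$, which holds because any ball contains $u$'s own part. So $|T^u_j|\le k$ for all $j$.

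The only subtle point is the offset convention: touching at level $i$ uses $R_{i+1}$, which matches exactly the pair $(\P_i,R_{i+1})$ whose width is controlled. It also requires the monotonicity of the $R_i$ for the prefix-closure step. Neither step is a real obstacle; the claim follows directly from the definitions.
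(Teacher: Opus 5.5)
Your proof is correct and follows the paper's own argument. Prefix-closure comes from the monotonicity $R_{i+1}\subseteq R_{i'+1}$, and thinness comes from the radius-$1$ width bound on $(\P_i,R_{i+1})$. Your explicit treatment of the top level $i=m$, where $R_{m+1}$ is undefined, is a small piece of care that the paper glosses over.
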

\begin{claimproof}
	To prove that $T^u$ is a $\Delta$-branching tree it suffices to argue that $T^u$ is closed under taking prefixes, but this follows directly from the observation: if $P\in \P_i$ is touched by $u$, then any $P'\in \P_{i'}$, $i'\geq i$, that contains $P$ is also touched by $u$, because $R_{i'+1}\supseteq R_{i+1}$. That $T^u$ is $k$-thin is implied by the bound on the radius-$1$ width of $(\P_1,R_1),\ldots,(\P_m,R_m)$: for each $i\in [m]$, $u$ touches only those parts of $\P_i$ that intersect the set $\Ball^1_{R_{i+1}}(u)$, of which there are at most $k$ many.
\end{claimproof}

Consider any node $w\in T^u$ of depth $m-i$ and let $P\coloneqq \eta(w)$; then $P\in \P_i$. Recall that $u$ is either adjacent to all $v\in P$ with $uv\notin R_i$ and $v\neq u$, or non-adjacent to all such $v$. We define $b(u,w)\in \{0,1\}$ to be $1$ if it is adjacent, and $0$ if it is non-adjacent (if there are no such vertices~$v$, $b(u,w)$ is chosen arbitrarily).

The next claim is the key component of our labelling scheme: it allows to decode the adjacency between two vertices $u$ and $v$ from the values $b(u,w)$ for $w$ ranging over the nodes of~$T^u$.

\begin{claim}\label{cl:decode}
	Let $u,v$ be distinct vertices of $G$ and let $w$ be the deepest node of $T^u$ that is an ancestor of $w(v)$ in $T$. Then
	\[uv\in E(G)\qquad\textrm{if and only if}\qquad b(u,w)=1.\]  
\end{claim}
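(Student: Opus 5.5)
Let $w$ be the deepest node of $T^u$ that is an ancestor of $w(v)$, of depth $m-i$, and let $P\coloneqq\eta(w)\in\P_i$; by the correspondence between descendants and inclusion, $v\in P$. The plan is to show that the pair $uv$ is \emph{unresolved} in $R_i$, so the homogeneity of $\P_i$ modulo $R_i$ forces the adjacency of $u$ to $v$ to equal the default adjacency $b(u,w)$. Note first that $T^u$ contains the root (since $V(G)$ is touched by $u$), so $w$ is well defined.

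First handle the case that $w=w(v)$, i.e.\ the leaf $\{v\}\in\P_1$ is touched by $u$. Since $u\neq v$ and $R_2$ then contains $uv$, we are fine only if we also ensure $uv\notin R_1$, which holds as $R_1=\emptyset$ by assumption. Then $v$ is a vertex of $P=\{v\}$ with $uv\notin R_1$ and $v\neq u$, so by definition of $b(u,w)$ (which is no longer arbitrary here) we get $uv\in E(G)$ iff $b(u,w)=1$.

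Otherwise $i>1$, and let $w'$ be the child of $w$ on the path to $w(v)$, with $P'\coloneqq\eta(w')\in\P_{i-1}$, so $v\in P'\subseteq P$. By maximality of $w$, $w'\notin T^u$, so $P'$ is not touched by $u$: that is, $u\notin P'$ and there is no $v'\in P'$ with $uv'\in R_i$. In particular $uv\notin R_i$ and $u\neq v$. Hence $v$ is one of the vertices of $P$ to which the definition of $b(u,w)$ refers, and by homogeneity of $\P_i$ modulo $R_i$ (applied to the parts containing $u$ and $P$), the adjacency between $u$ and all such $v$ is uniform and equals $b(u,w)$. This gives the claim.

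The only delicate point is the index bookkeeping: the touching condition for a part of $\P_{i-1}$ uses $R_i$, exactly matching the homogeneity of $\P_i$ modulo $R_i$ used in defining $b(u,w)$; and the boundary case at the leaves, which is dealt with by $R_1=\emptyset$.
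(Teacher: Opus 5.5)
Your proof is correct and is essentially the paper's argument. Like the paper, you reduce to showing $uv\notin R_i$: the leaf case is handled by $R_1=\emptyset$, and otherwise the child $w'$ of $w$ containing $v$ is not touched by $u$, so no pair from $u$ into $\eta(w')$ lies in $R_i$. The paper writes this as a proof by contradiction rather than a case split, but the content is the same.
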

\begin{claimproof}
	Let $P\coloneqq \eta(w)$ and let $i$ be such that the depth of $w$ is $m-i$; then $P\in \P_i$. By the definition of $b(u,w)$, it suffices to argue that $uv\notin R_i$. Suppose otherwise, that $uv\in R_i$. 
	
	Since we assumed that $R_1=\emptyset$, we must have $i>1$, or equivalently $w$ is not a leaf of $T$. Let then $w'$ be the unique child of $w$ such that $v\in P'\coloneqq \eta(w')$; note that $P'\in \P_{i-1}$. By the choice of $w$ as the deepest ancestor of $w(v)$ that belongs to $T^u$, we have $w'\notin T^u$, which in particular implies that $P'$ is a part of $\P_{i-1}$ not touched by~$u$. It follows that $u$ is not connected to any element of $P'$ by a pair belonging to $R_i$; this is a contradiction with $uv\in R_i$.
\end{claimproof}

We can finally define our adjacency labelling scheme $(\enc,\dec)$. For any vertex $u$ of $G$, we define its label $\enc_G(u)$ to consist of the following three components:
\begin{itemize}[nosep]
	\item the identifier $w(u)$, encoded as a binary word of length $h\cdot \lceil\log \Delta\rceil$;
	\item the word $\cd(T^u)$ provided by \cref{cl:tree-encoding}; and
	\item the binary word listing all the values $b(u,w)$ for $w\in T^u$, in the lexicographic order over~$T^u$. 
\end{itemize}
By \cref{cl:tree-encoding} and due to $|T^u|\leq (h+1)k$, we can easily encode all these components in a binary word $\enc_G(u)$ of length $\Oh(hk\log \Delta)$. Now given the labels $\enc_G(u)$ and $\enc_G(v)$ of distinct vertices $u$ and~$v$, we can determine whether $u$ and $v$ are adjacent as follows: 
\begin{itemize}[nosep]
	\item By \cref{cl:tree-encoding}, from $\cd(T^u)$ we may compute the set~$T^u$.
	\item From $T^u$ and the identifier $w(v)$, we may compute the deepest ancestor of $w(v)$ that belongs to $T^u$.
	\item From the binary word included in $\enc_G(u)$ we may read the value $b(u,w)$, which by \cref{cl:decode} determines whether $u$ and $v$ are adjacent or not.
\end{itemize}
This constitutes the Decoder $\dec$ and completes the proof of \cref{thm:adj-labelling-technical}.
\end{proof}

% \bibliographystyle{alphaurl}
% \bibliography{refs}
\printbibliography

\appendix
\crefalias{section}{appendix} % cleveref appendices are broken somehow?
\renewcommand{\mainonly}[1]{}

\section{Proof of Lemma \ref{lem:informal}}\label{apx:informal-proof}
Here, we explain how to obtain \cref{lem:informal} from the arguments of \cite[Section~3]{flip-separability}.
\informallemma*

Recall that $\overline{G}$ denotes the edge-complement of~$G$.
Let $\diam(G) \eqdef \max_{u,v \in V(G)} \dist_G(u,v)$ denote the diameter of a graph.
The proof of \cite[Lemma~8]{flip-separability} uses elementary results on diameter of graphs and their complements.
\begin{lemma}[{\cite[Lemma~9]{flip-separability}}]\label{lem:diam-complement}
  For any graph~$G$, either $\diam(G) \le 3$ or $\diam(\overline{G}) \le 3$.
\end{lemma}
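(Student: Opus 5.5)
We argue by contradiction via a single vertex pair. Suppose $\diam(G) \ge 4$, and fix $u,v$ with $\dist_G(u,v) \ge 4$; in particular $\dist_G(u,v) \ge 3$ suffices for the first step. We then show that every pair $x,y$ is at distance at most $3$ in $\overline{G}$. If $\dist_G(u,v)=\infty$ the argument below still applies, since only non-adjacency facts are used.

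The key observation is that, because $\dist_G(u,v)\ge 3$, the vertices $u$ and $v$ have no common neighbour in $G$ and are non-adjacent. Hence every vertex $w\neq u,v$ is non-adjacent in $G$ to at least one of $u,v$. In other words, $\{u,v\}$ dominates $\overline{G}$, and $uv\in E(\overline{G})$.

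Now take arbitrary $x,y$. If $x$ and $y$ are both $\overline{G}$-adjacent to the same vertex of $\{u,v\}$, then $\dist_{\overline{G}}(x,y)\le 2$. If instead $x$ is $\overline{G}$-adjacent to $u$ and $y$ is $\overline{G}$-adjacent to $v$, then the walk $x,u,v,y$ has length $3$ in $\overline{G}$. The degenerate cases where $x$ or $y$ lies in $\{u,v\}$ are handled the same way, since $uv\in E(\overline{G})$ and each of $u,v$ is joined to every other vertex through itself or the other. Therefore $\diam(\overline{G})\le 3$.

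This argument even yields the stronger statement that $\diam(G)\ge 3$ implies $\diam(\overline{G})\le 3$. The only step needing care is the domination claim, which is exactly where the assumption $\dist_G(u,v)\ge 3$ (no common neighbour) is used. There is no real obstacle beyond checking these small cases.
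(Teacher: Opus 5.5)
Your argument is correct and is the standard proof. Choosing $u,v$ with $\dist_G(u,v)\ge 3$ (including $\infty$) makes $uv$ an edge of $\overline{G}$ and $\{u,v\}$ a dominating set of $\overline{G}$, which gives $\diam(\overline{G})\le 3$; the paper itself gives no proof and simply cites \cite[Lemma~9]{flip-separability}. The only cosmetic point is that the argument is direct rather than by contradiction, and, as you note, it already works under the weaker hypothesis $\diam(G)\ge 3$.
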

A variant of \cref{lem:diam-complement} for bipartite graphs is also needed.
Here, for a bipartite graph $B = (X,Y,E)$ (where~$X$ and~$Y$ are the two sides of the bipartition),
we consider the bipartite complement $\overline{B} = (X,Y,\overline{E})$
where $\overline{E} \eqdef XY \setminus E$.
\begin{lemma}[{\cite[Lemma~10 and~11]{flip-separability}}]\label{lem:diam-bipartite-complement}
  For any bipartite graph $B = (X,Y,E)$, one of the following cases holds:
  \begin{enumerate}
    \item $\diam(B) \le 6$,
    \item $\diam(\overline{B}) \le 6$,
    \item $B$ is exactly the disjoint union of two bicliques, or
    \item there are vertices $x^+,x^- \in X$ such that~$x^-$ is isolated and $x^+$~is adjacent to all of~$Y$, or symmetrically swapping~$X$ and~$Y$.
  \end{enumerate}
\end{lemma}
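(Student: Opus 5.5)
The plan is to assume that cases 3 and 4 both fail and show that $\diam(B)\le 6$ or $\diam(\overline{B})\le 6$. The hypotheses are symmetric under bipartite complementation: the bipartite complement of a disjoint union of two bicliques $X_1Y_1 \cup X_2Y_2$ is the disjoint union of the bicliques $X_1Y_2 \cup X_2Y_1$, and case 4 is swapped with itself (the isolated vertex becomes complete to the other side and vice versa). So we may freely exchange $B$ and $\overline{B}$. Throughout we use that distances between same-side vertices are even and between opposite-side vertices are odd. We also use the basic fact that if $x\in X$ and $y\in Y$ lie in different components of $B$, then $xy\in E(\overline{B})$.

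\emph{Step 1: $B$ is disconnected.} Let $C_1,\dots,C_k$ be the components, $k\ge 2$. We bound $\diam(\overline{B})$.
\begin{itemize}
\item Suppose at least three components meet $Y$, or, more generally, every vertex of $X$ has a $\overline{B}$-neighbour in $Y$ outside its own component and symmetrically. Then any two same-side vertices have a common $\overline{B}$-neighbour in a third component, giving distance at most $2$. Opposite-side pairs then have distance at most $3$.
\item The remaining configurations have few components meeting each side. Isolated vertices are single-side components.
\end{itemize}
We handle the remaining configurations by explicit short paths. For example, with exactly two components $C_1,C_2$, each meeting both sides, take $x\in X\cap C_1$ and $y\in Y\cap C_1$. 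The path $x - y' - x'' - y$ with $y'\in C_2$ and $x''\in C_2$ works as soon as $x''y'$ is a non-edge of $B$. Such a non-edge exists unless $C_2$ is a biclique. If both components are bicliques we are in case 3. Otherwise we route through the non-biclique component.

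Components that are isolated vertices are handled in the same way. The only obstruction to a short $\overline{B}$-path is then an isolated vertex on one side combined with a vertex complete to the other side, which is case 4.

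\emph{Step 2: both $B$ and $\overline{B}$ are connected.} Assume $\diam(B)\ge 7$ and pick $u,v$ with $\dist_B(u,v)\ge 7$. Consider the BFS layers $L_0=\{u\},L_1,\dots$ from $u$ in $B$; these alternate sides. The key observation is that any vertex $w$ has $B$-neighbours only in two consecutive layers. Hence $w$ is $\overline{B}$-adjacent to every opposite-side vertex of all other layers. Vertices lying near $u$ and vertices lying near $v$ therefore have plenty of $\overline{B}$-neighbours far away.
\begin{itemize}
\item For two same-side vertices, pick an opposite-side vertex in a layer at $B$-distance at least $2$ from both their layers. The layers $L_1,L_3,L_5,\dots$ or $L_0,L_2,\dots$ up to distance $7$ provide enough room. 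This gives $\overline{B}$-distance $2$.
\item Opposite-side pairs then get distance at most $3$ or $5$ by one extra step.
\end{itemize}
We conclude $\diam(\overline{B})\le 6$.

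The main obstacle is the bookkeeping in Step 2, in choosing a far layer simultaneously for both vertices when they sit in the middle layers. That is why the threshold is $6$ rather than $3$ as in \cref{lem:diam-complement}. I would first try choosing the layer among $L_0,L_1$ or $L_6,L_7$ according to parity and position, checking that one of these pairs is always at distance at least $2$ from both given layers. A second, smaller obstacle is in Step 1: making sure the small-component cases exhaust exactly cases 3 and 4 and nothing else.
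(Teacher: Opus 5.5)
The paper does not prove this lemma; it quotes it from \cite{flip-separability}. So I assess your argument directly. The overall architecture is sound:
\begin{itemize}
\item the four cases are closed under bipartite complementation;
\item a disconnected $B$ can be handled through its components;
\item a connected $B$ with $\diam(B)\ge 7$ does force $\overline{B}$ to have small diameter via BFS layers.
\end{itemize}
However, the central claim of Step 2 is false. Take $B$ to be the path $p_0p_1\cdots p_7$ and $u=p_0$, so that $L_i=\{p_i\}$. The $\overline{B}$-neighbourhoods of $p_2$ and $p_6$ are $\{p_5,p_7\}$ and $\{p_1,p_3\}$. These are disjoint, so $\dist_{\overline{B}}(p_2,p_6)=4$. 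No layer is far from both $L_2$ and $L_6$. Your proposed repair, choosing among $L_0,L_1,L_6,L_7$, fails exactly here, since $L_1$ touches $L_2$ and $L_7$ touches $L_6$.

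The conclusion survives, but you need two-hop routing rather than a common far neighbour. One clean way to do this:
\begin{itemize}
\item Every $w\in L_i$ with $i\ge 3$ has a $\overline{B}$-neighbour in $L_0\cup L_1$: namely $u$ if $i$ is odd, and all of $L_1$ if $i\ge 4$ is even.
\item Inside $L_0\cup L_1\cup L_2$, all $\overline{B}$-distances are at most $3$, using only that $L_3,\dots,L_7$ are non-empty. Two vertices of $L_1$ have a common $\overline{B}$-neighbour in $L_4$. Two vertices of $L_0\cup L_2$ have one in $L_5$. Finally, $u\to L_3\to L_6\to L_1$ and $L_1\to L_4\to L_7\to L_2$ are $\overline{B}$-paths reaching every vertex of the target layer.
\end{itemize}
Hence $\diam(\overline{B})\le 1+3+1=5$.

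Step 1 reaches the right dichotomy, but two statements are inaccurate. First, ``at least three components meet $Y$'' only handles same-side pairs in $X$, not in $Y$. Second, the ``more generally'' clause is wrong: with exactly two non-trivial components every vertex has a $\overline{B}$-neighbour outside its component, yet there is no third component. Your final sentence about isolated vertices is asserted rather than checked. A clean split by isolated vertices settles everything:
\begin{enumerate}
\item[(a)] There are isolated $x_0\in X$ and $y_0\in Y$. Then $x_0$ is $\overline{B}$-complete to $Y$ and $y_0$ is $\overline{B}$-complete to $X$, so $\diam(\overline{B})\le 3$.
\item[(b)] Isolated vertices exist only in $X$, say $x_0$ (symmetrically for $Y$). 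Then all of $Y$ is pairwise within $\overline{B}$-distance $2$ via $x_0$. So $\diam(\overline{B})\le 4$ as soon as every $x\in X$ has some $\overline{B}$-neighbour. The only failure is some $x^+$ that is $B$-complete to $Y$, which together with $x_0$ is exactly case 4.
\item[(c)] There are no isolated vertices. With at least three components, same-side pairs meet in a third component and $\diam(\overline{B})\le 3$. With exactly two, your argument gives case 3 or $\diam(\overline{B})\le 5$.
\end{enumerate}
With these corrections, and the reduction via complementation for the case where $B$ is connected but $\overline{B}$ is not, the proof is complete.
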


The first statement we need to modify is \cite[Lemma~12]{flip-separability}:
\begin{lemma}[Variant of {\cite[Lemma~12]{flip-separability}}]\label{lem:bipartite-informal-lemma}
  Let $B = (X,Y,E)$ be a bipartite graph, and pick arbitrary vertices $x \in X$, $y \in Y$.
  Partition~$X$ according to the neighbourhood of~$y$,
  i.e.\ as $X = X^+ \uplus X^-$ with $X^+ = N(y)$ and $X^- = X \cap \overline{N}(y)$,
  and similarly $Y = Y^+ \uplus Y^-$ according to~$x$.
  Then there is a new bipartite graph $B' = (X,Y,E')$ obtained by some $\{X^+,X^-,Y^+,Y^-\}$-flip satisfying:
  \[ \text{for any $uv \in E'$,} \quad \dist_B(u,v) \le 6 \quad \text{and} \quad \dist_{\overline{B}}(u,v) \le 6. \]
\end{lemma}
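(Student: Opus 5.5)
The plan is a case analysis driven by \cref{lem:diam-bipartite-complement}, with the flip $B'$ chosen block by block. Because $B$ is bipartite, only the four blocks $(X^s,Y^t)$ with $s,t\in\{+,-\}$ matter. For each block we either keep it (edges of $B'$ in it are the $B$-edges) or flip it (edges of $B'$ in it are the $B$-non-edges). A kept edge $uv$ has $\dist_B(u,v)=1$, so we only need $\dist_{\overline B}(u,v)\le 6$. A flipped pair $uv$ is a non-edge of $B$, so $\dist_{\overline B}(u,v)=1$, and we only need $\dist_B(u,v)\le 6$. So for each block the goal is: \emph{either all $B$-edges in it are $6$-close in $\overline B$, or all $B$-non-edges in it are $6$-close in $B$.}

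The anchors give short connections. Every $u\in X^+$ is adjacent to $y$ in $B$, and every $u\in X^-$ is adjacent to $y$ in $\overline B$. Symmetrically, every $v\in Y^+$ (resp.\ $Y^-$) is adjacent to $x$ in $B$ (resp.\ $\overline B$). Also $xy$ is an edge of exactly one of $B$, $\overline B$.

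Next I would apply \cref{lem:diam-bipartite-complement} and treat the first three cases.
\begin{enumerate}
\item If $\diam(\overline B)\le 6$, keep all blocks, i.e.\ $B'=B$.
\item If $\diam(B)\le 6$, flip all blocks, i.e.\ $B'=\overline B$.
\item If $B$ is a disjoint union of two bicliques $(X_1,Y_1)$ and $(X_2,Y_2)$, then $X^+$ is the side of $y$'s biclique on $X$ and $X^-$ is the other side; likewise for $Y^\pm$ and $x$. Every block is therefore homogeneous, and flipping the complete blocks makes $E(B')=\emptyset$, so the condition holds vacuously.
\end{enumerate}

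The remaining case is a universal vertex $x^+\in X$ to $Y$ together with an isolated vertex $x^-\in X$ (or the symmetric version with $X$ and $Y$ swapped). Here all of $Y$ is within distance $2$ of itself in both $B$ (via $x^+$) and $\overline B$ (via $x^-$).
\begin{itemize}
\item Take a $B$-edge $uv$ where $u$ has some $\overline B$-neighbour $v'$. Then $u\,v'\,x^-\,v$ is a path in $\overline B$, so $\dist_{\overline B}(u,v)\le 3$.
\item Take a $B$-non-edge $uv$ where $u$ has some $B$-neighbour $v'$. Then $u\,v'\,x^+\,v$ is a path in $B$, so $\dist_B(u,v)\le 3$.
\end{itemize}
The only problematic vertices are therefore those $u\in X$ that are complete to $Y$ (they lie in $X^+$) and those that are isolated (they lie in $X^-$). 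These are whole-row defects, but all of them sit inside blocks with a fixed sign on $X$. So I would flip the blocks $(X^-,\cdot)$ and keep the blocks $(X^+,\cdot)$. An isolated $u\in X^-$ then only sees $B$-non-edges, which we flip; this requires $\dist_B(u,v)\le 6$, which fails if $u$ is isolated. This is exactly where the argument is delicate.

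This last case is the main obstacle. I expect to resolve it by one of two refinements. The first is to choose the keep/flip decision per block using the position of $x$ and $y$ themselves (note $x^-\ne x$ unless $Y^+=\emptyset$). The second is to observe that an isolated $u$ has all of its $\overline B$-distances at most $2$, so no flipped pair at $u$ is ever needed. Concretely, I would re-examine which pairs need to be certified: an isolated $u\in X^-$ should be placed on the kept side, where it has no $B$-edges and hence imposes no condition. This means arguing that in case~4 the blocks containing isolated or complete rows can be kept or flipped so that those rows contribute no edges to $B'$. That requires checking, block by block, that the remaining rows still satisfy the distance-$3$ bounds above.
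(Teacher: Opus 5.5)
Your cases 1--3 agree with the paper, but case~4 is not proved. You choose to flip the blocks $(X^-,\cdot)$ and keep $(X^+,\cdot)$, correctly observe that this fails (isolated rows lie in $X^-$, complete rows lie in $X^+$), and then leave the repair as a list of possible refinements. The missing step is to take the \emph{opposite} orientation and to use the anchor $y$ as the auxiliary neighbour $v'$ in your two distance-$3$ bullets.

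Let $B'$ flip the pair $(X^+,Y)$ and keep $(X^-,Y)$. Take $uv\in E(B')$.
\begin{itemize}
\item If $u\in X^+$, then $uv$ is a non-edge of $B$, so $\dist_{\overline{B}}(u,v)=1$. Also $uy\in E$ by definition of $X^+$, and $x^+$ is complete to $Y$. Hence $u\,y\,x^+\,v$ is a walk of length $3$ in $B$.
\item If $u\in X^-$, then $uv\in E$. Also $uy\notin E$, and $x^-$ is isolated in $B$. Hence $u\,y\,x^-\,v$ is a walk of length $3$ in $\overline{B}$.
\end{itemize}
In the symmetric subcase (with $y^+,y^-\in Y$), flip $(X,Y^+)$, keep $(X,Y^-)$, and route through $x$ instead.

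With this choice no row-by-row analysis is needed. Every row of $X^+$ has the $B$-neighbour $y$, and every row of $X^-$ has the $\overline{B}$-neighbour $y$, so your bullets apply uniformly. Complete rows (which lie in $X^+$) receive no $B'$-edges after flipping, and isolated rows (which lie in $X^-$) have none to begin with.

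This is exactly the paper's argument. Your second ``refinement'' points toward it, but the proposal stops before fixing the flip and checking it, so case~4 remains open as written.
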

\begin{proof}
  We consider the four possible cases of \cref{lem:diam-bipartite-complement}.
  \begin{enumerate}
    \item If~$\diam(B) \le 6$, we choose $B' = \overline{B}$ (which is indeed an $\{X^+,X^-,Y^+,Y^-\}$-flip).
      Then for any $uv \in E(B')$, we immediately have $\dist_{\overline{B}}(u,v) = 1$,
      and the assumption on the diameter gives $\dist_B(u,v) \le 6$.
    \item If~$\diam(\overline{B}) \le 6$, we instead choose $B' = B$, and the reasoning remains the same.
    \item If~$B$ is the disjoint union of two bicliques, then observe that regardless of the choice of~$x,y$,
      these two bicliques must be $(X^+,Y^+)$ and $(X^-,Y^-)$ (if we picked~$x,y$ in the same biclique),
      or alternatively $(X^+,Y^-)$ and $(X^-,Y^+)$ (if we picked~$x,y$ in different bicliques).
      Either way, these two bicliques can be removed by a $\{X^+,X^-,Y^+,Y^-\}$-flip,
      i.e.\ we can pick~$B'$ to be edgeless, making the last condition trivial.
    \item Finally, assume that there are $x^+,x^- \in X$ that are fully adjacent and non-adjacent to~$Y$ respectively.
      Note that necessarily $x^+ \in X^+$ and $x^- \in X^-$.
      Define~$B'$ by flipping between~$X^+$ and~$Y$, and let us check the desired property.
      Take an edge~$uv \in E(B')$ with $u \in X, v \in Y$.

      Assume first that~$u \in X^+$.
      Then by choice of~$B'$, $uv$ is also an edge in~$\overline{B}$, so $\dist_{\overline{B}}(u,v) = 1$.
      In~$B$ on the other hand, $uy$ is an edge (by definition of~$X^+$, since $u \in X^+$),
      and so are~$x^+y$ and~$x^+v$ (by choice of~$x^+$).
      Thus~$\dist_B(x,y) \le 3$.
      If~$u \in X^-$ instead, the situation is similar, swapping the roles of~$B$ and~$\overline{B}$, and using~$x^-$ instead of~$x^+$.

      The last case, swapping~$X$ and~$Y$, is symmetrical.\qedhere
  \end{enumerate}
\end{proof}

\begin{proof}[Proof of \cref{lem:informal}]
  Consider a graph~$G$, a partition~$\P$ of~$V(G)$, a transversal~$S$ of~$\P$, and define $\P' \eqdef \P \wedge S$.
  Let us construct a $\P'$-flip~$G'$ of~$G$.

  For any two parts~$X,Y \in \P'$, let us denote by $G[X,Y]$ be the subgraph of~$G$ consisting of all edges between~$X,Y$
  (with the corner case $G[X,X] \eqdef G[X]$ when $X=Y$).
  In the construction of~$G'$, we will consider each pair of parts~$X,Y$ independently,
  and transform~$G[X,Y]$ into~$G'[X,Y]$ by a $\P'$-flip to ensure that for any edge~$uv \in E(G'[X,Y])$,
  \begin{equation}\label{eq:local-metric-conversion}
    \dist_{G[X,Y]}(u,v) \le 6 \quad \text{and} \quad \dist_{\overline{G[X,Y]}}(u,v) \le 6. 
  \end{equation}
  We will ensure that the construction of~$G'[X,Y]$ ensuring~\eqref{eq:local-metric-conversion} is independent of the edges between any other pair of parts of~$\P$,
  so that the graphs~$G'[X,Y]$ for each pair~$X,Y$ can be recombined into~$G'$ without issue.
  \begin{enumerate}
    \item For each part $X \in \P$, by \cref{lem:diam-complement}, either~$G[X]$ or its complement has diameter at most~3.
      If $\diam(G[X]) \le 3$, we flip~$X$ with itself, i.e.\ we choose $G'[X] = \overline{G[X]}$.
      Then for any $uv \in G'[X]$, we directly have $\dist_{\overline{G[X]}}(u,v) = 1$,
      while $\dist_{G[X]}(u,v) \le 3$ holds by assumption on the diameter.
      In the case $\diam(\overline{G[X]}) \le 3$, we instead pick $G'[X] = G[X]$ and the reasoning remains similar.
    \item For each pair~$X,Y \in \P$ of distinct parts,
      let~$x,y$ be the unique vertices in $X \cap S$ and $Y \cap S$ respectively,
      and define $X^+,X^-,Y^+,Y^-$ as in \cref{lem:bipartite-informal-lemma}.

      Now \cref{lem:bipartite-informal-lemma} applied to~$G[X,Y]$ and~$x,y$
      gives some bipartite graph~$G'_{XY}$ between~$X$ and~$Y$,
      which is obtained as a $\{X^+,X^-,Y^+,Y^-\}$-flip of~$G[X,Y]$, and such that for any~$uv \in E(G'_{XY})$,
      the vertices~$u,v$ are at distance at most~$6$ in both~$G[X,Y]$ and $\overline{G[X,Y]}$.

      Note that the partition $\P' = \P \wedge S$ (restricted to~$X \cup Y$) refines $\{X^+,X^-,Y^+,Y^-\}$,
      so~$G'_{XY}$ is also a $\P'$-flip of~$G[X,Y]$.
      Thus we can choose $G'[X,Y] = G'_{XY}$.
  \end{enumerate}
  Thus we have constructed a $\P'$-flip~$G'$ of~$G$ satisfying~\eqref{eq:local-metric-conversion}.
  To conclude, consider any edge $uv \in E(G')$, say with $u \in X$ and $v \in Y$ (with possibly $X=Y$).
  We want to prove $\dist_{\P}(u,v) \le 6$, or in other words that for any $\P$-flip~$G''$ of~$G$,
  we have $\dist_{G''}(u,v) \le 6$.
  Observe that either~$G[X,Y]$ or~$\overline{G[X,Y]}$ must be a subgraph of~$G''$.
  By~\eqref{eq:local-metric-conversion}, $u,v$ are at distance at most~6 in both~$G[X,Y]$ and~$\overline{G[X,Y]}$, and thus also in~$G''$, proving the statement.
\end{proof}

\section{Sparse quotients, quasi-isometry, and neighbourhood covers: the bounded twin-width case}\label{sec:tww-app}

In this section we reprove the results from \cref{sec:quasi-iso} in the restricted case of graphs of bounded twin-width. The reason for this is that in this setting, the proofs are much simpler and yield better bounds for relevant parameters of the constructions, so this may be of interest for readers primarily interested in twin-width.

We need some additional definitions. A graph class $\CC$ is {\em{weakly sparse}} if there exists $t\in \N$ such that no member of $\CC$ contains the biclique $K_{t,t}$ as a subgraph. We say that $\CC$ has {\em{bounded sparse twin-width}} if $\CC$ has bounded twin-width and is weakly sparse. As proved in~\cite{tww2-journal}, classes of bounded sparse twin-width have bounded expansion (see also~\cite{DreierGJMR22,DreierGJMR22-correction}).

We now prove the following analogue of \cref{lem:quasi-iso}.

\begin{lemma}\label{lem:quasi-iso-tww}
	Let $G$ be a graph such that $\tww(G)\leq k$, for some $k\geq 1$. Then there exists a partition $\F$ of the vertex set of $G$ and a total order $<$ on $\F$ such that
	\begin{enumerate}
		\item $\scol^<_r(G/\F)\leq \frac{(k+1)^{r+1}-1}{k}$, for each $r\in \N$, and
		\item each part of $\F$ has weak diameter at most $2$ in $G$. 
	\end{enumerate}
\end{lemma}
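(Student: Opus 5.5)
Fix a contraction sequence $\P_1\prec\dots\prec\P_n$ of width $k$. In $\P_t$, call two distinct parts $A,B$ \emph{red-adjacent} if they are non-homogeneous. The plan is to mimic the proof of \cref{lem:quasi-iso}, using red edges in place of resolved pairs. Here a part is ``resolved'' once all of its edges to the rest of the graph are carried by red edges or lie inside the part. It is cleaner, though, to define $\F$ directly from the last moment each vertex is still ``tied'' to something.

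For a vertex-part $P\in\P_t$, say $P$ is \emph{active} at time $t$ if $P$ has a neighbour in $G$ outside $P$ and this neighbour lies in a part homogeneous to $P$. Such a part $Q$ is then fully complete to $P$, so $P\cup Q$ has weak diameter at most $2$: any two vertices of $P$ share the common neighbour $q\in Q$. We may assume $G$ is connected; otherwise we treat components separately and order them consecutively. Isolated vertices are handled trivially as singletons.

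Let $\F$ consist of the \emph{maximally active} parts: $P\in\P_t$ active, such that for every $t'>t$ the part of $\P_{t'}$ containing $P$ is not active. For each vertex, the parts containing it form a chain, and the singleton $\{v\}$ is active whenever $v$ has a neighbour (in $\P_1$ every pair is homogeneous). Hence the maximally active parts containing $v$ are unique, and $\F$ is a partition. If the whole set $V(G)$ at time $n$ is ``active'', we need a separate trivial case, which I would handle as in \cref{lem:quasi-iso}. Each $P\in\F$ is active, so it has weak diameter at most $2$, which gives item 2.

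For item 1, order $\F$ by decreasing index $t$, as in \cref{lem:quasi-iso}, so that parts that became maximal later come first. Fix $P\in\F$ of index $p$, and take a path $P=P_0,P_1,\dots,P_s=S$ in $G/\F$ with $s\le r$, where all internal $P_j>P$ (so index at most $p$) and $S<P$. Map each $P_j$ to the part $\hat P_j\in\P_{p+1}$ containing it. The key claim is that for every edge of $G$ leaving a part $P'\in\F$ of index at most $p$, the two $\P_{p+1}$-parts containing its endpoints are either equal or red-adjacent in $\P_{p+1}$. Indeed, if they were distinct and homogeneous, the edge would make the $\P_{p+1}$-part containing $P'$ active, contradicting maximality. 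This applies to $P_0,\dots,P_{s-1}$, so $\hat P_0,\dots,\hat P_s$ is a walk in the red graph of $\P_{p+1}$ (after removing repetitions) of length at most $r$. Red degree is at most $k$, so the number of reachable parts is at most $1+k+\dots+k^r=\frac{(k+1)^{r+1}-1}{k}$ (a crude bound, but it matches). Distinct $S<P$ lie in distinct parts of $\P_{p+1}$, since they have index at least $p$ and are disjoint maximal parts; this needs care when the index equals $p$, possibly by working with $\P_p$ plus one merge. Therefore $|\sreach_r^<(P)|$ is bounded as claimed.

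The main obstacle is this injectivity and the tie-breaking at equal indices. The step from $\P_p$ to $\P_{p+1}$ merges two parts, which may collapse two members of $\F$. I would resolve it by counting $\P_p$-parts reachable through red edges of $\P_p$ together with the one merge. Alternatively, I would redefine activity at time $t$ relative to $\P_{t+1}$ so that the counting is clean.
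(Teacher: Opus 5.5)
Your construction is essentially the paper's. You take maximal parts of the contraction sequence with a domination property, order them by decreasing index, and project strongly reaching paths into the partition at the index of $P$, counting via red degrees. The paper uses \emph{dominated}, i.e.\ $P\subseteq\Ball^1_G(u)$ for some vertex $u$. Your \emph{active} is stronger, but \emph{not active} is exactly ``complete to no other part'', which is all the argument uses. Also, $V(G)\in\P_n$ can never be active, so your extra trivial case is vacuous.

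The gap is in the final count, which you flag but do not close. The claim that distinct members of $\sreach^<_r(P)$ lie in distinct parts of $\P_{p+1}$ is false. A member $S$ of index exactly $p$ lies in $\P_p$, and the single merge $\P_p\to\P_{p+1}$ may merge $S$ with another member of $\F$ of index $p$ (possibly $P$ itself), so both land in the same part. Separately, for $S$ of index at least $p+2$, $\hat P_s$ must be the $\P_{p+1}$-part containing the endpoint of the last edge, since $S$ is then a union of several such parts. Your identity $1+k+\dots+k^r=\frac{(k+1)^{r+1}-1}{k}$ is also wrong. It is only an inequality, and that slack is exactly what absorbs the collision.

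Either of two fixes works.
\begin{itemize}
\item[(i)] Stay in $\P_{p+1}$ and bound the fibres of the map $S\mapsto\hat P_s$. A part $Z\in\P_{p+1}$ receives either members of index $p$ contained in $Z$, or members of index at least $p+1$ containing $Z$, never both, since members of $\F$ are disjoint. It receives at most one of the latter, and two of the former only if $Z$ is the part created by the merge. Hence $|\sreach^<_r(P)|\le 2+k+\dots+k^r\le 1+(k+1)+\dots+(k+1)^r$ for $r\ge 1$, and $r=0$ is trivial.
\item[(ii)] Follow the paper and project into $\P_p$ instead. Every $S$ of index at least $p$ is a union of $\P_p$-parts, so taking the endpoint's part $B_s\subseteq S$ makes injectivity automatic. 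The price is that you now need degrees in $G/\P_p$ rather than red degrees in $\P_{p+1}$. For $j<s$, let $B$ be the $\P_p$-part containing $P_j$. The $\P_{p+1}$-part containing $B$ is complete to no other part, so its neighbours in $G/\P_{p+1}$ are all red, hence at most $k$. Undoing the one merge gives $B$ at most $k+1$ neighbours in $G/\P_p$. This is where the $(k+1)$ in the stated bound comes from.
\end{itemize}
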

\begin{proof}
	We use the construction proposed in \cite{tww3} for the purpose of proving that graphs of bounded twin-width are $\chi$-bounded. In fact, this construction was also the inspiration for our proof of \cref{lem:quasi-iso}, so what follows is a simpler variant of the reasoning from that proof.
	
	Let $\P_1\prec \P_2\prec \ldots \prec \P_n$ be a contraction sequence for $G$ such that for every $i\in [n]$, every part $A\in \P_i$ is non-homogeneous to at most $k$ parts $B\in \P_i$ with $B\neq A$. Recall that $\P_1\prec \P_2\prec \ldots \prec \P_n$ is a maximal chain of partitions of $V(G)$: $\P_1$ is the partition into singletons, $\P_n$ is the partition into one part $V(G)$, and each partition $\P_{i+1}$ is obtained from $\P_i$ by merging some two parts.
	
	Call a set $A\subseteq V(G)$ {\em{dominated}} if there exists a vertex $u$ of $G$ such that $A\subseteq \Ball^1_{G}(u)$. 
	Call a part $P$ {\em{maximally dominated}} with {\em{index}} $i$ if $P\in \P_i$, but for every $j>i$, the part of $\P_j$ containing $P$ is not dominated. Let $\F$ be the set of all maximally dominated parts across all the partitions $\P_1,\ldots,\P_n$. Note that since every vertex $u$ belongs to some dominated part (namely $\{u\}\in \P_1$) and any two sets in $\bigcup_{i\in [n]} \P_i$ are either disjoint or contained in one another, it follows that $\F$ is a partition of $V(G)$. Since every part of $\F$ is dominated, its weak diameter is at most $2$.
	So it remains to bound the strong colouring numbers of the quotient graph $G/\F$.
	
	 %Note that since each partition $\P_{i+1}$ differs from $\P_i$ only by merging two parts, for every $i\in [n]$ there are at most two parts with index $t$ in $\F$.
	
	With each $P\in \F$ we associate the {\em{index}} of $P$ defined as the number $i$ such that $P$ is maximally dominated with index $i$.
	Order the parts of $\F$ by reverse indices, breaking any ties arbitrarily. That is, introduce a total ordering $<$ on $\F$ so that for any parts $P,Q\in F$ with $P<Q$, say with indices $i$ and $j$, respectively, we necessarily have $i\geq j$.
	
	The key observation of \cite{tww3} is that $<$ defined in this way provides an ordering of the vertex set of $G/\F$ whose {\em{degeneracy}} is at most $k+1$: for every part $P\in \F$ there are at most $k+1$ parts $Q<P$ that are adjacent to $P$ in $G/\F$. We now argue that in fact, all the strong colouring numbers of $G/\F$ can be similarly bounded.
	This argument was in fact already proposed in \cite[Proof of Theorem~2]{DreierGJMR22}, but we present it here for completeness.
	
	\begin{claim}\label{cl:sreach-tww}
		For every $P\in \F$ and $r\in \N$, we have
		\[|\sreach^<_r(P)|\leq \frac{(k+1)^{r+1}-1}{k}.\]
	\end{claim}
	\begin{claimproof}
		Consider any $Q\in \sreach^<_r(P)$, $Q\neq P$, and let $P=A_0,A_1,\ldots,A_{p-1},A_p=Q$ be a path in $G/\F$ witnessing this membership, where $1\leq p\leq r$. Note that we have $Q<P<A_t$ for each $t\in [p-1]$. Let $i$ and $j$ be the indices of $P$ and $Q$, respectively; then~$j\geq i$.
		
		For each $t\in \{0,1,\ldots,p-1\}$, let $B_t$ be the unique part of $\P_i$ that contains $A_t$; such a part exists due to $P<A_t$. Note that $P=A_0=B_0$. Also, for every $t\in \{0,1,\ldots,p-2\}$, since $A_t$ and $A_{t+1}$ adjacent in~$G/\F$, it follows that $B_t$ and $B_{t+1}$ are equal or adjacent in $G/\P_i$.
		 Further, since $A_{p-1}$ and $A_p=Q$ are adjacent in $G/\F$, there must exist a part $B_p\in \P_i$ such that $B_p\subseteq Q$ and $B_{p-1}$ and $B_p$ are adjacent in $G/\P_i$. %It follows that $\dist_{G/\P_i}(A,B_p)\leq p$.

		We now argue the following: For each $t\in \{0,1,\ldots,p-1\}$, part $B_t$ has degree at most $k+1$ in  $G/\P_i$. This is clear when $i=n$, for then $B_t$ is the unique part of $\P_n$; so assume $i<n$. 
		Let $B_t'$ be the unique part of $\P_{i+1}$ that contains $B_t$. Since $B_t$ contains a maximally dominated part $A_t$, it follows that $B_t'$ is not dominated. Hence, in partition $\P_{i+1}$, $B_t'$ is non-homogeneous to at most $k$ other parts (by the assumptions on the contraction sequence), complete to no other part (for otherwise it would be dominated), and anti-complete to all the other parts; so $B_t'$ has degree at most $k$ in $G/\P_{i+1}$. Since $\P_{i+1}$ differs from $\P_i$ only by some two parts being merged, it follows that $B_t$ has degree at most $k+1$ in $G/\P_i$.
		
		We conclude that the part $B_p$ can be reached in $G/\P_i$ from $A$ by a path of length at most $p\leq r$ such that all parts on this path, except possibly for $B_p$ itself, have degrees at most $k+1$ in $G/\P_i$. The number of such parts $B_p$ is therefore bounded by $(k+1)+(k+1)^2+\ldots+(k+1)^r$, for every next vertex on the path can be chosen among at most $k+1$ possibilities. Since different sets $Q$ belonging to $\sreach^<_r(P)$ give rise to different sets $B_p$ (due to being disjoint), it follows that $|\sreach^<_r(P)|\leq 1+(k+1)+(k+1)^2+\ldots+(k+1)^r=\frac{(k+1)^{r+1}-1}{k}$. (The additional summand $1$ accounts for the part $P$ itself.)
	\end{claimproof}
	
	From \cref{cl:sreach-tww} we conclude that $\scol^<_r(G)\leq \frac{(k+1)^{r+1}-1}{k}$ for every~$r\in \N$, as claimed.
\end{proof}

We now prove the analogue of \cref{thm:quasi-iso-be}.

\begin{theorem}\label{thm:quasi-iso-betww}
	For every graph class $\CC$ of bounded twin-width there exists a graph class $\DD$ of bounded sparse twin-width such that every graph from $\CC$ is $(3,2)$-quasi-isometric to some graph from $\DD$.
\end{theorem}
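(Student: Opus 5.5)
The plan is to apply \cref{lem:quasi-iso-tww} to each $G\in\CC$ and take $\DD\eqdef\{G/\F_G : G\in\CC\}$, where $\F_G$ is the partition it provides. Let $k$ be a bound on the twin-width of graphs in $\CC$. There are three things to check:
\begin{enumerate}
\item $G$ is $(3,2)$-quasi-isometric to $G/\F_G$;
\item $\DD$ has bounded twin-width;
\item $\DD$ is weakly sparse.
\end{enumerate}

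For the quasi-isometry, take $\phi(x)\eqdef[x]$, the part of $\F_G$ containing $x$. This map is surjective, so the density condition holds trivially. Contracting parts cannot increase distances, so $\dist_{G/\F}([x],[y])\le\dist_G(x,y)$. Conversely, take a path of length $p$ in $G/\F$ from $[x]$ to $[y]$. Lift each quotient edge to a $G$-edge between the corresponding parts, and connect consecutive lifted edges inside each of the $p+1$ visited parts using weak diameter at most $2$. This gives $\dist_G(x,y)\le p+2(p+1)=3p+2$, which is exactly the $(3,2)$ bound.

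Weak sparsity follows from the first item of \cref{lem:quasi-iso-tww}. With $r=1$ we get $\scol_1(G/\F)\le k+2$, so $G/\F$ is $(k+1)$-degenerate. Hence $G/\F$ excludes $K_{t,t}$ as a subgraph for $t=2k+3$, since $K_{t,t}$ has minimum degree $t>k+1$. In fact the strong colouring numbers are bounded for all $r$, so $\DD$ even has bounded expansion.

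The step I expect to be the main obstacle is bounding the twin-width of $G/\F$. My first attempt is to reuse the contraction sequence $\P_1\prec\dots\prec\P_n$ of $G$. I would start $G/\F$ from singletons of $\F$ and, at time $i$, take the partition of $\F$ induced by $\P_i$ on those parts of $\F$ already contained in a part of $\P_i$. The subtlety is that a part of $\F$ with index $j>i$ is a union of several parts of $\P_i$, so the induced partition is only defined once the $\F$-parts have been formed. I would therefore order things differently. First, for each part $P\in\F$ of index $j$, contract $P$'s own vertices following $\P_1,\dots,\P_j$ restricted to $P$; this is implicit in the quotient, since each part is already a single vertex. Then, for $i$ increasing, merge the vertices of $G/\F$ whose parts lie in a common part of $\P_i$. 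Red degree could be argued as follows. A part $B\in\P_i$ that is a union of $\F$-parts is, if not dominated, adjacent in $G/\P_i$ to at most $k$ parts, as in the proof of \cref{cl:sreach-tww}. Otherwise it is itself contained in an $\F$-part, and then it is not yet a vertex of the current quotient sequence. Adjacency in $G/\F$ between unions of $\F$-parts is coarser than homogeneity in $G$, so red edges correspond to (red or black) edges of $G/\P_i$, giving red degree $O(k)$.

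As a fallback I would avoid this bookkeeping altogether. The graph $G/\F$ is a first-order transduction of $G$ together with a unary colouring, because $\F$ has weak-diameter-$2$ parts defined via dominating vertices. Bounded twin-width is preserved under transductions, and combined with weak sparsity this gives bounded sparse twin-width.
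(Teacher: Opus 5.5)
Your choice of $\DD$, the $(3,2)$-quasi-isometry, and the weak sparsity argument (via degeneracy rather than bounded expansion) are all fine and match the paper. The gap is in the twin-width bound for $\DD$, which is the only non-routine step, and neither of your two arguments establishes it.

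\textbf{The primary argument.} Your derived contraction sequence for $G/\F$ does not have bounded width. At time $i$, every $\F$-part $P$ of index $j>i$ is a singleton vertex of the current sequence. Yet $P$ is a union of several \emph{dominated} parts $C_1,\dots,C_s\in\P_i$. Your count only bounds red degree from the side of groups coming from non-dominated parts of $\P_i$. Red degree is not symmetric, so this says nothing about $P$.

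Here is a concrete example. Let $u$ be adjacent to $c_1,\dots,c_s$, attach a pendant vertex $a_t$ to each $c_t$, and add isolated vertices $a'_1,\dots,a'_s$. Contract as follows:
\begin{enumerate}
\item contract every pair $\{a_t,a'_t\}$;
\item alternately grow $\{a_1,a'_1,\dots,a_t,a'_t\}$ and $\{c_1,\dots,c_t\}$;
\item merge $\{c_1,\dots,c_s\}$ with the $a$-part, and finally with $u$.
\end{enumerate}
All red degrees stay at most $2$. In this sequence $P=\{c_1,\dots,c_s\}$ is maximally dominated, and all $\{a_t\}$, $\{a'_t\}$ are parts of $\F$. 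Right after all pairs $\{a_t,a'_t\}$ are formed, your sequence has the groups $\{\{a_t\},\{a'_t\}\}$ and the singleton $\{P\}$. Since $Pa_t\in E(G/\F)$ while $Pa'_t\notin E(G/\F)$, the vertex $P$ has red degree $s$. Here $G/\F$ is just a star plus isolated vertices, so it is your sequence that fails, not the statement.

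\textbf{The fallback.} You assert that $G/\F$ is a transduction of $G$, but the reason you give does not suffice. A partition into dominated (weak-diameter-$2$) parts is in general not definable from $G$ with boundedly many colours. Take the spider with centre $u$ and legs $u\ell_tp_t$. Width-$2$ contraction sequences can make $\F$ restricted to $\{\ell_1,\dots,\ell_n\}$ an arbitrary partition into blocks of size at least $2$, all dominated by $u$. There are superexponentially many such partitions. A fixed formula with $c$ colour predicates defines at most $2^{c|V(G)|}$ relations on $V(G)$, so it cannot capture them all.

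\textbf{The missing idea.} The paper uses an order. There is a total order $<$ on $V(G)$ such that the ordered graph $(G,<)$ still has twin-width bounded in terms of $k$, and every part of every $\P_i$ is $<$-convex. Each part of $\F$ is a part of some $\P_i$, so the parts of $\F$ are intervals. After marking the $<$-minimum of each part, $\F$ and then $G/\F$ are obtained from $(G,<)$ by a fixed transduction. Closure of bounded twin-width under transductions of binary structures then bounds $\tww(G/\F)$.
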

\begin{proof}
	For every graph $G\in \CC$, let $\F_G$ be the partition of the vertex set of $G$ provided by \cref{lem:quasi-iso-tww}. Let \[\DD\coloneqq \{G/\F_G\colon G\in \CC\}.\] Since $\CC$ has bounded twin-width, from \cref{lem:quasi-iso-tww} it follows that the $r$-strong colouring numbers of $\DD$ are bounded by a function of $r$, hence $\DD$ has bounded expansion. In particular, $\DD$ is weakly sparse. Further, using the same argument as in \cref{sec:quasi-iso} we may easily see that mapping every vertex $u$ of $G$ to the part of $\F_G$ to which $u$ belongs yields a $(3,2)$-quasi-isometry from $G$ to $G/\F_G$.
	
	It remains to show that $\DD$ also has bounded twin-width. For this, we use a standard argument based on first-order transductions and the fact that every transduction of a class of binary structures of bounded twin-width again has bounded twin-width~\cite{tww1}; we assume the reader's familiarity with these tools. It is well known, see e.g.~\cite{tww1,tww4}, that if $G$ is a graph of twin-width $k$ and $\P_1\prec \P_2\prec \ldots \prec \P_n$ is a contraction sequence for $G$ witnessing this, then there exists a total order $<$ on $V(G)$ such that the ordered graph $(G,<)$, regarded as a binary structure, again has twin-width at most $k$, and every part $P\in \bigcup_{i\in [n]} \P_i$ is {\em{convex}} in $<$, that is, consist of an interval of elements that are consecutive in $<$. The construction presented in the proof of \cref{lem:quasi-iso-tww} produces a partition $\F_G$ such that every part of $\F_G$ is also a part of some $\P_i$, $i\in [n]$; hence, all the parts of $\F_G$ are convex in $<$. It now follows that the partition $\F_G$ can be transduced from the ordered graph $(G,<)$. Namely, it suffices to mark, using a unary predicate, the $<$-minimal element of each part of $\F_G$, and then to recover the parts of $\F_G$ as maximal intervals in $<$ that do not contain marked elements (besides the $<$-minimal one). Once partition $\F_G$ is transduced, we can easily transduce the quotient graph $G/\F_G$. This means that $G/\F_G$ can be transduced, using a fixed first-order transduction, from the ordered graph $(G,<)$ of twin-width at most $k$. By the results of~\cite{tww1} it now follows that the twin-width of $G/\F_G$ is bounded by a function of $k$, hence class $\DD$ has bounded twin-width.
\end{proof}

And here is the analogue of \cref{thm:nbhdCov}.

\begin{theorem}\label{thm:nbhdCov-tww}
	Every graph of twin-width $k$ admits a neighbourhood cover of radius $8$ and overlap $\Oh(k^2)$.
\end{theorem}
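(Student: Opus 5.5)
We follow the proof of \cref{thm:nbhdCov}, replacing the general merge-width partition by the partition of \cref{lem:quasi-iso-tww}. Let $G$ have twin-width $k$, with $k\ge 1$; twin-width $0$ graphs are cographs and can be handled by the same argument with $k$ replaced by $1$. \Cref{lem:quasi-iso-tww} gives a partition $\F$ of $V(G)$ and a total order $<$ on $\F$. Every part of $\F$ has weak diameter at most $2$ in $G$, and
\[
\scol^<_r(G/\F)\le \frac{(k+1)^{r+1}-1}{k}
\]
for every $r$. For $r=2$ this gives $\scol^<_2(G/\F)\le 1+(k+1)+(k+1)^2=\O(k^2)$.

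Next we bound the weak colouring number. \Cref{lem:col-nb-equiv} alone only gives $\wcol_2^<\le(\scol_2^<)^2=\O(k^4)$, which is too weak. Instead we estimate $\wreach_2^<(P)$ directly. If $Q$ is weakly $2$-reachable from $P$, then either $Q$ is strongly $2$-reachable from $P$, or $Q$ is reached by a path $P,A,Q$ with $Q<A<P$. In the second case $Q\in\sreach_1^<(A)$ and $A\in\sreach_1^<(P)$. The proof of \cref{cl:sreach-tww} shows $|\sreach^<_1(X)|\le k+2$ for every part $X$. So the weakly $2$-reachable set has size at most
\[
|\sreach^<_2(P)|+(k+2)^2=\O(k^2).
\]
Hence $\wcol_2^<(G/\F)=\O(k^2)$.

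Now apply \cref{lem:NbdCovWcol} to $G/\F$. It gives a neighbourhood cover $\K$ of $G/\F$ with overlap $\O(k^2)$ and radius $2$. We lift it to $\{\bigcup C\colon C\in\K\}$, a family of vertex sets of $G$, and check the three conditions of \cref{def:nbhdCov}.

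\textbf{Overlap.} The parts of $\F$ are disjoint, so each vertex $v$ of $G$ lies in $\bigcup C$ exactly when its own part lies in $C$. The overlap is therefore unchanged.

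\textbf{Covering.} The neighbourhood of $v$ in $G$ is contained in the union of the neighbourhood of its part $[v]$ in $G/\F$ together with $[v]$ itself. The cover given by \cref{lem:NbdCovWcol} (from \cite{gks}) in fact contains closed neighbourhoods of each vertex of $G/\F$. If only open neighbourhoods were available, we would add $[v]$ to a covering set at the cost of increasing the radius by one.

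\textbf{Radius.} Take $C\subseteq\Ball^2_{G/\F}(P)$ and fix $x_0\in P$. Parts have weak diameter $2$ and adjacent parts are joined by an edge, so the distance argument of \cref{eq:quasi-isom} with $12$ replaced by $2$ gives $\dist_G(x,y)\le 3\dist_{G/\F}([x],[y])+2$. Every $x\in\bigcup C$ therefore satisfies $\dist_G(x_0,x)\le 3\cdot2+2=8$, so the radius is at most $8$.

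The main obstacle is keeping the overlap quadratic rather than quartic. This is the step where we avoid the generic bound of \cref{lem:col-nb-equiv} and count weakly $2$-reachable parts directly using the degree bound $k+1$ in $G/\P_i$ from the proof of \cref{cl:sreach-tww}. A second, smaller point is confirming that closed neighbourhoods are covered, so that the radius stays exactly $8$.
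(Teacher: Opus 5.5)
Your proof is correct and follows the paper's argument. You split weakly $2$-reachable parts to get $\wcol_2^<(G/\F)\le\scol_2^<(G/\F)+(\scol_1^<(G/\F))^2=\O(k^2)$, apply \cref{lem:NbdCovWcol} to $G/\F$, and lift the cover using parts of weak diameter $2$ to get radius $3\cdot 2+2=8$. Your check that the cover from \cite{gks} covers closed neighbourhoods (needed because $N_G(v)$ may meet $[v]$) makes explicit a step the paper leaves implicit.
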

\begin{proof}
	Let $G$ be the graph in question. Let $\F$ be the partition of $V(G)$ and $<$ the ordering of $\F$ provided by \cref{lem:quasi-iso-tww}. Note that
	\begin{gather*}\scol^<_1(G/\F)\leq k+2=\Oh(k),\\ \scol^<_2(G/\F)\leq k+2+(k+1)^2=\Oh(k^2).\end{gather*}
	Observe that for each $P\in \F$, we have
	\[\wreach^<_2(P)=\sreach^<_2(P)\cup \bigcup_{Q\in \sreach^<_1(P)}\sreach^<_1(Q).\]
	Hence, we have
	\[\wcol^<_2(G/\F)\leq \scol^<_2(G/\F)+\scol^<_1(G/\F)^2\leq \Oh(k^2).\]
	From \cref{lem:NbdCovWcol} we conclude that $G/\F$ admits a neighbourhood cover~$\K$ of radius $2$ and overlap $\Oh(k^2)$. Since each part $P\in \F$ has weak diameter at most $2$, the same argument as in the proof of \cref{lem:quasi-iso} shows that $\{\bigcup C\colon C\in \K\}$ is a neighborhood cover of $G$ of radius $8$ and overlap $\Oh(k^2)$. 
\end{proof}

\end{document}